\documentclass[11pt,center]{amsart}
\usepackage{amsmath,amssymb}

\usepackage{hyperref}

\usepackage{tikz}
\usetikzlibrary{cd,arrows.meta,calc,decorations.markings,patterns,intersections}

\usepackage[bb=ams,scr=euler]{mathalpha}
\usepackage{enumitem}
\setenumerate{label=(\arabic*)}

\usepackage{thmtools}
\declaretheorem[name=Theorem]{theorem}
\declaretheorem[name=Lemma,numberwithin=section]{lemma}
\declaretheorem[name=Claim,sibling=lemma]{claim}
\declaretheorem[name=Proposition,sibling=lemma]{proposition}
\declaretheorem[name=Corollary,sibling=theorem]{corollary}

\declaretheorem[name=Remark,sibling=lemma]{remark}

\declaretheorem[name=Definition,sibling=lemma]{definition}
\declaretheorem[name=Definition,sibling=theorem]{bigdefinition}

\makeatletter
\newcommand{\abs}[1]{\left|#1\right|}
\newcommand{\bd}{\partial}
\newcommand{\C}{\mathbb{C}}
\renewcommand{\d}{\mathit{d}}
\newcommand{\id}{\mathit{id}}

\newcommand{\R}{\mathbb{R}}

\newcommand{\set}[1]{\left\{#1\right\}}
\newcommand{\Z}{\mathbb{Z}}
\newcommand{\Q}{\mathbb{Q}}

\def\@secnumfont{\bfseries}
\renewcommand\section{\@startsection{section}{1}{0pt}{-3.5ex \@plus -1ex \@minus -.2ex}{2.3ex \@plus.2ex}{\centering\bfseries}}
\renewcommand{\subsection}{\@startsection{subsection}{2}\z@{.5\linespacing\@plus.7\linespacing}{-.5em}{\normalfont\bfseries}}
\makeatother

\date{\today}
\author{Filip Broćić}
\email{filipvbrocic@gmail.com}
\address{Lehrstuhl für Analysis und Geometrie, Universität Augsburg, Universitätsstrasse 14, 86159 Augsburg, Germany}

\author{Dylan Cant}
\email{dylan@dylancant.ca}
\address{Département de mathématiques et de statistique, Université de Montréal, Pavillon André-Aisenstadt, 2920 Chemin de la Tour, Montréal, Québec, Canada}

\begin{document}
\title{Lengths of Reeb chords and Viterbo restriction}
\begin{abstract}
  Let $\Lambda$ be a Legendrian in the contact boundary of a Liouville domain $\Omega$. We explain how the non-existence of Reeb chords with endpoints on $\Lambda$ of length up to $a$ enables one to embed $D_{\epsilon}T^{*}\Lambda\times D(a)$ into $\Omega$ in an exact way. As in earlier work of Zhengyi Zhou, we use the Viterbo restriction map to deduce a contradiction in certain cases. In particular, we show that if $M$ admits a submersion from a product of spheres (e.g., the $n$-torus), then all compact Legendrians $\Lambda\subset ST^{*}M$ admit a Reeb chord for every choice of contact form $ST^{*}M$. The obstruction we use in this case is based on the idea of inverting the degree-$n$ classes in cohomology, and is similar to the notion of string point invertibility introduced by Egor Shelukhin.
\end{abstract}
\maketitle

\section{Introduction}
\label{sec:introduction}

\subsection{The notion of
  $n$-invertibility}
\label{sec:lengths-reeb-chords}

Suppose that $\Omega^{2n}$ is a Liouville
domain and consider a Legendrian submanifold
$\Lambda\subset\bd \Omega$. The primary goal
of this paper is to bound the length of the
shortest non-constant Reeb chord of $\Lambda$
from above. Here the Reeb flow is associated
to the canonical contact form on $\bd\Omega$.

Let $W$ denote the Liouville manifold obtained by completion of $\Omega$. Our main results are phrased in terms of the
symplectic cohomology $\mathit{SH}(W)$,
and its additional structures:
\begin{enumerate}
\item\label{struct-1}
  $\mathit{PSS}:H^{*}(W)\to
  \mathit{SH}(W)$,
\item\label{struct-2} BV operator
  $\Delta:\mathit{SH}(W)\to
  \mathit{SH}(W)$,
\item\label{struct-3} pair-of-pants
  product
  $\ast:\mathit{SH}(W)\otimes
  \mathit{SH}(W)\to
  \mathit{SH}(W)$.
\end{enumerate}
The symplectic cohomology
$\mathit{SH}(W)$ and its three
additional structures are well-known; see,
e.g.,
\cite{viterbo-GAFA-1999,viterbo-arXiv-1996,seidel-IP-2008,abouzaid-EMS-2015,zhao-thesis-2016,zhou-jtopol-2021,zhou-JSG-2022,zhou-adv-math-2022}. For
further details on the conventions we use, see
\cite{brocic-cant-arXiv-2025}.

This leads us to our main definition:
\begin{bigdefinition}\label{definition:n-invertible}
  A Liouville manifold $W$ is
  \emph{$n$-invertible} if the only
  $\Delta$-invariant ideal of
  $\mathit{SH}(W)$ which contains the
  subspace $\mathit{PSS} (H^{n}(W))$ is
  equal to $\mathit{SH}(W)$.
\end{bigdefinition}
Here we comment that $n$-invertible domains
always have dimension $2n$, i.e., we adopt
the somewhat common convention in symplectic
geometry that $n$ stands for ``half of the
dimension.'' The definition is closely related to that of \emph{string point invertibility} introduced in \cite{shelukhin-GAFA-2022}.

Let us set some terminology before proceeding
to our main theorem.
\begin{itemize}
\item The domain $\Omega$ is completed to a
  \emph{Liouville manifold} $W$, which means
  that $\Omega\subset W$ is embedded in such
  a way that the Liouville flow lines passing
  through the contact boundary $\Omega$ define the
  symplectization end $SY$ of $W$ (here $Y\simeq \partial \Omega$ is the ideal boundary).
\item The \emph{radial function} $r$ is
  $1$-homogeneous $\d r(Z)=r$ and satisfies
  $r|_{\partial \Omega}=1$. This function is
  smooth on the symplectization end $SY$
  and extends continuously to the rest of
  $\Omega$.
\item The \emph{Reeb vector field} is defined to be $R=X_{r}$ (defined on $SY$).
\item The \emph{Floer cohomology group} $\mathit{SH}_{c}(\Omega)$ is defined as the ``category theoretic limit'' of Floer cohomologies of Hamiltonian systems whose generating vector field agree with $cR$ outside of a compact set. We restrict $c$ to be in the complement of the \emph{spectrum} $\mathit{Spec}(\Omega)$ of periods of $R$. We assume throughout the body of the paper that Floer cohomologies are computed using coefficients in a field of characteristic two.
\end{itemize}

The spaces $\mathit{SH}_{c}(\Omega)$, for $c$ in the
complement of the spectrum, form a
\emph{persistence module} (in the sense of
\cite{polterovich_shelukhin_persistence_1});
the colimit of this persistence module is
easily seen to be the symplectic cohomology
$\mathit{SH}(W)$.

The three structures
$\mathit{PSS},\Delta,\ast$ can be lifted to
the persistence module as:
\begin{enumerate}
\item natural maps
  $\mathit{PSS}:H^{*}(W)\to \mathit{SH}_{c}(\Omega)$ for
  $c>0$,
\item natural endomorphisms
  $\Delta:\mathit{SH}_{c}(\Omega)\to
  \mathit{SH}_{c}(\Omega)$,
\item natural products
  $\ast:\mathit{SH}_{c_{1}}(\Omega)\otimes
  \mathit{SH}_{c_{2}}(\Omega)\to
  \mathit{SH}_{c_{1}+c_{2}}(\Omega)$.
\end{enumerate}

We now attempt to ``quantify'' the notion of
$n$-invertibility; first set:
\begin{equation*}
  \mathscr{I}_{0,c}=\mathrm{span}\set{ \mathit{PSS}(H^{n}(\Omega))\ast \mathit{SH}_{c'}(\Omega):c'<c},
\end{equation*}
where $\mathit{PSS}(H^{n}(\Omega))$ takes values in $\mathit{SH}_{c-c'}(\Omega)$.
Similarly define for $k\ge 1$:
\begin{equation*}
  \mathscr{I}_{k,c}=\mathscr{I}_{k-1,c}+\mathrm{span}\set{\Delta_{c_{1}}(\mathscr{I}_{k-1,c_{1}})\ast \mathit{SH}_{c_{2}}:c_{i}>0\text{ and }c_{1}+c_{2}=c},
\end{equation*}
and let
$\mathscr{I}_{c}\subset \mathit{SH}_{c}(\Omega)$ be the
union of $\mathscr{I}_{k,c}$ over all
$k$. Note that, since $\mathit{SH}_{c}(\Omega)$ is a
finite dimensional vector space, the colimit
$\mathscr{I}_{c}=\mathscr{I}_{c,k}$ is
attained for some finite number $k$. This
subspace
$\mathscr{I}_{c}\subset \mathit{SH}_{c}(\Omega)$ is a
natural subspace of the persistence module
(i.e., continuation maps send
$\mathscr{I}_{c'}$ into $\mathscr{I}_{c}$ for
$c'<c$), and so the colimit of
$\mathscr{I}_{c}$ is a subspace of the
symplectic cohomology
$\mathscr{I}\subset \mathit{SH}(W)$.

\begin{lemma}
  The ideal $\mathscr{I}$ is the smallest
  $\Delta$-invariant ideal of
  $\mathit{SH}(W)$ which contains the
  subspace $\mathit{PSS}(H^{n}(W))$.
\end{lemma}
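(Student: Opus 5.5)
We must show two inclusions. First, that $\mathscr{I}$ is itself a $\Delta$-invariant ideal of $\mathit{SH}(W)$ containing $\mathit{PSS}(H^{n}(W))$. Second, that any $\Delta$-invariant ideal $J$ containing $\mathit{PSS}(H^{n}(W))$ contains $\mathscr{I}$. Throughout we use that the colimit maps $\iota_{c}:\mathit{SH}_{c}(\Omega)\to \mathit{SH}(W)$ intertwine $\mathit{PSS}$, $\Delta$ and $\ast$. In other words, the lifts listed above are compatible with continuation maps, and $\iota_{c_{1}+c_{2}}(x\ast y)=\iota_{c_{1}}(x)\ast\iota_{c_{2}}(y)$. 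We also use that every class of $\mathit{SH}(W)$ is $\iota_{c}$ of something for $c$ large, and that $H^{*}(\Omega)=H^{*}(W)$.

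\emph{Minimality.} We show by induction on $k$ that $\iota_{c}(\mathscr{I}_{k,c})\subset J$ for every $c$.
\begin{itemize}
\item For $k=0$: a generator $\mathit{PSS}(h)\ast x$ with $h\in H^{n}$ and $x\in\mathit{SH}_{c'}$ maps to $\mathit{PSS}(h)\ast\iota_{c'}(x)$. This lies in $J$ because $J$ is an ideal containing $\mathit{PSS}(h)$.
\item For the inductive step: a generator $\Delta_{c_{1}}(y)\ast z$ with $y\in\mathscr{I}_{k-1,c_{1}}$ maps to $\Delta(\iota_{c_{1}}y)\ast\iota_{c_{2}}z$. Here $\iota_{c_{1}}y\in J$ by induction, so $\Delta(\iota_{c_{1}}y)\in J$ by $\Delta$-invariance, and the product lies in $J$ because $J$ is an ideal.
\end{itemize}
Taking unions over $k$ and colimits over $c$ gives $\mathscr{I}\subset J$.

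\emph{$\mathscr{I}$ contains $\mathit{PSS}(H^{n})$.} Write $\mathit{PSS}(h)=\mathit{PSS}(h)\ast 1$, where $1=\mathit{PSS}(1)\in\mathit{SH}_{c'}$ for small $c'>0$. The unit property $\mathit{PSS}(a)\ast\mathit{PSS}(b)=\mathit{PSS}(a\cup b)$ holds at the filtered level, up to continuation. Hence $\iota_{c}\mathit{PSS}(h)\in\mathscr{I}$ for any $c>c'$.

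\emph{Ideal property.} Let $u=\iota_{c}(y)$ with $y\in\mathscr{I}_{k,c}$, and $v=\iota_{d}(z)$ with $z\in\mathit{SH}_{d}$. We claim $y\ast z\in\mathscr{I}_{k,c+d}$, arguing by induction on $k$ using associativity of $\ast$ on the persistence module.
\begin{itemize}
\item For $k=0$: $(\mathit{PSS}(h)\ast x)\ast z=\mathit{PSS}(h)\ast(x\ast z)$ with $x\ast z\in\mathit{SH}_{c'+d}$ and $c'+d<c+d$.
\item For the inductive step: $(\Delta(y')\ast x)\ast z=\Delta(y')\ast(x\ast z)$, which is again a generator, now with $c_{2}$ replaced by $c_{2}+d$.
\end{itemize}
Commutativity of $\ast$, which holds in characteristic two without sign issues, handles multiplication on the other side.

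\emph{$\Delta$-invariance.} For $y\in\mathscr{I}_{k,c}$ we need $\iota\Delta(y)\in\mathscr{I}$. Take $z=1\in\mathit{SH}_{\epsilon}$ for small $\epsilon$. Then $\Delta_{c}(y)\ast 1$ is a generator of $\mathscr{I}_{k+1,c+\epsilon}$, and it maps to $\Delta(\iota_{c}y)$ in $\mathit{SH}(W)$. Finally, the colimit of the subspaces is a subspace, as noted above.

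The main obstacle is making precise the filtered compatibilities these steps rely on. We need associativity and graded commutativity of $\ast$ on the persistence module, which hold up to continuation maps. We need $\mathit{PSS}(1)$ to act as a unit up to continuation, so that $x\ast 1$ equals the continuation image of $x$. And we need $\Delta$ to commute with continuation. I would cite these as standard properties from \cite{brocic-cant-arXiv-2025} and the references above, rather than reprove them.
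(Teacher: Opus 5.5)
Your proof is correct, but it is organized differently from the paper's. The paper introduces the unfiltered ideals $\mathscr{I}_{0}=\mathit{PSS}(H^{n}(W))\ast\mathit{SH}(W)$ and $\mathscr{I}_{k}=\mathscr{I}_{k-1}+\Delta(\mathscr{I}_{k-1})\ast\mathit{SH}(W)$. It asserts that $\mathscr{I}_{k}$ is the colimit of $\mathscr{I}_{k,c}$ as $c\to\infty$, and commutes the colimits in $k$ and $c$ to get $\mathscr{I}=\bigcup_{k}\mathscr{I}_{k}$. The characterization is then pure algebra in $\mathit{SH}(W)$:
\begin{itemize}
\item an increasing union of ideals is an ideal;
\item $\Delta(\mathscr{I}_{k-1})\subset\mathscr{I}_{k}$ because $1\in\mathit{SH}(W)$;
\item any $\Delta$-invariant ideal containing $\mathit{PSS}(H^{n}(W))$ contains every $\mathscr{I}_{k}$, by induction.
\end{itemize}
Your minimality induction is essentially one half of that identification. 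Your three remaining checks amount to the other half, which the paper dismisses as ``easily seen.'' So your write-up makes explicit what the paper leaves to the reader.

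The trade-off is in the filtered input each route needs. The paper's route uses only that the colimit maps $\iota_{c}$ intertwine $\mathit{PSS}$, $\Delta$ and $\ast$, plus monotonicity of $\mathscr{I}_{k,c}$ in $c$. Your ideal step additionally uses associativity of $\ast$ on the persistence module. That is true but avoidable: push forward before reassociating. For example,
\[
\iota_{c}(\mathit{PSS}(h)\ast x)\ast\iota_{d}(z)=\mathit{PSS}(h)\ast\iota_{c'+d}(x\ast z)=\iota_{c+d}\bigl(\mathit{PSS}(h)\ast(x\ast z)\bigr)
\]
needs associativity only in $\mathit{SH}(W)$. You do need to choose $d$ with $c+d,\,c'+d\notin\mathit{Spec}(\Omega)$, which you can arrange by continuing $z$ slightly upward; your filtered version needs the same choice. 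Likewise, the filtered unit property you list is never actually used. Each time you insert $1$, you immediately apply $\iota$, so only the unit of $\mathit{SH}(W)$ matters. With these observations, the ``main obstacle'' you flag largely disappears.
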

\begin{proof}
  Define ideals
  $\mathscr{I}_{0}=
  \mathit{PSS}(H^{n}(W))\ast
  \mathit{SH}(W)$, and:
  \begin{equation*}
    \mathscr{I}_{k}=\mathscr{I}_{k-1}+\Delta(\mathscr{I}_{k-1})\ast \mathit{SH}(W),
  \end{equation*}
  and then $\mathscr{I}_{k}$ is easily seen
  to be the colimit of $\mathscr{I}_{c,k}$ as $c\to\infty$,
  and since colimits commute with colimits,
  the monotone union
  $\mathscr{I}_{0}\cup \mathscr{I}_{1}\cup
  \dots$ is equal to the colimit
  $\mathscr{I}$. From this description of
  $\mathscr{I}$ we conclude the desired
  result.
\end{proof}

This leads to our second main definition in
terms of the persistence refinement of the
ideal $\mathscr{I}$:
\begin{bigdefinition}
  The $n$-invertibility capacity of a
  Liouville domain $\Omega$ is:
  \begin{equation*}
    c_{\mathrm{ni}}(\Omega)=\inf\set{c>0:\mathit{PSS}(1)\in \mathscr{I}_{c}}.
  \end{equation*}
  Then $c_{\mathrm{ni}}(\Omega)<\infty$ if
  and only if the completion $W$ is $n$-invertible.
\end{bigdefinition}

Our main result is:
\begin{theorem}\label{theorem:main}
  Every Legendrian $\Lambda\subset \bd\Omega$
  bounds a non-constant Reeb chord with
  period at most $c_{\mathrm{ni}}(\Omega)$, provided $c_{\mathrm{ni}}(\Omega)<\infty$.
\end{theorem}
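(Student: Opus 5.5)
We argue by contradiction. Suppose $\Lambda\subset\bd\Omega$ is a compact Legendrian with no non-constant Reeb chord of length at most $a$, for some $a>c_{\mathrm{ni}}(\Omega)$. (By compactness of the chord spectrum this is implied by having no chords of length $\le c_{\mathrm{ni}}(\Omega)$, after shrinking slightly.) The argument has three steps.

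\textbf{Step 1: an exact embedding.} Use the chord-free hypothesis to build an exact symplectic embedding of $U=D_{\epsilon}T^{*}\Lambda\times D(a)$ into $\Omega$, as announced in the abstract. The embedding is made by flowing a Weinstein neighbourhood of $\Lambda$ by the Reeb flow for times in $[0,a]$ and by the Liouville flow in the radial direction. The absence of chords of length $\le a$ is exactly what makes the map
\begin{equation*}
  (q,p,t)\mapsto \phi^{t}_{R}(q,p)
\end{equation*}
injective on a thin Weinstein neighbourhood. Combining the Reeb time $t$ with the radial coordinate $r$ gives a disk factor of area roughly $a$, and the Liouville form pulls back to $\lambda_{T^{*}\Lambda}+\lambda_{D(a)}$ up to an exact term. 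We then smooth corners so that $U$ becomes a Liouville subdomain, possibly after deforming $\Omega$ by a Liouville homotopy that does not change the filtered invariants.

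\textbf{Step 2: transport through Viterbo restriction.} Use the filtered Viterbo restriction map
\begin{equation*}
  \mathit{SH}_{c}(\Omega)\to \mathit{SH}_{c}(U).
\end{equation*}
It intertwines $\mathit{PSS}$, $\Delta$, and the product $\ast$, and is compatible with continuation maps. Hence it sends $\mathscr{I}_{c}(\Omega)$ into $\mathscr{I}_{c}(U)$, where $\mathscr{I}_{c}(U)$ is defined with $H^{n}(U)\to H^{n}(\Omega)$ pulled back appropriately. Since $\mathit{PSS}(1)$ restricts to $\mathit{PSS}(1)$, the hypothesis $\mathit{PSS}(1)\in\mathscr{I}_{c}(\Omega)$ for some $c\in(c_{\mathrm{ni}},a)$ gives $\mathit{PSS}(1)\in\mathscr{I}_{c}(U)$. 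It is safest to phrase the whole inductive definition of $\mathscr{I}_{k,c}$ so that each generator visibly maps to a generator; this is a bookkeeping induction on $k$.

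\textbf{Step 3: contradiction in the product.} Show that $\mathit{PSS}(1)\notin\mathscr{I}_{c}(U)$ for $c<a$. Here $H^{n}(U)=H^{n}(\Lambda)$ contains the top class of the $(n-1)$-manifold only in degree $n-1$, so $H^{n}(U)=0$ ($\Lambda$ has dimension $n-1$). Hence $\mathscr{I}_{0,c}(U)=0$, and inductively every $\mathscr{I}_{k,c}(U)=0$. Meanwhile $\mathit{PSS}(1)\neq0$ in $\mathit{SH}_{c}(U)$ for $c<a$. To see this, use the Künneth formula for the product: $\mathit{SH}_{c}(D(a))$ is nonzero, since the unit survives up to level $a$, the first Reeb orbit of $\bd D(a)$ having period $a$. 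The $T^{*}\Lambda$ factor contributes $H^{*}(\Lambda)\neq0$ in the low action window. Splitting the action level appropriately (or using a split Hamiltonian with slope below $a$ on the disk factor), the unit of $U$ is nonzero. This contradicts Step 2.

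The main obstacle I expect is Step 1 together with the filtration bookkeeping. One must get an honestly exact embedding whose image is a Liouville subdomain with the correct action scale $a$, rather than something like $a-\delta$. One must also check that filtered Viterbo restriction preserves the action levels, so that $c$ on $\Omega$ corresponds to $c$ on $U$. I would first try building the embedding on the symplectization $[1-\delta,1]\times\bd\Omega$ using coordinates $(r,t)$ and the Weinstein neighbourhood. This avoids the Liouville homotopy, and any losses of $\delta$ are absorbed by the strict inequality $c<a$.
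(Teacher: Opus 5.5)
Your overall architecture matches the paper's. You build an exact embedding of $D_{\epsilon}T^{*}\Lambda\times D(a)$, use a filtered Viterbo restriction (with the slack $c<b<a$ you anticipate) that kills $\mathscr{I}_{c}$ because $H^{n}$ of the product vanishes, and derive a contradiction from the unit.

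Building Step 1 from a $1$-jet neighbourhood, instead of the paper's coisotropic (Gotay) neighbourhood argument, is fine, since $r(\d z-p\,\d q)=r\,\d z-(rp)\,\d q$ is already split. One caution: the $(r,t)$-rectangle $[r_{0},1]\times[0,a']$ has area $(1-r_{0})a'$. So you must flow down the Liouville field almost to the skeleton ($r_{0}\to 0$, as the paper does with $Z_{s}$, $s\in(-\infty,0]$) and shrink the codisk radius to about $\epsilon r_{0}$. The thin collar $[1-\delta,1]\times\bd\Omega$ you propose only yields a disk of area about $\delta a$.

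The genuine gap is in Step 3, which is where the real difficulty lies, not Step 1. Your reason for $\mathit{PSS}(1)\neq 0$ in $\mathit{SH}_{c}(U)$ is that the $T^{*}\Lambda$ factor ``contributes $H^{*}(\Lambda)\neq 0$ in the low action window.'' This does not work, for two reasons.
\begin{enumerate}
\item Nonvanishing in $H^{*}$ does not stop the unit from being hit by the differential from non-constant orbits of period at most $c$. The boundary $\bd (D_{\epsilon}T^{*}\Lambda)$ carries Reeb orbits whose periods are $\epsilon$ times lengths of closed geodesics, far below $c$.
\item You cannot give the $T^{*}\Lambda$ factor a smaller slope by ``splitting the action level.'' Near the $\bd D_{\epsilon}T^{*}\Lambda$ face of any smoothing of $U$, the slope-$c$ Hamiltonian has slope $c$ in that direction.
\end{enumerate}
In fact your argument applies verbatim with $D_{\epsilon}T^{*}\Lambda$ replaced by a disk $D(\epsilon)$. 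Yet the unit of (a smoothing of) the polydisk $D(\epsilon)\times D(a)$ dies at every level $c\in(\epsilon,a)$.

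Two inputs are actually needed.
\begin{enumerate}
\item A filtered Künneth isomorphism at the \emph{same} level: a smoothing $K$ of $Q\times D(a)$ with $\mathit{SH}_{b}(K)\cong \mathit{SH}_{b}(Q)$, sending unit to unit, for $b<a$. The unfiltered Künneth formula is useless here since $\mathit{SH}(\C)=0$. Your parenthetical split Hamiltonian $H_{t}+(b/a)\pi\abs{z}^{2}$ is the right starting idea. However, split Hamiltonians and split $J$ are not admissible for the diagonal Liouville structure. The paper therefore uses $J=Z^{*}_{-\ell(r_{2})}J_{1}\oplus J_{2}$ and the domains $K_{\delta}$. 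It proves by compactness that all Floer cylinders and PSS solutions stay in $W\times\set{0}$. It uses $b/a\in(0,1)$ to invert $\bar{\partial}+\beta(-s)2\pi b/a$ in the unit comparison.
\item The fact that $\mathit{PSS}(1)\neq 0$ in $\mathit{SH}_{b}(D_{\epsilon}T^{*}\Lambda)$ for every $b$. This follows only because its image in $\mathit{SH}(T^{*}\Lambda)$ is nonzero, via the Viterbo/Abbondandolo--Schwarz/Abouzaid comparison with the homology of the free loop space. There the unit goes to the class of constant loops, and characteristic $2$ removes orientation issues.
\end{enumerate}
The second input is the one place the argument uses that the subdomain is a cotangent bundle, and your proposal omits it.
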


\subsection{Examples of $n$-invertible manifolds}

\subsubsection{Domains with vanishing symplectic cohomology}
\label{sec:vanish-sympl-cohom}

If $\mathit{SH}(W)=0$, then the conclusion of Theorem \ref{theorem:main} holds since the condition from Definition \ref{definition:n-invertible} is trivially satisfied. This recovers a result of Zhou \cite{zhou-JSG-2022}.

\subsubsection{Cotangent bundles of products of spheres}
\label{sec:cotang-bundl-spher}

\begin{theorem}\label{theorem:spheres}
  For $n\ge 1$ we construct classes:
  \begin{itemize}
  \item $A\in H_{n}(\Lambda S^{n})$ and,
  \item $B \in H_{2n-1}(\Lambda S^n)$,
  \end{itemize}
  such that: $A\ast \Delta( B \ast [\mathit{pt}] ) = [S^n]$. More generally, if $M=S^{n_{1}}\times \dots \times S^{n_{k}}$ with $n=n_{1}+\dots+n_{k}$, there are classes:
  \begin{equation*}
    A_i\in H_{n}(\Lambda M)\quad B_{i} \in H_{n+n_{i}-1}(\Lambda M)\text{ for }i=1,\dots,k
  \end{equation*}
  such that the sequence $C_{i}$, $i=0,\dots,k$, defined by $C_{0}=[\mathrm{pt}]$:
  \begin{equation*}
    C_{i}=A_i \ast \Delta(B_{i}\ast C_{i-1})\text{ for }i=1,\dots,k
  \end{equation*}
  satisfies $C_{k}=[S^{n_{1}}\times \dots \times S^{n_{k}}]$.
\end{theorem}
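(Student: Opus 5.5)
The plan is to work entirely in string topology, in $H_{*}(\Lambda M)$ with the Chas--Sullivan product $\ast$ and the BV operator $\Delta$ induced by rotating the basepoint; this is the structure meant in Theorem \ref{theorem:spheres}. Here $[M]$ is the class of constant loops, which is the unit, and $[\mathrm{pt}]$ is a constant loop at a point. Coefficients are in characteristic two, so no orientation signs need tracking. For a point $q\in S^{n}$ and a unit tangent vector $v\perp q$, set $\gamma_{q,v}(t)=\cos(2\pi t)q+\sin(2\pi t)v$, the parametrized great circle.

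\textbf{Step 1 (construction of $B$).} Let $B$ be the image of the fundamental class of the unit tangent bundle $ST S^{n}$, which has dimension $2n-1$, under $(q,v)\mapsto \gamma_{q,v}$. The product $B\ast[\mathrm{pt}]$ is the intersection of this family with the loops based at a fixed point $p$. Since the evaluation map $ST S^{n}\to S^{n}$ is a submersion, this intersection is transverse, and it is represented by the $(n-1)$-sphere family $v\mapsto \gamma_{p,v}$ with $v\in S^{n-1}=ST_{p}S^{n}$.

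\textbf{Step 2 (apply $\Delta$).} $X:=\Delta(B\ast[\mathrm{pt}])$ is represented by $S^{1}\times S^{n-1}\to\Lambda S^{n}$, $(s,v)\mapsto\gamma_{p,v}(\cdot+s)$. Its evaluation at time $0$ is $(s,v)\mapsto \cos(2\pi s)p+\sin(2\pi s)v$, which collapses $\{0\}\times S^{n-1}$ and $\{1/2\}\times S^{n-1}$ to $\pm p$ and is otherwise a diffeomorphism. Hence this map has degree one onto $S^{n}$.

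\textbf{Step 3 (construction of $A$).} Let $A=\iota_{*}X$, where $\iota$ reverses loops. The product $A\ast X$ is represented by the fiber product over the evaluation maps, on which we concatenate $\bar\gamma$ with $\gamma$. The canonical retraction of $\bar\gamma\cdot\gamma$ to the constant loop at $\gamma(0)$ is continuous in families. It therefore homotopes this cycle to constant loops parametrized by a degree-one map to $M$, which gives $[M]$.

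The care needed here is twofold. First, one must perturb so that the fiber product is transverse; generically it is just the diagonal-type set $\{(x,x)\}$, up to perturbation. Second, one must check that the perturbation commutes with the contraction homotopy. I expect this to be the main obstacle. I would first try choosing $A$ with its evaluation map perturbed by a small rotation, so that the fiber product is cut out transversally and is identified with a copy of $S^{1}\times S^{n-1}$ mapping by degree one.

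\textbf{Step 4 (products of spheres).} For $M=\prod S^{n_{j}}$ we have $\Lambda M=\prod\Lambda S^{n_{j}}$. The Chas--Sullivan product is compatible with cross products, $(a\times b)\ast(a'\times b')=(a\ast a')\times(b\ast b')$ in characteristic two, and $\Delta$ satisfies the Leibniz rule $\Delta(a\times b)=\Delta a\times b+a\times\Delta b$, since the diagonal circle action is used.

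Put $B_{i}=[S^{n_{1}}]\times\dots\times B^{(i)}\times\dots\times[S^{n_{k}}]$ and $A_{i}=[S^{n_{1}}]\times\dots\times A^{(i)}\times\dots\times[S^{n_{k}}]$. Here $A^{(i)}$ and $B^{(i)}$ are the classes built above for $S^{n_{i}}$, and the degrees $n+n_{i}-1$ and $n$ check out.

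By induction, $C_{i-1}=[S^{n_{1}}]\times\dots\times[S^{n_{i-1}}]\times[\mathrm{pt}]\times\dots\times[\mathrm{pt}]$. Then
\begin{equation*}
B_{i}\ast C_{i-1}=[S^{n_{1}}]\times\dots\times[S^{n_{i-1}}]\times(B^{(i)}\ast[\mathrm{pt}])\times[\mathrm{pt}]\times\dots\times[\mathrm{pt}].
\end{equation*}
When we apply Leibniz, every other term vanishes because $\Delta$ kills classes of constant loops, since the $S^{1}$-action on them is trivial. Multiplying by $A_{i}$ and using Step 3 in the $i$-th factor yields $C_{i}$, and so $C_{k}=[M]$.
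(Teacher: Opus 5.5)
There is a genuine gap, and it is in Step 2 rather than in the transversality issue you flag in Step 3. The map $(s,v)\mapsto\cos(2\pi s)p+\sin(2\pi s)v$ on $S^{1}\times S^{n-1}$ is not injective away from $s\in\{0,\tfrac12\}$. The points $(s,v)$ and $(1-s,-v)$ have the same image, because $\gamma_{p,-v}$ is just $\gamma_{p,v}$ run backwards. So the map double covers $S^{n}\setminus\{\pm p\}$ and has mod $2$ degree $0$.

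This cannot be fixed by choosing $A$ differently. Evaluation at time zero, $ev_{*}:H_{*}(\Lambda S^{n})\to H_{*}(S^{n})$, carries the Chas--Sullivan product to the intersection product: the product is a fiber product over base points, and a concatenation is based at the common base point. Also $ev_{*}[S^{n}]=[S^{n}]$. Hence $ev_{*}(A\ast X)=ev_{*}(A)\cdot ev_{*}(X)=0$ for every $A$, so $A\ast X\neq[S^{n}]$.

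The same doubling breaks Step 3 directly. The fiber product defining $\iota_{*}X\ast X$ contains, besides the diagonal, the graph of the involution $(s,v)\mapsto(1-s,-v)$. On that component the concatenated loop $\bar\gamma\cdot\bar\gamma$ is a doubly traversed great circle, which does not retract to a constant loop.

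What is missing is the correct $(n-1)$-dimensional family of loops at the base point. The paper's class $P$ consists of circles through $q$ that are all tangent to one fixed unit vector $w$ at $q$ and pass through a varying point $x$ of the equator orthogonal to $w$. At $x=q$ this is the constant loop, and at $x=-q$ it is the great circle. Distinct such circles lie in distinct affine planes containing the line $q+\R w$, and that line meets $S^{n}$ only at $q$. So after rotating the parameter, the family sweeps $S^{n}$ exactly once.

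The paper then shows $P$ is homotopic to the open-book class $D$ from its earlier work, for which $\Delta D=A^{-1}$ with $A$ the rotation action class, and $A\ast A^{-1}=[S^{n}]$. Your reversal-and-retraction argument is valid for this last identity, since the evaluation map of the action class is the identity; indeed $\iota_{*}A^{-1}=A$.

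To obtain $P=B\ast[\mathit{pt}]$ from a global $(2n-1)$-cycle, you need the direction $w$ chosen coherently over the base, i.e., a section of $STS^{n}$. No such section exists for $n$ even. The paper therefore uses a generalized section $s:\R P^{n}\to STS^{n}$ lying over the degree-one blow-down $b:\R P^{n}\to S^{n}$, which has a single regular preimage over the chosen point. It then defines $B$ by the completing-manifolds construction along $s$.

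Your Step 4 (cross products, the Leibniz rule, and $\Delta$ killing constant loops) is correct and matches the paper once the single-sphere case is repaired.
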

This proves that $T^{*}(S^{n_{1}}\times \dots \times S^{n_{k}})$ is $n$-invertible,\footnote{For odd-dimensional spheres, one can appeal to formulas in \cite[Theorem 16]{menichi-CMH-2009}.} due to the morphism of BV-algebras established in \cite{viterbo-GAFA-1999,viterbo-arXiv-1996,abbondandolo-schwarz-CPAM-2006,abouzaid-EMS-2015,brocic-cant-arXiv-2025}:
\begin{equation*}
  H_{*}(\Lambda M)\to \mathit{SH}(T^{*}M)
\end{equation*}
sending $[\mathit{pt}]$ to $\mathit{PSS}(F)$ where $F$ is the cohomology class represented by the cotangent fiber. Moreover, our argument also gives estimates on the $n$-invertibility capacity in terms of the lengths of the loops appearing in the classes $A,B_{1},\dots,B_{k}$, where the length functional is the one determined by the domain $\Omega\subset T^{*}M$ considered in \cite{brocic-cant-arXiv-2025}.

\subsection{Rationality constants of non-exact Lagrangians}
\label{sec:rationality-intro}

Let $\Omega$ be a Liouville domain. Recently, there has been interest \cite{zhou-JSG-2022,atallah-chasse-leclercq-shelukhin-arXiv-2024} in bounding on the \emph{rationality constant} of a non-exact Lagrangian inside $\Omega$. Here we recall that a Lagrangian $L\subset \Omega$ determines a Liouville class $[\lambda|_{L}]\in H^{1}_{\mathrm{dR}}(L)$, and the \emph{rationality constant} $\rho(L)$ is the infimal positive period of this class.

Our second main result is the following bound on the rationality constants of \emph{aspherical}\footnote{A manifold $L$ is said to be aspherical if its universal cover is contractible.} Lagrangians inside of $T^{*}T^{n}$:
\begin{theorem}\label{theorem:rationality}
  Let $\Omega\subset T^{*}T^{n}$ be the unit codisk bundle. Then there exists a constant $C>0$ such that if $L \subset \Omega$ is a closed, non-exact, Lagrangian which is aspherical then $\rho(L)\le C$.
\end{theorem}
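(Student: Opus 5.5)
Let $L\subset\Omega\subset T^{*}T^{n}$ be closed, aspherical and non-exact, with rationality constant $\rho=\rho(L)$. The plan is to turn $L$ into a Legendrian in the boundary of a slightly larger Liouville domain and then apply Theorem \ref{theorem:main}. In that picture, Reeb chords correspond to holomorphic strips with boundary on $L$, which is where asphericity and $\rho$ enter. Concretely, we compare $\rho(L)$ with $c_{\mathrm{ni}}$ of a fixed domain, and the constant $C$ will be a multiple of $c_{\mathrm{ni}}(\Omega')$, where $\Omega'\supset\Omega$ is a slightly larger codisk bundle.

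By Theorem \ref{theorem:spheres} and the BV morphism $H_{*}(\Lambda T^{n})\to \mathit{SH}(T^{*}T^{n})$, the torus $T^{n}=S^{1}\times\dots\times S^{1}$ is $n$-invertible. The length estimates mentioned after Theorem \ref{theorem:spheres} then give $c_{\mathrm{ni}}(\Omega')<\infty$, with an explicit bound from the lengths of the loops in the classes $A_i,B_i$.

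\textbf{Reduction to a Legendrian.} Since $[\lambda|_{L}]$ is not exact, we may assume $\rho>0$ is attained, i.e.\ the period group is discrete; otherwise $\rho=0$ and there is nothing to prove. After rescaling by $1/\rho$ (dilating via the Liouville flow, which multiplies the periods), we obtain a Lagrangian $L'$ whose Liouville class has period group $\Z$, sitting in the rescaled domain $\rho^{-1}\Omega$. A Lagrangian with integral periods lifts to a Legendrian in a prequantization-type contact manifold. Instead, I would use the standard trick, as in Zhou \cite{zhou-JSG-2022}, of removing a Weinstein neighborhood or attaching a handle: consider $\Omega\times D(a)$ or the circle-bundle boundary. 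The contact boundary of $\Omega\times D(a)$ (with corners smoothed) contains the Legendrian $\Lambda=\{(x,\sqrt{a/\pi}\,e^{2\pi i f(x)})\}$ for $x\in L$. Here $f\colon L\to\R/\Z$ is a primitive of $[\lambda|_{L}]/\rho$, scaled so that the area $a$ matches $\rho$. The domain $\Omega\times D(a)$ has dimension $2n+2$ and satisfies $\mathit{SH}(\Omega\times D)=0$ by the Künneth formula. So Theorem \ref{theorem:main}, or even Zhou's vanishing result, gives a Reeb chord of length at most $c_{\mathrm{ni}}(\Omega\times D(a))$. This capacity is controlled by $a$, via the unit of $\mathit{SH}(D(a))$ dying at action $a$. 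That bound alone is useless, so instead we must use $n$-invertibility of $T^{*}T^{n}$ itself.

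\textbf{Better route.} Take $\Lambda$ to be the Legendrian lift of $L$ into the contact boundary of a domain $\Omega_{\rho}$ obtained from $\Omega'$ by a construction of total size comparable to $\rho$. Then show that Reeb chords of $\Lambda$ of length $\ell<\rho$ correspond to Hamiltonian chords of $L$ of action $\ell$, which must come from discs or strips with boundary on $L$. Asphericity of $L$ rules out the contractible ones: $\pi_{2}(T^{*}T^{n},L)=0$ holds because both are aspherical and $\pi_{1}(L)\to\pi_{1}(T^{n})$ is injective, which should follow from $n$-invertibility. With those excluded, the actions lie in the period group $\rho\Z$, forcing every chord length to be at least $\rho$. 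Theorem \ref{theorem:main} then yields $\rho\le c_{\mathrm{ni}}(\Omega')=:C$.

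I expect the main obstacle to be the geometric step relating Reeb chords of the lifted Legendrian to periods of $[\lambda|_{L}]$, in particular ruling out short chords using asphericity. I would try first to deduce $\pi_{1}$-injectivity of $L\to T^{n}$ from a Viterbo restriction argument on $T^{*}L\to\Omega$.
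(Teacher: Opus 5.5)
Your proposal has a genuine gap at its central step. The ``better route'' never specifies $\Omega_{\rho}$, and it cannot be $\Omega'$. The Lagrangian $L$ is $n$-dimensional, while Legendrians in the $(2n-1)$-dimensional $\partial\Omega'$ are $(n-1)$-dimensional. A Legendrian lift of $L$ only exists in a $(2n+1)$-dimensional contact manifold, such as the corner-smoothed boundary of $\Omega\times D(\rho)$, which is your first attempt. There, Theorem \ref{theorem:main} bounds chords by the invertibility capacity of a $(2n+2)$-dimensional domain with $H^{n+1}=0$. That capacity is at least roughly $\rho$: by Theorem \ref{theorem:filtered-kunneth} and Viterbo restriction, the unit survives in $\mathit{SH}_{c}$ for $c<\rho$. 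Nothing in your argument converts $c_{\mathrm{ni}}(\Omega')$ into a $\rho$-independent upper bound for such a domain, so the final inequality $\rho\le c_{\mathrm{ni}}(\Omega')$ has no derivation. The claimed correspondence between Reeb chords of length $\ell<\rho$ and ``Hamiltonian chords of $L$'' bounded by strips is also unsupported. In the product region the lift is a graph over $L$, so its chords there are whole fibres of length exactly $\rho$.

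The asphericity step fails as well, because $\pi_{1}(L)\to\pi_{1}(T^{n})$ need not be injective. A product of small circles in a Darboux ball of $\Omega$ is a closed, aspherical, non-exact Lagrangian torus that is null-homotopic in $T^{*}T^{n}$; its $\rho$ is small, consistent with the theorem. So injectivity cannot follow from $n$-invertibility alone. Any statement of this kind must use that $\rho$ is large, and proving it is the real content.

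The missing idea, used in the paper, avoids Legendrians and Theorem \ref{theorem:main} entirely. One applies the truncated Viterbo restriction (Theorem \ref{theorem:VR-trunc}) to the non-exact Weinstein embedding $DT^{*}L\to\Omega$, whose correction form has periods in $\rho(L)\Z$. If $\rho(L)>2c$, this transports the relations $A_{i}^{+}\ast A_{i}^{-}=\mathit{PSS}(1)$, where $A_{i}^{\pm}$ lie in the free homotopy classes of $\pm e_{i}$, to $\mathit{SH}(T^{*}L)$. The images lie in loop classes of $L$ with zero $\lambda$-period. Abouzaid's BV map then gives nonzero classes of degree $\ge n$ in the corresponding components of the free loop space of $L$. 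Lemmas \ref{lemma:lat1} and \ref{lemma:lat2} for aspherical $L$ then force the relevant centralizers to have finite index. This produces a finite-index $\Z^{n}\subset\pi_{1}(L)$ on which $[\lambda|_{L}]$ vanishes, hence $[\lambda|_{L}]=0$. The resulting constant is $C=2c$, determined by the invertible action classes rather than by $c_{\mathrm{ni}}$.
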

It is noteworthy that our result applies to $T^{*}T^{n}$, whereas previous results concerning aspherical Lagrangians $L$ such as \cite{zhou-JSG-2022,li-yin-arXiv-2023} do not apply when the ambient space is $T^{*}T^{n}$ (but rather apply when the ambient space behaves like the cotangent bundle of a non-aspherical manifold).

\begin{remark}\label{remark:on-the-constant-C}
  The constant $C$ can be chosen to be any number $2c$ such that there exist classes:
  \begin{equation*}
    A_{i}^{\pm}\in \mathit{SH}_{c_{i,\pm}}(\Omega)\text{ for }c_{i,\pm}>0\text{ with }c_{i,+}+c_{i,-}\le c
  \end{equation*}
  such that:
  \begin{itemize}
  \item $A_{i}^{\pm}$ lies in the free homotopy class of the loop $t\mapsto x\pm te_{i}$,
  \item $A_{i}^{+}\ast A_{i}^{-}=1$ holds in $\mathit{SH}_{c}(\Omega)$.
  \end{itemize}
  For related bounds, see \cite{brocic-cant-arXiv-2025}.
\end{remark}

\subsection{Methods of proof}
\label{sec:methods-of-proof}

The first step used in the proof of Theorem \ref{theorem:main} is a variation of the geometric construction similar to \cite{mohnke-annals-2001} and \cite[Proposition 4.4]{opshtein-arXiv-2025}. The result can be considered as a version of the $1$-jet neighborhood theorem for Legendrians.

\begin{theorem}\label{theorem:geometric-construction}
  Let $\Omega$ be a Liouville domain inducing a radial coordinate $r$ on $W$, and let $\alpha=\lambda/r$ be the resulting contact form on the ideal boundary $Y$ of the completion $W$. Let $\Lambda_{t}\subset Y$ be a Legendrian isotopy, and suppose that:
  \begin{itemize}
  \item for each smooth path $y(t)\in \Lambda_{t}$ it holds that $y^{*}\alpha>a\d t$ ,
  \item $\Lambda_{t}\cap \Lambda_{t'}=\emptyset$ for each $t\ne t'$ in $[0,1]$.
  \end{itemize}
  Then there is an exact embedding:
  \begin{equation*}
    D_{\epsilon}T^{*}\Lambda\times D(a)\to \Omega
  \end{equation*}
  taking values in the interior of $\Omega$. Here $D_{\epsilon}T^{*}\Lambda$ is an arbitrarily small disk bundle in $T^{*}\Lambda$, $D(a)$ is the closed disk of area $a$, and $D_{\epsilon}T^{*}\Lambda\times D(a)$ is equipped with its standard Liouville structure.
\end{theorem}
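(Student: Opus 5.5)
Plan: realize the isotopy as a Lagrangian in the symplectization, sweep it by the Reeb/Liouville directions to get a symplectic tube, and identify its neighborhood with $D_\epsilon T^*\Lambda\times D(a)$ via the Weinstein neighborhood theorem.

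Let $\Lambda_t=\phi_t(\Lambda)$ with $\phi_t:\Lambda\to Y$ the isotopy, and write $y^*\alpha=h_t\,\d t$ along the paths $y(t)=\phi_t(q)$, where $h_t(q)=\alpha(\partial_t\phi_t(q))$. The hypothesis gives $h>a$ pointwise. Since the $\Lambda_t$ are pairwise disjoint, the map $\Phi:[0,1]\times\Lambda\to Y$, $\Phi(t,q)=\phi_t(q)$, is injective. Shrinking $[0,1]$ slightly if needed (we only need $\int_0^1 h\,\d t>a$, which survives small shrinking by compactness), we may assume it is an embedding. The $(n)$-dimensional image $\Sigma=\Phi([0,1]\times\Lambda)$ satisfies $\Phi^*\alpha=h\,\d t$, hence $\Phi^*\d\alpha=\d_\Lambda h\wedge \d t$.

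Work in the symplectization $SY=(0,\infty)\times Y$ with $\lambda=r\alpha$, and consider the $(n+1)$-manifold
\[
  N=\set{(r,\Phi(t,q)): r\in(\delta,1)}\cong (\delta,1)\times[0,1]\times\Lambda .
\]
Then $\lambda|_N=r h\,\d t$ and $\omega|_N=\d r\wedge h\,\d t+r\,\d h\wedge \d t$. This form is presymplectic on $N$, with kernel a line field transverse to nothing bad. The reparametrization $\sigma=rh(t,q)$ would give the standard form $\d\sigma\wedge\d t$ if $h$ were independent of $q$.

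First normalize $h$. Since every path satisfies the bound, I expect one can reparametrize the isotopy, for instance by composing each $\phi_t$ with a contactomorphism flowing along $\Lambda_t$ using $\alpha$-Hamiltonian adjustment, to make $h$ depend only on $t$. The cleaner route is to pass to a coisotropic statement: $N$ is coisotropic in $SY$ with characteristic foliation by the curves $\{q\}\times$const directions. Using the coisotropic neighborhood theorem (Gotay), a neighborhood of $N$ is determined by $\omega|_N$. I would then construct a map from a neighborhood of the zero section of $T^*\Lambda$ times a region $U\subset\R^2_{(\sigma,t)}$ of area greater than $a$ whose restriction to $\{0\}\times U$ pulls back $\omega$ to $\omega|_N$. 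The region is $U=\set{(\sigma,t):\delta h<\sigma<h}$, where the area is $\int(1-\delta)h\,\d t>a$ after $\delta\to0$. Any $q$-dependence of $h$ is absorbed by a fiberwise shift in $T^*\Lambda$, via $\sigma\mapsto r\,h(t,q)$, $p\mapsto p+\dots$, a symplectic shear with correction term of order $\d h$.

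Then embed a closed disk $D(a)$ inside $U$ (area strictly smaller than $U$). This produces a symplectic embedding of $D_\epsilon T^*\Lambda\times D(a)$ into $\{r<1\}$, i.e. into the interior of $\Omega$ collar.

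For exactness, the two Liouville forms differ on the image by a closed form. Its class lives in $H^1(\Lambda)$, since $D(a)$ is contractible. On the zero-section slice $\Lambda\times\{pt\}$ it pulls back from $\lambda|_N=rh\,\d t$. This vanishes on each slice $t=$const, so the class is zero. Hence the embedding is exact, after possibly a further Hamiltonian-free adjustment by a fiberwise translation in $T^*\Lambda$.

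Main obstacle: the normalization step. This means making the Weinstein/coisotropic model genuinely a product with a round region of area $>a$ despite $q$-dependence of $h$. I would first try to choose the isotopy parametrization so that $h$ is constant in $q$ up to an arbitrarily small error, and then handle the remaining error by the shear.
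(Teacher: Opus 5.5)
Your overall architecture matches the paper's. You build a coisotropic copy of $\Lambda\times D(a)$ in the collar $\set{r<1}$ on which $\lambda$ restricts to a primitive of the area form of $D(a)$, apply Gotay's coisotropic neighbourhood theorem against $\Lambda\times D(a)\subset T^{*}\Lambda\times\C$, and get exactness because the difference of Liouville forms vanishes on $\Lambda\times\set{\mathit{pt}}$. The gap is the step you flag yourself, and it rests on a misdiagnosis. You claim that $\sigma=rh(t,q)$ yields $\d\sigma\wedge\d t$ only when $h$ is independent of $q$. But since $\lambda|_{N}=rh\,\d t$, one has $\omega|_{N}=\d(rh\,\d t)=\d(rh)\wedge\d t$ identically. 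So in the coordinates $(\sigma,t,q)$ on $N$, $\lambda|_{N}=\sigma\,\d t$ and $\omega|_{N}=\d\sigma\wedge\d t$, however $h$ depends on $q$. No shear in $T^{*}\Lambda$ is needed, because Gotay's theorem only asks for a diffeomorphism of the coisotropic submanifolds intertwining the restricted forms.

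The $q$-dependence survives only in the shape of the region $\set{\delta h(t,q)<\sigma<h(t,q)}$. That region is not a fixed planar set $U$, which is how your proposal treats it when it embeds $D(a)$ ``inside $U$''. The fix is to choose $a<a'<\min h$ and restrict to the product $\Lambda\times\set{\delta\max h<\sigma<a'}\times[0,1]$, whose planar factor has area $a'-\delta\max h>a$ for small $\delta$. This is exactly the paper's argument:
\begin{itemize}
\item it moves $\psi(x,t)$ along the Liouville flow for time $\log(a'/h(x,t))$, so that $\lambda$ pulls back to $a'\,\d t$;
\item it then flows by $Z_{s}$, $s\le 0$, so that $\lambda$ pulls back to $a'e^{s}\,\d t$, i.e.\ $\sigma=a'e^{s}$;
\item it embeds $D(a)$ area-preservingly into $(-\infty,0]\times[0,1]$.
\end{itemize}

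The remedies you propose instead would not close the gap. Reparametrizing the given isotopy cannot make $h$ independent of $q$, whether you precompose $\phi_{t}$ with diffeomorphisms of $\Lambda$ or change the time parameter. On the swept hypersurface $\Sigma$, the slices are forced to be the leaves of $\ker(\alpha|_{\Sigma})$. The form $\d(\alpha|_{\Sigma})=\d h\wedge\d t$ is intrinsic, whereas any re-slicing with $\alpha|_{\Sigma}=h'(t')\,\d t'$ would force it to vanish. So the normalization must use the radial (Liouville) direction, which is what $\sigma=rh$, equivalently the domain-dependent Liouville flow, accomplishes.

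Two minor points. First, $\Phi$ is automatically an embedding: it is an injective immersion of a compact manifold, since $\alpha(\partial_{t}\Phi)=h>0$ while $\alpha$ vanishes on $T\Lambda_{t}$. So no shrinking of $[0,1]$ is needed. Second, the kernel of $\omega|_{N}$ has dimension $n-1$, not $1$.
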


It is clear that if $\Lambda$ has no Reeb chords for the contact form $\alpha$ (as in the statement of Theorem \ref{theorem:geometric-construction}) of periods up to $a$, then we can find an isotopy $\Lambda_{t}$ satisfying the hypotheses of Theorem \ref{theorem:geometric-construction}, and thereby conclude an exact embedding $D_{\epsilon}T^{*}\Lambda\times D(a)\to \Omega$.

The next step is to appeal to the well-known \emph{Viterbo restriction} (or \emph{Viterbo transfer}) map, whose properties are summarized in the following theorem:

\begin{theorem}\label{theorem:VR-main}
  If $K,\Omega$ are smooth Liouville domains and $\iota:K\to \Omega$ is an exact embedding taking values in the interior of $\Omega$, then, for each pair of numbers $0<c<b$ such that $c\not\in \mathit{Spec}(\Omega)$ and $b\not\in \mathit{Spec}(K)$, there is a linear map:
  \begin{equation*}
    \mathfrak{R}:\mathit{SH}_{c}(\Omega)\to \mathit{SH}_{b}(K).
  \end{equation*}
  Moreover, this map respects structures:
  \begin{itemize}
  \item $\mathfrak{R}\circ \Delta=\Delta\circ \mathfrak{R}$,
  \item $\mathfrak{R}(\zeta_{1}\ast \zeta_{2})=\mathfrak{R}(\zeta_{1})\ast \mathfrak{R}(\zeta_{2})$,
  \item $\mathfrak{R}\circ \mathit{PSS}=\mathit{PSS}\circ \mathfrak{R}$,
  \end{itemize}
  where $\mathfrak{R}:H^{*}(\Omega)\to H^{*}(K)$ is the pullback on cohomology in the last item.
\end{theorem}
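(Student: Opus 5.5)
We construct the Viterbo restriction map at the chain level using the stretched Hamiltonian construction. Fix an admissible almost complex structure and Hamiltonians on the completion $W$ of the following shape. Let $K\subset\Omega\subset W$ via $\iota$, and let $\rho$ be the radial coordinate of $K$ near $\partial K$; exactness of $\iota$ means the Liouville vector field of $K$ agrees with that of $\Omega$ up to a Hamiltonian correction, and after a standard deformation we may assume the Liouville flows coincide in a collar of $\partial K$. We consider a family of Hamiltonians $H_{c,b}$ with the following profile:
\begin{itemize}
\item $H_{c,b}$ has slope $b$ in $\rho$ on a collar $\{1\le\rho\le R\}$ of $\partial K$;
\item $H_{c,b}$ is constant on the region between $\iota(K_R)$ and $\partial\Omega$;
\item $H_{c,b}$ has slope $c$ in $r$ near infinity.
\end{itemize}
Here $K_R$ denotes the enlargement of $K$ by the collar $\{1\le\rho\le R\}$.

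Using action filtration, we split the Floer complex $CF(H_{c,b})$. We choose the constant value of the plateau to be large, roughly $bR$, and we make $R$ large and the slopes generic ($b\notin\mathit{Spec}(K)$, $c\notin\mathit{Spec}(\Omega)$). Then the orbits inside $K_R$, meaning those in $K$ and those near $\partial K$ of periods less than $b$, have action above some threshold. The orbits near $\partial\Omega$, of period less than $c$, have action below that threshold, because the plateau value enters with a minus sign. Hence the orbits inside $K_R$ span a quotient complex; with cohomological conventions we choose the sign so that the $K$-part is a quotient. That quotient complex computes $\mathit{SH}_b(K)$: by a neck-stretching/maximum-principle argument along $\partial K_R$, Floer cylinders with both ends in $K_R$ stay in $K_R$.

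The map $\mathfrak{R}$ is the composition of two pieces. First, a continuation map $\mathit{SH}_c(\Omega)\cong HF(H_c)\to HF(H_{c,b})$, where $H_c$ is the slope-$c$ Hamiltonian; this is a monotone homotopy, since $H_{c,b}\ge H_c$ after adjusting constants. Second, the projection to the quotient complex. Independence of the choices, and compatibility with continuation maps in $b$ and $c$, follow from the usual homotopy-of-homotopies arguments that respect the action filtration.

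We then check the structures one at a time.
\begin{itemize}
\item \textbf{BV operator.} The operator $\Delta$ is defined by counting $S^1$-parametrized cylinders. Both the continuation map and the action-filtered projection commute with it, because the same energy/action estimates apply to the parametrized moduli spaces.
\item \textbf{PSS.} Compatibility follows by gluing a PSS plane to the continuation cylinder. The low-slope part of $H_{c,b}$ on $K$ is a $C^2$-small Morse function, and restricting to the quotient corresponds on cohomology to restriction $H^*(\Omega)\to H^*(K)$, that is, to the pullback $\iota^*$.
\item \textbf{Product.} The pair-of-pants product is defined with Hamiltonian $1$-forms. We must check that the quotient complex is closed under the product $CF(H_{c_1,b_1})\otimes CF(H_{c_2,b_2})\to CF(H_{c_1+c_2,b_1+b_2})$: if the two inputs lie in $K_R$, the output does as well, and outputs in the $\Omega$-shell only pair with inputs in the $\Omega$-shell. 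This again follows from action estimates, provided the plateau heights add and the forms are chosen compatibly.
\end{itemize}

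The main obstacle is the product compatibility. Pair-of-pants surfaces carry a non-closed Hamiltonian form, so both the energy/action estimates and the maximum principle along the hypersurface $\partial K_R$ need care. The approach is to use a product-type $1$-form $\beta$ with $d\beta\le0$ and Hamiltonians depending only on $\rho$ near $\partial K_R$. One then applies the maximum principle for holomorphic curves with respect to the contact-type hypersurface $\partial K_R$, which is where exactness of $\iota$ is used, and tunes the plateau so that actions of the $\Omega$-orbits are separated uniformly from those of the $K$-orbits.
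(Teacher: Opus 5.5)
Your architecture is the same as the paper's: a two-slope Hamiltonian, an action filtration in which the $K$-orbits form a quotient complex, a no-escape lemma at a contact-type level of $K$, and split pair-of-pants data with $\d\beta\le 0$. The gap is that your step profile does not fit inside $\Omega$.

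You put the slope-$b$ region on the outer collar $\set{1\le\rho\le R}$ of $\partial K$ and need $K_R\subset\Omega$ with $R$ large. Since $K$ and $\Omega$ are fixed, this holds only for $R\le R_0$; for instance, $\mathrm{vol}(K_R)=R^{n}\,\mathrm{vol}(K)$. With $R$ bounded, the action separation fails in general. Use the action $\int H-\int x^{*}\lambda$.
\begin{itemize}
\item The $K$-orbits have action roughly in $(-b,0]$.
\item An orbit at the top of the step with period $T$ has action about $b(R-1)-RT$. For $T=\sigma(b)$ this is $R(b-\sigma(b))-b$, which lands in $(-b,0]$ as soon as $b-\sigma(b)<b/R_0$.
\item The orbits near $\partial\Omega$ additionally require $b(R-1)>c$.
\end{itemize}
So your construction produces $\mathfrak{R}$ only for those $(b,c)$ that the available room allows, not for all $0<c<b$. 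A symptom is that the hypothesis $c<b$ plays no role in your argument.

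The missing idea is to build the step inside $K$, using the negative-time Liouville flow of $K$, which is defined on all of $K$. The paper takes $H\approx 2b\delta$ on $\set{\rho\le\delta}$, $H=b\rho+b\delta$ on $\set{\delta\le\rho\le 1-\delta}$, $H=b$ on the rest of $\Omega$, and $H=cr+b-c$ outside, then mollifies.
\begin{itemize}
\item The orbits near $\rho=\delta$ have action in $(b\delta,2b\delta]$. They compute $\mathit{SH}_{b}(K)$ because $\set{\rho\le\delta}$ has radial coordinate $\rho/\delta$ and spectrum $\delta\,\mathit{Spec}(K)$, so $H$ has slope $b\delta$ for it.
\item The orbits near $\rho=1$ have action in $(b-\sigma(b),b]$.
\item The orbits near $\partial\Omega$ have action in $(b-c,b]$.
\end{itemize}
The plateau height is therefore about $b$, not $bR$. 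Your ``$R$ large'' becomes ``$\delta$ small'', and the separation condition becomes $2b\delta<\min\set{b-\sigma(b),b-c}$. This is exactly where $b\notin\mathit{Spec}(K)$ and $c<b$ enter. The no-escape lemma is then applied at a level such as $\rho=1/2$ in the linear region.

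Separately, your product condition is misstated. For the product to descend to the quotient, you need $S\otimes\mathit{CF}+\mathit{CF}\otimes S$ to land in $S$, where $S$ is the shell subcomplex; that is, whenever at least one input is a shell orbit, the output must be one too. Whether two $K$-inputs give a $K$-output is not the relevant condition. The output action is only bounded below by the sum of the input actions. So the shell orbits of each input must have action above the sum of the two $K$-windows. The paper secures this with the extra spectral gap $[\tau,2\tau]$ in its window-spacing lemma for products, which amounts to taking $\delta$ small relative to all the weights.
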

Thus, given a Legendrian $\Lambda$ which has no chords of length up to $a$, every smooth Liouville subdomain $K\subset D_{\epsilon}T^{*}\Lambda \times D(a)$ receives a map:
\begin{equation*}
  \mathfrak{R}:\mathit{SH}_{c}(\Omega)\to \mathit{SH}_{b}(K)
\end{equation*}
compatible with structures, as in Theorem \ref{theorem:VR-main}. In particular, if $a$ is larger than $c_{\mathrm{ni}}(\Omega)$, then we can pick $c_{\mathrm{ni}}(\Omega)<c<b<a$ so that:
\begin{equation*}
  1=0\in \mathit{SH}_{b}(K).
\end{equation*}
This is because $1$ lies in $\mathscr{I}_{c}(\Omega)$, and $\mathfrak{R}(\mathscr{I}_{c}(\Omega))$ vanishes in $\mathit{SH}_{b}(K)$ since:
\begin{equation*}
  \mathfrak{R}:H^{n}(\Omega)\to H^{n}(K)
\end{equation*}
vanishes (as there are no non-trivial degree $n$ classes in $D_{\epsilon}T^{*}\Lambda\times D(a)$).

The next step uses the following filtered Künneth map result:
\begin{theorem}\label{theorem:filtered-kunneth}
  Let $Q\times D(a)$ be a Cartesian product of a Liouville domain with a disk $D(a)$. Then, for any number $0<b<a$ not in $\mathit{Spec}(Q)$, there exists a Liouville subdomain of $K\subset Q\times D(a)$ and an isomorphism:
  \begin{equation*}
    \mathfrak{K}:\mathit{SH}_{b}(K)\to \mathit{SH}_{b}(Q)
  \end{equation*}
  such that $\mathfrak{K}(\mathit{PSS}(1))=\mathit{PSS}(1)$, and such that $b\not\in \mathit{Spec}(K)$. In particular, if the $\mathit{PSS}(1)$ is zero in $\mathit{SH}_{b}(K)$, then it is also zero in $\mathit{SH}_{b}(Q)$.
\end{theorem}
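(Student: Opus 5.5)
The plan is to make the Cartesian product $Q\times D(a)$ into an honest smooth Liouville domain $K$ by smoothing corners, and to choose the smoothing so that the Reeb dynamics on $\bd K$ splits into two kinds of orbits. Orbits on the ``$Q$-side'' are governed by the Reeb flow of $\bd Q$ together with the stationary center of the disk. Orbits on the ``disk-side'' wind around $D(a)$ and have action at least roughly $a$.

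\textbf{Step 1: the smoothing.} I would define $K$ as a sublevel set $\set{F(r_{Q},\pi\abs{z}^{2})\le 1}$. Here $r_{Q}$ is the radial function of $Q$, extended to the completion, and $F$ is a convex function that equals $r_{Q}$ when $\pi\abs{z}^{2}$ is small. It should be close to the corner of the rectangle $[0,1]\times[0,a]$, with inward-bending level curves. The Liouville vector field $Z_{Q}+\tfrac12 z\bd_{z}$ is transverse to such a hypersurface. The Reeb vector field on $\bd K$ is a combination $\beta_{1}R_{Q}+\beta_{2}\bd_{\theta}$ whose coefficients depend on the slope of the level curve. The periodic orbits are then tori or products, and their actions can be read off from the Legendre transform of the level curve. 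Choosing $F$ close enough to the corner makes every orbit with nonzero winding in the disk factor have action $>b$, since $b<a$. The orbits with zero winding are exactly the orbits of $\bd Q$ sitting at $z=0$, with the same actions. This gives $b\notin\mathit{Spec}(K)$, using $b\notin\mathit{Spec}(Q)$.

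\textbf{Step 2: split Hamiltonians and the Künneth isomorphism.} I would compute $\mathit{SH}_{b}(K)$ with split Hamiltonians $H=H_{Q}+H_{D}$. Here $H_{D}$ is a radial function on $\C$ that is $C^{2}$-small near $0$ and then has slope $b$ in $\pi\abs{z}^{2}$. Since $b<a$ is less than the area, and the chosen shape is adapted, $H$ is cofinal for the slope-$b$ persistence level. More precisely, it is sandwiched between Hamiltonians of slope $b-\delta$ and $b+\delta$ relative to $K$, with no spectrum in between. A Künneth theorem for Floer cohomology (product almost complex structure, with field coefficients) gives $HF(H)\cong HF(H_{Q})\otimes HF(H_{D})$. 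For $H_{D}$ of slope $b$ on $\C$, the Floer cohomology is one-dimensional, generated by the minimum at $0$, which is the unit, because the $1$-periodic orbits of $H_{D}$ appear only with action beyond the window. Hence $\mathfrak{K}$ is an isomorphism onto $\mathit{SH}_{b}(Q)$. It sends $\mathit{PSS}(1)$ to $\mathit{PSS}(1)\otimes\mathit{PSS}(1)$, which is $\mathit{PSS}(1)$. The final sentence of the statement follows immediately.

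\textbf{Main obstacle.} The hard step is the comparison in Step 2 between the split Hamiltonian and the ``category-theoretic limit'' definition of $\mathit{SH}_{b}(K)$. One has to show that a Hamiltonian adapted to the product, which is not of the form $h(r_{K})$ near $\bd K$, computes the same persistence level. I would handle this by a monotone sandwich argument: find $K$-radial Hamiltonians $G_{-}\le H\le G_{+}$ of slopes in an interval free of $\mathit{Spec}(K)$, and check that the composite continuation maps are isomorphisms. A second point is the no-escape/maximum principle for the split Hamiltonian on the non-cylindrical product end. I would handle this with the standard product maximum principle applied separately to each factor.
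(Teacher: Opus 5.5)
There is a genuine gap in Step 2: your split Hamiltonian is not adapted to the domain you build in Step 1. In the paper's notation $r_1=r_Q$ and $r_2=\pi\abs{z}^2/a$; both are $1$-homogeneous for the diagonal Liouville field. Because $F=r_Q$ for small $\pi\abs{z}^2$ and $F$ hugs the corner of the rectangle, your $K$ is a smoothing of $\set{\max(r_1,r_2)\le 1}$. So a $K$-radial Hamiltonian of slope $b'$ grows like $b'\max(r_1,r_2)$. A split $H_Q+H_D$ of slope $b$ in each factor grows like $b(r_1+r_2)$, which is $2b\max(r_1,r_2)$ along the diagonal. In fact no split Hamiltonian $\alpha r_1+\beta r_2$ can lie between $K$-radial Hamiltonians of slopes $b\pm\delta$. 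The two edges of the rectangle force $\alpha\approx\beta\approx b$, and then the diagonal violates the upper bound. So the sandwich fails, and with it the identification of $HF(H_Q+H_D)$ with $\mathit{SH}_b(K)$. A symptom of this: for your $K$ the zero-winding orbits do not sit only at $z=0$; they fill $\Gamma\times\set{\pi\abs{z}^2\text{ small}}$ for each Reeb orbit $\Gamma$ of $\bd Q$. The domain adapted to a split Hamiltonian is instead a smoothing of the ``simplex'' $\set{r_1+r_2\le 1}$, with $r_1$ replaced near the skeleton by something like $\delta f(r_1/\delta)$ since $r_1$ is not smooth there. This is the paper's $K_\delta$. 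On it the disk factor rotates at rate $b<a$ everywhere, so orbits of action below $a$ genuinely sit at $z=0$.

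Even with the right $K$, your second obstacle is not settled by a factorwise maximum principle. The data $(H_Q+H_D,J_Q\oplus J_D)$ is not invariant under the diagonal Liouville flow at infinity. Continuation maps comparing it with $K$-admissible data pass through non-split data, where the factorwise argument does not apply; this is exactly the difficulty the paper flags before constructing $\mathfrak{K}$. The paper avoids any comparison by using data that is simultaneously $K_\delta$-admissible and split near $W\times\set{0}$. The almost complex structure is $J=Z^*_{-\ell(r_2)}J_1\oplus J_2$, which is Liouville-equivariant outside $\set{r_1\le 1}\cap\set{r_2\le 1}$ and genuinely split on $\set{r_2\le\frac12}$. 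The Hamiltonian equals $b\,r_{K_\delta}$ at infinity. A compactness argument as $\delta\to 0$ and $P_t\to 0$ then confines all orbits, Floer cylinders and PSS solutions to $W\times\set{0}$. The normal operators $\bar\partial+2\pi b/a$ and $\bar\partial+\beta(-s)2\pi b/a$ are invertible, the latter genuinely using $b/a\in(0,1)$. This turns the Floer complex on $W\times\C$ tautologically into the one on $W$, preserving $\mathit{PSS}(1)$. A smaller point: $H_D$ should have slope $b/a$ in $\pi\abs{z}^2$. It then has no non-constant $1$-periodic orbits at all, rather than orbits lying ``beyond the window.''
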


Combining all these steps, we conclude that if $\Lambda$ admits no Reeb chords of length up to some number $a>c_{\mathrm{ni}}(\Omega)$, then the unit vanishes in $\mathit{SH}(T^{*}\Lambda)$. However, due to the comparison between string topology and symplectic cohomology, the unit cannot vanish in $\mathit{SH}(T^{*}\Lambda)$. This provides the desired contradiction, and proves Theorem \ref{theorem:main}, (modulo Theorems \ref{theorem:geometric-construction}, \ref{theorem:VR-main}, and \ref{theorem:filtered-kunneth}). Moreover, the argument actually gives a stronger statement; we first introduce a definition:
\begin{definition}\label{definition:uniformly-fast}
  Let $(Y,\alpha)$ be a compact cooriented contact manifold with contact form $\alpha$. If $\Lambda_{s}\subset Y$, $s\in [0,\infty)$, is a Legendrian isotopy, satisfying:
  \begin{itemize}
  \item for each smooth path $y(s)\in \Lambda_{s}$ it holds that $y^{*}\alpha\ge \d s$,
  \end{itemize}
  then we say $\Lambda_{s}$ is \emph{uniformly fast relative $\alpha$}. If $\Lambda_{s}$ is uniformly fast relative some unspecified contact form, then we simply say it is \emph{uniformly fast}.
\end{definition}
\begin{theorem}\label{theorem:strong-main} 
  Let $\Omega$ be a Liouville domain inducing a radial coordinate $r$ on $W$, and let $\alpha=\lambda/r$ be the resulting contact form on the ideal boundary $Y$ of the completion $W$. Let $\Lambda_{s}\subset Y$, $s\in [0,\infty)$, be a Legendrian isotopy which is uniformly fast relative $\alpha$, then there exists $0<a\le c_{\mathrm{ni}}(\Omega)$ such that $\Lambda_{a}\cap \Lambda_{0}\ne \emptyset$, provided $c_{\mathrm{ni}}(\Omega)<\infty$.
\end{theorem}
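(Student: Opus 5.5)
We argue by contradiction. Suppose $\Lambda_{s}\cap\Lambda_{0}=\emptyset$ for every $0<s\le c_{\mathrm{ni}}(\Omega)$. Since the set of $s$ with $\Lambda_s\cap\Lambda_0=\emptyset$ is open (both are compact) and the condition fails only on a closed set, we can pick a number $a>c_{\mathrm{ni}}(\Omega)$ with $\Lambda_{s}\cap\Lambda_{0}=\emptyset$ for all $s\in(0,a]$. The goal is to produce the data required by Theorem \ref{theorem:geometric-construction}: a Legendrian isotopy $\Lambda'_{t}$, $t\in[0,1]$, with pairwise disjoint members and $y^{*}\alpha>a'\,\d t$ along paths, for some $a'$ still larger than $c_{\mathrm{ni}}(\Omega)$.

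The first thing to try is the reparametrization $\Lambda'_{t}=\Lambda_{at}$. Uniform fastness gives $y^{*}\alpha\ge a\,\d t$, and we can shrink $a$ slightly to get strict inequality with $a'<a$, $a'>c_{\mathrm{ni}}$. Pairwise disjointness is the issue. We only know $\Lambda_{s}\cap\Lambda_{0}=\emptyset$, not $\Lambda_{s}\cap\Lambda_{s'}=\emptyset$ for all $s\neq s'$. This is the main obstacle.

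\textbf{Handling disjointness.} Short-range disjointness is automatic. Since $\alpha$ evaluates positively ($\ge 1$) on the velocity of the isotopy, there is $\delta>0$ with $\Lambda_{s}\cap\Lambda_{s'}=\emptyset$ for $0<|s-s'|<\delta$, by a local positivity argument in a Weinstein neighbourhood. For long-range disjointness I would inspect the proof of Theorem \ref{theorem:geometric-construction}. The embedding is built from the trace $\bigcup_t \{r\}\times\Lambda_t$ in the symplectization, and global disjointness should only be used to ensure this trace is embedded. The plan is to replace the trace by a graph over a lift where the $r$-coordinate (or an auxiliary coordinate) increases along $t$. Concretely, the map $(t,y)\mapsto (e^{\kappa t}, y)$ into $SY$ separates different times radially, and an exact Lagrangian-type cylinder $\Lambda\times[0,1]\to SY$ remains embedded without any disjointness assumption. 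The condition $\Lambda_{at}\cap\Lambda_0=\emptyset$ would then be needed only to close up the construction near the endpoints, or equivalently to embed the disk factor $D(a)$ where the two ends of the trace are glued.

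If adapting the construction proves too delicate, there is a fallback. Use the short-range disjointness to write $[0,a]$ as a union of intervals of length $<\delta$. Then run a persistence argument: compose Viterbo restrictions for the pieces, arranged so that only the first piece meets $\Lambda_0$.

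\textbf{Concluding the contradiction.} With the exact embedding $D_\epsilon T^*\Lambda\times D(a')\to\Omega$ in hand, the argument follows the sketch in the text. Choose $c_{\mathrm{ni}}<c<b<a'$ avoiding the relevant spectra, and take $K$ from Theorem \ref{theorem:filtered-kunneth} with $Q=D_\epsilon T^*\Lambda$. Theorem \ref{theorem:VR-main} gives $\mathfrak R:\mathit{SH}_c(\Omega)\to \mathit{SH}_b(K)$, compatible with $\ast$, $\Delta$ and $\mathit{PSS}$. Because $H^n(K)=0$, $\mathfrak R$ kills $\mathscr I_c$. This follows by induction on $k$ in the definition of $\mathscr I_{k,c}$, using that $\mathfrak R$ intertwines the persistence structure and continuation maps. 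Since $\mathit{PSS}(1)\in\mathscr I_c$, we get $\mathit{PSS}(1)=0$ in $\mathit{SH}_b(K)$. Theorem \ref{theorem:filtered-kunneth} then transfers this vanishing to $\mathit{SH}_b(D_\epsilon T^*\Lambda)$, and hence to $\mathit{SH}(T^*\Lambda)$. This contradicts the nonvanishing of the unit in $\mathit{SH}(T^*\Lambda)\cong H_*(\mathcal L\Lambda)$.
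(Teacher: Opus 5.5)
Your final paragraph is the paper's argument. The paper's proof of Theorem \ref{theorem:strong-main} is only the sentence that it follows from Theorems \ref{theorem:geometric-construction}, \ref{theorem:VR-main} and \ref{theorem:filtered-kunneth}. Your chain (exact embedding; $\mathfrak{R}$ kills $\mathscr{I}_{c}$ by induction on $k$ because $H^{n}(K)=0$; K\"unneth; non-vanishing of the unit in $\mathit{SH}(T^{*}\Lambda)$) unpacks that sentence correctly.

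\textbf{The gap.} It is the step you flag yourself, and it is never closed. Theorem \ref{theorem:geometric-construction}, applied to $t\mapsto\Lambda_{at}$, needs $\Lambda_{s}\cap\Lambda_{s'}=\emptyset$ for \emph{all} $0\le s'<s\le a$. You never derive this from $\Lambda_{s}\cap\Lambda_{0}=\emptyset$. For the Reeb flow, or any isotopy generated by an autonomous flow, it is immediate, since $\Lambda_{s}\cap\Lambda_{s'}=R_{s'}(\Lambda_{s-s'}\cap\Lambda_{0})$; that case is all Theorem \ref{theorem:main} needs. For a general uniformly fast isotopy, the paper's one-line proof does not address this either, and neither of your two fixes supplies it.

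\textbf{Why the main fix fails.} In Lemma \ref{lemma:coiso-construct} the disk $D(a)$ does not sit on the trace. It sits in the Liouville cone $\phi(x,\sigma,t)=Z_{\sigma}\tilde\psi(x,t)$, $\sigma\le 0$, and its area $a'$ comes from using the whole cone down to $\sigma=-\infty$.
\begin{itemize}
\item If $y\in\Lambda_{t}\cap\Lambda_{t'}$ with $t\neq t'$, the cones over the two preimages of $y$ share the part of the Liouville ray through $y$ below the smaller of the two radii. This holds however you lift the trace radially.
\item So radial separation can at best make the exact Lagrangian cylinder embedded, not the cone. Even for the cylinder you need a lift $r=\rho(t)/h(x,t)$: the map $(x,t)\mapsto(e^{\kappa t},\psi(x,t))$ pulls $\lambda$ back to $e^{\kappa t}h\,\d t$, which is not closed unless $h$ is independent of $x$. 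Staying inside $\Omega$ also forces $\rho\le\min h$, which already costs area.
\item Making the cone injective requires truncating it differently over intersecting times. That loses area, with no lower bound in terms of $a-c_{\mathrm{ni}}(\Omega)$.
\item Nowhere does the construction glue the two ends of the trace. So disjointness from $\Lambda_{0}$ plays no special role at the endpoints.
\end{itemize}

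\textbf{Why the fallback fails.} Pieces of length $<\delta$ only give embeddings of $D_{\epsilon}T^{*}\Lambda\times D(\delta')$ with $\delta'<\delta$. These domains are not nested, so there are no restriction maps between them to compose. Nothing contradicts $c_{\mathrm{ni}}(\Omega)$ unless $\delta'>c_{\mathrm{ni}}(\Omega)$.

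\textbf{What your argument does prove.} Combined with short-range disjointness and compactness, it gives a weaker conclusion: $\Lambda_{s}\cap\Lambda_{s'}\neq\emptyset$ for some $0\le s'<s\le c_{\mathrm{ni}}(\Omega)$. To see this, apply the chain to $t\mapsto\Lambda_{at}$ for $a\downarrow c_{\mathrm{ni}}(\Omega)$ and pass to a limit of intersecting pairs, which stay at distance $\ge\delta$ from the diagonal. Getting $s'=0$ needs an extra ingredient, such as time-translation invariance of the isotopy.
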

\begin{proof}
  This follows from the combination of Theorems \ref{theorem:geometric-construction}, \ref{theorem:VR-main}, and \ref{theorem:filtered-kunneth}.
\end{proof}
Because the Reeb flow $\Lambda_{s}=R_{s}(\Lambda_{0})$ is uniformly fast relative $\alpha$, Theorem \ref{theorem:strong-main} implies Theorem \ref{theorem:main}. Using Theorem \ref{theorem:strong-main} we prove the following in \S\ref{sec:cotangent-submersion}.
\begin{theorem}\label{theorem:cotangent-submersion}
  Suppose there is a submersion $E\to M$, where $E$ is a compact manifold, and $T^{*}E$ is $n$-invertible, e.g., $E$ could be a product of spheres. Then any uniformly fast Legendrian isotopy $\Lambda_{s}\subset ST^{*}M$, $s\in [0,\infty)$ must satisfy $\Lambda_{a}\cap \Lambda_{0}\ne \emptyset$ for some $a\in (0,\infty)$. In particular, the Arnol'd chord conjecture holds for $ST^{*}M$.
\end{theorem}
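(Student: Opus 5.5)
Given a submersion $\pi:E\to M$ with $E$ compact, the plan is to lift the Legendrian isotopy from $ST^{*}M$ to $ST^{*}E$ and apply Theorem \ref{theorem:strong-main} there. The pullback $\pi^{*}:T^{*}M\to T^{*}E$ is fiberwise injective because $d\pi$ is surjective. It therefore induces a map on the complement of the zero section which is compatible with the Liouville forms: $\pi^{*}$ pulls back $\lambda_{E}$ to $\lambda_{M}$. Given $\Lambda\subset ST^{*}M$, viewed as a conic Legendrian (a Lagrangian cone) in $T^{*}M\setminus 0_{M}$, set
\begin{equation*}
\tilde\Lambda=\set{(e,\pi^{*}p):(\pi(e),p)\in \Lambda}.
\end{equation*}
This is the conormal-type lift. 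It is a conic submanifold of $T^{*}E\setminus 0_E$ of dimension $(\dim E-\dim M)+(\dim\Lambda+1)=\dim E$, and it is isotropic since $\lambda_{E}$ restricts to $\pi^{*}$ of $\lambda_{M}|$, which vanishes. Hence $\tilde\Lambda$ projects to a compact Legendrian in $ST^{*}E$, and the isotopy $\Lambda_{s}$ lifts to a Legendrian isotopy $\tilde\Lambda_{s}$.

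Next I fix contact forms so that the lift remains uniformly fast. Let $\alpha$ be a contact form on $ST^{*}M$ relative to which $\Lambda_{s}$ is uniformly fast. Realize $\alpha$ as the restriction of $\lambda_{M}$ to the hypersurface $\Sigma_{M}=\set{r_{M}=1}$, where $r_M$ is the $1$-homogeneous function determined by $\alpha$, and represent $\Lambda_{s}$ inside $\Sigma_{M}$. Choose any fiberwise norm on $T^{*}E$, and define a $1$-homogeneous function $r_{E}$ that agrees with $r_{M}\circ(\pi^{*})^{-1}$ on the image of $\pi^{*}$; for instance, use $r_E$ equal to the norm of the horizontal component with respect to a splitting of $T^{*}E$ into $\pi^{*}T^{*}M$ and a complement, plus a term that vanishes on the image. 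Since $r_{E}$ extends $r_{M}$, the paths $\tilde y(s)$ in $\tilde\Lambda_{s}\subset\set{r_{E}=1}$ have the form $(e(s),\pi^{*}p(s))$, and $\lambda_{E}(\dot{\tilde y})=\lambda_{M}(\dot y)$ with $y=(\pi(e),p)$. The uniform fastness condition $y^{*}\alpha\ge \d s$ therefore transfers directly. We must also ensure that $\set{r_{E}\le1}$ is a Liouville domain, i.e.\ starshaped and smooth. This holds if $r_{E}$ is smooth and positive away from the zero section, which is the main technical point: one has to build a convex, or merely positive smooth, homogeneous extension of $r_M$ off the subbundle $\pi^{*}T^{*}M$. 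A choice such as $r_{E}(\xi)=\sqrt{r_{M}(\xi_h)^{2}+\abs{\xi_v}^{2}}$ should work away from the zero section, smoothing near the locus $\xi_h=0$ if needed.

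Finally, Theorem \ref{theorem:strong-main} applies to $\Omega_{E}=\set{r_{E}\le 1}$, whose completion is $T^{*}E$. By hypothesis $T^{*}E$ is $n$-invertible, so $c_{\mathrm{ni}}(\Omega_{E})<\infty$; for products of spheres this follows from Theorem \ref{theorem:spheres}. Hence $\tilde\Lambda_{a}\cap\tilde\Lambda_{0}\ne\emptyset$ for some $0<a\le c_{\mathrm{ni}}(\Omega_{E})$. Projecting via $(e,\pi^{*}p)\mapsto(\pi(e),p)$ shows that $\Lambda_{a}\cap\Lambda_{0}\ne\emptyset$. The chord conjecture follows by taking $\Lambda_{s}=R_{s}(\Lambda)$ for an arbitrary contact form on $ST^{*}M$, which is uniformly fast. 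Such forms realize every cooriented contact form $\alpha$ via the homogeneous function $r_{M}$ described above.

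The step I expect to be the main obstacle is the construction of $r_{E}$ in the second paragraph. Transferring the speed estimate and the intersection statement is formal.
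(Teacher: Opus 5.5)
Your proposal is correct and is essentially the paper's proof. Your $\tilde\Lambda$ is the preimage of $\Lambda$ under the reduction map from the coisotropic $N=\set{p\text{ vanishes on }\ker \d\pi}$ (the image of $\pi^{*}$) to $T^{*}M$. The paper likewise transfers uniform fastness via $\lambda_{E}|_{N}=\pi^{*}\lambda_{M}$, applies Theorem \ref{theorem:strong-main} on $T^{*}E$, and projects the intersection back down.

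The one difference is that the step you flag as the main obstacle is unnecessary. The paper takes an arbitrary radial function on $T^{*}E$ and gets a uniform positive lower bound on the speed from compactness of the lift of $\set{r_{M}=1}$, which suffices up to a linear reparametrization of $s$. Your extension of $r_{M}$ off $N$ is exactly the construction sketched in Remark \ref{remark:ACC-fibration}.
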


Turning now to Theorem \ref{theorem:spheres}, the method of proof is to use the framework of \cite{brocic-cant-arXiv-2025}, which allows one to prove the desired relations in the BV algebra $H_{*}(\Lambda M)$ by direct manipulation of smooth maps taking values in $M$. The proof is entirely topological and all Floer theory is isolated in our earlier work \cite{brocic-cant-arXiv-2025}.

Finally we discuss the proof of Theorem \ref{theorem:rationality} concerning the rationality constants of aspherical Lagrangians $L$ inside of $T^{*}T^{n}$. The key idea is again to use a Viterbo restriction map; however, since the Lagrangian $L$ is non-exact, Weinstein neighborhoods of $L$ are not embedded into $T^{*}T^{n}$ in an exact way, and so one cannot appeal directly to Theorem \ref{theorem:VR-main}. One instead appeals to the so-called \emph{truncated Viterbo restriction map} introduced by \cite{zhou-JSG-2022}.

\begin{theorem}\label{theorem:VR-trunc}
  Let $\iota:K\subset \Omega$ be a symplectic embedding of a Liouville domain such that the periods of the closed $1$-form $\iota^{*}\lambda_{\Omega}-\lambda_{K}$ lie in the subgroup $\rho\Z\subset \R$ for some $\rho>0$. Then, for any numbers $0<c<b$ such that $b+c<\rho$ there is a restriction map $\mathfrak{R}:\mathit{SH}_{c}(\Omega)\to \mathit{SH}_{b}(K)$ such that:
  \begin{itemize}
  \item $\mathfrak{R}\circ \Delta=\Delta\circ \mathfrak{R}$,
  \item $\mathfrak{R}(\zeta_{1}\ast \zeta_{2})=\mathfrak{R}(\zeta_{1})\ast \mathfrak{R}(\zeta_{2})$,
  \item $\mathfrak{R}\circ \mathit{PSS}=\mathit{PSS}\circ \mathfrak{R}$,
  \end{itemize}
  as in Theorem \ref{theorem:VR-main}, whenever these expressions make sense; i.e., in the second item we assume that $\zeta_{i}\in \mathit{SH}_{c_{i}}(\Omega)$ and $\mathfrak{R}(\zeta_{i})\in \mathit{SH}_{b_{i}}(K)$ for $i=1,2$ with $0<c_{i}<b_{i}$ and $c_{1}+c_{2}+b_{1}+b_{2}<\rho$.

  Finally, if $\varpi$ denotes some collection of free homotopy classes in $\Omega$, then $\mathfrak{R}$ satisfies:
  \begin{itemize}
  \item $\mathfrak{R}(\mathit{SH}_{c}(\Omega,\varpi))\subset \mathit{SH}_{b}(K,\iota^{*}\varpi\cap \kappa)$,
  \end{itemize}
  where $\kappa$ is the collection of free homotopy classes in the kernel of $\iota^{*}\lambda_{\Omega}-\lambda_{K}$.
\end{theorem}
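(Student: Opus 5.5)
The plan is to follow Zhou's construction of the truncated Viterbo map, adapted to the conventions for $\mathit{SH}_{c}$ used here. First I will make the embedding exact up to a correction. Extend $K$ inside $\Omega$ to a slightly larger domain $\widehat K$ containing a piece of the symplectization collar of $\bd K$. Then choose a Hamiltonian $H$ on $W$ whose profile is: approximately $0$ on $K$, slope $b$ in the radial coordinate of $K$ on the collar, constant on the rest of $\Omega$, and slope $c$ at infinity in $W$. These are the usual "step-shaped" Hamiltonians. The $1$-periodic orbits fall into three groups. Group (i) consists of orbits near $K$ and on the $K$-collar, i.e.\ the generators of $\mathit{SH}_{b}(K)$. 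Group (ii) consists of constants in $\Omega\setminus \widehat K$. Group (iii) consists of orbits near $\bd\Omega$. The action is computed with $\lambda_{\Omega}$. On $K$ we write $\iota^{*}\lambda_{\Omega}=\lambda_{K}+\theta$, where $\theta$ is closed with periods in $\rho\Z$. So the $\lambda_\Omega$-action of an orbit in $K$ is its $\lambda_K$-action plus a period of $\theta$, which lies in $\rho\Z$.

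Next I will define $\mathfrak{R}$ as the composition of two maps. The first is the continuation map from the Floer complex of the slope-$c$ Hamiltonian to that of $H$. The second is the projection onto the subcomplex or quotient spanned by group (i) orbits whose action lies in a window. The key action estimate is as follows. Orbits of group (i) whose $\theta$-period is $0$ have actions in an interval of length roughly $b$ near the top. The orbits of groups (ii) and (iii) have actions separated from this interval by about $b$ or $c$. Orbits of group (i) with nonzero $\theta$-period are shifted by at least $\rho$, up or down. Since $b+c<\rho$, these shifted copies cannot fall into the window that isolates the "good" group (i) orbits. Hence the action window $(-b-\delta,\,c+\delta)$, suitably normalized, cuts out a quotient complex of a subcomplex, and the differential respects it. Its homology is identified with $\mathit{SH}_{b}(K,\kappa)\subset \mathit{SH}_{b}(K)$ by the standard neck-stretching/maximum-principle argument: Floer cylinders between group (i) orbits stay in $\widehat K$, where the action filtration agrees with $K$'s filtration up to a constant. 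Truncating at the action window plays the role that exactness plays in Theorem \ref{theorem:VR-main}.

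With the map in hand, I will check the structural properties. For compatibility with $\Delta$, the BV operator is defined by $S^{1}$-families of continuation-type data, and these are action-non-increasing. For compatibility with $\mathit{PSS}$, capping planes have small energy and land in constants. For the product, the pair-of-pants operation with Hamiltonians of slopes $c_{1},c_{2}$ and $b_{1},b_{2}$ shifts actions by at most the sum of the windows, so $c_{1}+c_{2}+b_{1}+b_{2}<\rho$ ensures that no homotopy class with nonzero $\theta$-period can enter. Each identity then follows from a homotopy of data through operations which respect the truncation. The statement about free homotopy classes holds because the continuation and projection maps preserve the class of the orbit in $\Omega$. Moreover, only orbits in $\kappa$ survive, since an orbit with nonzero $\theta$-period has action shifted by at least $\rho$ and lands outside the window.

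The main obstacle is the action bookkeeping that shows the window defines an honest subquotient complex invariant under all operations. This is especially delicate for the product, because windows add. The approach I will try first is to fix all Hamiltonian profiles with explicit slopes and small perturbation parameters. I will then verify, via the energy identity $E=\mathcal A(x_{-})-\mathcal A(x_{+})+(\text{curvature terms})\ge 0$, that the truncation levels are preserved, treating the curvature terms as in Zhou's argument.
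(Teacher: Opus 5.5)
You have the paper's architecture: a step-shaped Hamiltonian, continuation from a slope-$c$ Hamiltonian followed by projection to an action window, exclusion of orbits with nonzero $\theta$-period via the shift by $\rho\Z$, and no-escape to identify the window with $\mathit{SH}_{b}(K)$. However, the action bookkeeping you flag as the crux fails for the Hamiltonian you actually describe.

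\textbf{The separation fails.} With $H\approx 0$ on $K$ and slope $b$ only on a thin collar $\rho\in[1,R]$ of a slightly enlarged $\hat K$, the plateau value $b(R-1)$ is close to $0$. The good orbits have actions in roughly $(-b,0]$. The constants in $\Omega\setminus\hat K$ have action $b(R-1)$, only that far above the constants in $K$. The orbits near $\partial\Omega$ have actions $\approx b(R-1)-c'$ with $c'\in\mathit{Spec}(\Omega)\cap(0,c)$, which land inside $(-b,0]$. So they are not separated ``by about $b$ or $c$''.

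\textbf{A family of orbits is missing.} These are the orbits where the slope-$b$ ramp bends back to slope $0$ at the top of the collar. They are not generators of $\mathit{SH}_{b}(K)$ and must be cut away by the window. Their actions $b(R-1)-Rs$, $s\in\mathit{Spec}(K)\cap(0,b)$, overlap the good range unless $R>b/(b-\sigma(b))$, where $\sigma(b)$ is the largest element of $\mathit{Spec}(K)$ below $b$. Their copies shifted by $-\rho$ must also be kept out of the window.

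\textbf{The paper's fix.} The paper runs the ramp over almost the whole radial range of $K$, which amounts to shrinking $K$ by its own Liouville flow. Concretely, $H=2b\delta$ on $\set{\rho\le\delta}$, slope $b$ for $\rho\in[\delta,1-\delta]$, constant $b$ on the rest of $\Omega$, and slope $c$ at infinity. Then:
\begin{itemize}
\item the good orbits (T1) have actions in $(b\delta,2b\delta]+\rho\Z$;
\item the top-bend orbits (T2) have actions in $(b-\sigma(b),b]+\rho\Z$;
\item the plateau and $\partial\Omega$ orbits (T3) have actions in $(b-c,b]$.
\end{itemize}
The condition $7b\delta<\min\set{b-c,b-\sigma(b)}$ separates unshifted T1 from everything else.

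\textbf{Where $b+c<\rho$ enters.} It is not the ``window of width $b+c$ versus shift $\rho$'' heuristic. Downward-shifted copies with arbitrarily negative action exist, so you cannot take a quotient of the whole complex; you must work inside the subcomplex $\mathit{CF}_{A>-c}$. The lower end $-c$ is forced: orbits of a slope-$c$ Hamiltonian on $\Omega$ have action $>-c$, so the monotone continuation image of $\mathit{SH}_{c}(\Omega)$ lies in $\mathit{CF}_{A>-c}$. The paper's window is $[-c,7b\delta]$. The hypothesis $\rho>b+c$ is exactly what puts the shifted T2 orbits (actions $\le b-\rho$) and shifted T1 orbits (actions $\le 2b\delta-\rho$) below $-c$. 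The window then contains only unshifted T1 orbits and can be replaced by $[0,3b\delta]$. So the hypothesis is driven by the top-bend family you omitted, not by the width of the good interval.

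\textbf{Your window does not work.} $(-b-\delta,c+\delta)$ has width about $b+c$. Any window reaching a distance comparable to $b$ past the good orbits, on the side where T2 and T3 sit, contains those orbits.

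\textbf{After the fix.} With the window $[-c,7b\delta]$, every orbit in it has zero $\theta$-period. This is also what the no-escape argument needs in the non-exact setting, and it gives the free-homotopy-class statement. The structural compatibilities can then be handled by filtered operations roughly as you outline. The paper does this with split, zero-curvature connections and, for the product, the spectral gap $[\tau,2\tau]$.
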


As in \cite{zhou-JSG-2022}, given a Lagrangian $L\subset \Omega$ we apply Theorem \ref{theorem:VR-trunc} to a small Weinstein neighborhood of $L$. The next step in the argument is to apply the truncated Viterbo restriction map $\mathfrak{R}$ to classes $A_{i}^{\pm}$ as in Remark \ref{remark:on-the-constant-C} to conclude classes in $SH(T^{*}L,\kappa)$ (assuming the rationality constant of $L$ is large enough). Using the fact that $L$ is aspherical enables the use of two algebraic topology results \cite[Lemma 5.1 and 5.2]{latschev-EMS-2015}, which we use to show $\pi_{1}(L,\mathit{pt})$ admit a subgroup $\Z^{n}$ whose associated covering space is open. This leads to a contradiction, since the covering space is also an aspherical $n$-manifold, and therefore must be homotopy equivalent to $K(\Z^{n},1)\simeq T^{n}$ which has non-zero $n$th homology group (and so cannot be open). The details of this argument are given in \S\ref{sec:proof-rationality}.

\begin{remark}
  As in Theorem \ref{theorem:VR-trunc}, it is sometimes convenient to appeal to the direct sum decomposition of $\mathit{SH}_{c}(\Omega)$ into the summands generated by orbits in a given collection of free homotopy classes $\varpi$. It will be obvious from the construction that the Viterbo restriction map associated to an exact embedding $\iota:K\to \Omega$ respects this decomposition: $\mathfrak{R}(\mathit{SH}_{c}(\Omega,\varpi))\subset \mathit{SH}_{b}(K,\iota^{*}\varpi)$. We do not need to use this fact in the proof of Theorem \ref{theorem:strong-main}, but it is useful in certain cases. For instance, if there is an invertible element in $\mathit{SH}(W,\varpi)$, then every Legendrian in $\partial \Omega$ such that $\iota^{*}\varpi=\emptyset$ must have a Reeb chord.
\end{remark}

\subsection{Survey of related results}
\label{sec:survey-and-further}

The chord conjecture due to Arnol'd appears in \cite[pp. \@~15]{arnold-RMS-1986}, and it is formulated for $S^3$ with the standard contact structure. Theorem \ref{theorem:cotangent-submersion} solves the Arnol'd chord conjecture for $\Lambda\subset ST^{*}M$ when it applies.

\subsubsection{Higher dilations}
\label{sec:higher-dilations}

If one allows constraints on the topology of $\Lambda$ relative the topology of $M$, then one can deduce quite a bit by combining the following two propositions:
\begin{proposition}[{\cite[Corollary 1.1.6]{zhao-thesis-2016}} using \cite{goodwillie-topology-1985}]\label{claim:zhao}
  A compact orientable manifold $M$ is rationally essential\footnote{Every connected manifold has a classifying map $M\to BG$ where $G=\pi_{1}(M,\mathit{pt})$; if the induced map in rational homology kills $[M]$ (we assume $M$ is orientable so $[M]$ is well-defined), then we say $M$ is rationally inessential; otherwise $M$ is said to be rationally essential. Rationally inessential manifolds contain all simply connected manifolds, and form an ideal with respect to Cartesian product (in the space of orientable manifolds).} if and only if the unit element is non-zero in the $S^{1}$-equivariant symplectic homology $\mathit{SH}^{S^{1}}(M,\Q,\mathfrak{L})$, defined with rational coefficients and twisted by the local coefficient system $\mathfrak{L}$ obtained by transgressing the second Stiefel Whitney class of $TM$.\hfill$\square$
\end{proposition}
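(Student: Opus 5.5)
The plan is to move the statement entirely into string topology, and then compare $M$ with its classifying space $BG$, $G=\pi_{1}(M,\mathit{pt})$.

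\emph{Reduction to loop spaces.} First I would invoke the twisted $S^{1}$-equivariant Viterbo isomorphism. With rational coefficients and the local system $\mathfrak{L}$ obtained by transgressing $w_{2}(TM)$, it gives
\begin{equation*}
  \mathit{SH}^{S^{1}}(M,\Q,\mathfrak{L})\cong H^{S^{1}}_{*}(\Lambda M;\Q).
\end{equation*}
The twist by $\mathfrak{L}$ is exactly what is needed for the non-equivariant isomorphism to hold without spin assumptions. It should also respect the decomposition by free homotopy classes. Under this isomorphism the unit should go to the image of $[M]$ under
\begin{equation*}
  H_{*}(M;\Q)\to H_{*}(M;\Q)\otimes H_{*}(BS^{1};\Q)\cong H^{S^{1}}_{*}(M;\Q)\to H^{S^{1}}_{*}(\Lambda M;\Q).
\end{equation*}
Here $M\subset \Lambda M$ is the space of constant loops, on which $S^{1}$ acts trivially. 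Since the unit lives in the contractible component, the statement becomes: $[M]\otimes 1$ is nonzero in $H^{S^{1}}_{*}(\Lambda_{0}M;\Q)$ if and only if $M$ is rationally essential.

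\emph{Essential implies nonzero.} Let $f:M\to BG$ be the classifying map. It induces an $S^{1}$-equivariant map $\Lambda_{0}M\to \Lambda_{0}BG$. Since $BG$ is a $K(G,1)$, evaluation at the basepoint gives $\Lambda_{0}BG\simeq BG$. Equivariantly, the $S^{1}$-action on this component is homotopic to the trivial one, because the component of the identity in the center is trivial. This is the part of Goodwillie's computation $\Lambda BG\simeq \bigsqcup_{[g]}BZ(g)$ that we need. Hence
\begin{equation*}
  H^{S^{1}}_{*}(\Lambda_{0}BG;\Q)\cong H_{*}(BG;\Q)\otimes H_{*}(BS^{1};\Q),
\end{equation*}
and by naturality the image of $[M]\otimes 1$ is $f_{*}[M]\otimes 1$. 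If $f_{*}[M]\ne 0$, then $[M]\otimes 1\neq 0$ already upstairs, so the unit is nonzero.

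\emph{Inessential implies zero (the main obstacle).} Now suppose $f_{*}[M]=0$ in $H_{n}(BG;\Q)$. We must show $[M]\otimes 1$ dies in $H^{S^{1}}_{*}(\Lambda_{0}M;\Q)$. My first attempt would use Goodwillie's theorem in its full form, which identifies $H^{S^{1}}_{*}(\Lambda X;\Q)$ with the cyclic homology of a model of the chains on the based loop space $\Omega X$. One then studies the map to the cyclic homology of $\Q[G]$ induced by $\Omega M\to G$. The aim is to show that the kernel of $H_{*}(M)\to H_{*}(BG)$, restricted to constant loops, becomes zero in negative-cyclic/cyclic homology. The mechanism I expect is the Connes operator: a chain $N$ in $BG$-rational terms with $\partial N=f_{*}M$ should lift so that $[M]\otimes 1$ equals $\Delta$ of a class (equivalently, lies in the image of the connecting map in the Gysin sequence). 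This would make it zero in the equivariant theory modulo lower powers of the $BS^{1}$-generator.

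Concretely, I would argue with rational models. The rational homotopy fiber of $M\to BG$ is the universal cover, so it contributes no essential top class. The Gysin sequence
\begin{equation*}
  H_{*}(\Lambda M)\to H^{S^{1}}_{*}(\Lambda M)\to H^{S^{1}}_{*-2}(\Lambda M)\to H_{*-1}(\Lambda M)
\end{equation*}
should then exhibit $[M]\otimes 1$ as hit by a Connes-type boundary. Making this lifting rigorous in the non-simply-connected setting is exactly the delicate part of Zhao's thesis, and it is where I expect most of the work to lie.
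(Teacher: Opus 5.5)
The paper does not prove this proposition. It imports it from Zhao's thesis, where the essential input is Goodwillie's theorem.

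Your direction ``rationally essential $\Rightarrow$ unit nonzero'' is correct. The unit is the image of the constant-loop class $[M]\otimes 1$, and $\Lambda f:\Lambda_{0}M\to\Lambda_{0}BG$ is $S^{1}$-equivariant. The inclusion of constant loops $BG\hookrightarrow\Lambda_{0}BG$ is equivariant and a non-equivariant homotopy equivalence, since the evaluation fibration has fibre $\Omega_{0}BG\simeq\mathit{pt}$. It is therefore an isomorphism on Borel homology, so the unit maps to $f_{*}[M]\otimes 1\neq 0$. This, rather than the remark about the centre, is the right justification for your claim about the $S^{1}$-action on $\Lambda_{0}BG$.

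The converse has a genuine gap. You do not prove it, and the mechanism you sketch is not one that works.

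\begin{itemize}
\item There is no map $BG\to\Lambda M$ along which a null-homology of $f_{*}[M]$ could be ``lifted''.
\item Asking for $[M]=\Delta(\xi)$ asks for an honest dilation. By the Gysin sequence, the unit vanishes iff $[M]$ lies in the image of the transfer $H^{S^{1}}_{n-1}(\Lambda M)\to H_{n}(\Lambda M)$. Since $\Delta=\mathrm{tr}\circ\pi_{*}$, this image contains $\mathrm{im}\,\Delta$, but it is in general larger; this is exactly why higher dilations appear. So your ``equivalently'' is not an equivalence.
\end{itemize}

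The missing idea is the actual content of Goodwillie's paper. Over $\Q$, derivations act by zero on periodic cyclic homology. Consequently, a map of connective DG (or simplicial) algebras that is an isomorphism on $H_{0}$ (more generally, surjective with nilpotent kernel) induces an isomorphism on $\mathit{HP}$.

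Apply this to $C_{*}(\Omega M;\Q)\to H_{0}(\Omega M;\Q)=\Q[G]$, i.e.\ to $\Lambda f:\Lambda M\to\Lambda BG$. It says $\Lambda f$ is an isomorphism on rational periodic (Tate) $S^{1}$-equivariant homology. The constant-loop class already defines a unit in the periodic complex $C((u))$, and it maps to $f_{*}[M]\otimes 1$. This is zero when $M$ is rationally inessential, so the periodic unit vanishes.

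The Borel unit then vanishes too. The projection $C((u))\to C((u))/uC[[u]]=C\otimes\Q[u^{-1}]$ is a chain map sending the periodic unit to the Borel unit. This argument never produces a $\Delta$-preimage; the passage through $\mathit{HP}$ is what replaces your lifting step. Without it, your proposal establishes only one of the two implications.
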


\begin{remark}
  The vanishing of the unit is often expressed as the existence of a ``higher dilation.''  Dilation classes were introduced in \cite{seidel-solomon-GAFA-2012,seidel-inventiones-2014} and further developed from the framework of $S^{1}$-equivariant homology by \cite{zhao-thesis-2016,zhou-jtopol-2021,zhou-adv-math-2022,zhou-JSG-2022,li-yin-arXiv-2023,li-QT-2024}.

  We note that \cite{albers-cieliebak-frauenfelder-PLMS-2016,frauenfelder-pajitnov-JFPTA-2017} also appeal to the result of \cite{goodwillie-topology-1985}. In \cite{frauenfelder-pajitnov-JFPTA-2017} it is shown that the $\pi_{1}$-sensitive Hofer--Zehnder capacity is finite in the cotangent bundles of all compact rationally inessential manifolds.
\end{remark}

\begin{proposition}[Truncated Viterbo Restriction {\cite{zhou-JSG-2022}}]\label{claim:zhou}
  If $\Lambda\subset \partial \Omega$ is a Legendrian without Reeb chords, there is a restriction morphism:
  $$\mathit{SH}^{S^{1}}(\Omega,\mathbf{k},\mathfrak{L})\to
  \mathit{SH}^{S^{1}}(T^{*}(\Lambda\times S^{1}),\mathbf{k},\mathfrak{L}'),$$ which sends the unit element to the unit element; here $\mathbf{k}$ is a coefficient field. The local system $\mathfrak{L}'$ is obtained by restriction the local system $\mathfrak{L}$ to a Weinstein neighborhood of a Lagrangian embedding $\Lambda\times S^{1}\to \Omega$.\hfill$\square$
\end{proposition}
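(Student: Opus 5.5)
The approach is to combine the geometric construction of Theorem~\ref{theorem:geometric-construction} with an $S^{1}$-equivariant version of the truncated restriction map of Theorem~\ref{theorem:VR-trunc}, and then let the truncation level go to infinity. Since $\Lambda$ has no Reeb chords at all, the Reeb isotopy $\Lambda_{t}=R_{at}(\Lambda)$, $t\in[0,1]$, satisfies both hypotheses of Theorem~\ref{theorem:geometric-construction} for \emph{every} $a>0$. Indeed, a path $y(t)\in\Lambda_{t}$ has $y^{*}\alpha=a\,\d t$ up to a term which vanishes because $\Lambda_{t}$ is Legendrian; replacing $a$ by a slightly larger number gives the strict inequality. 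Moreover, $\Lambda_{t}\cap\Lambda_{t'}\neq\emptyset$ would produce a chord. Hence for every $a$ we obtain an exact embedding $D_{\epsilon}T^{*}\Lambda\times D(a)\to\Omega$.

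Inside $D_{\epsilon}T^{*}\Lambda\times D(a)$, I consider the Lagrangian $\Lambda\times S^{1}$, where $S^{1}$ is a circle enclosing area $a'<a$ in $D(a)$. It is non-exact, and the periods of its Liouville class lie in $a'\Z$. A small Weinstein neighborhood $K\cong D_{\delta}T^{*}(\Lambda\times S^{1})$ is therefore symplectically embedded in $\Omega$, and the closed form $\iota^{*}\lambda_{\Omega}-\lambda_{K}$ has periods in $\rho\Z$ with $\rho=a'$. Theorem~\ref{theorem:VR-trunc} then gives maps $\mathit{SH}_{c}(\Omega)\to\mathit{SH}_{b}(K)$ for $b+c<a'$, compatible with $\mathit{PSS}$ and hence sending the unit to the unit. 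I would run the same construction in the $S^{1}$-equivariant setting: build the truncated restriction from the same Hamiltonian continuation (step-shaped Hamiltonians adapted to $K\subset\Omega$, with action truncation below $\rho$), now using $S^{1}$-equivariant Floer data. I would also pull back the local system $\mathfrak{L}$ along $\iota$ to obtain $\mathfrak{L}'$. Compatibility with the unit in the equivariant theory follows as in the non-equivariant case, since the unit comes from the constant orbits, where the action filtration is untouched by the truncation.

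Next I pass to the colimit. For fixed $c<b$, I choose $a'$ larger than $b+c$. As $c,b\to\infty$, I must use larger and larger $a$, and therefore different embeddings. The key check is that the maps for $(c,b,a)$ and $(c',b',a'')$ with $c\le c'$, $b\le b'$ commute with the continuation maps. For this I would arrange the embeddings to be nested: the embedding for a larger $a$ restricts, up to exact isotopy, to the one for the smaller $a$, because both come from the same Reeb flow. I would also keep a fixed Lagrangian $\Lambda\times S^{1}$ of fixed area $a_{0}$, and only enlarge the ambient disk. With the Lagrangian fixed, however, the period group stays $a_{0}\Z$, so I would instead rescale: I use the Liouville flow of $\Omega$ to shrink the whole configuration. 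Equivalently, I scale the Lagrangian's area up with $a$ while identifying all the Weinstein neighborhoods via conformal rescaling, which identifies $\mathit{SH}(T^{*}(\Lambda\times S^{1}))$ canonically.

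Taking colimits then yields a map $\mathit{SH}^{S^{1}}(\Omega,\mathbf{k},\mathfrak{L})\to\mathit{SH}^{S^{1}}(T^{*}(\Lambda\times S^{1}),\mathbf{k},\mathfrak{L}')$ sending unit to unit. I expect the main obstacle to be this coherence step: showing that the truncated maps for different truncation levels $\rho$ and different embeddings are compatible with continuation, so that the colimit is well defined. A secondary difficulty is establishing the $S^{1}$-equivariant version of Theorem~\ref{theorem:VR-trunc} with twisted coefficients. I would first try to handle the coherence by a one-parameter family argument: the isotopy between the nested embeddings keeps the Liouville-class periods in $\rho\Z$ with $\rho$ above the action window, so the truncated maps are invariant along it.
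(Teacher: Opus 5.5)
The paper gives no proof of this proposition; it is quoted from \cite{zhou-JSG-2022}. The appendix (\S\ref{sec:truncated-viterbo-restriction}) records that what Zhou proves is a \emph{truncated} map $\mathit{SH}_{b}(\Omega)\to \mathit{SH}(T^{*}L)$, valid whenever the rationality constant of $L$ exceeds $2b$. Your first steps reproduce exactly this mechanism and are fine. Since $\Lambda$ has no chords at all, Theorem~\ref{theorem:geometric-construction} applies to $R_{at}(\Lambda)$ for every $a$ (this is Mohnke's trick). The Lagrangian $\Lambda\times S^{1}\subset D_{\epsilon}T^{*}\Lambda\times D(a)$ has period group $a'\Z$. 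The truncated restriction respects $\mathit{PSS}$, so it sends unit to unit. Its $S^{1}$-equivariant twisted version should simply be cited from Zhou rather than announced as a rerun of Theorem~\ref{theorem:VR-trunc}.

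The gap is the colimit step, which you flag as the main obstacle but do not resolve.
\begin{itemize}
\item \textbf{Rescaling cannot reach larger truncation levels.} Flowing by $Z_{-s}$ inside the fixed $\Omega$ multiplies every period of the Lagrangian by $e^{-s}<1$. Rescaling the whole configuration (domain, Lagrangian and slopes together) is a conformal equivalence, so the constraint $b+c<\rho$ is unchanged. Identifying Weinstein neighbourhoods by conformal rescaling only shifts the filtration on the target.
\item \textbf{The one-parameter family argument is not automatic.} Along $L_{s}=\Lambda\times\gamma_{s}$ the period $\int_{\set{x}\times\gamma_{s}}\lambda$ changes, yet $\set{x}\times\gamma_{s}$ is contractible in the exact domain $\Omega$. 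Hence the isotopy is not induced by any ambient symplectic isotopy, and you cannot get $\mathfrak{R}_{L_{a'}}=\mathfrak{R}_{L_{a''}}$ by naturality. You would need a separate filtered continuation argument between step Hamiltonians adapted to the moving hypersurfaces $\iota_{s}(\bd K)$. You would also need a no-escape statement for those interpolations. Neither is supplied, nor available in the paper.
\end{itemize}

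The simplest fix is to notice that this coherence is not needed for how the proposition is used. If $1=0$ in $\mathit{SH}^{S^{1}}(\Omega)$, then already $1=0$ at some finite filtration level $c$, since the unit is represented compatibly at every level. A single truncated map, for one Lagrangian $\Lambda\times S^{1}$ with $a'$ large compared to $c$ (e.g.\ $a'>b+c$ as in Theorem~\ref{theorem:VR-trunc}), then forces $1=0$ in $\mathit{SH}^{S^{1}}(T^{*}(\Lambda\times S^{1}),\mathbf{k},\mathfrak{L}')$. So either state and prove the result in this truncated form, or supply a genuine proof of invariance under the area-changing isotopy.
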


By appealing to
\cite{zhao-thesis-2016}, \cite{zhou-JSG-2022}
concludes:
\begin{claim}[{\cite{zhou-JSG-2022}}]\label{claim:zhou-rationally-inessential}
  If $\Lambda$ is an aspherical
  Legendrian in the unit cotangent bundle of
  a compact orientable rationally inessential
  manifold $M$, then $\Lambda$ has a Reeb chord
  for every choice of contact
  form.\hfill$\square$
\end{claim}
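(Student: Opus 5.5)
The plan is to chain Proposition \ref{claim:zhou} with Proposition \ref{claim:zhao}, arguing by contradiction. Fix $M$ compact, orientable and rationally inessential. Equip $ST^{*}M$ with an arbitrary contact form, and let $\Lambda\subset ST^{*}M$ be an aspherical Legendrian with no Reeb chords. Every contact form on the unit cotangent bundle (with the standard cooriented contact structure) is the boundary form of some Liouville domain $\Omega\subset T^{*}M$, namely the fiberwise star-shaped domain it determines. Moreover, symplectic cohomology and its $S^{1}$-equivariant version, including the twisted coefficients, depend only on the completion $W=T^{*}M$. So we may work with $\Omega$ and its completion $T^{*}M$ without loss of generality.

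Next apply Proposition \ref{claim:zhao} to $M$ with $\mathbf{k}=\Q$. Since $M$ is rationally inessential, the unit vanishes in $\mathit{SH}^{S^{1}}(T^{*}M,\Q,\mathfrak{L})$, where $\mathfrak{L}$ is the transgression of $w_{2}(TM)$. Proposition \ref{claim:zhou} then gives a unital restriction morphism into $\mathit{SH}^{S^{1}}(T^{*}(\Lambda\times S^{1}),\Q,\mathfrak{L}')$. A unital map sends $0=1$ to $1$, so the unit also vanishes in the target.

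The remaining task, and the main point to check, is that $N=\Lambda\times S^{1}$ is rationally essential and that $\mathfrak{L}'$ is the local system Proposition \ref{claim:zhao} associates to $N$. For essentialness: $N$ is aspherical because $\Lambda$ is, and for an aspherical closed manifold the classifying map $N\to B\pi_{1}(N)$ is a homotopy equivalence, so it cannot kill $[N]$. This requires $N$ orientable. I would first try to show $\Lambda$ orientable using the Lagrangian embedding $\Lambda\times S^{1}\to T^{*}M$. If that fails, I would pass to the orientation double cover, which is still aspherical and still carries the pulled-back structures. For the local system, I would identify $TN$ with the restriction of $T(T^{*}M)$ up to the symplectic/Lagrangian correspondence, so that $w_{2}(TN)$ matches the restriction of $w_{2}$ of $M$ pulled back to $T^{*}M$. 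This gives $\mathfrak{L}'=\mathfrak{L}_{N}$. Proposition \ref{claim:zhao} applied to $N$ then says the unit is nonzero, a contradiction, so $\Lambda$ must have a Reeb chord.

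I expect this local-system identification, together with the orientability bookkeeping, to be the most delicate step. The rest is formal once the cited propositions are taken as given.
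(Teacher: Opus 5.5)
Your first two steps (passing to a star-shaped $\Omega\subset T^{*}M$, killing the unit via Proposition \ref{claim:zhao} for $M$, and pushing this through the unital map of Proposition \ref{claim:zhou}) are exactly what the paper does. The gap is in the last step, where you apply Proposition \ref{claim:zhao} to $N=\Lambda\times S^{1}$ as a black box. That proposition is about one specific twisting, the transgression of $w_{2}(TN)$. Proposition \ref{claim:zhou}, however, only gives you $\mathfrak{L}'$, the transgression of $\iota^{*}w_{2}(TM)$, where $\iota:N\to T^{*}M\to M$.

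Your argument for $\mathfrak{L}'=\mathfrak{L}_{N}$ does not work. The Lagrangian condition gives $T(T^{*}M)|_{N}\cong TN\otimes\C$, and $T(T^{*}M)\cong\pi^{*}TM\otimes\C$, so you only learn $TN\otimes\C\cong\iota^{*}TM\otimes\C$. Since $w(E\otimes\C)=w(E\oplus E)=w(E)^{2}$ mod $2$, this constrains squares such as $w_{1}^{2}$ and $w_{2}^{2}$. It says nothing about $w_{2}(TN)$ versus $\iota^{*}w_{2}(TM)$, and for Lagrangians these classes need not agree in general. Likewise, your orientation double-cover fallback would need a unital map $\mathit{SH}^{S^{1}}(T^{*}N)\to\mathit{SH}^{S^{1}}(T^{*}\tilde N)$, which none of the cited results supply.

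The paper avoids both issues by not invoking Proposition \ref{claim:zhao} for $N$ at all. Instead it proves non-vanishing of the unit directly, by an argument that does not care which transgressed twisting appears.

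\begin{enumerate}
\item Since $N$ is aspherical, the component of contractible loops is homotopy equivalent to $N$ via the constant loops.
\item Any local system transgressed from a class in $H^{2}(N;\Z/2\Z)$ is trivial on constant loops.
\item Hence Abouzaid's twisted Viterbo isomorphism shows that $\mathit{PSS}:H^{*}(T^{*}N,\Q)\to\mathit{SH}(T^{*}N,\Q,\mathfrak{L}')$ is an isomorphism onto the contractible summand, for whatever $\mathfrak{L}'$ arises. The orientation system $\mathfrak{o}$ takes care of a non-orientable $\Lambda$.
\item A $u$-adic filtration argument for $d_{S^{1}}=d+u\Delta+u^{2}\delta_{2}+\dots$ then shows that $1$ cannot be $d_{S^{1}}$-exact. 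This uses that all higher terms vanish on the subcomplex $C_{0}$ computing $H^{*}$.
\end{enumerate}

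So asphericity of $N$ is used to make $\mathit{PSS}$ an isomorphism onto the contractible summand, not to verify that $N$ is rationally essential. That twisting-independent argument is the missing ingredient in your proposal.
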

\begin{proof}[Summary of proof]
  Using Proposition \ref{claim:zhou} and Claim \ref{claim:zhao}, one obtains:
  \begin{equation*}
    \text{unit vanishes in }\mathit{SH}^{S^{1}}(T^{*}(\Lambda\times S^{1}),\Q,\mathfrak{L}'),
  \end{equation*}
  where $\mathfrak{L}'$ is obtained by transgressing some class in $H^{2}(\Lambda\times S^{1},\Z/2\Z)$. This is impossible if $\Lambda\times S^{1}$ is aspherical, as we now explain.

  Abbreviate $N=\Lambda\times S^{1}$. The first step is to appeal to \cite{abouzaid-EMS-2015} to conclude there is a commutative diagram:
  \begin{equation*}
    \begin{tikzcd}
      H_{*}(N,\mathfrak{o}\otimes \Q)\arrow[d,"\simeq"]\arrow[r,"\iota"]&H_{*}(\Lambda N,\Q,\mathfrak{o}\otimes \mathfrak{L}'\otimes \mathfrak{L})\arrow[d,"\simeq"]\\
      H^{*}(T^{*}N,\Q)\arrow[r,"PSS"]&SH(T^{*}N,\Q,\mathfrak{L}').
    \end{tikzcd}
  \end{equation*}
  The local system $\mathfrak{L}$ is also obtained by transgressing a class in $H^{2}(N,\Z/2\Z)$, while $\mathfrak{o}$ is the orientation local system for the base manifold. The top arrow $\iota$ is an isomorphism onto the contractible summand, since $N$ is aspherical. It follows that the PSS map is also an isomorphism onto the contractible summand.

  This implies that the equivariant PSS map:
  \begin{equation*}
    H^{*}(T^{*}N,\Q)\otimes \Q[u^{-1}]\to SH^{S^{1}}(T^{*}N,\Q,\mathfrak{L}') 
  \end{equation*}
  cannot kill the unit, by a spectral sequence argument applied to the $u$-adic filtration inherent to $S^{1}$-equivariant homology; see \cite{zhao-thesis-2016,zhou-adv-math-2022,li-QT-2024}. We recall the argument for the reader's convenience. The contractible summand of $SH^{S^{1}}(T^{*}N,\Q,\mathfrak{L}')$ is the homology of a complex: $$(C_{0}\oplus C_{+})\otimes \Q[u^{-1}]$$ with deformed differential:
  \begin{equation*}
    d_{S^{1}}=d+u\Delta+u^{2}\delta_{2}+u^{3}\delta_{3}+\dots
  \end{equation*}
  where $\delta_{i}$ can be considered as ``higher order'' BV operators. One sets up the chain complex so that:
  \begin{itemize}
  \item $(C_{0},d_{0})$ is a subcomplex whose homology computes $H^{*}(T^{*}N,\Q)$,
  \item $H(C_{0}\oplus C_{+},d_{0})$ is the contractible summand of $SH(T^{*}N,\Q,\mathfrak{L}')$,
  \item the inclusion of $C_{0}$ into $C_{0}\oplus C_{+}$ induces the map $\mathit{PSS}$ on homology,
  \item all higher order terms $\Delta,\delta_{2},\delta_{3},\dots$ vanish on $C_{0}\otimes \Q[u^{-1}]$.
  \end{itemize}
  Suppose we have a solution of:
  \begin{equation*}
    d_{S^{1}}(u^{-k}\zeta_{k}+\dots+u^{-1}\zeta_{1}+\zeta_{0})=1\in C_{0}.
  \end{equation*}
  If $k>0$ then $d_{0}\zeta_{k}=0$. Since $C_{0}\to C_{0}\oplus C_{+}$ is an isomorphism on $d_{0}$ homology, we can assume that $\zeta_{k}=\eta_{k}+d_{0}\mu_{k}$ for $\eta_{k}\in C_{0}$. Then, adding $0=d_{S^{1}}(d_{S^{1}}(u^{-k}\mu_{k}))$ to both sides yields:
  \begin{equation*}
    d_{S^{1}}(u^{-k}\eta_{k}+u^{-k+1}(\zeta_{k-1}+\Delta\mu_{k})+\dots+u^{-1}(\zeta_{1}+\delta_{k-1} \mu_{k})+\zeta_{0}+\delta_{k}\mu_{k})=1
  \end{equation*}
  Now, since all the higher terms in $d_{S^{1}}$ vanish on $\eta_{k}\in C_{0}$, we can simply remove it from the equation and conclude:
  \begin{equation*}
    d_{S^{1}}(u^{-k+1}(\zeta_{k-1}+\Delta\mu_{k})+\dots+u^{-1}(\zeta_{1}+\delta_{k-1} \mu_{k})+\zeta_{0}+\delta_{k}\mu_{k})=1.
  \end{equation*}
  By repeating this argument we conclude:
  \begin{equation*}
    d_{0}(\zeta_{0}+\delta_{k}\mu_{k}+\delta_{k-1}\mu_{k-1}'+\dots)=1,
  \end{equation*}
  contradicting the fact that $1$ is not an exact cycle with respect to $d_{0}$.
\end{proof}

\subsubsection{Orientation tricks}
\label{sec:orientation-tricks}

Let us note one curiously
general example:
\begin{claim}\label{claim:spin-CPd}
  Let $M$ be a compact manifold such that $w_{2}(M)$ evaluates non-trivially on some element in $\pi_{2}(M)$. For example, $M=N\times \C P^{2n}$ for any positive integer $n$ and any compact smooth manifold $N$. Then every spin Legendrian in $ST^{*}M,$ admits a Reeb chord for all contact forms.
\end{claim}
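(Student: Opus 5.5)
The plan is to rerun the argument behind Theorem \ref{theorem:strong-main} with a twisted local system, and with coefficients in $\Q$ rather than in a field of characteristic two. The twist is what lets $w_{2}$ matter. On $T^{*}M$, consider symplectic cohomology twisted by the local system $\mathfrak{L}$ obtained by transgressing $w_{2}(TM)$ to the free loop space, and write $\mathit{SH}(T^{*}M,\Q,\mathfrak{L})$. By \cite{abouzaid-EMS-2015}, and via the same comparison as in the diagram in the proof of Claim \ref{claim:zhou-rationally-inessential}, this group is isomorphic to a loop homology $H_{*}(\Lambda M,\Q,\mathfrak{o}\otimes\mathfrak{L}\otimes\mathfrak{L}_{0})$. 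Here $\mathfrak{L}_{0}$ is the intrinsic transgressed $w_{2}$-twist, and the two twists are arranged to cancel or not as needed. I would choose the ambient twist so that the resulting local system on the contractible component $\Lambda_{0}M$ is exactly the transgression of $w_{2}(M)$.

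\textbf{Step 1: vanishing of the twisted unit.} The fibration $\Omega M\to\Lambda_{0}M\to M$ admits a section by constant loops, and the local system is trivial along this section. On the fibre $\Omega M$, however, the monodromy of the local system along a loop coming from $\sigma\in\pi_{2}(M)=\pi_{1}(\Omega M)$ equals $(-1)^{\langle w_{2},\sigma\rangle}$, which is $-1$ by hypothesis. Since $\Omega M$ is an H-space, left multiplication by a based loop in $\Omega M$ representing $\sigma$ is homotopic to the identity. On twisted homology, however, it acts by $-1$. Hence $2=0$ on $H_{*}(\Omega M,\Q,\mathfrak{L})$, so this group vanishes. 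The Serre spectral sequence then gives $H_{*}(\Lambda_{0}M,\Q,\mathfrak{L})=0$. The unit lies in the contractible summand, so $\mathit{PSS}(1)=0$ in $\mathit{SH}(T^{*}M,\Q,\mathfrak{L})$, and hence in $\mathit{SH}_{c}(\Omega,\Q,\mathfrak{L})$ for some finite $c$. This replaces the $n$-invertibility condition in the present argument.

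\textbf{Step 2: restriction.} Suppose $\Lambda$ has no chords of length up to $a>c$. Theorem \ref{theorem:geometric-construction} then gives an exact embedding $D_{\epsilon}T^{*}\Lambda\times D(a)\to\Omega$. Theorem \ref{theorem:VR-main} holds verbatim with local coefficients: the pulled-back local system on the subdomain is used, and unitality is preserved. Together with Theorem \ref{theorem:filtered-kunneth}, this yields $\mathit{PSS}(1)=0$ in $\mathit{SH}_{b}(T^{*}\Lambda,\Q,\iota^{*}\mathfrak{L})$.

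\textbf{Step 3: identifying the restricted twist.} This is the step I expect to be the main obstacle. Along the image of $\Lambda$ we have $T(T^{*}M)|_{\Lambda}\cong TM\otimes\C|_{\Lambda}\cong (T\Lambda\otimes\C)\oplus\C$, where the last summand is spanned by the Reeb and Liouville directions. The background class used to orient the moduli spaces is $w_{2}$ of the vertical polarization. I would show that after restriction to $K$ it differs from the canonical background class of $T^{*}\Lambda$ by $w_{2}(T\Lambda)$, plus terms pulled back from $M$ that are cancelled by the choice made in Step 1. My first attempt would be a careful comparison of the two Lagrangian polarizations $TM|_{\Lambda}$ and $T\Lambda\oplus\R$ inside the same complex bundle, using the standard fact that the relevant orientation twist depends only on $w_{2}$ of the polarization.

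Assuming this identification, the hypothesis that $\Lambda$ is spin makes the restricted twist the standard one. Then $\mathit{SH}(T^{*}\Lambda,\Q,\iota^{*}\mathfrak{L})\cong H_{*}(\Lambda\Lambda,\Q,\mathfrak{o})$, and in this group the unit (the fundamental class of the constant loops) is nonzero, because evaluation at the basepoint gives a left inverse. This contradicts Step 2. The argument uses only the fact that the Reeb flow is uniformly fast, so it applies to every contact form on $ST^{*}M$. For the example $M=N\times\C P^{2n}$, we have $w_{2}(\C P^{2n})\ne0$ on the line $\C P^{1}\subset\C P^{2n}$, since $c_{1}(\C P^{2n})=(2n+1)h$ is odd on that line.
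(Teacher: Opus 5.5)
Your argument is correct in outline. It rests on the same three inputs as the paper's proof:
\begin{enumerate}
\item vanishing of $\mathit{PSS}(1)$ in untwisted $\mathit{SH}(T^{*}M,\Q)$, which the paper simply cites from \cite{albers-frauenfelder-oancea-mathann-2017} and which your Step 1 reproves via the $H$-space structure on $\Omega M$ and the Serre spectral sequence;
\item a unital restriction map to a neighbourhood of $\Lambda$;
\item non-vanishing of the unit for the cotangent bundle of a spin manifold.
\end{enumerate}

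The difference is the restriction step. The paper runs Zhou's argument (Proposition \ref{claim:zhou}): it restricts to $T^{*}(\Lambda\times S^{1})$, which is spin because $\Lambda$ is, and this setting is already available over any field with local systems. You instead run this paper's own pipeline, Theorems \ref{theorem:geometric-construction}, \ref{theorem:VR-main} and \ref{theorem:filtered-kunneth}, over $\Q$. That buys a quantitative statement: a chord of length at most the slope at which $\mathit{PSS}(1)$ dies in $\mathit{SH}_{c}(\Omega,\Q)$, together with the version for uniformly fast isotopies. The cost is redoing the restriction and Künneth theorems with coherent orientations, since the paper treats only characteristic two. This is routine; for instance, in the Künneth step the extra $\C$-factor operators, such as \eqref{eq:PSS-operator}, are complex-linear isomorphisms and so are canonically oriented.

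Two pieces of bookkeeping should be corrected.

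First, do not twist the ambient theory. Untwisted $\mathit{SH}(T^{*}M,\Q)$ is already identified with loop homology twisted by $\mathfrak{o}\otimes\mathfrak{L}_{0}$, and that is exactly the group that vanishes. Twisting by $\mathfrak{L}=\mathfrak{L}_{0}$, as your first sentence literally says, would cancel the twist. You would then get ordinary loop homology, whose unit is non-zero, and Step 1 would fail. Your stated criterion for choosing the twist does single out the trivial one, so this is a matter of stating it correctly.

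Second, with that choice Step 3 is not an obstacle, and no comparison of polarizations is needed. The orientation line of an orbit is determined by the symplectic bundle along it and its asymptotic operator. Both are preserved by a symplectic embedding, so the restriction and Künneth maps send untwisted $\mathit{SH}$ to untwisted $\mathit{SH}$. The $w_{2}$-twist on the loop side then comes from the comparison theorem applied to $T^{*}\Lambda$ itself, as in the diagram in the proof of Claim \ref{claim:zhou-rationally-inessential}. That twist is the transgression of $w_{2}(\Lambda)$, which is trivial for spin $\Lambda$, and there are no terms pulled back from $M$ to cancel.
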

\begin{proof}
  By \cite{albers-frauenfelder-oancea-mathann-2017} (building on \cite{seidel-note-2010} in the case $M=\C P^{2}$), it holds that $\mathit{SH}(T^{*}M,\Q)=0$ and so the same argument used in Claim \ref{claim:zhou} can be applied.
\end{proof}
\begin{remark}
  It seems to be a somewhat interesting
  question how the ``spin'' assumption in
  Claim \ref{claim:spin-CPd} can be lifted
  (without imposing further conditions on the embedding of $\Lambda$ into $ST^{*}M$). One can obtain more consequences from \cite{albers-frauenfelder-oancea-mathann-2017} if one specifies how the inclusion $\Lambda\to ST^{*}M$ acts on $H^{2}$.
\end{remark}

\subsubsection{Aspherical ambient spaces}
\label{sec:asph-ambi-spac}

Let us comment that Theorem
\ref{theorem:cotangent-submersion} applies to
$ST^{*}T^{n}$, because $T^{n}$ is a Cartesian
product of $n$ copies of $S^{1}$. This is
notable because $T^{n}$ is aspherical and
hence the symplectic cohomology does not
admit any type of higher dilation classes, or
any trickery with orientations as in
Claim \ref{claim:spin-CPd}. In this respect,
i.e., applicability to aspherical manifolds,
our result is closer to the work of
\cite{irie-JEMS-2014} which uses the product
structure to bound the Hofer Zehnder capacity
of cotangent bundles of manifolds admitting
circle actions with non-contractible orbit
class.

\subsubsection{Fillable Legendrians}
\label{sec:fillable-legendrians}

One can also restrict the class of Legendrians in another ways: by requiring that they admit Lagrangian fillings; see, e.g., \cite{ritter-jtopol-2013}. If the fillings are assumed to be exact, then one can use wrapped Floer cohomology; see, e.g., \cite{abouzaid-seidel-GT-2010}. The Arnol'd chord conjecture is established for conormal Legendrians via a comparison with the Morse homology of path spaces, see \cite{brocic-cant-shelukhin-math-ann-2025} which builds on work of \cite{abbondandolo-schwarz-CPAM-2006, abbondandolo-portaluri-schwarz-JFPTA-2008}.

\subsubsection{Strong Arnol'd chord conjecture}
\label{sec:strong-arnold-chord}

An interesting research direction is the ``strong'' version of the conjecture, as studied in \cite{kang-jtopol-2026,kang-zhang-arXiv-2024,cant-arXiv-2026}, which asks whether there is always a chord with distinct endpoints. It seems difficult to establish the strong Arnol'd conjecture from our methods, since the upper bound we obtain on the length of the Reeb chord is always at least the systole of $\lambda|_{\bd \Omega}$ (so we cannot a priori guarantee chords shorter than all orbits, which is the surefire way to conclude the strong Arnol'd conjecture).

\subsubsection{Time-dependent Reeb flows}
\label{sec:time-dependent-reeb}

The method is robust, in that it allows one to prove the existence of chords for time-dependent Reeb flows, as long as they are uniformly fast; see Definition \ref{definition:uniformly-fast} and Theorem \ref{theorem:strong-main}.  We note that other methods of producing Reeb chords (such as bubbling analysis for almost complex structures of SFT type) do not seem to easily produce chords for uniformly fast time-dependent flows.

\subsubsection{Cotangent bundles of manifolds with negative sectional curvature}
\label{sec:cotang-bundl-manif}

The restrictions from this paper are related to interactions between product structures, BV operators, and the PSS morphisms. When $\Omega\subset T^{*}M$, and $M$ has negative sectional curvature, there are seemingly not enough relations between the top degree PSS classes and the other classes to apply our methods. It is natural to wonder if there a symplectic cohomology based upper bound on the length of Reeb chords of Legendrian knots in $\bd\Omega$ for domains $\Omega\subset T^{*}\Sigma_{g}$, where $\Sigma_{g}$ is a surface of genus $g>1$. Note that, since $ST^{*}\Sigma_{g}$ is a contact 3-manifold, the existence of Reeb chords is known \cite{hutchings-taubes-MRL-2011,hutchings-taubes-GT-2013}. One can also compare with the problem on the finiteness of the Hofer-Zehnder capacity of compact subsets in $T^{*}\Sigma_{g}$, \cite{bimmermann-arch-math-2024,bimmermann-et-al-arXiv-2026}.

\subsection{Acknowledgements}
\label{sec:acknowledgements}

The authors would like to thank E.\@~Shelukhin for many useful discussions surrounding these topics. The key ideas for this project were developed during the second author's visit to Augsburg in May 2025; the authors would like to thank Universit\"at Augsburg for providing a pleasant work environment. The second author thanks P.\@~Biran at ETHZ, J.\@~Shang at LMO, J.\@~Zhang at USTC IGP, W.\@~Gong at BNU, and P.\@ Salomão at SUSTech SICM for valuable discussions during the preparation of this work. The authors also wish to warmly thank Z.\@~Zhou and K.\@~Guo for communicating their related work \cite{guo-zhou-arXiv-2026} (developed contemporaneously with work the present paper) during the May 2026 program at the Tianyuan Mathematics Research Center in Kunming.

The first author was funded by the Deutsche Forschungsgemeinschaft (DFG, German Research Foundation) – 541525489. The second author was funded by the ANR project CoSy and an NSERC/CRSNG Discovery grant.

\section{Proofs}
\label{sec:proofs}

The contents of this section are:
\begin{itemize}
\item \S\ref{sec:from-legendr-stab} on the existence of embeddings of stabilized codisk bundles;
\item \S\ref{sec:kunneth-morphism} on the Künneth morphism;
\item \S\ref{sec:string-topology-products-spheres} on the string topology of products of spheres;
\item \S\ref{sec:proof-rationality} on rationality constant of Lagrangians in $T^{*}T^{n}$.
\end{itemize}

Due to the more-or-less standard nature of the Viterbo restriction map, see \cite{viterbo-GAFA-1999,mclean-GT-2009,abouzaid-seidel-GT-2010,ritter-jtopol-2013}, and the fact the truncated version already appears in \cite{zhou-JSG-2022}, we skip directly from \S\ref{sec:from-legendr-stab} to \S\ref{sec:kunneth-morphism} and confine the proof of Theorems \ref{theorem:VR-main} and \ref{theorem:VR-trunc} to Appendix \ref{sec:viterbo-restriction}.

\subsection{From Legendrians to embeddings of stabilized codisk bundles}
\label{sec:from-legendr-stab}

The goal of this section is to prove Theorem \ref{theorem:geometric-construction}. The first part of the construction involves a coisotropic embedding:
\begin{lemma}\label{lemma:coiso-construct}
  Let $\Omega,\alpha,\Lambda_{t},a$ be as in the statement of Theorem \ref{theorem:geometric-construction}. There exists an embedding $\varphi:\Lambda\times D(a)\to \Omega,$ taking values in the interior, such that:
  \begin{equation}\label{eq:pullback-formula}
    \varphi^{*}\lambda=\lambda_{D(a)}
  \end{equation}
  where $\lambda_{D(a)}$ is (the pullback of) a Liouville form on $D(a)$ (defining its standard symplectic structure). In particular, $\varphi$ is a coisotropic embedding.
\end{lemma}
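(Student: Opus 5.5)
We construct $\varphi$ explicitly by sweeping the isotopy $\Lambda_t$ through the symplectization end and then shrinking radially. By the standard Legendrian isotopy extension we fix a smooth family of Legendrian embeddings $j_t:\Lambda\to Y$, $t\in[0,1]$, with image $\Lambda_t$. Set $J(x,t)=j_t(x)$. The second hypothesis ($\Lambda_t\cap\Lambda_{t'}=\emptyset$) makes $J$ injective. The first hypothesis gives
\[
J^*\alpha=h(x,t)\,\d t,\qquad h>a,
\]
since $j_t^*\alpha=0$ (each $j_t$ is Legendrian) and $h=\alpha(\partial_t j_t)$. Thus $J$ is an embedding $\Lambda\times[0,1]\to Y$: the $\partial_t$-direction is transverse to $\Lambda_t$ because $\alpha(\partial_tJ)>0$ while $\alpha$ vanishes on $T\Lambda_t$.

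Next we lift $J$ into $W$ using the coordinate $(r,y)\in(0,\infty)\times Y$, in which $\lambda=r\alpha$. This identification is valid on $SY$ and extends to the compact core via the Liouville flow, with $r\to0$ along backward flow lines. Consider maps
\[
\Phi(x,t,s)=(\rho(t,s,x),J(x,t)),
\]
for which $\Phi^*\lambda=\rho\,h\,\d t$. We want the image of $\Lambda\times D(a)$ to be a region of the form
\[
\set{(r,J(x,t)) : 0< r\le R(x,t)}
\]
swept out by varying $r$, suitably closed up. The natural choice is to use polar-type coordinates on $D(a)$ with $\lambda_{D(a)}=\tfrac{1}{2\pi}\rho^2\,\d\theta$-type forms, after reparametrizing $t\mapsto\theta$. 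Concretely, we want $u\,\d\theta=\rho h\,\d t$, where $u\in[0,a/2\pi]$ is the area coordinate ($\lambda_{D(a)}=u\,\d\theta$) and $\theta=2\pi t$. This forces $\rho=2\pi u/h(x,t)$, which is at most $a/h<1$. So $\varphi(x,u,\theta)=(2\pi u/h,\,J(x,\theta/2\pi))$ lands in the region $r<1$, i.e.\ the interior of $\Omega$, and satisfies $\varphi^*\lambda=u\,\d\theta$ exactly.

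The main obstacle is making this a smooth embedding of the closed disk: $\theta\in[0,2\pi]$ is not periodic (the isotopy does not close up), and $u=0$ maps to the "cone point" $r=0$, which is not in $W$. The fix I would try is to choose the Liouville form on $D(a)$ differently: take $D(a)$ to be a disk of area slightly less than $\int h\,\d t$ but realized as a region $\set{(t,v): 0\le v\le g(t)}$ in a strip, with $\lambda_{D}=v\,\d t$ (exact, standard up to an exact term, which is acceptable after correcting by a Moser/Liouville homotopy, or absorbed into "a Liouville form on $D(a)$" as the lemma allows). Here $g$ is a bump function vanishing at $t=0,1$ with $\int g<\int h$ pointwise in $x$ (using $h>a$ and compactness). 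Then
\[
\varphi(x,t,v)=\bigl(r_0+v/h(x,t),\,J(x,t)\bigr)
\]
for a small $r_0>0$ (shifting $\lambda$ by $r_0h\,\d t=\d(\cdot)$ only if $h$ is closed in $t$, so instead we absorb $r_0$ by taking $v\ge r_0h$ and region $r_0h\le v\le r_0h+g$). This gives $\varphi^*\lambda=v\,\d t$, which is exactly the pullback of a Liouville form on a disk-shaped region of area $\int g>a$ (after smoothing corners and shrinking). We ensure it fits in $r<1$ by taking $r_0$ small and $g<(1-r_0)h$ wait—area needs $\int g\,\d t>a$ with $g\le(1-2r_0)h$, which is possible since $\int h>a$ strictly and uniformly. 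Injectivity follows from injectivity of $J$, and the immersion property from $\partial_v$ being radial. Finally, this forces coisotropy: the characteristic foliation is $\Lambda$-directions, since $\d(v\,\d t)$ has kernel $T\Lambda$.
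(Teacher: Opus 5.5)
There is a gap at the last step. In your final construction, $\varphi(x,t,v)=(v/h(x,t),J(x,t))$ is defined on $\set{(x,t,v): r_0h(x,t)\le v\le r_0h(x,t)+g(t)}$. This region of the $(t,v)$-plane moves with $x$ whenever $h$ does, so the domain is not $\Lambda\times D$ for a fixed disk $D$. Consequently ``$\varphi^*\lambda=v\,dt$'' is not the pullback of a form on a fixed $D(a)$.

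If you straighten the domain to a product by $v\mapsto v+r_0h(x,t)$, the pullback becomes $(v+r_0h(x,t))\,dt$. Its differential $dv\wedge dt+r_0\,d_xh\wedge dt$ is not pulled back from the disk, and its kernel is $\set{\dot t=0,\ \dot v=-r_0\,d_xh(\dot x)}$ rather than $T\Lambda$. This is exactly the non-closed term $r_0h\,dt$ you flagged one line earlier; the ``absorption'' only moves the $x$-dependence from the form into the domain. It matters because the lemma feeds into Gotay's theorem against the standard inclusion $\Lambda\times D(a)\subset T^*\Lambda\times\C$. There the pulled-back form is the area form of the $D(a)$ factor alone, so $\varphi^*\lambda$ must be pulled back from $D(a)$ under the product structure (this is also what makes the closed form $\beta$ exact afterwards).

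The repair is easier than the offset: avoiding the cone point $r=0$ only requires a compact region inside $\set{v>0}$, not a shift in $r$. Let $m(t)=\min_x h(x,t)\ge\min h>a$ and $U=\set{(t,v):0<t<1,\ 0<v<m(t)}$. This region is independent of $x$ and has area $\int_0^1 m\,dt>a$. Pick a smoothly bounded closed disk $D\subset U$ of area $a$ and restrict $\varphi(x,(t,v))=(v/h(x,t),J(x,t))$ to $\Lambda\times D$. Then $\varphi^*\lambda=v\,dt$ is pulled back from $D$, and $0<r<1$ on the image. Your injectivity, immersion and coisotropy arguments go through unchanged. Alternatively, take a fixed disk inside the intersection over $x$ of your regions, which still has area $>a$ for $r_0$ small.

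This is precisely the paper's proof in other coordinates. The paper first rescales by the domain-dependent Liouville flow $Z_{\log(a'/h)}$ so that the pullback becomes $a'\,dt$ with $a<a'<\min h$ constant. It then flows by $Z_s$, $s\le 0$, obtaining $a'e^s\,dt$ on a half-strip of total area $a'>a$, into which $D(a)$ embeds. With $v=a'e^s$ its map is literally $(v/h,J)$. Normalizing $h$ to a constant first is exactly what makes the target region independent of $x$.
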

\begin{proof}
  First let $\psi:\Lambda\times [0,1]\to \partial \Omega$ be some parametrization of the Legendrian isotopy $\Lambda_{t}$, so that $\psi(\Lambda\times \set{t})=\Lambda_{t}$. Then, by the hypotheses of Theorem \ref{theorem:geometric-construction}, it holds that $\psi$ is an embedding and:
  \begin{equation*}
    \psi^{*}\lambda=h\d t
  \end{equation*}
  where $h:\Lambda\times [0,1]\to (a,\infty)$ is a smooth function. By compactness of $\Lambda\times [0,1]$, we may suppose that $h>a'>a$ for some $a'$.

  Now consider the modification by a domain dependent Liouville flow:
  \begin{equation*}
    \tilde{\psi}(x,t)=Z_{\log(a'/h(x,t))}(\varphi(x,t)).
  \end{equation*}
  This is still an embedding (since $\varphi$ remains an embedding even after quotienting by the Liouville flow), and it takes values in the interior of $\Omega$. An easy computation yields:
  \begin{equation*}
    \tilde{\psi}^{*}\lambda=a'\d t.
  \end{equation*}
  The next step is to extend the embedding as:
  \begin{equation*}
    \phi(x,s,t)=Z_{s}\tilde{\psi}(x,t)\text{ where }\phi:\Lambda\times (-\infty,0]\times [0,1]\to \Omega.
  \end{equation*}
  Another standard computation yields:
  \begin{equation*}
    \phi^{*}\lambda=a'e^{s}\d t.
  \end{equation*}
  Now we observe that the area of $(-\infty,0]\times [0,1]$ with respect to the area form $a'e^{s}\d t$ is $a'$, and so, since $a'>a$, we can find a smooth area preserving embedding $w:D(a)\to (-\infty,0]\times [0,1]$ such that $w^{*}(a'e^{s}\d t)=\lambda_{D(a)}$, where $\lambda_{D(a)}$ is some Liouville form on $D(a)$ for its standard symplectic structure.

  Finally we define $\varphi:\Lambda\times D(a)\to \Omega$ by the formula:
  \begin{equation*}
    \varphi(x,z)=\phi(x,w(z)).
  \end{equation*}
  This map $\varphi$ satisfies the first part of the lemma. It remains to explain why $\varphi$ is a coisotropic embedding. It is clear that $\d\varphi(T\Lambda)$ lies in the $\omega$-orthogonal complement of $\varphi(T\Lambda\times D(a))$, because of the pullback formula \eqref{eq:pullback-formula}. By dimensional reasons, it follows that $\d\varphi(T\Lambda)$ must equal the entire $\omega$-orthogonal complement. Thus the embedding is coisotropic.
\end{proof}

The next step is to appeal to the following general fact:
\begin{lemma}\label{lemma:gotay}
  Let $N$ be a compact smooth manifold with boundary, and suppose there are two coisotropic embeddings:
  \begin{equation*}
    j_{1}:N\to (M_{1},\omega_{1})\text{ and }j_{2}:N\to (M_{2},\omega_{2})
  \end{equation*}
  such that $j_{1}^{*}\omega=j_{2}^{*}\omega$. Then a neighborhood of $j_{1}(N)\subset M$ is symplectomorphic to a neighborhood of $j_{2}(N)\subset M_{2}$, and the symplectomorphism can be chosen to send $j_{1}(x)$ to $j_{2}(x)$ for $x\in N$.
\end{lemma}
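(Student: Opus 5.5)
This is Gotay's coisotropic neighborhood theorem, and I would prove it in two steps: first produce \emph{some} diffeomorphism of neighborhoods that matches the symplectic forms along $N$, then correct it by a Moser argument. The same-dimension hypothesis is implicit, since coisotropic embeddings with equal pullback forms have equal ranks; I will take $\dim M_{1}=\dim M_{2}$, with $\dim N=\dim M_i-k$.

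\textbf{Step 1: identifying normal bundles.} Write $\sigma=j_{1}^{*}\omega_{1}=j_{2}^{*}\omega_{2}$, a closed form of constant rank. Its kernel $E=\ker\sigma\subset TN$ has rank $k$ and equals the characteristic distribution $(TN)^{\omega_{i}}$ of each embedding. The nondegenerate pairing
\begin{equation*}
  \nu_{i}=j_{i}^{*}TM_{i}/TN\to E^{*},\qquad v\mapsto \omega_{i}(v,\cdot)|_{E},
\end{equation*}
is an isomorphism. This gives a canonical bundle isomorphism $\nu_{1}\cong E^{*}\cong\nu_{2}$. I would upgrade it to an isomorphism $A:j_{1}^{*}TM_{1}\to j_{2}^{*}TM_{2}$ that is the identity on $TN$ and satisfies $A^{*}\omega_{2}=\omega_{1}$ pointwise. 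To do this, choose a complement $G\subset TN$ to $E$; $G$ is symplectic for $\sigma$. Then $G^{\omega_{i}}$ is a symplectic subbundle containing $E$ as a Lagrangian subbundle, and choosing Lagrangian complements $F_{i}\subset G^{\omega_{i}}$ to $E$ gives $F_{i}\cong E^{*}$ via $\omega_{i}$. Defining $A$ as the identity on $G\oplus E$ and the composite $F_{1}\cong E^{*}\cong F_{2}$ on $F_{1}$ makes $A$ linear symplectic. The boundary of $N$ causes no trouble, because all of these choices are made on vector bundles.

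\textbf{Step 2: a first diffeomorphism.} Using tubular neighborhoods (exponential maps for auxiliary metrics restricted to complements of $TN$), I would build a diffeomorphism $\Phi$ from a neighborhood $U_{1}$ of $j_{1}(N)$ onto a neighborhood of $j_{2}(N)$ with $\Phi\circ j_{1}=j_{2}$ and $d\Phi=A$ along $j_{1}(N)$. Since $N$ has boundary, I would first extend $N$ slightly past $\partial N$ to an open collar $N'$ and extend the embeddings coisotropically. This is possible because $j_{i}(N)$ sits inside a slightly larger coisotropic submanifold; alternatively, one can simply work with germs and shrink at the end, which is all that is needed. After this step, $\omega_{0}=\omega_{1}$ and $\omega'=\Phi^{*}\omega_{2}$ agree at every point of $j_{1}(N)$.

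\textbf{Step 3: Moser's trick.} Set $\omega_{t}=\omega_{0}+t(\omega'-\omega_{0})$, which is nondegenerate near $j_{1}(N)$ for all $t\in[0,1]$ by compactness. The difference $\omega'-\omega_{0}$ is closed and vanishes along $j_{1}(N)$. Apply the relative Poincaré lemma, using the homotopy operator of fiberwise radial retraction onto the zero section of the tubular neighborhood, to get $\beta$ with $d\beta=\omega'-\omega_{0}$ and $\beta=0$ along $j_{1}(N)$. The vector field $X_{t}$ defined by $\iota_{X_{t}}\omega_{t}=-\beta$ then vanishes on $j_{1}(N)$. Its flow is defined up to time $1$ on a smaller neighborhood, fixes $j_{1}(N)$ pointwise, and pulls $\omega'$ back to $\omega_{0}$. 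Composing this flow with $\Phi$ gives the required symplectomorphism sending $j_{1}(x)\mapsto j_{2}(x)$.

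\textbf{Main obstacle.} The step needing the most care is the linear-algebra upgrade in Step 1, namely that matching $\sigma$ plus identifying the normal bundles via $E^{*}$ determines a full symplectic bundle isomorphism. A second delicate point is handling $\partial N$ in the tubular-neighborhood and Moser steps. I would deal with it by passing to a slightly larger open manifold, or by making the retraction respect the collar so that $X_{t}$ stays tangent to the relevant region.
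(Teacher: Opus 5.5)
Your Steps 1--3 are the standard proof of Gotay's coisotropic neighbourhood theorem: a linear normal form along $N$ via $\nu_{i}\cong E^{*}$, then a diffeomorphism $\Phi$ with $d\Phi=A$ built from tubular neighbourhoods, then relative Moser. This is exactly what the paper invokes by citing Gotay and McDuff--Salamon. When $\partial N=\emptyset$ your argument is correct and complete, and Step 1 is routine rather than the main obstacle.

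The genuine gap is the boundary, which the statement allows and the paper uses, since $N=\Lambda\times D(a)$. To get an \emph{open} neighbourhood of $j_{1}(\partial N)$ you must work with extensions $\tilde{j}_{i}$ of $j_{i}$ to an open collar $N'\supset N$. Your justification (``$j_{i}(N)$ sits inside a slightly larger coisotropic submanifold'') is only asserted. Moreover, coisotropy of the $\tilde{j}_{i}$ is not what the later steps need. Redoing Step 1 over $N'$, or having $\omega'-\omega_{0}$ vanish along the whole extended zero section, requires $\tilde{j}_{1}^{*}\omega_{1}=\tilde{j}_{2}^{*}\omega_{2}$ on all of $N'$. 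Without this, the fibrewise radial homotopy operator only gives $\omega'-\omega_{0}=d\beta_{0}+\pi^{*}\eta$. Here $\pi$ is the tube projection and $\eta$ is the pullback of $\omega'-\omega_{0}$ to $N'$, a closed $2$-form vanishing on $N$ but not on $N'\setminus N$. Your fallback of keeping $X_{t}$ tangent to ``the relevant region'' is not spelled out. If it means keeping the flow inside the tube over $N$, it only gives a symplectomorphism of non-open tubes, not of neighbourhoods.

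The repair needs no coisotropic extension at all.
\begin{enumerate}
\item Extend $j_{1},j_{2}$ to arbitrary smooth embeddings $\tilde{j}_{1},\tilde{j}_{2}$ of $N'$. Extend $A$ to any bundle isomorphism over $N'$ that is the identity on $TN'$; it is symplectic only over $N$.
\item Step 2 then gives $\Phi$ with $\Phi\circ\tilde{j}_{1}=\tilde{j}_{2}$ and $d\Phi=A$ along $j_{1}(N)$. So $\tau=\Phi^{*}\omega_{2}-\omega_{1}$ vanishes pointwise along $j_{1}(N)$, and you are in the situation just described.
\item Choose a smooth homotopy $r_{s}:N'\to N'$, $s\in[0,1]$, with $r_{0}=\mathrm{id}$, $r_{s}(N)\subset N$ for all $s$, and $r_{1}(N')\subset N$. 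Concretely, push the collar inward, shrinking $N'$ if needed.
\item Then $r_{1}^{*}\eta=0$, and the homotopy formula gives $\eta=d\gamma$ with $\gamma_{x}=-\int_{0}^{1}\eta_{r_{s}(x)}(\partial_{s}r_{s}(x),dr_{s}(\cdot))\,ds$. This vanishes for $x\in N$ because $r_{s}(x)\in N$.
\item So $\beta=\beta_{0}+\pi^{*}\gamma$ is a primitive of $\tau$ vanishing along $j_{1}(N)$. Your Step 3 then runs verbatim, integrating $X_{t}$ to time $1$ on an open neighbourhood of the compact set $j_{1}(N)$.
\end{enumerate}

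For the paper's application you could also sidestep this. The construction in Lemma \ref{lemma:coiso-construct} works on a slightly larger disk $D(a'')$ with $a<a''<a'$, which supplies extensions with matching pullback forms on both sides.
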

\begin{proof}
  See \cite{gotay-PAMS-1982} and \cite[Exercise 3.4.18]{mcduff-salamon-book-2017}.
\end{proof}
We apply this fact with:
\begin{equation*}
  N=\Lambda\times D(a)\quad M_{1}=T^{*}\Lambda\times \C\quad M_{2}=\Omega.
\end{equation*}
In particular, we conclude from Lemma \ref{lemma:gotay} a symplectic embedding:
\begin{equation*}
  \Phi:D_{\epsilon}T^{*}\Lambda\times D(a)\to \Omega
\end{equation*}
which agrees with the map $\varphi$ from Lemma \ref{lemma:coiso-construct} on $\Lambda\times D(a)$. As it is a symplectic embedding, it follows that:
\begin{equation*}
  \Phi^{*}\lambda=p\d q+\lambda_{D(a)}+\beta
\end{equation*}
where $\beta$ is a closed 1-form (here $p\d q$ is the standard Liouville form on $T^{*}\Lambda$). Since $\Phi$ agrees with $\varphi$ on the zero section, and $D_{\epsilon}T^{*}\Lambda\times D(a)$ deformation retracts onto $\Lambda\times D(a)$ we conclude that all periods of $\beta$ vanish, and hence $\beta$ is an exact 1-form. This completes the proof of Theorem \ref{theorem:geometric-construction}. \hfill$\square$

\subsection{Künneth morphism}
\label{sec:kunneth-morphism}

A Liouville manifold $(W,\omega,Z)$ involves the data of a Liouville vector field $Z$ such that $L_{Z}\omega=\omega$. Let us denote by $Z_{s}$ the time-$s$ flow. Hamiltonian Floer theory of $(W,\omega,Z)$ is well-established provided one uses Hamiltonian systems and $\omega$-tame almost complex structures that are invariant under $Z_{s}$, $s\ge 0$, outside of a compact set; see, e.g., \cite{brocic-cant-JFPTA-2024}. It is in this context that one defines the persistence module $\mathit{SH}_{c}(\Omega)$ used throughout this paper.

The product $W\times \C$ is another Liouville manifold with diagonal Liouville vector field:
\begin{equation*}
 Z_{W\times \C}=Z_{W}+Z_{\C} 
\end{equation*}
where $Z_{\C}$ is the radial Liouville vector field on $\C$, with its standard symplectic form.

The naive idea of the Künneth morphism is that, if $H_{t}$ generates a non-degenerate Hamiltonian
system on $W$ (with positive slope $b$ as in the statement of Theorem \ref{theorem:filtered-kunneth}), then:
\begin{equation*}
  H_{t}+\tau \pi\abs{z}^{2}\text{ where }\tau=b/a
\end{equation*}
generates a non-degenerate Hamiltonian system on $W\times \C$, and the vector space underlying the Floer complex $\mathit{CF}(H_{t}+\tau\pi\abs{z}^{2})$ is identified with the Floer complexes $\mathit{CF}(H_{t})$ computed in $W$. If one picks a split almost complex structure $J_{W\times \C}=J_{W}\oplus J_{\C}$, then the differential respects this identification, and one obtains an isomorphism of complexes:
\begin{equation}\label{eq:isomorphism-of-complexes}
  \mathit{CF}(H_{t}+\tau \pi\abs{z}^{2},J_{W\times \C})\to \mathit{CF}(H_{t},J_{W}).
\end{equation}
Since $\tau\in (0,1)$, this chain level identification also respects the unit element (this is no longer true when $\tau\not\in (0,1)$).

The problem in using this observation in symplectic cohomology theory is that $H_{t}+\pi \abs{z}^{2}$ and $J_{W}+J_{\C}$ are not $Z$-invariant outside of a compact set, and so our preferred framework (as used in \cite{brocic-cant-JFPTA-2024,brocic-cant-arXiv-2025} and throughout the rest of this paper) cannot be applied. Existing literature \cite{oancea-JSG-2008,mclean-GT-2009} successfully establishes a ``Künneth isomorphism'' between the colimits:
\begin{equation*}
  \mathit{SH}(W\times \C)\simeq \mathit{SH}(W)\otimes \mathit{SH}(\C).
\end{equation*}
Such a statement is not ideal for our purposes since $\mathit{SH}(\C)=0$; we instead require a ``filtered'' Künneth map $\mathfrak{K}$ as in Theorem \ref{theorem:filtered-kunneth}.


\subsubsection{Choice of almost complex structure}
\label{sec:choice-almost-complex}

As in Theorem \ref{theorem:filtered-kunneth}, fix:
\begin{itemize}
\item a Liouville domain $Q$, with radial coordinate $r_{1}$, 
\item an $\omega$-tame almost complex structure $J_{1}$ on the completion $W$, which is invariant under $Z_{s}$, $s\ge 0$, outside of $Q=\set{r_{1}\le 1}$,
\item the standard complex structure $J_{2}$ on $\C$,
\item the radial coordinate $r_{2}=a^{-1}\pi\abs{z}^{2}$ for the Liouville domain $D(a)$.
\end{itemize}

The complex structure:
\begin{equation*}
  Z_{s}^{*}J_{1}=\d Z_{s}^{-1}J_{1}\d Z_{s}
\end{equation*}
on $W$ can be considered as ``neck stretched,'' loosely speaking.

Let $\ell(r)$ be a smooth approximation of $\max\set{\log(r),0}$, such that:
\begin{itemize}
\item $\ell(r)=\log(r)$ when $r\ge 1$,
\item $\ell(r)\le 0$ for $r\le 1$,
\item $\ell(r)$ is constant on the interval $r\in [0,\frac{1}{2}]$.
\end{itemize}
See \S\ref{sec:ham-systems-for-kunneth} for an explicit choice of such $\ell$.

Define an almost complex structure on $T(W\times \C)$:
\begin{equation}\label{eq:special-J}
  J=Z_{-\ell(r_{2})}^{*}(J_{1})\oplus J_{2}.
\end{equation}
Then we have:
\begin{lemma}\label{lemma:LE}
  The almost complex structure $J$ is
  Liouville equivariant outside of the
  rectangle
  $\set{r_{1}\le 1}\cap \set{r_{2}\le
    1}$. That is to say, if
  $(w_{1},w_{2})$ lies outside of this
  rectangle, and $s>0$, it holds that
  $(Z_{s}^{*}J)_{(w_{1},w_{2})}=J_{(w_{1},w_{2})}$.  Moreover,
  $J$ is genuinely split:
  $$J_{(w_{1},w_{2})}=(Z^{*}_{-\ell(0)}J_{1})_{w_{1}}\oplus
  (J_{2})_{w_{2}}$$ on the strip
  $\set{r_{2}\le 1/2}$.
\end{lemma}
\begin{proof}
  Let $r_{i}=r_{i}(w_{i})$. If $r_{2}$ is larger than $1$, we compute:
  \begin{equation*}
    \begin{aligned}
      (Z^{*}_{s}J)_{(w_{1},w_{2})}
      &=(Z^{*}_{s-\log(e^{s}r_{2})}J_{1})_{w_{1}}\oplus J_{2,w_{2}}\\
      &=(Z^{*}_{-\log(r_{2})}J_{1})_{w_{1}}\oplus J_{2,w_{2}},\\
      &=(Z^{*}_{-\ell(r_{2})}J_{1})_{w_{1}}\oplus J_{2,w_{2}}=(J)_{(w_{1},w_{2})}.
    \end{aligned}
  \end{equation*}
  On the other hand if $r_{1}$ is larger
  than $1$ while $r_{2}$ is smaller than
  $1$, we compute:
  \begin{equation*}
    \begin{aligned}
      (Z^{*}_{s}J)_{(w_{1},w_{2})}
      &=(Z^{*}_{s-\ell(e^{s}r_{2})}J_{1})_{w_{1}}\oplus J_{2,w_{2}}\\
      &=J_{1,w_{1}}\oplus J_{2,w_{2}}\\
      &=(Z^{*}_{-\ell(r_{2})}J_{1})_{w_{1}}\oplus J_{2,w_{2}},
    \end{aligned}
  \end{equation*}
  where we use the fact that $J_{1}$ is
  equivariant outside of $\set{r_{1}\ge 1}$,
  and:
  \begin{itemize}
  \item $s-\ell(e^{s}r_{2})\ge 0$, in
    the second equality,
  \item $-\ell(r_{2})\ge 0$, in the
    third inequality.
  \end{itemize}
  Finally, if $r_{2}$ is less than $1/2$, the splitting of $J$ holds from the requirement that $\ell(r_{2})=\ell(0)$ is constant in this region.
\end{proof}

\subsubsection{Hamiltonian systems for the Künneth morphism}
\label{sec:ham-systems-for-kunneth}

We continue with the set-up of \S\ref{sec:choice-almost-complex}. The construction of the Künneth map is defined in a similar way to the ``naive'' Künneth map described in \S\ref{sec:kunneth-morphism}; see equation \eqref{eq:isomorphism-of-complexes}. As in the statement of Theorem \ref{theorem:filtered-kunneth}, we will find a domain $K\subset Q\times D(a)$ and directly identify a chain complex computing $\mathit{SH}_{b}(K)$ with a chain complex computing $\mathit{SH}_{b}(Q)$; this identification produces the desired map $\mathfrak{K}$.

The primary goals of \S\ref{sec:kunneth-morphism} are to construct this map and prove it respects $\mathit{PSS}(1)$; a secondary goal is to make the map $\mathfrak{K}$ canonical. For the sake of canonicity, we introduce some functions via explicit formulas:
\begin{definition}\label{definition:cut-off}
  Define a function $\beta:\R\to [0,1]$ by the formula:
  \begin{equation*}
    \beta(s):=\left\{
      \begin{aligned}
        &0&&\text{ for }s\le 0\\
        &\textstyle m^{-1}\int_{0}^{s}e^{-1/\sigma^{2}}e^{-1/(\sigma-1)^{2}}d\sigma&&\text{ for }s\in [0,1]\\
        &1&&\text{ for }s\ge 1
      \end{aligned}
    \right.
  \end{equation*}
  where $m=\int_{0}^{1}e^{-1/\sigma^{2}}e^{-1/(\sigma-1)^{2}}d\sigma$. This function is a standard smooth cut-off function which increases monotonically from $0$ to $1$ on the interval $[0,1]$. Also define:
  \begin{itemize}
  \item $f(r):=\int_{0}^{r}\beta(2s-1)\d s+1-\int_{0}^{1}\beta(2s-1)\d s$,
  \end{itemize}
  so that $f$ is convex, non-decreasing, constant on $[0,\frac{1}{2}]$, and satisfies $f(r)=r$ for $r\ge 1$. Finally define:
  \begin{itemize}
  \item $\ell(r):=\log(f(r))$,
  \end{itemize}
  which makes the choice of $\ell(r)$ in \S\ref{sec:choice-almost-complex} canonical.
\end{definition}

The map $\mathfrak{K}$ is defined via the following family of Hamiltonian systems. 
\begin{definition}\label{definition:admissible-for-kunneth}
  Let $C^{\infty}(Q\times \R/\Z)$ be the topological space of compactly supported, time dependent, smooth functions $P_{t}$ on $Q$, and let $\mathscr{U}$ be an open neighborhood of $0$ in $C^{\infty}(Q\times \R/\Z)$.
  
  Let us say that a time-dependent Hamiltonian function $H_{t}$ on $W\times \C$ is admissible for the Künneth map for $Q\times D(a)$ and slope $b<a$, relative $\mathscr{U}$ and a number $\delta>0$, provided that:
  \begin{equation}\label{eq:crazy-formula}
    H_{t}=b e^{\ell(r_{2})}\delta f(\delta^{-1}e^{-\ell(r_{2})}r_{1})+br_{2}+\beta(2-2r_{2})P_{t}
  \end{equation}
  where $P_{t}\in \mathscr{U}$ and:
  \begin{enumerate}[label=(K\arabic*)]
  \item\label{item:K1} all orbits of $H_{t}$ lie in the submanifold $\set{r_{2}=0}=W\times \set{0}$, and,
  \item\label{item:K2} the moduli space of Floer cylinders for $(H_{t},J)$ is cut transversally, where $J$ is given by \eqref{eq:special-J}.
  \end{enumerate}
  We denote by $\mathscr{O}_{b}(\delta,\mathscr{U})$ the set of such systems $H_{t}$ satisfying \ref{item:K1} and \ref{item:K2} with $P_{t}\in \mathscr{U}$.
\end{definition}
\begin{definition}\label{definition:Kdelta}
  In the context of Definition \ref{definition:admissible-for-kunneth}, define:
  \begin{equation*}
    K_{\delta}:=\set{\delta e^{\ell(r_{2})}f(\delta^{-1}e^{-\ell(r_{2})}r_{1})+r_{2}\le 1}\subset Q\times D(a).
  \end{equation*}
  Then $K_{\delta}$ is a smooth Liouville domain\footnote{To check $K_{\delta}\subset Q\times D(a)$, observe that:
  \begin{equation*}
    \max\set{r_{1},r_{2}}\le cf(c^{-1}r_{1})+r_{2}\text{ for all }c>0
  \end{equation*}
  since $f$ is convex and $f(x)=x$ for $x\ge 1$.} defining the Liouville structure on $W\times \C$ such that $H_{t}=r_{K_{\delta}}$ holds outside of a compact set (here $r_{K_{\delta}}$ is the radial coordinate for the domain $K_{\delta}$) when $H_{t}$ is defined by \eqref{eq:crazy-formula}. 
\end{definition}
\begin{remark}\label{remark:significant-canonical}
  To simplify the exposition, we will not discuss changes of almost complex structure and will always use $J$ given by \eqref{eq:special-J} for a fixed $J_{1}$. This slight lack of canonicity is easily remedied by including $J_{1}$ as an auxiliary choice, and this strategy works without any problems (except for making the notation more cumbersome). We adopt a middle ground and try to make the construction canonical with regard to the ``significant'' choice of $P_{t}$.
\end{remark}

\begin{lemma}
  For any slope $b<a$ and any open set $\mathscr{U}$ and sufficiently small number $\delta$ as in Definition \ref{definition:admissible-for-kunneth} the set $\mathscr{O}_{b}(\delta,\mathscr{U})$ is non-empty.
\end{lemma}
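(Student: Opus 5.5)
We need to show that for small $\delta$ there is a $P_t\in\mathscr{U}$ such that \eqref{eq:crazy-formula} satisfies \ref{item:K1} and \ref{item:K2}. The plan is to analyse the Hamiltonian vector field of $H_t$ region by region in $r_2$, and then to obtain transversality from genericity of $P_t$ together with the splitting in Lemma \ref{lemma:LE}.

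\emph{Step 1 (orbits away from $r_2=0$).} On $\set{r_2\ge 1}$ the cut-off $\beta(2-2r_2)$ vanishes. There $e^{\ell(r_2)}=r_2$, so
\begin{equation*}
H=\delta r_2 f(\delta^{-1}r_1/r_2)\,b+br_2 ,
\end{equation*}
which is $1$-homogeneous for the diagonal Liouville flow. It is essentially the radial function of $K_\delta$ scaled by $b$, so its $1$-periodic orbits correspond to closed characteristics of $\partial K_\delta$ of period $b$. I would show that $\partial K_\delta$ consists of two pieces:
\begin{itemize}
\item a piece near $\set{r_1\le\delta r_2}$, where $f$ is constant, so $H=\mathrm{const}\cdot r_2$ with slope slightly above $b$ relative to $D(a)$;
\item a piece where $f(x)=x$, so $H=br_1+br_2$ and the flow splits into the Reeb flow of $Q$ at speed $b$ and rotation in $\C$ at speed $b/a<1$.
\end{itemize}
In the split piece, a $1$-periodic orbit needs a full rotation in the $\C$-factor, which is impossible since $b<a$; the alternative, a stationary $\C$-component, forces $r_2=0$. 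In the convex transition region the $\C$-component rotates with angular speed $b(f(x)-xf'(x)+\dots)/a$. I would check this stays strictly below $1$ in period units as long as $\delta$ is small, using $f-xf'\le f(0)$ and that $\delta f(0)\to 0$. This is the main computation.

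For $r_2\in[1/2,1]$ the same argument applies with $e^{\ell(r_2)}$ replaced by $f(r_2)$, plus a small perturbation by $P_t$. For $r_2\le 1/2$ everything is split: $H=b\delta e^{\ell(0)}f(\cdot)+br_2+P_t$. The $\C$-factor is then the harmonic oscillator of slope $b/a<1$, whose only $1$-periodic orbit is $z=0$. Hence every orbit has $r_2=0$ once $P_t$ is small enough that no orbits are created in the transition region. This smallness is guaranteed by openness, since nondegenerate behaviour of the rotation number bound is an open condition in $C^\infty$. This establishes \ref{item:K1}.

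\emph{Step 2 (nondegeneracy and the $W$-factor).} On $W\times\set{0}$ the Hamiltonian restricts to $b\delta e^{\ell(0)}f(\delta^{-1}e^{-\ell(0)}r_1)+P_t$. This is a slope-$b$ Hamiltonian for a shrunken copy of $Q$, and $b\notin\mathit{Spec}(Q)$ implies there are no orbits at infinity. A generic small $P_t$ makes its orbits nondegenerate in $W$. The normal linearization in $\C$ is rotation by $b/a\in(0,1)$, hence nondegenerate. So all orbits are nondegenerate for generic $P_t\in\mathscr{U}$.

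\emph{Step 3 (transversality, the expected main obstacle).} I would use the maximum principle together with Lemma \ref{lemma:LE} to confine Floer cylinders between orbits in $\set{r_2=0}$. The function $r_2$ is subharmonic along solutions in the region where $J$ is split and $H$ is $br_2$ plus a $W$-term, and the cut-off is arranged so that cylinders stay in $\set{r_2\le 1/2}$. There the equation splits into a Floer equation in $W$ for $(H|_W,Z^*_{-\ell(0)}J_1)$ and a linear Cauchy--Riemann type equation in $\C$. The $\C$-component is a solution of the linear equation with zero asymptotics, hence vanishes by the energy identity.

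For transversality in $W$, I would take $P_t$ generic; standard Floer--Hofer--Salamon genericity over time-dependent Hamiltonian perturbations applies, since perturbations supported near orbits suffice. The normal operator is the complex-linear operator $\bar\partial-\frac{b}{a}\cdot 2\pi i$ on a trivial line bundle with both asymptotics of the same Conley--Zehnder index. By a splitting into Fourier modes this operator is an isomorphism, so no cokernel arises, and the full linearized operator is surjective.

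The delicate point I expect is justifying the confinement to $\set{r_2\le 1/2}$ in the non-split region $r_2\in[1/2,1]$, where $J$ is only Liouville-equivariant. I would first try the maximum principle for $r_2$ directly, using that $H$ depends on $r_2$ monotonically there once $\delta$ is small. This would give $\mathscr{O}_b(\delta,\mathscr{U})\neq\emptyset$.
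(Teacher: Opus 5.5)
Your Steps 1 and 2 are the paper's argument for \ref{item:K1} and for nondegeneracy. The $\C$-component of $X_{H_t}$ is $\kappa X_{r_2}$ with $\kappa=bS-2\beta'(2-2r_2)P_t$, and your bound $0\le f-xf'\le f(0)$ is what puts $\kappa$ in $(0,a)$ once $\delta$ and $P_t$ are small. Since $r_2$ is conserved along the flow, no $1$-periodic orbit leaves $r_2=0$. Generic $P_t$ supported in $Q$ (using $b\notin\mathit{Spec}(Q)$), together with $b/a\notin\Z$ in the normal direction, then gives nondegeneracy.

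The gap is in Step 3. Your transversality argument needs every Floer cylinder to lie in the split region (indeed in $W\times\set{0}$), and you leave this confinement unproved. The maximum principle you suggest does not deliver it. Because $J$ is block-diagonal everywhere with $\C$-block $J_2$, the $\C$-component of any Floer cylinder solves $\partial_s u_2+i\partial_t u_2\pm\frac{2\pi}{a}\kappa(s,t)\,u_2=0$, where $\kappa(s,t)=\kappa(u(s,t),t)$ depends on $u_1$. A maximum principle for $\abs{u_2}^2$ needs a sign on $\partial_s\kappa$, as in the Abouzaid--Seidel no-escape lemma, where the slope must be monotone in $s$. Here $\partial_s\kappa$ involves $\partial_s u_1$ and has no sign. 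Monotonicity of $H$ in $r_2$ only gives $\kappa>0$, which is not the relevant condition. Subharmonicity inside $\set{r_2\le 1/2}$ also says nothing about whether the cylinder enters $\set{r_2>1/2}$.

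In the paper, \ref{item:K2} is obtained directly from genericity of $P_t$: every cylinder passes through the region where $P_t$ is active. Confinement is proved separately in Lemma \ref{lemma:floer-cylinders-KMAP} by compactness. As $\delta\to0$ and $\mathscr{U}$ shrinks, $\kappa\to b$ uniformly, so cylinders meeting $\set{r_2>1/2}$ would converge, after $s$-translation, to a nonconstant Floer cylinder for $bX_{r_2}$ in $\C$. No such cylinder exists for $0<b<a$. Since shrinking $\mathscr{U}$ only shrinks $\mathscr{O}_b(\delta,\mathscr{U})$, you may import this argument.

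Alternatively, the linear structure gives confinement directly. Near the ends $\kappa\equiv b$, so exponential decay together with $b/a\notin\Z$ forces the winding of $t\mapsto u_2(s,t)$ to be strictly larger as $s\to-\infty$ than as $s\to+\infty$. On the other hand, the similarity principle makes zeros of $u_2$ isolated and positive, so this winding is nondecreasing in $s$. Hence $u_2\equiv0$. With confinement in place, your splitting $D_u=D_{u_1}\oplus(\bar\partial+2\pi b/a)$ and genericity in $W$ finish the proof.
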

\begin{proof}
  For \ref{item:K1}, we compute the Hamiltonian vector field $X_{H_{t}}$ with respect to the splitting $TW\oplus T\C$:
  \begin{equation}\label{eq:XHT-splitting}
    X_{H_{t}}=\left[
      \begin{matrix}
        bf'(\delta^{-1}e^{-\ell(r_{2})}r_{1})X_{r_{1}}+\beta(2-2r_{2})X_{P_{t}}\\
        (bS-2\beta'(2-2r_{2})P_{t})X_{r_{2}}
      \end{matrix}
    \right]
  \end{equation}
  where:
  \begin{itemize}
  \item $S=\ell'(r_{2})e^{\ell(r_{2})}\delta [f(\rho)-\rho f'(\rho)]+1$ abbreviating $\rho=\delta^{-1} e^{-\ell(r_{2})}r_{1}$.
  \end{itemize}
  In particular, by picking $\delta$ and $P_{t}$ small enough, we may ensure that:
  \begin{equation*}
    0<bS-2\beta'(2-2r_{2})P_{t}< a,
  \end{equation*}  
  since $S$ can be made as close to $1$ as desired by making $\delta$ small (and $P_{t}$ can also be made vanishingly small). Since the minimal period of $X_{r_{2}}$ is $a$ (recall that $r_{2}=a^{-1}\pi \abs{z}^{2}$ is the radial coordinate for $D(a)$) it follows that any initial point lying above the region $r_{2}\ne 0$ cannot lie on a one-periodic orbit of $X_{H_{t}}$. Thus \ref{item:K1} holds.

  Part \ref{item:K2} follows easily from \ref{item:K1} and the formula \eqref{eq:XHT-splitting} --- one can simply use the generic perturbation term $P_{t}$ to render all orbits non-degenerate and all Floer cylinders regular (as all Floer cylinders must pass through the region where $P_{t}$ is active). Here it is important that $b\not\in \mathit{Spec}(Q)$ so that all orbits actually lie in $Q$ (where $P$ is active).
\end{proof}

\subsubsection{Floer cylinders for the Künneth morphism}
\label{sec:floer-cylind-kunn}

In this section we prove structural results for the Floer complex $\mathit{CF}(H_{t},J)$ provided $H_{t}\in \mathscr{O}_{b}(\delta,\mathscr{U})$ and $\delta$ and the open set $\mathscr{U}$ are sufficiently small.

\begin{lemma}\label{lemma:floer-cylinders-KMAP}
  For sufficiently small $\delta$ and $\mathscr{U}$ as in Definition \ref{definition:admissible-for-kunneth} all Floer cylinders for $(H_{t},J)$ where $H_{t}\in \mathscr{O}_{b}(\delta,\mathscr{U})$ lie in the submanifold $W\times \set{0}$.

  Moreover, every Floer differential cylinder for $(H_{t}|_{W\times \set{0}},J|_{W\times \set{0}})$ remains a valid Floer differential cylinder for $H_{t},J$.
\end{lemma}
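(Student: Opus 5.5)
The plan is to project a Floer cylinder $u=(u_{1},v)$ for $(H_{t},J)$ to the $\C$-factor and show that $v\equiv 0$, using the similarity principle together with a winding-number count. Two structural facts make this possible. First, by \eqref{eq:special-J} the second summand of $J$ is always the standard $J_{2}$. Second, by \eqref{eq:XHT-splitting} the $T\C$-component of $X_{H_{t}}$ is $g\,X_{r_{2}}$, where
\begin{equation*}
  g=bS-2\beta'(2-2r_{2})P_{t}
\end{equation*}
is a real function on $W\times\C$. Hence $v$ satisfies
\begin{equation*}
  \partial_{s}v+J_{2}\big(\partial_{t}v-\gamma(s,t)X_{r_{2}}(v)\big)=0,\qquad \gamma(s,t)=g(u(s,t),t).
\end{equation*}
Since $X_{r_{2}}(z)=\tfrac{2\pi}{a}iz$ up to a sign convention, this is the linear Cauchy--Riemann type equation
\begin{equation*}
  \partial_{s}v+i\partial_{t}v+\tfrac{2\pi}{a}\gamma\, v=0
\end{equation*}
with a real-valued zeroth order coefficient. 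As in the proof of the previous lemma, choosing $\delta$ and $\mathscr{U}$ small forces $0<\gamma<a$ everywhere, which is the only quantitative input needed.

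The key steps, in order, are as follows.
\begin{enumerate}
\item By \ref{item:K1} the asymptotic orbits of $u$ lie in $\set{r_{2}=0}$, so $v\to 0$ at both ends. There $\beta'(2-2r_{2})=0$ and $\ell$ is constant, so $\gamma(s,t)$ converges to a $t$-dependent function $\gamma_{\pm}(t)$ with values in $(0,a)$.
\item By the similarity principle (Carleman), writing locally $v=e^{h}w$ with $\bar\partial h=-\tfrac{\pi}{a}\gamma$ and $w$ holomorphic, either $v\equiv 0$ or $v$ has isolated zeros of positive local degree.
\item If $v\not\equiv 0$, exponential-decay asymptotics give $v(s,t)\sim e^{\lambda s}e_{\pm}(t)$ near $\pm\infty$, where $e_{\pm}$ is an eigenfunction of the asymptotic operator $A_{\pm}=-i\partial_{t}-\tfrac{2\pi}{a}\gamma_{\pm}(t)$ on $\R/\Z$, with $\lambda<0$ at $+\infty$ and $\lambda>0$ at $-\infty$. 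Gauging by $\exp\big(i\tfrac{2\pi}{a}\int_{0}^{t}\gamma_{\pm}\big)$ turns $A_{\pm}$ into a constant operator with total twist $\tfrac{2\pi}{a}\int\gamma_{\pm}\in(0,2\pi)$. Consequently every eigenvalue of one sign has winding $\le 0$, and every eigenvalue of the other sign has winding $\ge 1$; the twist never lands on a multiple of $2\pi$ because $\tau=b/a\in(0,1)$.
\item Counting zeros gives $\#v^{-1}(0)=w(+\infty)-w(-\infty)$ (or the same with the signs reversed, depending on convention). With the decaying directions forced as in step 3, the right-hand side is $\le -1$, which contradicts positivity. Hence $v\equiv 0$.
\end{enumerate}

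The step I expect to be the main obstacle is pinning down the asymptotic-winding signs in step 3 with the paper's conventions, since $\gamma$ is $(s,t)$-dependent and only converges at the ends. I would first reduce to the model case of constant $\gamma=b$ by the explicit gauge transformation in $t$ and check directly that $z\mapsto e^{-2\pi i\tau t}z$ has the right decay. If the signs come out backwards, that just means the two ends swap roles, and the inequality still fails because the twist lies strictly in $(0,2\pi)$. An alternative route, if winding bookkeeping is awkward, is an action/energy argument: compute the $\C$-part of the energy via Stokes and show it is nonpositive, again using $0<\gamma<a$.

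For the second statement: on $W\times\set{0}$ we have $X_{r_{2}}=0$, so $X_{H_{t}}$ is tangent to $W\times\set{0}$ and there equals $X_{H_{t}|_{W\times\set{0}}}$. Moreover, by Lemma \ref{lemma:LE}, $J$ splits on $\set{r_{2}\le1/2}$ with $W$-factor $J|_{W\times\set{0}}=Z_{-\ell(0)}^{*}J_{1}$. Hence a Floer cylinder $u_{1}$ for the restricted data, paired with $v\equiv 0$, solves the Floer equation for $(H_{t},J)$ with the same asymptotics, and is therefore a valid differential cylinder.
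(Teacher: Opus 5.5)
Your first part is correct, but it takes a genuinely different route from the paper, while your second part has a gap.

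\textbf{First part: a different, valid route.} The paper argues softly. A cylinder contained in $\set{r_{2}\le 1/2}$ splits, and its $\C$-component is a Floer cylinder for $bX_{r_{2}}$, hence constant at $0$. Cylinders leaving this strip are then ruled out by taking $\delta_{n}\to 0$, $P_{n,t}\to 0$ and extracting, after translation, a non-constant limiting Floer cylinder for $bX_{r_{2}}$.

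You instead treat $v$ as a solution of a linear Cauchy--Riemann equation with a real scalar zeroth-order term, and combine the similarity principle with asymptotic winding numbers. This works, and more cleanly than you expect. Since $\ell'=0$ and $\beta'(2-2r_{2})=0$ on $\set{r_{2}\le 1/2}$, one has $\gamma\equiv b$ there. So for $\abs{s}$ large, $v$ solves exactly $\partial_{s}v+i\partial_{t}v+\frac{2\pi b}{a}v=0$ and has the exact expansion $v=\sum_{k}a_{k}e^{(2\pi k-2\pi b/a)s}e^{2\pi i k t}$. No general asymptotic analysis or gauge transformation is needed, and, as you note, either sign convention gives a zero count $\le -1$.

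The interior bound $0<\gamma<a$ is never used. Your argument therefore proves the first part for every $H_{t}$ satisfying \ref{item:K1}, with no compactness and no further shrinking of $\delta,\mathscr{U}$. The paper's argument needs that shrinking, but it uses no intersection theory and is reused for continuation and PSS solutions.

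Your energy fallback would not work as stated: the $\C$-energy equals $\pm\frac{\pi}{a}\int\partial_{s}\gamma\,\abs{v}^{2}$, which has no sign.

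\textbf{Second part: the gap.} A Floer \emph{differential} cylinder is a rigid, transversally cut out solution. The lemma is used right afterwards to build the tautological chain isomorphism $\mathfrak{K}_{H_{t}}$. For that you need: $u_{1}$ is rigid and regular for the restricted data if and only if $(u_{1},0)$ is rigid and regular for $(H_{t},J)$. Checking that $(u_{1},0)$ solves the equation only matches the solution sets. A priori the $\C$-directions could contribute kernel, cokernel, or index, so your ``therefore'' is unjustified.

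The missing step, which the paper supplies, is the following. On $\set{r_{2}<1/2}$ one has $H_{t}=H_{t}|_{W\times\set{0}}\circ\mathrm{pr}_{W}+br_{2}$ and $J=Z_{-\ell(0)}^{*}J_{1}\oplus J_{2}$. Hence the linearization at $(u_{1},0)$ is block diagonal, $D_{u}=D_{u_{1}}\oplus D_{2}$, with $D_{2}\xi=\partial_{s}\xi+i\partial_{t}\xi+\frac{2\pi b}{a}\xi$ (up to your sign).

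The operator $D_{2}:W^{1,p}(\R\times\R/\Z,\C)\to L^{p}(\R\times\R/\Z,\C)$ is an isomorphism. It is translation invariant, and its asymptotic operator $-i\partial_{t}-\frac{2\pi b}{a}$ is invertible because $b/a\notin\Z$. Equivalently, it has index $0$ and trivial kernel, the latter by exactly your Fourier/winding argument.

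It follows that $\ker D_{u}\cong\ker D_{u_{1}}$ and $\operatorname{coker}D_{u}\cong\operatorname{coker}D_{u_{1}}$. So rigidity and regularity transfer in both directions; in particular \ref{item:K2} gives regularity of the restricted data. Finally, the Conley--Zehnder index of $(\gamma,0)$ differs from that of $\gamma$ by a constant, so the gradings match as well.
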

\begin{proof}
  By \ref{item:K1}, we already know that all orbits of $H_{t}\in \mathscr{O}_{b}(\delta,\mathscr{U})$ lie in the submanifold $W\times \set{0}$. To prove that the connecting trajectories for $(H_{t},J)$ also lie in this submanifold, we appeal to a compactness argument.

    The key is the formula \eqref{eq:XHT-splitting} and the fact that $\ell(r_{2})$ is constant on the region $r_{2}\le 1/2$. Indeed, if $r_{2}\le 1/2$, then \eqref{eq:XHT-splitting} simplifies to the split system:
  \begin{equation*}
    X_{H_{t}}=\left[
      \begin{matrix}
        bf'(\delta^{-1}e^{-\ell(0)}r_{1})X_{r_{1}}+X_{P_{t}}\\
        bX_{r_{2}}
      \end{matrix}
    \right].
  \end{equation*}
  Since $J$ is also split in the region $r_{2}\le 1/2$ (Lemma \ref{lemma:LE}), we conclude that any Floer cylinder $u$ contained in the region $\set{r_{2}\le 1/2}$ equals $u=(u_{1},u_{2})$ where $u_{2}$ is a Floer cylinder for $(bX_{r_{2}},J_{2})$. Since $b\in (0,a)$, the only Floer cylinders for $(b X_{r_{2}},J_{2})$ are the constant ones located at the origin. Thus, any Floer cylinder for $(H_{t},J)$ not contained in $W\times \set{0}$ must intersect the region $\set{r_{2}>1/2}$. This conclusion is stable under taking limits: if we consider a sequence $\delta_{n}\to 0$ and $\mathscr{U}_{n}$, so that $P_{n,t}\in \mathscr{U}_{n}$ implies $P_{n,t}\to 0$, then any sequence of Floer cylinders $u_{n}=(u_{1,n},u_{2,n})$ intersecting $\set{r_{2}>1/2}$ will have a subsequence so that $u_{2,n}$ converges (after reparametrization by a translation) to a Floer cylinder for $b X_{r_{2}}$. This is because since the coefficient:
  \begin{equation*}
    (bS-2\beta'(2-2r_{2})P_{t})\quad\text{from \eqref{eq:XHT-splitting}}
  \end{equation*}
  converges uniformly to $b$ as $n\to\infty$. This completes the first part of the proof.

  The second part, that Floer differential cylinders for the restricted data remain valid when considered in the larger space follows from standard arguments for split Hamiltonian systems and split almost complex structures (noting that $H_{t},J$ are both split in a neighborhood of $W\times \set{0}$). Briefly, one shows that the linearized operator at a solution $u=(u_{1},0)$ also splits into a diagonal matrix:
  \begin{equation*}
    D_{u}=\left[
      \begin{matrix}
        D_{u_{1}}&0\\
        0&\bar{\partial}+2\pi b/a
      \end{matrix}
    \right]
  \end{equation*}
  since $\bar{\partial}+2\pi b/a$ is an isomorphism, since $b\in (0,a)$, acting on the appropriate Sobolev spaces: $$W^{1,p}(\R\times \R/\Z,\C)\to L^{p}(\R\times \R/\Z,\C),$$ the regularity and rigidity of the solution $u_{1}$ is equivalent to the regularity and rigidity of the solution $u$.
\end{proof}

Due to Lemma \ref{lemma:floer-cylinders-KMAP}, we can pick $\delta>0$ and $\mathscr{U}$ small enough so that, for each $H_{t}\in \mathscr{O}_{b}(\delta,\mathscr{U})$, there is a tautological isomorphism of complexes:
\begin{equation*}
  \mathfrak{K}_{H_{t}}:\mathit{CF}(H_{t},J)\simeq \mathit{CF}(H_{t}|_{W\times \set{0}},J|_{W\times \set{0}}).
\end{equation*}
This map should be thought of as one representation of the Künneth map satisfying Theorem \ref{theorem:filtered-kunneth}; in the next section we explain how to make $\mathfrak{K}$ ``canonical,'' i.e., independent of the auxiliary choices used in $H_{t}$.

\subsubsection{Canonicity of the Künneth map}
\label{sec:canon-kunn-map}

Recall from \S\ref{sec:introduction} that $\mathit{SH}_{b}(Q)$ is defined to be the (formal) inverse limit of the Floer cohomologies of systems which agree with $b r_{Q}$ outside of a compact set. Thus, for $\delta>0$ and $\mathscr{U}$ small enough such that $H_{t}\in \mathscr{O}_{b}(\delta,\mathscr{U})$ satisfies Lemma \ref{lemma:floer-cylinders-KMAP} we have the following diagram:
\begin{equation}\label{eq:definition-of-K}
  \begin{tikzcd}
    \mathit{SH}_{b}(K_{\delta})\arrow[d]\arrow[r,dashed,"\mathfrak{K}_{\delta}"]&\mathit{SH}_{b}(Q)\arrow[d]\\
    \mathit{HF}(H_{t},J)\arrow[r,"\mathfrak{K}_{H_{t}}"]&\mathit{HF}(H_{t}|_{W\times \set{0}},J|_{W\times \set{0}})
  \end{tikzcd}
\end{equation}
where $K_{\delta}\subset Q\times D(a)$ is as in Definition \ref{definition:Kdelta}. The upper horizontal map is well-defined, for fixed $H_{t}$, by inverting the right vertical map. However, it a priori depends on the choice of $P_{t}$ used in the definition of $H_{t}$. In this section we show the result is independent of $P_{t}$.

\begin{lemma}\label{lemma:canonical-K}
  The upper horizontal map in \eqref{eq:definition-of-K} is independent of the choice of $H_{t}\in \mathscr{O}_{b}(\delta,\mathscr{U})$, provided that $\delta$ and $\mathscr{U}$ are sufficiently small.
\end{lemma}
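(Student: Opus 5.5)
Given two admissible systems $H^{0}_{t},H^{1}_{t}\in \mathscr{O}_{b}(\delta,\mathscr{U})$ with perturbation terms $P^{0}_{t},P^{1}_{t}\in\mathscr{U}$, I will show that the two versions of the upper map in \eqref{eq:definition-of-K} agree. The plan is to build a commuting square of continuation maps: one on $W\times\C$ from $\mathit{HF}(H^{0}_{t},J)$ to $\mathit{HF}(H^{1}_{t},J)$, and one on $W$ between the restrictions to $W\times\set{0}$. Both continuation maps are compatible with the vertical maps from $\mathit{SH}_{b}(K_{\delta})$ and $\mathit{SH}_{b}(Q)$, since these are the structure maps of the formal limits and continuation maps are functorial. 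So it suffices to show that the tautological isomorphisms $\mathfrak{K}_{H^{0}}$ and $\mathfrak{K}_{H^{1}}$ intertwine the two continuation maps.

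For the continuation data I take the interpolating family
\[
H^{s}_{t}=b e^{\ell(r_{2})}\delta f(\delta^{-1}e^{-\ell(r_{2})}r_{1})+br_{2}+\beta(2-2r_{2})P^{s}_{t},\qquad P^{s}_{t}=(1-\beta(s))P^{0}_{t}+\beta(s)P^{1}_{t}.
\]
If $\mathscr{U}$ is shrunk to be convex, every $P^{s}_{t}$ lies in $\mathscr{U}$. Outside a compact set the family equals $r_{K_{\delta}}$, and the almost complex structure $J$ is fixed. Hence this is an honest continuation datum for $K_{\delta}$: it is $s$-independent at infinity, so no monotonicity issue arises. Its restriction to $W\times\set{0}$ is a continuation datum for $Q$ with fixed slope $b$.

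The key step, and the main obstacle, is to redo Lemma \ref{lemma:floer-cylinders-KMAP} for these $s$-dependent continuation cylinders. The first part of the argument carries over. In the region $\set{r_{2}\le 1/2}$ both $J$ and $X_{H^{s}_{t}}$ split, and the $\C$-component satisfies the autonomous equation for $bX_{r_{2}}$, independent of $s$. For $b\in(0,a)$ the only finite-energy solutions of that equation with ends at the origin are constant. So any continuation cylinder that leaves $W\times\set{0}$ must enter $\set{r_{2}>1/2}$. I then argue by contradiction: take $\delta_{n}\to0$ and $\mathscr{U}_{n}$ shrinking to $0$, translate so the cylinders hit $\set{r_{2}>1/2}$ near $s=0$, and use Gromov compactness. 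The coefficient of $X_{r_{2}}$ in \eqref{eq:XHT-splitting} converges uniformly to $b$, so the limit $\C$-component would be a non-constant finite-energy solution for $bX_{r_{2}}$, which is impossible. One subtlety must be checked here: the energy of the continuation cylinders needs to be uniformly bounded. This should follow because the $s$-dependence is only through $\beta'(s)(P^{1}-P^{0})$, which is small.

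Regularity is handled by the splitting of the linearized operator. At a solution $u=(u_{1},0)$ the operator is diagonal, with normal block $\bar\partial+2\pi b/a$, which is invertible. So transversality of the continuation problem on $W\times\C$ is equivalent to transversality on $W$, which I arrange by a generic choice of the interpolation (perturbing the path $P^{s}$ inside $\mathscr{U}$). Moreover, rigid continuation cylinders on $W\times\C$ correspond bijectively to those on $W$. This gives chain-level commutation of the square. Combined with functoriality of the vertical maps, it shows that $\mathfrak{K}_{\delta}$ is independent of $P_{t}$.
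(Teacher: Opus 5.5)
Your proposal is correct and is essentially the paper's argument. The paper reduces the lemma to showing that the isomorphisms $\mathfrak{K}_{H_{t}}$ intertwine the continuation maps obtained by varying $P_{t}$, then says this follows by rerunning the compactness and splitting argument of Lemma \ref{lemma:floer-cylinders-KMAP} with continuation cylinders in place of Floer cylinders, and leaves the details to the reader. You supply those details, and the energy bound you flag is indeed harmless, since all asymptotic orbits have action $O(\delta)+O(\sup\abs{P_{t}})$ and the interpolation contributes curvature at most $\sup\abs{P^{1}_{t}-P^{0}_{t}}$.
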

\begin{proof}
  It suffices to prove that the maps $\mathfrak{K}_{H_{t}}$ commute with the continuation maps arising from changing $P_{t}$. This argument follows the same lines as Lemma \ref{lemma:floer-cylinders-KMAP}, except Floer cylinders are replaced by continuation cylinders. The details are left to the reader.
\end{proof}

\subsubsection{Compatibility of the Künneth map with the unit element}
\label{sec:compatible-KM-unit}

Fix $\delta$ and $\mathscr{U}$ small enough that Lemma \ref{lemma:canonical-K} applies; this fixes a domain $K_{\delta}$ and a map $\mathfrak{K}_{\delta}:\mathit{SH}_{b}(K_{\delta})\to \mathit{SH}_{b}(Q)$. The goal of this section is proving:
\begin{lemma}\label{lemma:PSS-KMAP}
  For $\delta$ small enough, the isomorphism $\mathfrak{K}_{\delta}:\mathit{SH}_{b}(K_{\delta})\to \mathit{SH}_{b}(Q)$ preserves the unit elements, i.e., $\mathfrak{K}_{\delta}(\mathit{PSS}(1))=\mathit{PSS}(1).$
\end{lemma}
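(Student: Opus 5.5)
Since the map $\mathfrak{K}_{\delta}$ is built from the tautological chain isomorphism $\mathfrak{K}_{H_{t}}$, the plan is to show that the chain-level representatives of $\mathit{PSS}(1)$ on both sides correspond under it. We represent $\mathit{PSS}(1)$ by counting PSS half-cylinders, i.e., solutions on $\C$ (viewed as $\R\times\R/\Z$ capped at $s=-\infty$) of the Floer equation for $\beta(s)H_{t}$, with a single output asymptotic to an orbit of $H_{t}$. On the $K_{\delta}$ side we use $H_{t}\in\mathscr{O}_{b}(\delta,\mathscr{U})$ and $J$ from \eqref{eq:special-J}. On the $Q$ side we use the restricted data $(H_{t}|_{W\times\set{0}},J|_{W\times\set{0}})$.

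The first step is to check that every PSS half-cylinder for the data $(\beta(s)H_{t},J)$ lies in $W\times\set{0}$ when $\delta$ and $\mathscr{U}$ are small. We mimic Lemma \ref{lemma:floer-cylinders-KMAP}. In the region $\set{r_{2}\le 1/2}$ both $J$ and the Hamiltonian split, and the $\C$-component $u_{2}$ satisfies the PSS equation for $\beta(s)b r_{2}$ with $J_{2}$. Its energy equals minus the action of the output orbit of $br_{2}$. Since $0<b<a$, the only such orbit is the origin, with action $0$, so $u_{2}\equiv 0$. To exclude solutions leaving the strip, we run the same limiting argument as in Lemma \ref{lemma:floer-cylinders-KMAP}: take $\delta_{n}\to 0$ and $P_{n}\to 0$. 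The coefficient of $X_{r_{2}}$ in \eqref{eq:XHT-splitting}, now multiplied by $\beta(s)$, converges to $\beta(s)b$. Then $u_{2,n}$ converges to a nonconstant PSS plane for $\beta(s)br_{2}$ reaching $\set{r_{2}\ge 1/2}$, which is impossible. Here we need $\beta(s)\ge0$ and $\partial_{s}\beta\ge0$, which gives the energy/maximum principle estimate controlling $u_{2}$.

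The second step is transversality and matching of counts. For $u=(u_{1},0)$ the linearized operator is diagonal, with blocks $D_{u_{1}}$ and $\bar\partial+2\pi\beta(s)b/a$ acting on $\C$-valued functions over the plane. The latter is an $s$-dependent perturbation of $\bar\partial$ that is asymptotic to the invertible operator $\bar\partial+2\pi b/a$ at the output end. At the capped end it is $\bar\partial$ on a removable puncture. Its Fredholm index, via the Conley--Zehnder index of the rotation by $2\pi b/a<2\pi$, is $0$. It is also injective: a kernel element is a holomorphic-type section decaying at the output, and a winding number argument shows it is zero. Hence this block is an isomorphism. Therefore $u$ is regular and rigid if and only if $u_{1}$ is, and the PSS counts agree: $\mathfrak{K}_{H_{t}}(\mathit{PSS}(1))=\mathit{PSS}(1)$ at chain level. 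This index bookkeeping is also where $\tau=b/a\in(0,1)$ enters, as in the remark after \eqref{eq:isomorphism-of-complexes}. Compatibility with the vertical maps in \eqref{eq:definition-of-K} follows because $\mathit{PSS}$ commutes with continuation maps, and the lemma follows.

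The main obstacle is the compactness step in the first paragraph. The PSS equation is $s$-dependent, and $J$ is not split where $r_{2}>1/2$. So one must verify that the neck-stretched factor $Z^{*}_{-\ell(r_{2})}J_{1}$ does not spoil the uniform energy and $C^{0}$ bounds on $u_{2,n}$ needed to pass to the limit. I would first try to obtain these bounds from the maximum principle applied to $r_{2}\circ u$, using the formula \eqref{eq:XHT-splitting} together with $\partial_{s}\beta\ge0$.
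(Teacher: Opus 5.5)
Your proposal is correct and is essentially the paper's proof. It localizes the PSS solutions in $W\times\set{0}$ by the same limiting argument as in Lemma \ref{lemma:floer-cylinders-KMAP}, then uses that the linearization at $u=(u_{1},0)$ is diagonal with $\C$-block $\bar{\partial}+2\pi\beta b/a$, which is an isomorphism precisely because $b/a\in(0,1)$; the paper checks this by a Fourier series argument rather than your index-plus-injectivity count.

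The obstacle you flag is milder than you fear. $J$ in \eqref{eq:special-J} is block diagonal everywhere (only its $W$-block depends on $r_{2}$), and $H_{t}$ depends on the $\C$-factor only through $r_{2}$. So $u_{2}$ always solves a scalar linear equation $\bar{\partial}u_{2}+\lambda u_{2}=0$ with real $\lambda$ converging uniformly as $\delta,P_{t}\to 0$, and your winding-number argument (via the similarity principle) in fact applies to $u_{2}$ itself.
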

\begin{proof}
  The proof is similar to the proof of Lemma \ref{lemma:floer-cylinders-KMAP} and \ref{lemma:canonical-K}; one shows (by a compactness argument) that all of the solutions contributing to $\mathit{PSS}(1)$ remain entirely in the submanifold $W\times \set{0}$.

  The argument that solutions contributing to $\mathit{PSS}(1)$ for the restricted data are rigid/regular if and only if they are rigid/regular for the unrestricted data follows a similar argument as the one used in the proof of Lemma \ref{lemma:floer-cylinders-KMAP}. However, here it is truly necessary to use that $b/a\in (0,1)$, while the proof of Lemma \ref{lemma:floer-cylinders-KMAP} technically only requires that $b/a\not\in \Z$. The reason is that the linearized operator for a PSS solution $u:\C\to W$ appears in the form:
  \begin{equation*}
    D_{u}=\left[
      \begin{matrix}
        D_{u_{1}}&0\\
        0&\bar{\partial}+\beta(-s)2\pi b/a
      \end{matrix}
    \right]
  \end{equation*}
  in cylindrical coordinates $z=e^{-2\pi(s+it)}$; here we agree to use the cut-off function $\beta$ from Definition \ref{definition:cut-off} in the definition of the PSS equation, which involves an interpolation between the zero Hamiltonian and $H_{t}$; see \S\ref{sec:pss-elements} and the references therein. If $b/a\in (0,1)$, then the operator:
  \begin{equation}\label{eq:PSS-operator}
    \bar{\partial}+\beta(-s)2\pi b/a:W^{1,p}(\C,\C)\to L^{p}(\C,\C)
  \end{equation}
  is an isomorphism, as can be established using a Fourier series argument. The argument then proceeds as in Lemma \ref{lemma:floer-cylinders-KMAP}.
\end{proof}

\begin{proof}[Proof of Theorem \ref{theorem:filtered-kunneth}]
  Fix $\delta$ and $\mathscr{U}$ small enough that Lemmas \ref{lemma:floer-cylinders-KMAP}, \ref{lemma:canonical-K}, and \ref{lemma:PSS-KMAP} apply; this fixes a domain $K=K_{\delta}$ and a map $\mathfrak{K}:\mathit{SH}_{b}(K)\to \mathit{SH}_{b}(Q)$. By what we have shown above, this map $\mathfrak{K}$ satisfies Theorem \ref{theorem:filtered-kunneth}.
\end{proof}

\subsection{Submersions and Legendrian lifts}
\label{sec:cotangent-submersion}

In this section we prove Theorem \ref{theorem:cotangent-submersion}. The proof is an application of simple facts about symplectic reductions. The first step is to recall that a smooth submersion $\pi:E\to M$, where $E$ is a compact manifold, is automatically a smooth proper fiber bundle (this result is often attributed to Ehresmann).

Let $\pi:E\to M$ be a smooth proper fiber bundle. Define:
\begin{equation*}
  N=\set{(p,q)\in T^{*}E:p\text{ vanishes on }\ker \d\pi};
\end{equation*}
then $N$ is a coisotropic submanifold of $T^{*}E$ (see \cite[\S5.4]{mcduff-salamon-book-2017}).
\begin{lemma}\label{lemma:char-fol}
  The leaves of the characteristic foliation of $N$ (that is to say, the integrable submanifolds tangent to the $\d\lambda$-complement to $TN$) consist of the submanifolds:
  \begin{equation}\label{eq:leaf-of-N}
    \mathscr{F}_{x,y}=\set{(p,q)\in T^{*}E:p=x\circ \d \pi \text{ and }\pi(q)=y},
  \end{equation}
  where $(x,y)\in T^{*}M$ is in the complement of the zero section.
\end{lemma}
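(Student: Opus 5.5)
The proof works in local coordinates adapted to the fiber bundle, with a dimension count at the end. Since $\pi:E\to M$ is a proper submersion, near any point we may choose coordinates $q=(y,v)$ on $E$, with $y$ coordinates on $M$ and $v$ fiber coordinates, so that $\pi(y,v)=y$. Writing $p=(\eta,\xi)$ for the dual momenta, $\ker\d\pi$ is spanned by $\partial_{v}$. Hence $N=\set{\xi=0}$, and a point of $N$ is exactly a covector of the form $p=x\circ\d\pi$ with $x=\eta$ a covector at $y=\pi(q)$. The first step is to record this, and to note that the map
\begin{equation*}
  N\to T^{*}M,\qquad (p,q)\mapsto (x,y)
\end{equation*}
is a well-defined smooth submersion, globally and not only in charts. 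Its fibers are precisely the sets $\mathscr{F}_{x,y}$ from \eqref{eq:leaf-of-N}; each such fiber is a copy of $\pi^{-1}(y)$, hence a compact connected component collection of dimension $\dim E-\dim M$.

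The second step computes the characteristic distribution $TN^{\omega}$. Here $N$ is cut out by the functions $\xi_{1},\dots,\xi_{k}$, with $k=\dim E-\dim M$. So $TN^{\omega}$ is spanned by their Hamiltonian vector fields $X_{\xi_{j}}$, which are $\pm\partial_{v_{j}}$ in canonical coordinates. These fields are tangent to $\set{\xi=0,\ \eta=\mathrm{const},\ y=\mathrm{const}}$ and span its tangent space. Thus at each point of $N$, $TN^{\omega}$ equals the tangent space of the fiber $\mathscr{F}_{x,y}$ through that point.

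A coordinate-free check is also available and confirms the same thing. A vector $V\in TN$ is $\omega$-orthogonal to all of $TN$ if and only if $\d\Pi(V)=0$, where $\Pi:N\to T^{*}M$ is the reduction map. This uses $\Pi^{*}\omega_{M}=\omega|_{N}$, which follows from $\lambda_{E}|_{N}=\Pi^{*}\lambda_{M}$ (itself a direct check from $p=x\circ\d\pi$). It also uses that $\Pi$ is a submersion and that $\omega_{M}$ is nondegenerate.

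The leaves of the characteristic foliation are then the connected components of the fibers $\mathscr{F}_{x,y}$. This is because the fibers are integral submanifolds of $TN^{\omega}$ of full dimension, and they are closed in $N$. The one remaining point concerns the phrase ``$(x,y)$ in the complement of the zero section.'' I expect this to be only a matter of restricting attention to $N$ minus the zero section, where the argument is identical; the zero section case gives the fibers $\pi^{-1}(y)$ inside the zero section of $T^{*}E$. If the fibers of $\pi$ are disconnected, the leaves are the components of $\mathscr{F}_{x,y}$; I would state the lemma up to this caveat, or assume connected fibers.

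The main obstacle is bookkeeping: verifying $\lambda_{E}|_{N}=\Pi^{*}\lambda_{M}$ cleanly. I would do this directly from the definition of the tautological form, $\lambda_{E}(V)=p(\d\tau V)$ where $\tau:T^{*}E\to E$ is the projection. The computation is $p\circ\d\tau=x\circ\d\pi\circ\d\tau=x\circ\d\tau_{M}\circ\d\Pi$.
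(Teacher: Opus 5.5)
Your proof is correct and follows essentially the same route as the paper. The paper simply observes that $T\mathscr{F}_{x,y}\subset TN^{\omega}$ and concludes equality from $\dim \mathscr{F}_{x,y}=\dim E-\dim M=2\dim E-\dim N=\dim TN^{\omega}$; your adapted canonical coordinates, or alternatively your identification $TN\cap TN^{\omega}=\ker \d\Pi$ via $\Pi^{*}\lambda_{M}=\lambda_{E}|_{N}$, just make the paper's ``easy to see'' containment explicit. Your caveat is also accurate: when the fibers of $\pi$ are disconnected the leaves are the connected components of $\mathscr{F}_{x,y}$, a point the paper glosses over because it does not affect the later properness and Legendrian-lift arguments.
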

\begin{proof}
  It is easy to see that the tangent space of each leaf $\mathscr{F}_{x,y}$ is contained in the $\d\lambda$-complement to $TN$. Since:
  \begin{itemize}
  \item $\dim \mathscr{F}_{x,y}=\dim E-\dim M$, and 
  \item $\dim N=\dim E+\dim M$,
  \end{itemize}
  we conclude from dimension reasons that $\dim \mathscr{F}_{x,y}$ equals the dimension of the characteristic foliation.
\end{proof}

Let $SN$ be the spherization\footnote{The reader is warned that the symbol ``$S$'' is used both for \emph{spherization} (dimension lowering operation applied to vector bundles) and \emph{symplectization} (dimension raising operation applied to cooriented contact manifolds). The meaning of the symbol should be inferred from the context.} of the vector bundle $N\to E$ and consider the following diagram:
\begin{equation}\label{eq:reduction-diagram}
  \begin{tikzcd}
    SN\arrow[r,"\subset"]\arrow[d,"\pi"]&ST^{*}E\\
    ST^{*}M.&
  \end{tikzcd}
\end{equation}
It is a standard fact of symplectic reductions that $\pi^{-1}(\Lambda)\subset SN$ is a Legendrian in $ST^{*}E$ for every Legendrian $\Lambda\subset ST^{*}M$.

\begin{lemma}
  The map $\pi$ in \eqref{eq:reduction-diagram} is proper. In particular, if $\Lambda$ is a compact Legendrian in $ST^{*}M$, then $\pi^{-1}(\Lambda)\subset N$ is a compact Legendrian in $ST^{*}E$.
\end{lemma}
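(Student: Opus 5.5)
The plan is to make the map $\pi:SN\to ST^{*}M$ explicit and reduce properness to compactness of the fibers of $E\to M$. Here $N\setminus 0_{E}$ consists of covectors $p=x\circ \d\pi_{q}$ with $x\in T^{*}_{\pi(q)}M$ nonzero. Since $\d\pi_{q}$ is surjective, the map $x\mapsto x\circ\d\pi_{q}$ is injective. So $N\to \pi^{*}T^{*}M$, $(p,q)\mapsto (x,q)$, is a vector bundle isomorphism over $E$, and the reduction map is $(x\circ\d\pi,q)\mapsto (x,\pi(q))$. By Lemma \ref{lemma:char-fol}, the fibers of this map are exactly the leaves $\mathscr{F}_{x,y}$. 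The map is linear on fibers and commutes with positive rescaling, so it descends to spherizations. I will define the spherizations as quotients by the $\R_{>0}$-action, which avoids choosing metrics.

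Properness then reduces to two facts. First, under the identification $SN\cong S(\pi^{*}T^{*}M)=E\times_{M}ST^{*}M$, the map $\pi:SN\to ST^{*}M$ is just the base change of $E\to M$ along $ST^{*}M\to M$. Second, base change of a proper map is proper. Concretely, if $C\subset ST^{*}M$ is compact, then $\pi^{-1}(C)$ is identified with the closed subset $\set{(q,\xi)\in E\times C:\pi(q)=\text{base}(\xi)}$ of the compact space $E\times C$, since $E$ is compact. Hence $\pi^{-1}(C)$ is compact. Alternatively one can argue with sequences: take $(q_{n},[x_{n}])$ with $[x_{n}]\to[x]$, and extract a convergent subsequence of $q_{n}$ by compactness of $E$.

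For the second statement, take $\Lambda$ compact. Then $\pi^{-1}(\Lambda)$ is compact by properness. It is a smooth submanifold because $\pi:SN\to ST^{*}M$ is a submersion, being a fiber bundle with fiber $\pi^{-1}(y)$. The paper already records the standard reduction fact that $\pi^{-1}(\Lambda)$ is Legendrian, so together these give that $\pi^{-1}(\Lambda)$ is a compact Legendrian in $ST^{*}E$.

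The main obstacle is bookkeeping: verifying that the spherization of $N$ matches the pullback bundle and that the identification is a homeomorphism. Once the fiber-product description $SN\cong E\times_{M}ST^{*}M$ is in place, the rest is formal.
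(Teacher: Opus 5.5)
Your proof is correct and rests on the same key point as the paper's: the reduction map is $(x\circ \d\pi,q)\mapsto (x,\pi(q))$, so compactness of preimages comes from properness (here, compactness) of $E\to M$. The two arguments differ only in packaging. The paper proves properness of the conical map $N\to T^{*}M$ by a sequence argument, then passes to the ideal boundary by lifting $\Lambda$ to a compact $K\subset T^{*}M-M$. You work directly on the spherizations via $SN\cong E\times_{M}ST^{*}M$, which has the side benefit of making the submersion (hence submanifold) claim explicit.
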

\begin{proof}
  The leaves of the characteristic foliation on $N$ are the sets $\mathscr{F}_{x,y}$ appearing in \eqref{eq:leaf-of-N}; in particular, if $K\subset T^{*}M$ is a compact set, then $(p_{n},q_{n})\in \pi^{-1}(K)$ is a compact set, because if $(x_{n},y_{n})\in K$ converge then $q_{n}$ converge, after passing to a subsequence, (since $\pi:E\to M$ is proper and $y_{n}$ converges), and it follows automatically that $p_{n}=x_{n}\circ \d \pi$ converges.

  If $\Lambda\subset ST^{*}M$ is a compact set, then there is a compact set $K\subset T^{*}M-M$ whose projection to the ideal boundary $ST^{*}M$ is $\Lambda$. Then $\pi^{-1}(K)\subset N$ is a compact set, by the first paragraph of the proof. It is clear that $\pi^{-1}(K)$ is in $T^{*}E-E$, and it is easy to see that the ideal projection of $\pi^{-1}(K)$ equals $\pi^{-1}(\Lambda)\subset ST^{*}E$; thus $\pi^{-1}(\Lambda)$ is compact.
\end{proof}

The final step in the proof of Theorem \ref{theorem:cotangent-submersion} is to prove:
\begin{lemma}\label{lemma:uniformly-fast}
  If $\Lambda_{s}$ is a uniformly fast Legendrian isotopy in $ST^{*}M$ (see Definition \ref{definition:uniformly-fast}), then $\pi^{-1}(\Lambda_{s})$ is a uniformly fast Legendrian isotopy in $ST^{*}E$.
\end{lemma}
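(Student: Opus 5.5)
The plan is to build a contact form on $ST^{*}E$ adapted to the reduction diagram \eqref{eq:reduction-diagram}, and then to check the inequality $y^{*}\alpha_{E}\ge \d s$ by pushing paths down along $\pi$. First I identify contact forms on $ST^{*}M$ with positive $1$-homogeneous functions on $T^{*}M-M$: the form is $\alpha=\lambda_{M}/r$ restricted to $\set{r=1}$, which is identified with $ST^{*}M$ via the ideal projection. Fix the radial function $r$ for which $\Lambda_{s}$ is uniformly fast.

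\textbf{Step 1: build $r_{E}$.} On $N-E$ (the zero section removed), the map $\pi:N\to T^{*}M$ of Lemma \ref{lemma:char-fol} is equivariant for the fiberwise $\R_{>0}$-scaling. It sends $N-E$ into $T^{*}M-M$, because $p=x\circ\d\pi$ is non-zero exactly when $x\neq 0$, as $\d\pi$ is surjective. Hence $\pi^{*}r$ is a positive $1$-homogeneous function on $N-E$. I extend it to a positive $1$-homogeneous smooth function $r_{E}$ on $T^{*}E-E$. To do this, restrict $\pi^{*}r$ to $SN\subset ST^{*}E$, viewed as a smooth positive function on this closed submanifold. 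Extend it to a positive smooth function on $ST^{*}E$ using a tubular neighborhood and a partition of unity, where positivity is kept by taking a convex combination with the constant $1$. Then extend homogeneously. Set $\alpha_{E}=\lambda_{E}/r_{E}$.

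\textbf{Step 2: the key identity.} On $N$ we have $\lambda_{E}|_{N}=\pi^{*}\lambda_{M}$. Indeed, at $(p,q)$ with $p=x\circ\d\pi$, a tangent vector $v$ gives
\begin{equation*}
\lambda_{E}(v)=p(\d q(v))=x(\d\pi\,\d q(v))=\lambda_{M}(\d\pi(v)).
\end{equation*}
Combined with $r_{E}|_{N}=\pi^{*}r$, this gives $\alpha_{E}|_{\set{r_{E}=1}\cap N}=\pi^{*}\alpha$. Moreover $\pi$ maps $\set{r_{E}=1}\cap N$ into $\set{r=1}$.

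\textbf{Step 3: verify uniform fastness.} Let $y(s)$ be a smooth path with $y(s)\in\pi^{-1}(\Lambda_{s})$. Represent it in the level set $\set{r_{E}=1}$; it lies in $N$ there. Then $\pi\circ y$ is a smooth path in $\set{r=1}$ with $\pi\circ y(s)\in\Lambda_{s}$. Therefore
\begin{equation*}
y^{*}\alpha_{E}=(\pi\circ y)^{*}\alpha\ge \d s,
\end{equation*}
and the inequality comes directly from the hypothesis on $\Lambda_{s}$. That $\pi^{-1}(\Lambda_{s})$ is a smooth Legendrian isotopy of compact Legendrians follows from the reduction fact and the properness lemma above. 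Smoothness in $s$ holds because $\pi:SN\to ST^{*}M$ is a submersion, so preimages of a smooth family of submanifolds form a smooth family.

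I expect the main obstacle to be Step 1, specifically making sure that the extension $r_{E}$ is smooth, positive and homogeneous, and that $\pi^{*}r$ is well defined and smooth on $N-E$. This needs $\pi:N-E\to T^{*}M-M$ to be a smooth equivariant submersion; that follows from the explicit description \eqref{eq:leaf-of-N} of the leaves, which exhibits $N-E$ locally as a product of $T^{*}M-M$ with the fibers of $E\to M$.
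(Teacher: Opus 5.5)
Your proof is correct and follows essentially the same route as the paper: the identity $\lambda_{E}|_{N}=\pi^{*}\lambda_{M}$ on $N$ lets you push paths in $\pi^{-1}(\Lambda_{s})$ down to paths in $\Lambda_{s}$ and read off the inequality. The only difference is in the choice of contact form on $ST^{*}E$. You build an adapted radial function $r_{E}$ extending $r\circ\pi$ (the same construction as in Remark \ref{remark:ACC-fibration}), which gives $y^{*}\alpha_{E}\ge \d s$ on the nose; the paper instead allows an arbitrary radial function on $T^{*}E-E$, which only gives $y^{*}\alpha_{E}\ge c\,\d s$ for some $c>0$ (using compactness of $SN$), and then implicitly rescales the contact form by a constant.
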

\begin{proof}
  Consider the reduction map $\pi:N\to T^{*}M$. We compute:
  \begin{equation}\label{eq:lucky-relation}
    \pi^{*}\lambda=p\d q;
  \end{equation}
  writing $\lambda=xdy$, this follows from the relationship between $(p,q)\in N$ and its projection $\pi(p,q)=(x,y)$ where $p=x\circ \d \pi$ and $y=\pi(q)$. The result then follows easily: let $K_{s}$ be some lift of $\Lambda_{s}$ to the symplectization $T^{*}M-M$. Because $\Lambda_{s}$ is uniformly fast, and because of \eqref{eq:lucky-relation}, the velocity vectors of curves contained in $\pi^{-1}(K_{s})$ are uniformly positive when inserted into $r^{-1}p\d q=r^{-1}\pi^{*}\lambda$, for any radial function $r:T^{*}E-E\to (0,\infty)$. This implies $\pi^{-1}(\Lambda_{s})$ is uniformly fast, as desired.
\end{proof}

\begin{proof}[Proof of Theorem {\ref{theorem:cotangent-submersion}}]
  Under the hypotheses of Theorem \ref{theorem:cotangent-submersion}, we conclude from Lemma \ref{lemma:uniformly-fast} that $\pi^{-1}(\Lambda_{s})$ must reintersect $\pi^{-1}(\Lambda_{0})$, by Theorem \ref{theorem:strong-main}, and thus $\Lambda_{s}$ must reintersect $\Lambda_{0}$, as desired.
\end{proof}

\begin{remark}\label{remark:ACC-fibration}
  The same argument proves the following: suppose $E$ is a compact manifold and $ST^{*}E$ solves the Arnol'd chord conjecture, in the sense that all compact Legendrians admit a non-constant Reeb chord for every choice of Reeb flow, and suppose there is a submersion $E\to M$. Then $ST^{*}M$ also solves the Arnol'd chord conjecture. This is because any Reeb flow on $ST^{*}M$ is generated by a positive $1$-homogeneous radial function $r_{M}$ on $T^{*}M$, which lifts to a positive $1$-homogeneous radial function $r_{M}\circ \pi$ on $N$, which can be extended (non-canonically) to a positive $1$-homogeneous radial function $r_{E}$ on $T^{*}E$. The ideal restriction of the Hamiltonian flow of $X_{r_{E}}$ is a Reeb flow which preserves $SN$ and is related to the Reeb flow corresponding to $r_{M}$ via the map $SN\to ST^{*}M$.
\end{remark}
\begin{remark}
  Remark \ref{remark:ACC-fibration} can be placed in a more general context. Suppose $Y_{1}$ and $Y_{2}$ are two compact cooriented contact manifolds, and suppose there is a coisotropic submanifold $N\subset SY_{1}$ which is invariant under the Liouville flow and which admits a \emph{proper} Liouville equivariant reduction map: $$\pi:N\to SY_{2},$$ where the word ``reduction'' means that the fibers of this map are the leaves of the characteristic foliation of $N$, and that $\pi^{*}\d \lambda_{2}=\d \lambda_{1}|_{N}$, where $\lambda_{i}$ is the canonical Liouville form on $SY_{i}$. It follows from Liouville equivariance that $\pi^{*}\lambda_{2}-\lambda_{1}=0$. The above argument then goes through and one concludes that $Y_{1}$ solving the Arnol'd chord conjecture implies $Y_{2}$ solves the Arnol'd chord conjecture.
\end{remark}

\subsection{String topology and products of spheres}
\label{sec:string-topology-products-spheres}

In this section we prove Theorem \ref{theorem:spheres} concerning classes in $H_{*}(\Lambda M)$ for $M=S^{n_{1}}\times \dots \times S^{n_{k}}$. First, we construct classes $A$ and $B$ for the case of $M = S^n$. The key part of this step is the notion of \emph{completing manifolds} as in \cite[\S 6]{oancea-EMS-2015} which relates classes in the spherical tangent bundle $STS^{n}$ with classes in $\Lambda S^{n}$. This step is done in \S\ref{sec:completing-manifolds}. In \S\ref{sec:generalized-section} we construct a special class in $H_{n}(STS^{n})$ using the notion of a generalized section. Afterwards in \S\ref{sec:product-spheres} we use the case of one sphere to construct the classes in $M=S^{n_{1}}\times \dots \times S^{n_{k}}$.

Throughout this section we use the bordism class approach to the homology of the loop space, as in \cite{brocic-cant-arXiv-2025}.

\subsubsection{Completing manifolds}
\label{sec:completing-manifolds}

Let $STS^n$ denote the spherical tangent bundle. Each point $v$ determines its \emph{orthogonal equator}, which is the unique equator passing through the basepoint of $v$ and which is orthogonal to $v$ at that basepoint. Consider the sphere bundle of pairs:
\begin{equation*}
  E=\set{(x,v)\in S^{n}\times STS^{n}:x\text{ lies in the orthogonal equator of }v}.
\end{equation*}
Define the map $F: E \to \Lambda S^n$ which sends $(x, v) \in E$ to the circle obtained by intersecting $S^n$ with the affine plane passing through $x$ and $v$ (this means the affine plane intersects the basepoint of $v$ and is tangent to $v$). This circle is parametrized by $\R/\Z$ with constant speed in the unique way that the tangent vector at $0$ points in the direction of $v$.

This procedure yields the \emph{completing manifolds operation}: $$\gamma:H_*(STS^n) \to H_{*+n-1} (\Lambda S^n)$$ as follows: given a class $f: X \to STS^n$, the class $\gamma f$ is the composition of the two upper horizontal morphisms in the following diagram:
\begin{equation*}
  \begin{tikzcd}
    f^* E \arrow[r]\arrow[d] &  E\arrow[d, "\pi"] \arrow[r]  & \Lambda S^n \\
    X \arrow[r, "f"] & STS^n.
  \end{tikzcd}
\end{equation*}

\subsubsection{Generalized section}
\label{sec:generalized-section}

In this section we construct a special class:
\begin{equation*}
  s: \R P^{n} \to STS^{n}
\end{equation*}
to which we will apply the completing manifolds construction of \S\ref{sec:completing-manifolds} to obtain a class in $H_{2n-1}(\Lambda S^{n})$. The construction of $s$ is as a \emph{generalized section}, i.e., that $\pi \circ s : \R P^n \to S^n$ has degree $1$. Even though $STS^{n}\to S^{n}$ does not admit a section for even values of $n$, it does admit a generalized section: after a blow up of the base space, transforming $S^{n}$ to $\R P^{n}$, it admits a section. We describe this with greater detail.

The first step is to recall the \emph{unoriented blow-down map} $b:\R P^{n}\to S^{n}$.
\begin{lemma}\label{lemma:unorient-blow-down}
  Let $N\in S^{n}$ denote the north pole, and let $\mathit{stereo}:S^{n}\setminus N\to \R^{n}$ be the stereographic projection. There is a unique smooth map $b:\R P^{n}\to S^{n}$ such that:
  \begin{equation*}
    \begin{tikzcd}
      \R^{n}\arrow[r]\arrow[d,"\mathit{stereo}^{-1}"]&\R P^{n}\arrow[d,"b"]\\
      S^{n}\setminus N\arrow[r] &S^{n}
    \end{tikzcd}
  \end{equation*}
  where the upper horizontal map is the parametrization:
  \begin{equation*}
    (x_{1},\dots,x_{n})\mapsto [1:x_{1}:\dots:x_{n}]
  \end{equation*}
  associated to the zeroth affine coordinate chart on $\R P^{n}$.
\end{lemma}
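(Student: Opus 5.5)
Uniqueness is immediate. The image of the zeroth affine chart $\R^{n}\to \R P^{n}$ is open and dense, since its complement is the hyperplane $\{x_{0}=0\}$. So any two continuous maps $b$ making the square commute must agree.

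For existence, I will write down an explicit formula in homogeneous coordinates and check that it is well defined and smooth. Use the inverse stereographic projection from the north pole $N=(0,\dots,0,1)$:
\begin{equation*}
  \mathit{stereo}^{-1}(x)=\Big(\frac{2x}{1+\abs{x}^{2}},\frac{\abs{x}^{2}-1}{1+\abs{x}^{2}}\Big).
\end{equation*}
Substitute $x=y/x_{0}$, where $[x_{0}:y]\in\R P^{n}$ with $x_{0}\ne0$, and clear denominators by multiplying numerator and denominator by $x_{0}^{2}$. This gives the candidate
\begin{equation*}
  b([x_{0}:y])=\Big(\frac{2x_{0}y}{x_{0}^{2}+\abs{y}^{2}},\frac{\abs{y}^{2}-x_{0}^{2}}{x_{0}^{2}+\abs{y}^{2}}\Big).
\end{equation*}
Each component is a ratio of homogeneous quadratic polynomials. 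The denominator $x_{0}^{2}+\abs{y}^{2}$ never vanishes on $\R^{n+1}\setminus 0$. Hence the formula is invariant under $(x_{0},y)\mapsto\lambda(x_{0},y)$, including $\lambda<0$, because everything is quadratic.

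It therefore defines a smooth map $\R^{n+1}\setminus 0\to\R^{n+1}$ that descends to $\R P^{n}$. Smoothness on the quotient follows since $\R^{n+1}\setminus0\to\R P^{n}$ is a submersion. The image lies in $S^{n}$: one checks
\begin{equation*}
  4x_{0}^{2}\abs{y}^{2}+(\abs{y}^{2}-x_{0}^{2})^{2}=(x_{0}^{2}+\abs{y}^{2})^{2}.
\end{equation*}
On the chart $x_{0}=1$, the formula reduces exactly to $\mathit{stereo}^{-1}$, so the square commutes.

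It is worth recording the behaviour at infinity, since it is used later to see that $b$ has degree $1$ (mod $2$, or in the appropriate sense). The hyperplane at infinity $\{x_{0}=0\}\cong\R P^{n-1}$ is collapsed to the north pole $N$. Away from this hyperplane, $b$ is a diffeomorphism onto $S^{n}\setminus N$.

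There is no real obstacle here. The only point needing care is choosing the homogeneous (quadratic) formula, which makes well-definedness on $\R P^{n}$ and smoothness across the hyperplane at infinity automatic. A chart-by-chart limit argument would be clumsier.
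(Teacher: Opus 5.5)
Your proof is correct and is essentially the paper's argument: both clear denominators in $\mathit{stereo}^{-1}$ to get a ratio of homogeneous quadratics whose denominator never vanishes, and both get uniqueness from density of the zeroth chart. The only difference is packaging: the paper checks smoothness chart by chart on each affine chart $y_{k}=1$, while you check it once on $\R^{n+1}\setminus 0$ and descend through the quotient submersion.
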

\begin{proof}
  Uniqueness is obvious since the domain of the zeroth affine chart is dense. Moreover, the map obviously must extend by collapsing the hyperplane divisor at infinity corresponding to $x_{0}=0$ to the north pole $N$.

  It remains only to check this map is actually smooth. First recall that the inverse of the stereographic projection map is given by:
  \begin{equation}\label{eq:stereo-inverse}
    \mathit{stereo}^{-1}(x_{1},\dots,x_{n}):=\frac{(x_{1}^{2}+\dots+x_{n}^{2}-1,2x_{1},\dots,2x_{n})}{x_{1}^{2}+\dots+x_{n}^{2}+1}
  \end{equation}
  To prove this extends smoothly from the zeroth affine coordinate chart to all of $\R P^{n}$, we check how the formula transforms when we move to the $k$th affine chart parametrized by $[y_{0}:\dots:y_{k-1}:1:y_{k+1}:\dots:y_{n}]$; the transition function is $x_{j}=y_{j}/y_{0}$, with the convention that $y_{k}=1$. In these new coordinates, \eqref{eq:stereo-inverse} is given by:
  \begin{equation}\label{eq:blow-down-in-affine-chart}
    \frac{(y_{1}^{2}+\dots+y_{n}^{2}-y_{0}^{2},2y_{1}y_{0},\dots,2y_{n}y_{0})}{y_{0}^{2}+y_{1}^{2}+\dots+y_{n}^{2}},
  \end{equation}
  which evidently extends smoothly to the entire affine chart (it was initially only defined when $y_{0}\ne 0$, as that is where the $k$th affine chart intersects the zeroth affine chart). Since $k$ was arbitrary, we conclude the result.
\end{proof}

\begin{lemma}
  There exists a smooth map $s:\R P^{n}\to STS^{n}$ such that:
  \begin{equation*}
    \begin{tikzcd}
      &STS^{n}\arrow[d,"\pi"]\\
      \R P^{n}\arrow[ru,out=90,in=180,"s"]\arrow[r,"b"]&S^{n}.
    \end{tikzcd}
  \end{equation*}
  In the zeroth affine chart, $s$ can be chosen to agree with the push-forward of a constant vector field on $\R^{n}$ via the inverse of the stereographic projection.
\end{lemma}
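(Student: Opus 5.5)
The plan is to write down the map $s$ explicitly on the zeroth affine chart, rewrite the formula in homogeneous coordinates exactly as in the proof of Lemma \ref{lemma:unorient-blow-down}, and observe that the result extends smoothly over all of $\R P^{n}$. We view $STS^{n}$ as the set of pairs $(p,u)\in S^{n}\times S^{n}\subset \R^{n+1}\times\R^{n+1}$ with $\langle p,u\rangle=0$.

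On the zeroth chart, push forward the constant field $\partial_{x_{1}}$ by $\mathit{stereo}^{-1}$ and normalize. Differentiating \eqref{eq:stereo-inverse} in the $x_{1}$ direction gives
\begin{equation*}
  \frac{2}{(1+\abs{x}^{2})^{2}}\bigl(2x_{1},\,(1+\abs{x}^{2})e_{1}-2x_{1}x\bigr).
\end{equation*}
Since $\mathit{stereo}^{-1}$ is conformal with factor $2/(1+\abs{x}^{2})$, this vector has length $2/(1+\abs{x}^{2})$. The unit vector in the same direction is therefore
\begin{equation*}
  u(x)=\frac{\bigl(2x_{1},\,(1+\abs{x}^{2})e_{1}-2x_{1}x\bigr)}{1+\abs{x}^{2}}.
\end{equation*}
We set $s=(b,u)$ on the zeroth chart. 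This makes the required diagram commute there, and it agrees with the normalized push-forward of a constant vector field, as the statement demands.

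Next, substitute $x_{j}=y_{j}/y_{0}$ and multiply numerator and denominator by $y_{0}^{2}$. Writing $y'=(y_{1},\dots,y_{n})$, this gives
\begin{equation*}
  u=\frac{\bigl(2y_{1}y_{0},\,(y_{0}^{2}+\abs{y'}^{2})e_{1}-2y_{1}y'\bigr)}{y_{0}^{2}+\abs{y'}^{2}}.
\end{equation*}
This expression is smooth on $\R^{n+1}\setminus\set{0}$, because the denominator is $\abs{y}^{2}$. It is homogeneous of degree $0$ and invariant under $y\mapsto -y$ (every term has even total degree), so it descends to a smooth map on $\R P^{n}$. Together with the formula \eqref{eq:blow-down-in-affine-chart} for $b$, this defines a smooth map $s=(b,u):\R P^{n}\to\R^{n+1}\times\R^{n+1}$.

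It remains to check that $s$ lands in $STS^{n}$, i.e.\ that $\abs{u}=1$ and $\langle b,u\rangle=0$. Both identities hold on the dense zeroth chart by construction, so they hold everywhere by continuity. As a sanity check at the divisor $y_{0}=0$: there $b=N=(1,0,\dots,0)$ and $u=\bigl(0,\,e_{1}-2y_{1}y'/\abs{y'}^{2}\bigr)$. The second component is the reflection of $e_{1}$ across the hyperplane orthogonal to $y'$, hence a unit vector, and it is tangent to $S^{n}$ at $N$. Thus $\pi\circ s=b$, as required.

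The only genuine obstacle is smoothness across the hyperplane at infinity. The point is that the apparent singularity of $u$ at $\abs{x}\to\infty$ cancels after homogenization, and the explicit formula above settles this in the same way as in the proof of Lemma \ref{lemma:unorient-blow-down}. A naive extension would fail here: the limit of $u$ at infinity depends on the direction of approach $[0:y']$, which is exactly why $s$ is defined on the blow-up $\R P^{n}$ rather than on $S^{n}$.
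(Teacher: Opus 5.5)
Your proof is correct and takes essentially the paper's approach: push forward $\partial_{x_{1}}$ by $\mathit{stereo}^{-1}$, rewrite it in homogeneous coordinates, and observe that the resulting quadratic vector $\bigl(2y_{1}y_{0},\,(y_{0}^{2}+\abs{y'}^{2})e_{1}-2y_{1}y'\bigr)$ makes sense across the divisor $y_{0}=0$. The only difference is cosmetic: the paper takes the positive ray spanned by this vector and checks by hand that it never vanishes, whereas you divide by $\abs{y}^{2}$ and get nonvanishing (indeed $\abs{u}=1$) from density of the zeroth chart.
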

\begin{proof}
  The derivative of $\mathit{stereo}^{-1}(x)$ with respect to the $x_{1}$ variable extends to the following vector field on $S^{n}$:
  \begin{equation*}
    V(q)=(q_{1}(1-q_{0}),1-q_{0}-q_{1}^{2},-q_{1}q_{2},\dots,-q_{1}q_{n}).
  \end{equation*}
  We will prove that the ray spanned by $V\circ b$ extends smoothly to a map $s$.

  In the affine chart \eqref{eq:blow-down-in-affine-chart} with $k>0$, one computes:
  \begin{equation*}
    V\circ b=\frac{(2y_{1}y_{0},y_{0}^{2}-y_{1}^{2}+y_{2}^{2}+\dots,2y_{1}y_{2},\dots,2y_{1}y_{n})}{(2y_{0}^{2})^{-1}(y_{0}^{2}+y_{1}^{2}+\dots+y_{n}^{2})^{2}}
  \end{equation*}
  Thus we can set:
  \begin{equation*}
    s([y_{0}:\dots:y_{n}])=\R_{+}(2y_{1}y_{0},y_{0}^{2}-y_{1}^{2}+y_{2}^{2}+\dots,2y_{1}y_{2},\dots,2y_{1}y_{n}).
  \end{equation*}
  This ray is well-defined; indeed, if the generator vanishes, then:
  \begin{equation}\label{eq:to-be-contradicted}
    y_{1}^{2}=y_{0}^{2}+y_{2}^{2}+\dots+y_{n}^{2}\implies y_{1}\ne 0,
  \end{equation}
  and the other entries $2y_{1}y_{i}=0$ yield $y_{0}=y_{2}=\dots=y_{n}=0$ which contradicts \eqref{eq:to-be-contradicted}. This completes the proof.
\end{proof}

\subsubsection{Manipulation of classes in the free loop space of a sphere}
\label{sec:manip-class-free}

Apply the completing manifolds operation from \S\ref{sec:completing-manifolds} to the map $s: \R P^n \to STS^n$ constructed in \S\ref{sec:generalized-section} to produce the class $B=\gamma s\in H_{2n-1}(\Lambda S^{n})$. Similarly, let $P\in H_{n-1}(\Lambda S^{n})$ be the class obtained from the point class in $STS^{n}$, and let $\mathit{pt}$ denote the point class in $H_{0}(\Lambda S^{n})$ consisting of a single constant loop. Our first lemma in this section is:
\begin{lemma}\label{lemma:B-ast-pt-P}
  It holds that $B\ast \mathit{pt}=P$, where $\ast$ denotes the Chas-Sullivan product.
\end{lemma}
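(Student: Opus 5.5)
We compute $B\ast\mathit{pt}$ directly in the bordism model of $H_{*}(\Lambda S^{n})$ used in \cite{brocic-cant-arXiv-2025}. There the Chas--Sullivan product of classes $f:X\to\Lambda S^{n}$ and $g:Y\to\Lambda S^{n}$ is the fiber product of $X\times Y$ over the evaluation maps at time $0$ (assumed transverse), followed by loop concatenation. With $\mathit{pt}$ the constant loop at a point $x_{0}\in S^{n}$, the product $B\ast\mathit{pt}$ is the restriction of $B$ to the preimage of $x_{0}$ under $\mathrm{ev}_{0}$, concatenated with the constant loop, which is homotopic to the loop itself. So the plan is to identify $\mathrm{ev}_{0}^{-1}(x_{0})$ inside the domain of $B$ and show the resulting $(n-1)$-cycle is the class $P$.

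The domain of $B=\gamma s$ is $s^{*}E=\set{(y,x):y\in\R P^{n},\ x\text{ in the orthogonal equator of }s(y)}$. By construction, the circle $F(x,v)$ is parametrized so that it starts at the basepoint of $v$ with tangent direction $v$. Hence $\mathrm{ev}_{0}\circ F(x,s(y))=\pi(s(y))=b(y)$, so evaluation factors through the map $s^{*}E\to\R P^{n}\xrightarrow{b}S^{n}$.

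The key steps are as follows.
\begin{enumerate}
\item Choose $x_{0}$ to be a regular value of $b$ away from the north pole, e.g.\ the south pole. By Lemma \ref{lemma:unorient-blow-down}, $b$ restricted to the zeroth affine chart is the inverse stereographic projection, a diffeomorphism onto $S^{n}\setminus N$. Hence $b^{-1}(x_{0})$ is a single point $y_{0}$, at which $b$ is a local diffeomorphism, and so $\mathrm{ev}_{0}$ is transverse to $x_{0}$ there.
\item Conclude that the fiber product is the fiber of $s^{*}E$ over $y_{0}$. This is the sphere $S^{n-1}$ of points $x$ in the orthogonal equator of $v_{0}=s(y_{0})$, mapped by $x\mapsto F(x,v_{0})$.
\item Observe that this is exactly the definition of $P$: $P$ is $\gamma$ applied to the point class $v_{0}\in STS^{n}$, namely the pullback of $E$ to a point composed with $F$. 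Since any two point classes in $STS^{n}$ are homologous, the choice of $v_{0}$ does not matter.
\item Remove the concatenation with the constant loop by the standard reparametrization homotopy.
\end{enumerate}

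The main obstacle is bookkeeping in the bordism model. We must check that the transversality and fiber-product conventions of \cite{brocic-cant-arXiv-2025}, including the order of factors and the degree shift $(2n-1)+0-n=n-1$, produce precisely the fiber over $y_{0}$ and not some twisted or multiply covered version. The remaining ingredients are the degree-one property of $b$ near $x_{0}$ and working in characteristic two, which removes any orientation sign issues. I would first verify transversality concretely in the zeroth affine chart, where $b$ is the explicit diffeomorphism $\mathit{stereo}^{-1}$ and $s$ is the pushforward of a constant vector field. In that chart the computation is immediate.
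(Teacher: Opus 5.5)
Your proposal is correct and follows the same route as the paper: restrict $B$ to the preimage, under evaluation at time $0$, of a regular value of $b$ away from the north pole; its unique preimage point makes the fiber product exactly the domain $E|_{\mathit{pt}}$ of $P$, and what remains is only a reparametrization of the loops. One small correction to your step 3: for $n=1$ the space $STS^{1}$ has two components, so point classes are not all homologous, and $P$ must be taken from the component containing $s(y_{0})$, as the paper's remark after the lemma notes.
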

\begin{proof}
  If we pick the representative of $\mathit{pt}$ which is a constant loop not based at the north pole $N$, this follows easily from the definition of the Chas-Sullivan product in terms of transverse intersections; see \cite[\S 2.1]{brocic-cant-arXiv-2025}.

  Briefly, $B\ast \mathit{pt}$ considers those loops in $B$ whose base points are $\mathit{pt}$; since:
  \begin{itemize}
  \item the base points of $B$ cover the blow-down map $b$, 
  \item $\mathit{pt}$ is a regular value of $b$, and,
  \item there is a unique point in $b^{-1}(\mathit{pt})$,
  \end{itemize}
  we conclude that $B\ast \mathit{pt}$ has the same domain as $P$, namely $E|_{\mathit{pt}}$; see \S\ref{sec:completing-manifolds}. The only difference is that loops in $P$ are parametrized with unit speed while loops in $B\ast \mathit{pt}$ are parametrized differently; nonetheless, the two classes are equal in $H_{n-1}(\Lambda S^{n})$.
\end{proof}
\begin{remark}
  In the case $n=1$, there are two connected components of $STS^{n}$, and so $P$ is not a well-defined class, but rather there are two classes depending on the orientation (and $B\ast \mathit{pt}$ will equal one of these, depending on how the section $s$ is chosen).
\end{remark}

Following the terminology in \cite[\S2.2]{brocic-cant-arXiv-2025} introduce the \emph{action class}:
\begin{equation*}
  A(x;t)=(\cos(2\pi t)x_{0}-\sin(2\pi t)x_{1},\sin(2\pi t)x_{0}+\cos(2\pi t)x_{1},x_{2},\dots,x_{n}),
\end{equation*}
associated to the rotation circle action so that $A\in H_{n}(\Lambda S^{n})$, and similarly let $A^{-1}(x;t)=A(x;-t)$. We have:
\begin{lemma}\label{lemma:A-A-inv}
  We have $A\ast A^{-1}=[S^{n}]$. In the case $n>1$, we have $A^{-1}=A$.
\end{lemma}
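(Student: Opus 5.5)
The plan is to compute the Chas--Sullivan product directly, using the transverse-intersection description from \cite[\S2.1]{brocic-cant-arXiv-2025}. Both $A$ and $A^{-1}$ are represented by the same domain manifold $S^{n}$, via $x\mapsto A(x;\cdot)$ and $x\mapsto A(x;-\cdot)$. Their base-point evaluation maps are both the identity $S^{n}\to S^{n}$, because $A(x;0)=x$.

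The first step is to form the product. The diagonal condition on base points is transverse, since one factor's evaluation is a diffeomorphism. So $A\ast A^{-1}$ is represented by $S^{n}\to\Lambda S^{n}$ sending $x$ to the concatenation of the loop $t\mapsto A(x;t)$ with $t\mapsto A(x;-t)$, suitably reparametrized. Orientation and sign issues disappear because we work in characteristic two.

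The second step is to contract this family. The concatenation $\gamma\cdot\gamma^{-1}$ is canonically null-homotopic through loops based at $x$. Concretely, at homotopy parameter $\tau\in[0,1]$ we traverse $A(x;t)$ for $t\in[0,\tau]$ and then retrace back. This is smooth in $x$ (after the standard smoothing of reparametrizations) and defines a bordism $S^{n}\times[0,1]\to\Lambda S^{n}$. At $\tau=0$ it gives the family of constant loops $x\mapsto x$, which represents $[S^{n}]$ under the inclusion of constant loops. Hence $A\ast A^{-1}=[S^{n}]$. I expect the only technical point to be matching the smoothness and parametrization conventions for bordism classes in \cite{brocic-cant-arXiv-2025}. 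Since homotopic families of maps from a fixed closed manifold define the same class, this should be routine.

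The third step handles $A^{-1}=A$ for $n>1$. Here the plan is to find a diffeomorphism $\phi$ of $S^{n}$ with $A(\phi(x);-t)=\phi(A(x;t))$ that has degree $1$ mod 2; over $\mathbb{F}_{2}$ that is all that is needed. Reflection $x_{1}\mapsto -x_{1}$ conjugates the rotation to its inverse, but pointwise it gives $A^{-1}(\rho x;t)=\rho A(x;t)$, i.e. $A^{-1}\circ\rho=\rho_{*}A$ as families of loops, where $\rho$ also acts on the target. Composing with the target diffeomorphism is not allowed, so instead I will rotate. For $n\geq 2$, the rotation $R$ by $\pi$ in the $(x_{1},x_{2})$-plane lies in $SO(n+1)$ and conjugates the $(x_{0},x_{1})$-rotation to its inverse: $R A_{t} R^{-1}=A_{-t}$. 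Since $R$ is isotopic to the identity through $R_{\theta}$, the family $x\mapsto A(x;-t)=R A(R^{-1}x;t)$ is homotopic, via $x\mapsto R_{\theta}A(R_{\theta}^{-1}x;t)$, to $x\mapsto A(x;t)$ after reparametrizing the domain by $R^{-1}$, a degree-one diffeomorphism. Therefore $A^{-1}=A$.

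The main obstacle is only notational: checking that the conjugation identity holds exactly and that the homotopy stays within smooth loop families. For $n=1$ no such $R$ exists, which is consistent with the restriction to $n>1$.
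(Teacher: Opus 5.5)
Your argument is correct. For $A\ast A^{-1}=[S^{n}]$ it matches the paper's route; the paper simply cites \cite[\S2.2.1]{brocic-cant-arXiv-2025}. There, the product of two action classes is the action class of the concatenated loop of rotations, and a loop followed by its inverse is null-homotopic. That is exactly your fiber-product-then-retract computation, specialized to this case.

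For $A^{-1}=A$ your route differs slightly. The paper appeals to $\pi_{1}(\mathit{SO}(n+1))=\Z/2\Z$: the rotation loop is then homotopic to its inverse in $\mathit{SO}(n+1)$, so the action classes are homotopic. You instead write the homotopy down explicitly as $\theta\mapsto R_{\theta}A_{t}R_{\theta}^{-1}$, using $RA_{t}R^{-1}=A_{-t}$. The mechanism is the same, but your key input is more elementary: you only need that conjugation by a rotation in the $(x_{1},x_{2})$-plane inverts the loop, which is also where $n>1$ visibly enters. You do not need the computation of $\pi_{1}(\mathit{SO}(n+1))$. The paper's version is shorter, and it shows at once that every loop in $\mathit{SO}(n+1)$ gives a self-inverse action class.

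One small simplification: at $\theta=0$ your homotopy is already $x\mapsto A(x;t)$ on the nose. So that step needs neither a reparametrization of the domain by $R^{-1}$ nor characteristic two.
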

\begin{proof}
  The first part is a direct consequence of \cite[\S2.2.1]{brocic-cant-arXiv-2025}. The second part follows from the fact that $\pi_{1}(\mathit{SO}(n+1))=\Z/2\Z$ for $n>1$.
\end{proof}
\begin{lemma}\label{lemma:delta-P-A-inv}
  It holds that $\Delta P=A^{-1}$ in $H_{n}(\Lambda S^{n})$; in the case $n=1$ we require that $P$ is obtained by picking a point in the positively oriented component of $STS^{1}$.
\end{lemma}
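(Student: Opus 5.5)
Both $\Delta P$ and $A^{-1}$ are $n$-dimensional families of loops, and I would compare them geometrically by exhibiting an explicit bordism between representatives, in the bordism model of $H_{*}(\Lambda S^{n})$ used throughout this section.

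\textbf{Step 1: a representative of $\Delta P$.} Fix the point $v_{0}\in STS^{n}$ with basepoint $e_{0}=(1,0,\dots,0)$ and direction $e_{1}$. The orthogonal equator of $v_{0}$ is $S^{n}\cap\set{x_{1}=0}$, an $(n-1)$-sphere, so the fiber $E|_{v_{0}}\simeq S^{n-1}$. By definition, $P$ is represented by the map sending $x\in E|_{v_{0}}$ to the circle through $x$ and $e_{0}$, tangent to $e_{1}$ at $e_{0}$, traversed at constant speed starting at $e_{0}$. The BV operator on a cycle $g:X\to\Lambda S^{n}$ is the cycle $X\times\R/\Z\to\Lambda S^{n}$ given by $(x,\theta)\mapsto g(x)(\cdot+\theta)$. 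Hence $\Delta P$ is the family, parametrized by $S^{n-1}\times S^{1}$, of all these circles with all possible starting points.

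\textbf{Step 2: recognize this family as the action class up to degree.} Each such circle is the intersection of $S^{n}$ with an affine $2$-plane containing the line $e_{0}+\R e_{1}$. These affine planes are parametrized by the direction $x\in S^{n-1}$, and degenerate to a point when $x=-e_{0}$: the circle then collapses to the constant loop at $e_{0}$. I would construct a degree-one map $\Phi:S^{n-1}\times S^{1}\to S^{n}$, $(x,\theta)\mapsto$ the starting point of the loop, namely the point at parameter $\theta$ on the circle. Then I would compare $\Delta P$ with the family $y\mapsto A^{-1}(y;\cdot)$ via $\Phi$. The loop $A^{-1}(y;\cdot)$ is the rotation orbit in the $(x_{0},x_{1})$-plane, which is a circle in a plane parallel to $\mathrm{span}(e_{0},e_{1})$, traversed clockwise. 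The circles of $\Delta P$ are instead all tangent to $e_{1}$ at $e_{0}$. I would build an explicit homotopy of families, for instance by conjugating with a Möbius or conformal transformation of $S^{n}$ that fixes $e_{0}$ and moves the pencil of circles tangent at $e_{0}$ to the family of parallel circles. Stereographic projection from $-e_{0}$ is the natural tool: it sends the pencil to parallel lines in direction $e_{1}$ in $\R^{n}$. I would then interpolate through the one-parameter family of conformal circle foliations to the rotation-orbit foliation, taking care with the direction of traversal, which is what should produce $A^{-1}$ rather than $A$. This gives $\Delta P=\deg(\Phi)\cdot A^{-1}$ as bordism classes, provided the parametrization mismatch is again absorbed, as in Lemma \ref{lemma:B-ast-pt-P}.

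\textbf{Step 3: the degree and orientation check.} This is the main obstacle. With characteristic-two coefficients the identity only requires $\Phi$ to have odd degree, which follows by counting preimages of a generic point $y\neq e_{0}$. Such a $y$ lies on exactly one circle of the pencil, since there is a unique plane through $e_{0}+\R e_{1}$ and $y$, and at exactly one parameter on it; so the degree is $\pm1$. For $n=1$, where $A^{-1}\neq A$ and the sign matters, I would do a direct computation. Here $STS^{1}$ has two components; choosing $v_{0}$ in the positively oriented one makes $P$ the counterclockwise unit circle based at $e_{0}$. Rotating its starting point gives the family $\theta\mapsto$ that loop, whose orientation I would match against $A^{-1}(x;t)=A(x;-t)$, using the convention of the BV operator fixed in \cite{brocic-cant-arXiv-2025}. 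For $n>1$, Lemma \ref{lemma:A-A-inv} gives $A^{-1}=A$, so the sign is irrelevant and the argument above suffices.
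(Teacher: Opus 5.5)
Your degree count in Step 3 is correct, but the mechanism of Step 2 cannot work for $n\ge 2$. There is no homotopy of families from your representative of $\Delta P$ to $A^{-1}\circ\Phi$. Nor is there one to $A^{-1}\circ\Phi'$ for any map $\Phi':S^{n-1}\times\R/\Z\to S^{n}$.

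Write $G(x,\theta)=P(x)(\cdot+\theta)$. Then $G|_{S^{n-1}\times\{0\}}=P$. On the other hand, $(A^{-1}\circ\Phi')|_{S^{n-1}\times\{0\}}$ is null-homotopic, because every map $S^{n-1}\to S^{n}$ is. So such a homotopy would force $[P]=0$ in $H_{n-1}(\Lambda S^{n})$.

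But $P$ is a family of loops based at $e_{0}$ whose adjoint is precisely your $\Phi$: it collapses $S^{n-1}\times\{0\}\cup\{e_{0}\}\times\R/\Z$ to $e_{0}$ and has degree $\pm1$. Hence $P$ generates $H_{n-1}(\Omega S^{n})$. This group injects into $H_{n-1}(\Lambda S^{n})$ by the Serre spectral sequence of $\Omega S^{n}\to\Lambda S^{n}\to S^{n}$, since the section by constant loops kills all differentials leaving the bottom row.

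Geometrically, the family ``pencil circle through $y$, started at $y$'' is discontinuous at $y=e_{0}$. Resolving it through $\Phi$ places the nontrivial family $P$ over $\Phi^{-1}(e_{0})\supset S^{n-1}\times\{0\}$, where $A^{-1}\circ\Phi$ is constant, and no conformal interpolation removes this.

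There are also smaller slips:
\begin{enumerate}
\item The degenerate loop is at $x=e_{0}$, not $x=-e_{0}$.
\item The pencil is straightened by stereographic projection from $e_{0}$, not $-e_{0}$.
\item A M\"obius map fixing $e_{0}$ preserves the set of circles through $e_{0}$, so it cannot carry the pencil onto the rotation orbits, only one of which passes through $e_{0}$.
\end{enumerate}

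The missing idea is that the identity only holds after a degenerate chain is discarded. So you need a genuine bordism, not a homotopy of $\Delta P$.

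The paper works one level down. It homotopes the $(n-1)$-family $P$ itself, via the explicit family $\Gamma(\rho_{r}(x),x)$, to the class $D$ of \cite{brocic-cant-arXiv-2025}. This class consists of rotation orbits through one page $\{\ell\le 0\}$ of the open book with binding $\{x_{0}=x_{1}=0\}$, and of constant loops on the opposite page. Then $\Delta D$ is a degree-one reparametrization of $A^{-1}$ over $\mathrm{page}\times\R/\Z$, plus rotated constant loops. The latter factor through an $(n-1)$-disk and so vanish, and it is into this degenerate piece that the class $P$ disappears. The paper then quotes $\Delta D=A^{-1}$ from the earlier work.

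Alternatively, for $n\ge 3$ with $\Z/2$ coefficients, your degree count does suffice once you add one fact. The map $\mathrm{ev}_{*}:H_{n}(\Lambda S^{n})\to H_{n}(S^{n})$ is an isomorphism, because the same spectral sequence has no $E^{2}_{0,n}$ term ($n-1$ does not divide $n$). Since $\mathrm{ev}\circ G=\Phi$ and $\mathrm{ev}\circ A^{-1}=\mathrm{id}$, odd degree of $\Phi$ then gives $\Delta P=A^{-1}$ directly. For $n=2$, however, $H_{2}(\Omega S^{2})\neq 0$, and a further argument would be needed.
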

\begin{proof}
  In \cite[\S2.2.2]{brocic-cant-arXiv-2025} we construct an explicit class $D\in H_{n-1}(\Lambda S^{n})$ using an open book decomposition of $S^{n}$ and show that $\Delta D=A^{-1}$. In this proof we will show that the class $P$ is homotopic to the class $D$, and so the desired result will follow.

  Fix $v\in STS^{n}$ and let $E_{v}$ be the equator orthogonal to $v$, as in \S\ref{sec:completing-manifolds}. Let us think of $v\in STS^{n}_{q}$ as a pair $(q,\R_{+}w)$ where $\R_{+}w$ is the ray through the origin spanned by a unit vector $w$ and $q$ is a basepoint on $S^{n}$.

  For any two distinct points in $x,y\in E_{v}$ there is a unique two-dimensional subspace $\Pi$ which contains $w$ and the vector $x-y$. Let us denote by:
  \begin{equation*}
    \Gamma(x,y)=(x+\Pi)\cap S^{n}=(y+\Pi)\cap S^{n}
  \end{equation*}
  parametrized as a circle $\R/\Z\to S^{n}$ via the formula:
  \begin{equation*}
    t\mapsto \frac{(x+y)+\cos(t)(x-y)}{2}+\frac{\sin(t)\abs{x-y}}{2}w.
  \end{equation*}
  with constant speed $2^{-1}\abs{x-y}$. By the completing manifolds construction, the class $P$ is represented by the map:
  \begin{equation}\label{eq:explicit-P}
    P:x\in E_{v}\mapsto \Gamma(q,x).
  \end{equation}
  For concreteness, let us fix $w=\partial_{0}$, $q=(0,1,0,\dots,0)$, so $E_{v}=\set{0}\times S^{n-1}$. Define:
  \begin{equation*}
    \ell(0,x_{1},\dots,x_{n})=x_{1}\qquad \rho(x_{0},\dots,x_{n})=(0,-x_{1},x_{2},\dots,x_{n})
  \end{equation*}
  
  The class $D$ considered in \cite[\S2.2.2]{brocic-cant-arXiv-2025} is (a smoothing of) the map:
  \begin{equation}\label{eq:explicit-D}
    D:x\in E_{v}\mapsto \left\{
      \begin{aligned}
        &\Gamma(\rho(x),x)&&\text{ if }\ell(x)\le 0,\\
        &x&&\text{ if }\ell(x)\ge 0.
      \end{aligned}
      \right.
    \end{equation}
    This map is not smooth at the interface $\ell(x)=0$ but it is continuous, and therefore represents a well-defined bordism class. The class parametrizes all loops passing through one page of an open book decomposition; we require the piecewise smooth formula to switch between the circle action on one page and the constant loops on the opposite page.

    \begin{claim}
      The classes $P$ and $D$ given in \eqref{eq:explicit-P} and \eqref{eq:explicit-D} are homotopic as maps $E_{v}\to \Lambda S^{n}$.
    \end{claim}
    \begin{proof}[Proof of claim]
      We construct a continuous homotopy $G:E_{v}\times [0,1]\to \Lambda S^{n}$ interpolating between $D$ and $P$ at the two extremities (one obtains a smooth homotopy between $P$ and a smoothing of $D$ by standard smooth approximation results). We define:
      \begin{equation}\label{eq:explicit-G}
        G:(x,r)\in E_{v}\in [0,1]\mapsto \left\{
          \begin{aligned}
            &\Gamma(\rho_{r}(x),x)&&\text{ if }\ell(x)\le r,\\
            &x&&\text{ if }\ell(x)\ge r,
          \end{aligned}
        \right.        
      \end{equation}
      where $\rho_{r}:E_{v}\cap \set{\ell \le r}\to E_{v}\cap \set{\ell \ge r}$ is any continuous map which satisfies $\rho_{r}(x)=x$ if $\ell(x)=r$ and $\rho_{0}(x)=\rho(x)$ and $\rho_{1}(x)=q$ for all $x$. We fix: $$\rho_{r}(0,x_{1},\dots,x_{n}) = (0,\beta_{r}(x_{1}),\alpha_{r}(x_{1})x_{2},\dots,\alpha_{r}(x_{1})x_{n})$$ where:      
      \begin{equation*}
        \beta_{r}(x) = \frac{x+1}{r+1}(r-1) +1
      \end{equation*}
      and $\alpha_{r}$ is determined by the condition that $\abs{\rho_r(0,x_{1},\dots,x_{n})}=1$. Then $G$ is the desired homotopy with $G(x,0)=D(x)$ and $G(x,1)=P(x)$; see Figure \ref{fig:bordism}.
    \end{proof}
    Since we have shown that $\Delta D=A^{-1}$ in \cite[Lemma 17]{brocic-cant-arXiv-2025}, it follows from the claim that $\Delta P=A^{-1}$, as desired.
\end{proof}
\begin{figure}[h!]
  \begin{tikzpicture}[scale=.8]
    \begin{scope}[shift={(4,0)}]
      \draw[|-] (-3,-1.5)--(-3,-0.8); \draw[|-] (-3,-0.8)--(-3,0.3); \draw[|-] (-3, 0.3) -- (-3, 1.1); \draw[|-|] (-3,1.1)--(-3,1.5);

      \path node at (-3, 0.3) [left] {$r$} node at (-3, -0.8) [left] {$x$} node at (-3,1.1) [left] {$\beta_{r}(x)$};
    \end{scope}
    \draw (3,0) circle (1.5); \draw[name path= el_left] (3,1.5) arc (90:270:0.4 and 1.5);

    \draw[dashed, name path=el_right] (3,-1.5) arc (-90:90:0.4 and 1.5);

    \coordinate (A) at (2, -0.8);

    \coordinate (B) at (4, -0.8);

    \coordinate (C) at (2,1.1);

    \coordinate (D) at (4,1.1);

    \path[name path = A--B] (A)--(B);

    \path[name path = C--D] (C)--(D);

    \path[name path = line_fx] ($(C)-(0,0.3)$) -- ($(D)-(0,0.3)$);

    \path[name path = line_x] ($(A)+(0,0.1)$) -- ($(D)+(0,0.1)$);

    \path[name intersections = {of=el_left and A--B, by=xn}];

    \path[name intersections = {of=el_left and C--D, by=fxn}];

    \path[name intersections = {of=el_left and line_x, by=x'}];

    \path[name intersections = {of=el_left and line_fx, by=fx'}];

    \node[draw, circle, fill=black, inner sep=1pt] at (xn) {};

    \node[draw, circle, fill=black, inner sep=1pt] at (fxn) {};

    \node (B) at (fxn) {};

    \node[draw, circle, fill=black, inner sep=1pt] at (x') {};

    \node[draw, circle, fill=black, inner sep=1pt] at (fx') {};

    \node at (xn) [below right] {$x$};

    \node (A) at ($(fxn)+(1.5,0)$) [above right] {$\rho_r(x)$};

    \draw[-Latex] (A)--(B);

    \coordinate (E) at ($0.5*(xn) - 0.5*(fxn)$);

    \draw let \p1=($0.5*(xn) - 0.5*(fxn)$), \n1={veclen(\x1,\y1)} in ($0.5*(xn)+0.5*(fxn)$) ellipse (0.9*\n1 and \n1);

    \draw let \p2=($0.5*(x') - 0.5*(fx')$), \n2={veclen(\x2,\y2)} in ($0.5*(x')+0.5*(fx')$) ellipse (0.9*\n2 and \n2);
  \end{tikzpicture}
  \caption{An illustration of the loop $G(x,r)$ with $x_{1}<r$.}
  \label{fig:bordism}
\end{figure}

This concludes the first part of Theorem \ref{theorem:spheres}, since $A\ast \Delta(B\ast \mathit{pt})=[S^{n}]$ by combining Lemmas \ref{lemma:B-ast-pt-P}, \ref{lemma:A-A-inv}, and \ref{lemma:delta-P-A-inv}.

\subsubsection{The product of spheres}
\label{sec:product-spheres}

For the second part on the product of spheres, we require the following notion:
\begin{definition}\label{definition:cartesian-product}
  Let $F_{1}:X_{1}^{n_{1}}\times \R/\Z\to M_{1}$ and $F_{2}:X_{2}^{n_{2}}\times \R/\Z\to M_{2}$ represent classes in $H_{n_{1}}(\Lambda M_{1})$ and $H_{n_{2}}(\Lambda M_{2})$ respectively. Then we define:
  \begin{equation*}
    F_{1}\times F_{2}:(X_{1}\times X_{2})\times \R/\Z\to M_{1}\times M_{2}
  \end{equation*}
  by the formula:
  \begin{equation*}
    (F_{1}\times F_{2})(x_{1},x_{2};t)=(F_{1}(x_{1};t),F_{2}(x_{2};t)),
  \end{equation*}
  so that $F_{1}\times F_{2}\in H_{n_{1}+n_{2}}(\Lambda(M_{1}\times M_{2}))$.
\end{definition}

\begin{lemma}\label{lemma:cartesian-product}
  Given classes $F_{1},F_{2}$ as in Definition \ref{definition:cartesian-product}, then:
  \begin{itemize}
  \item $\Delta(F_{1} \times F_{2})=F_{1}\times \Delta F_{2}$ provided that $F_{1}(x,t)=F_{1}(x,0)$, i.e., $F_{1}$ is a class of constant loops.
  \end{itemize}
  Given also $F_{1}',F_{2}'$ as in Definition \ref{definition:cartesian-product}, then:
  \begin{itemize}
  \item $(F_{1}\times F_{2})\ast (F_{1}'\times F_{2}')=(F_{1}\ast F_{1}')\times (F_{2}\ast F_{2}')$.
  \end{itemize}
\end{lemma}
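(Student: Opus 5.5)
Both identities can be checked directly on representatives, using the bordism-class description of $H_{*}(\Lambda M)$ from \cite{brocic-cant-arXiv-2025}. There a class is a map $F:X\times\R/\Z\to M$, and the operations are defined geometrically:
\begin{itemize}
\item $\Delta F$ is represented by $(x,\theta;t)\mapsto F(x;t+\theta)$ on $X\times\R/\Z$;
\item $F\ast F'$ is represented on the fiber product $\set{(x,x'):F(x;0)=F'(x';0)}$, made transverse if necessary, by the loop that traverses $F(x;\cdot)$ and then $F'(x';\cdot)$.
\end{itemize}

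For the first identity, the representative of $\Delta(F_{1}\times F_{2})$ is
\begin{equation*}
  (x_{1},x_{2},\theta;t)\mapsto (F_{1}(x_{1};t+\theta),F_{2}(x_{2};t+\theta)).
\end{equation*}
Since $F_{1}$ consists of constant loops, $F_{1}(x_{1};t+\theta)=F_{1}(x_{1};0)$, so this map equals
\begin{equation*}
  (x_{1},x_{2},\theta;t)\mapsto (F_{1}(x_{1};t),F_{2}(x_{2};t+\theta)).
\end{equation*}
This is literally $F_{1}\times\Delta F_{2}$ on the domain $X_{1}\times(X_{2}\times\R/\Z)$. Two points remain. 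First, reordering the domain factors as $(X_{1}\times X_{2})\times\R/\Z$ versus $X_{1}\times(X_{2}\times\R/\Z)$ can introduce a sign; this is irrelevant because we work in characteristic two. Second, the domain orientation conventions of $\Delta$ and of $\times$ must match; again this only affects signs.

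For the second identity, the basepoint condition for $M_{1}\times M_{2}$ splits componentwise. The fiber product defining $(F_{1}\times F_{2})\ast(F_{1}'\times F_{2}')$ is
\begin{equation*}
  \set{(x_{1},x_{2},x_{1}',x_{2}'):F_{1}(x_{1};0)=F_{1}'(x_{1}';0),\ F_{2}(x_{2};0)=F_{2}'(x_{2}';0)}.
\end{equation*}
After reordering factors, this is the product of the two fiber products defining $F_{1}\ast F_{1}'$ and $F_{2}\ast F_{2}'$. Transversality of each factor evaluation map to its diagonal implies transversality of the product evaluation map to the diagonal of $(M_{1}\times M_{2})^{2}$. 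So if the individual pairs are perturbed to be transverse, the product pair is transverse with no further perturbation.

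On this common domain, the concatenated loop in $M_{1}\times M_{2}$ has components equal to the concatenations in each factor. This holds provided concatenation is done in a uniform way. With \cite{brocic-cant-arXiv-2025}'s convention (fixed reparametrizations that traverse the first loop on $[0,\frac12]$ and the second on $[\frac12,1]$, or some fixed smoothing of this), the reparametrization acts identically on both components. Hence the two representatives agree exactly, up to reordering of domain factors.

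The main obstacle is bookkeeping rather than geometry. One must confirm that the Chas--Sullivan product in \cite{brocic-cant-arXiv-2025} uses a reparametrization independent of the loops, so that it commutes with taking products. If it instead depends on the loops (for example, via speed-normalization), the two representatives differ by a reparametrization homotopy. In that case I would write down an explicit homotopy of reparametrizations on the common domain, using that the space of such reparametrizations is contractible. Orientation and sign issues are moot over $\Z/2$.
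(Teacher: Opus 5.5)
Your proposal is correct and takes the same approach as the paper. The paper's proof simply asserts that both identities follow directly from the bordism-model definitions of $\Delta$ and the Chas--Sullivan product in \cite[\S 2.1]{brocic-cant-arXiv-2025}; your argument (rotate both factors and use that $F_{1}$ is constant, then split the fiber product, transversality, and the loop-independent concatenation componentwise, with signs irrelevant in characteristic two) is exactly that verification written out.
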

\begin{proof}
  This follows easily from the definition of the BV operator and the Chas-Sullivan product; see \cite[\S 2.1]{brocic-cant-arXiv-2025}.
\end{proof}

\begin{proof}[{Completion of proof of Theorem \ref{theorem:spheres}}]
  Set:
  \begin{itemize}
  \item $A_i:=S^{n_{1}} \times \cdots \times A_{n_i} \times \cdots \times S^{n_{k}},$ and
  \item $B_i:=S^{n_{1}} \times \cdots \times B_{n_i} \times \cdots \times S^{n_{k}}.$
  \end{itemize}
  The classes $A_{n_i} \in H_{n_{i}}(\Lambda S^{n_{i}})$, and $B_{n_i} \in H_{2n_{i}-1}(S^{n_{i}})$ are from the construction in \S\ref{sec:manip-class-free} and $S^{n_{i}}$ is, by abuse of notation, the class of constant loops corresponding to the sphere $S^{n_{i}}$.

  It follows from Lemma \ref{lemma:cartesian-product} and the results of \S\ref{sec:manip-class-free} that:
  \begin{itemize}
  \item $C_i = [S^{n_{1}} \times S^{n_{2}} \times \cdots \times S^{n_i} \times \mathit{pt}\times \mathit{pt}\times \cdots]$,
  \end{itemize}
  where $C_{i}$ is defined in terms of the $A_{1},\dots,A_{i}$ and $B_{1},\dots,B_{i}$ as in the statement of Theorem \ref{theorem:spheres}.
\end{proof}

\subsection{Rationality constants of Lagrangians in cotangent bundles of tori}
\label{sec:proof-rationality}

In this section we prove Theorem \ref{theorem:rationality}. Fix a fiberwise starshaped subdomain $\Omega\subset T^{*}L$, consider a compact Lagrangian submanifold $L$ in the interior of $\Omega$. The goal is to prove that either $L$ is exact, or $\rho(L)$ is bounded above by a constant which depends only on $\Omega$.

\subsubsection{Non-exact Weinstein neighbourhood}
\label{sec:non-exact-weinstein}

The first step is to use the Weinstein neighborhood theorem to conclude a symplectic embedding $\iota:K\to \Omega$ of a disk cotangent bundle $K=DT^{*}L$ which extends the embedding $L\to \Omega$. It follows that the period group of the closed one-form:
\begin{equation*}
  \iota^{*}\lambda_{\Omega}-\lambda_{K}
\end{equation*}
is valued in $\rho(L) \Z$. This holds because $K$ deformation retracts onto $L$ and so the periods of $\iota^{*}\lambda_{\Omega}-\lambda_{K}$ are the same as the periods of $\lambda_{\Omega}|_{L}$.

\subsubsection{Classes in the free-loop space of cotangent bundle of a torus}
\label{sec:classes-free-loop}

The next step introduces certain classes in $\mathit{SH}(T^{*}T^{n})$; these classes were alluded to in Remark \ref{remark:on-the-constant-C}.

\begin{lemma}
  There exist classes $A_{i}^{\pm}\in \mathit{SH}(T^{*}T^{n})$, $i=1,\dots,n$ such that
  \begin{equation*}
    A_{i}^{+}\ast A_{i}^{-}=\mathit{PSS}(1)
  \end{equation*}
  and such that $A_{i}^{\pm}$ lies in the free homotopy class of $t\mapsto \pm te_{i}$.
\end{lemma}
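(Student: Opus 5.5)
We plan to construct the classes on the string topology side and transport them to $\mathit{SH}(T^{*}T^{n})$ using the BV-algebra morphism $H_{*}(\Lambda T^{n})\to \mathit{SH}(T^{*}T^{n})$ recalled after Theorem \ref{theorem:spheres}. That morphism sends the point class $[\mathit{pt}]$ to $\mathit{PSS}(F)$, with $F$ the fiber class, and the fundamental class $[T^{n}]$ of constant loops to $\mathit{PSS}(1)$. It also preserves the decomposition by free homotopy classes. It therefore suffices to find classes $\tilde A_{i}^{\pm}\in H_{n}(\Lambda T^{n})$ lying in the components of $t\mapsto \pm t e_{i}$ with $\tilde A_{i}^{+}\ast \tilde A_{i}^{-}=[T^{n}]$, and then set $A_{i}^{\pm}$ to be their images.

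The natural candidates are the action classes of \cite[\S2.2]{brocic-cant-arXiv-2025} for the circle actions on $T^{n}=(\R/\Z)^{n}$ given by translation in the $\pm e_{i}$ direction. Concretely, we take
\begin{equation*}
  \tilde A_{i}^{\pm}(x;t)=x\pm te_{i},\qquad x\in T^{n},\ t\in \R/\Z,
\end{equation*}
viewed as a map $T^{n}\to \Lambda T^{n}$, so that $\tilde A_{i}^{\pm}\in H_{n}(\Lambda T^{n})$. Each loop $t\mapsto x\pm te_{i}$ lies in the required free homotopy class.

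The key relation is then exactly the one used in Lemma \ref{lemma:A-A-inv}: for a circle action $\phi$ with action class $A_{\phi}$, \cite[\S2.2.1]{brocic-cant-arXiv-2025} gives $A_{\phi}\ast A_{\phi^{-1}}=[M]$. We would argue this directly as well. The Chas--Sullivan product of two classes both parametrized by $M$ via basepoint evaluation is computed on the diagonal locus, which is just $M$. The product loop is the concatenation of $t\mapsto x+te_{i}$ with $t\mapsto x-te_{i}$, and this is homotopic to the constant loop at $x$ through a homotopy that is uniform in $x$. Hence $\tilde A_{i}^{+}\ast\tilde A_{i}^{-}=[T^{n}]$. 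Alternatively, one can reduce to $n=1$ via the Cartesian product formula of Lemma \ref{lemma:cartesian-product}, writing $\tilde A_{i}^{\pm}$ as $T^{1}\times\dots\times A^{\pm}\times\dots\times T^{1}$ with constant factors, and then invoke Lemma \ref{lemma:A-A-inv} for $S^{1}$.

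The main point requiring care is the transfer step. We need the identification of $[T^{n}]$ (constant loops) with $\mathit{PSS}(1)$ under the Viterbo/Abbondandolo--Schwarz isomorphism, including correct orientation and local system conventions; the torus is parallelizable and spin, so no twisting arises. We also need that $\mathit{PSS}(1)$ is the unit for $\ast$ in the conventions of \cite{brocic-cant-arXiv-2025}. Characteristic two coefficients remove all sign issues.

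For the filtered refinement needed in Remark \ref{remark:on-the-constant-C}, we would finally note that each $\tilde A_{i}^{\pm}$ is represented by loops of bounded length. By the length-filtration compatibility in \cite{brocic-cant-arXiv-2025}, the classes $A_{i}^{\pm}$ then lift to $\mathit{SH}_{c_{i,\pm}}(\Omega)$ for finite $c_{i,\pm}$. This step is not required by the statement as given.
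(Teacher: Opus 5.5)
Your proposal is correct and follows the paper's own route: the paper likewise takes the action classes of \S\ref{sec:manip-class-free} for the coordinate circle actions, reduces to the $n=1$ relation $A\ast A^{-1}=[S^{1}]$ via the Cartesian product formulas of Lemma \ref{lemma:cartesian-product}, and transfers the resulting identity $\tilde A_{i}^{+}\ast\tilde A_{i}^{-}=[T^{n}]$ to $\mathit{SH}(T^{*}T^{n})$ through the Viterbo isomorphism in the framework of \cite{brocic-cant-arXiv-2025}. Your direct diagonal-locus computation of the Chas--Sullivan product is a valid alternative to the product-formula step.
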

\begin{proof}
  This is well-known, and follows from the arguments of \S\ref{sec:manip-class-free} regarding action classes, the arguments in \S\ref{sec:product-spheres} regarding cartesian products, and the Viterbo isomorphism (using the framework of \cite{brocic-cant-arXiv-2025}).
\end{proof}
Therefore, having fixed $\Omega\subset T^{*}T^{n}$, we can pick the slope $c>0$ large enough that $A_{i}^{+}\ast A_{i}^{-}=\mathit{PSS}(1)$ holds in $\mathit{SH}_{c}(\Omega)$ for each $i=1,\dots,n$.

\subsubsection{Truncated Viterbo restriction}
\label{sec:trunc-viterbo-restr}

Combining the set-up of \S\ref{sec:non-exact-weinstein} and \S\ref{sec:classes-free-loop}, we can apply Theorem \ref{theorem:VR-trunc} to conclude:
\begin{corollary}\label{corollary:the-B-classes}
  If the Lagrangian $L\subset \Omega$, as in \S\ref{sec:non-exact-weinstein}, has $\rho(L)>2c$, where $c$ is as in \S\ref{sec:classes-free-loop}, then there are classes $B_{i}^{\pm}=\mathfrak{R}(A_{i}^{\pm})$
  so that:
  \begin{itemize}
  \item $B_{i}^{+}\ast B_{i}^{-}=\mathit{PSS}(1)$
  \item $B_{i}^{\pm}\in \mathit{SH}(T^{*}L,\kappa_{i}^{\pm})$, where $\kappa_{i}^{\pm}$ is the collection of free homotopy classes of loops $\gamma$ lying in the kernel of $\iota^{*}\lambda_{\Omega}-\lambda_{K}$ and satisfying that $\iota(\gamma)$ lies in the free homotopy class of $t\mapsto \pm te_{i}$.
  \end{itemize}
\end{corollary}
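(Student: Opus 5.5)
The plan is to apply Theorem \ref{theorem:VR-trunc} to the non-exact Weinstein embedding $\iota:K=DT^{*}L\to \Omega$ of \S\ref{sec:non-exact-weinstein}, with $\rho=\rho(L)$. By \S\ref{sec:non-exact-weinstein}, the periods of $\iota^{*}\lambda_{\Omega}-\lambda_{K}$ lie in $\rho(L)\Z$. By \S\ref{sec:classes-free-loop}, the classes $A_{i}^{\pm}$ live in $\mathit{SH}_{c_{i,\pm}}(\Omega)$ with $c_{i,+}+c_{i,-}\le c$, and the relation $A_{i}^{+}\ast A_{i}^{-}=\mathit{PSS}(1)$ holds in $\mathit{SH}_{c}(\Omega)$.

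First fix slopes $b_{i,\pm}$ slightly larger than $c_{i,\pm}$, avoiding $\mathit{Spec}(K)$, with
\begin{equation*}
c_{i,+}+c_{i,-}+b_{i,+}+b_{i,-}<\rho(L).
\end{equation*}
This is possible since the left side can be made arbitrarily close to $2(c_{i,+}+c_{i,-})\le 2c<\rho(L)$. Then define $B_{i}^{\pm}=\mathfrak{R}(A_{i}^{\pm})\in \mathit{SH}_{b_{i,\pm}}(K)$.

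Next, the product clause of Theorem \ref{theorem:VR-trunc} gives
\begin{equation*}
B_{i}^{+}\ast B_{i}^{-}=\mathfrak{R}(A_{i}^{+}\ast A_{i}^{-})=\mathfrak{R}(\mathit{PSS}(1)).
\end{equation*}
The PSS clause gives $\mathfrak{R}(\mathit{PSS}(1))=\mathit{PSS}(\iota^{*}1)=\mathit{PSS}(1)$. Here $\mathfrak{R}$ acts on the product-slope group $\mathit{SH}_{c}(\Omega)\to\mathit{SH}_{b}(K)$ with $b=b_{i,+}+b_{i,-}$, and $b+c<\rho$ holds by the choice above. To pass to $\mathit{SH}(T^{*}L)$, push everything forward by the continuation map to the colimit. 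This map preserves products and $\mathit{PSS}(1)$, because all the structures are natural in the persistence module.

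For the homotopy-class statement, use the last bullet of Theorem \ref{theorem:VR-trunc}. Take $\varpi$ to be the single free homotopy class of $t\mapsto \pm te_{i}$. Then $B_{i}^{\pm}\in \mathit{SH}(K,\iota^{*}\varpi\cap\kappa)$, which is exactly $\kappa_{i}^{\pm}$ as described. Finally, identify $\mathit{SH}(K)$ with $\mathit{SH}(T^{*}L)$, compatibly with the summand decomposition.

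The main obstacle is the bookkeeping of slopes: the relation $A^{+}\ast A^{-}=\mathit{PSS}(1)$ must hold at a filtration level where the truncated map is defined. We must also check that the condition $c_{1}+c_{2}+b_{1}+b_{2}<\rho$ is simultaneously compatible with the restriction of the product class at level $c$. Once that is done, everything else is a formal consequence of Theorem \ref{theorem:VR-trunc}.
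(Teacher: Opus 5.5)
Your proposal is correct and takes the same route as the paper. The paper obtains the corollary by applying Theorem~\ref{theorem:VR-trunc} (its product, $\mathit{PSS}$, and free-homotopy-class clauses) to the Weinstein neighbourhood embedding $\iota:K\to\Omega$ and the classes $A_{i}^{\pm}$, then passing to the colimit $\mathit{SH}(T^{*}L)$. The slope bookkeeping you flag is harmless: push $A_{i}^{+}$ forward by a continuation map, which commutes with $\ast$, and if necessary enlarge $c$ slightly to avoid $\mathit{Spec}(\Omega)$ while keeping $2c<\rho(L)$. You may then assume $c_{i,+}+c_{i,-}=c$, so the product lands exactly in $\mathit{SH}_{c}(\Omega)$, and $\mathfrak{R}:\mathit{SH}_{c}(\Omega)\to\mathit{SH}_{b_{i,+}+b_{i,-}}(K)$ is defined with $c<b_{i,+}+b_{i,-}$ and $c+b_{i,+}+b_{i,-}<\rho(L)$.
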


\subsubsection{From symplectic cohomology back to string topology}
\label{sec:SH-to-string-topol}

In this subsection, we import one of the main results from \cite{abbondandolo-schwarz-CPAM-2006,abouzaid-EMS-2015} that there is a BV algebra map from $\mathit{SH}(T^{*}L)$ to $\mathit{HM}_{*}(\Lambda L)$. We emphasize here that $\mathit{HM}_{*}(\Lambda L)$ uses a Morse model rather than the bordism classes model used in \S\ref{sec:string-topology-products-spheres}.
\begin{theorem}\label{theorem:import}
  There is a graded BV-algebra $\mathit{HM}_{*}(\Lambda L)$ over $H^{*}(T^{*}L)$ with:
  \begin{itemize}
  \item the BV operator has degree $+1$,
  \item the product has degree $-n$,
  \item the algebra map sends $H^{*}(T^{*}L)$ to $\mathit{HM}_{n-*}(\Lambda L)$.
  \item there is a direct sum decomposition into summands $\mathit{HM}_{*}(\Lambda M,\kappa')$ corresponding to free homotopy classes $\kappa'$ of loops in $M$.
  \end{itemize}
   The group $\mathit{HM}_{d}(\Lambda L,\kappa')$ is isomorphic to the degree $d$ summand $H_{d}(\Lambda_{\kappa'} L)$ of the singular homology of the component $\Lambda_{\kappa'}L$ of free loop space $\Lambda L$.

  There is a map of BV algebras:
  \begin{equation*}
    \Psi:\mathit{SH}(T^{*}L)\to \mathit{HM}_{*}(\Lambda L),
  \end{equation*}
  defined by counting half-infinite Floer cylinders with boundary conditions on the zero section; the map $\Psi$ respects the BV operator, the product, and the algebra structure over $H^{*}(T^{*}L)$; the operation also respects free homotopy classes, in that $\mathit{SH}(T^{*}L,\kappa)$ is mapped into $\mathit{HM}_{*}(\Lambda L,\pi(\kappa))$ where $\pi$ is the obvious projection.
\end{theorem}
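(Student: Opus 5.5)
The plan is to assemble Theorem \ref{theorem:import} from the existing literature rather than reprove it, checking only that the conventions line up with ours (characteristic two coefficients, cohomological grading on $\mathit{SH}$, and the $Z$-equivariant framework of \cite{brocic-cant-JFPTA-2024,brocic-cant-arXiv-2025}).

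\emph{Step 1: the Morse model.} For the first part, I would take $\mathit{HM}_{*}(\Lambda L)$ to be the Morse homology of a Lagrangian action functional $\mathcal{S}_{L}$ on the Hilbert manifold $W^{1,2}(\R/\Z,L)$, for a fiberwise convex quadratic Lagrangian (e.g.\ $\tfrac{1}{2}\abs{\dot q}^{2}$ plus a generic potential). The theorem of Abbondandolo--Schwarz identifies $\mathit{HM}_{d}$ with singular homology $H_{d}(\Lambda L)$. Since gradient flow lines preserve free homotopy classes, the splitting into summands $\mathit{HM}_{*}(\Lambda L,\kappa')$ is immediate, and it matches the decomposition of $H_{*}(\Lambda_{\kappa'}L)$.

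\emph{Step 2: the algebraic structure.} I would use the Morse-theoretic descriptions of the structure maps:
\begin{itemize}
\item the Chas--Sullivan product is defined by figure-eight intersections, and it has degree $-n$;
\item the BV operator is defined by the $S^{1}$-reparametrization action, and it has degree $+1$;
\item the algebra structure over $H^{*}(T^{*}L)\cong H^{*}(L)$ is given by cap product with pulled-back classes at the basepoint, which yields the degree shift $\mathit{HM}_{n-*}$ by Poincaré duality.
\end{itemize}

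\emph{Step 3: the map $\Psi$.} I would define $\Psi$ by counting half-cylinders $u:[0,\infty)\times\R/\Z\to T^{*}L$ solving Floer's equation. Each is asymptotic to a Hamiltonian orbit at $+\infty$, has boundary $u(0,\cdot)$ on the zero section, and that boundary loop lies in the unstable manifold of a critical point of $\mathcal{S}_{L}$. Compactness follows from the action inequality $\mathcal{A}_{H}(x)\ge \mathcal{S}_{L}(u(0,\cdot))$, obtained by Legendre duality. Since $\pi(u(0,\cdot))$ is homotopic to the orbit, the statement on free homotopy classes is clear.

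\emph{Step 4: compatibility with the structures.}
\begin{itemize}
\item For the BV operator, I would use the $S^{1}$-family version of the half-cylinder count.
\item For the product, I would follow \cite{abouzaid-EMS-2015} (Abouzaid's Cardy-type argument, or Abbondandolo--Schwarz's pair-of-pants with boundary on the zero section). This degenerates a pants surface with a boundary component on $L$ into half-cylinders joined by a figure-eight.
\item For the $H^{*}$-module structure, I would compose with $\mathit{PSS}$ and observe that a PSS plane glued to a half-cylinder is a disk with boundary on $L$. This count realizes the inclusion of constant loops.
\end{itemize}

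\emph{Main obstacle.} The hard part is Step 4 for the product. It needs a gluing/cobordism argument between the pants moduli space and the fibered product over the figure-eight locus. It also needs care with orientations (which our char-$2$ assumption sidesteps) and with the fact that Abouzaid uses the colimit $\mathit{SH}$ rather than our $Z$-invariant systems. I would handle the latter by noting that both frameworks compute the same colimit through cofinal families of Hamiltonians. In the end I would cite \cite{abbondandolo-schwarz-CPAM-2006,abouzaid-EMS-2015} for the BV-algebra statement, rather than redoing the gluing.
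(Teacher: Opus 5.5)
Your end point agrees with the paper's. Its proof of Theorem~\ref{theorem:import} is simply a citation of \cite{abouzaid-EMS-2015}, plus the remark that the algebra map $H^{*}(T^{*}L)\to\mathit{HM}_{n-*}(\Lambda L)$ is the Thom-type map (intersect with the zero section, take constant loops), which is consistent with your description. However, the construction of $\Psi$ you sketch in Step 3 would not work as written, because it splices together two different constructions.

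\begin{itemize}
\item Requiring $u(0,\cdot)$ to lie on the zero section \emph{and} in the unstable manifold $W^{u}(\gamma)$ is overdetermined. $W^{u}(\gamma)$ is finite-dimensional, of dimension equal to the Morse index of $\gamma$. The zero-section half-cylinders asymptotic to a fixed orbit already form a finite-dimensional family. So their boundary loops generically miss $W^{u}(\gamma)$ inside the infinite-dimensional loop space.
\item An unstable-manifold condition makes $\gamma$ an input. The count would therefore be a map $\mathit{HM}\to\mathit{SH}$, not $\mathit{SH}\to\mathit{HM}$.
\end{itemize}

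What you describe (half-cylinder on $[0,\infty)$, orbit at $+\infty$, $W^{u}(\gamma)$, Legendre duality) is Abbondandolo--Schwarz's isomorphism $\Phi:\mathit{HM}_{*}\to\mathit{HF}_{*}$ from \cite{abbondandolo-schwarz-CPAM-2006}. There the boundary condition is only $\pi\circ u(0,\cdot)\in W^{u}(\gamma)$, with the fiber component free. The Fenchel inequality $\int (p\dot q-H)\,dt\le\int L(q,\dot q)\,dt$ bounds the Hamiltonian action by the Lagrangian one, which controls energy only in that Morse-to-Floer direction. With those normalizations it yields $\mathcal{A}_{H}(x)\le\mathcal{S}_{L}(\pi\circ u(0,\cdot))$, not the inequality you wrote.

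For $\Psi$ as stated, use Abouzaid's construction. Take half-cylinders with boundary on the zero section and push forward their boundary loops as chains in $\Lambda L$. If you want a Morse model, instead intersect with the finite-codimension stable manifolds $W^{s}(\gamma)$. Compactness then needs no duality: $\lambda$ vanishes on $L$ and $H$ is bounded there, so the energy is bounded by the orbit's action plus $\max_{L}\abs{H}$. Alternatively, set $\Psi:=\Phi^{-1}$. With this correction, the rest of your outline matches the cited literature: figure-eight degeneration for the product, $S^{1}$-families for $\Delta$, and PSS planes glued to half-cylinders giving constant loops.
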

\begin{proof}
  The construction of this map is the main topic of \cite{abouzaid-EMS-2015}. We comment that the algebra map $H^{*}(T^{*}L)\to \mathit{HM}_{n-*}(\Lambda L)$ is given by a Thom isomorphism style map (take a class in $H^{*}(T^{*}L)$, intersect it with the zero section, and take the corresponding cycle of constant loops).  
\end{proof}

Referring to the notation and conclusions of Corollary \ref{corollary:the-B-classes}, we conclude:
\begin{corollary}\label{corollary:the-C-classes}
  Assume the context of Corollary \ref{corollary:the-B-classes}; there are non-zero classes $C_{i}\in H_{*}(\Lambda_{\kappa_{i}} L)$
  so that:
  \begin{itemize}
  \item $\deg(C_{i})\ge n$,
  \item $\kappa_{i}$ is the collection of free homotopy classes of loops $\gamma$ lying in the kernel of $\iota^{*}\lambda_{\Omega}-\lambda_{K}$ and satisfying that $\iota(\gamma)$ lies in the free homotopy class of $t\mapsto \pm te_{i}$, for some choice of $\pm$ sign.
  \end{itemize}
\end{corollary}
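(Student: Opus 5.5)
We will apply the map $\Psi$ from Theorem \ref{theorem:import} to the classes $B_{i}^{\pm}$ of Corollary \ref{corollary:the-B-classes}. Set $D_{i}^{\pm}=\Psi(B_{i}^{\pm})$. Since $\Psi$ is a map of algebras over $H^{*}(T^{*}L)$, it sends $\mathit{PSS}(1)$ to the unit of $\mathit{HM}_{*}(\Lambda L)$, namely the fundamental class $[L]$ of constant loops in $H_{n}(\Lambda L)$. It also intertwines the products, so we get
\begin{equation*}
  D_{i}^{+}\ast D_{i}^{-}=[L]\ne 0 .
\end{equation*}
Because $\Psi$ respects free homotopy classes, $D_{i}^{\pm}$ lies in $\mathit{HM}_{*}(\Lambda L,\kappa_{i}^{\pm})$, where we identify $\kappa_{i}^{\pm}$ with its image under the projection $K\to L$, which is a homotopy equivalence.

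The first consequence is that both $D_{i}^{+}$ and $D_{i}^{-}$ are non-zero, since otherwise their product would vanish. Next we decompose each into homogeneous components, $D_{i}^{\pm}=\sum_{d}(D_{i}^{\pm})_{d}$. The Chas--Sullivan product has degree $-n$, and $[L]$ sits in degree $n$. Hence there are degrees $d_{+},d_{-}$ with $(D_{i}^{+})_{d_{+}}\ast(D_{i}^{-})_{d_{-}}\neq 0$ in degree $n$, which forces $d_{+}+d_{-}-n=n$, that is, $d_{+}+d_{-}=2n$. So at least one of $d_{+},d_{-}$ is $\ge n$.

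Let $\pm$ be the corresponding sign and set $C_{i}=(D_{i}^{\pm})_{d_{\pm}}$. This class is non-zero, since its product with the other component is non-zero. It has degree $\ge n$. It lies in $\mathit{HM}_{*}(\Lambda L,\kappa_{i})$ with $\kappa_{i}=\kappa_{i}^{\pm}$ for the chosen sign. Finally, the identification $\mathit{HM}_{d}(\Lambda L,\kappa')\simeq H_{d}(\Lambda_{\kappa'}L)$ from Theorem \ref{theorem:import} turns $C_{i}$ into a singular homology class, as required. If $\kappa_{i}$ consists of several free homotopy classes, we further restrict to a summand on which the product is still non-zero.

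The main point needing care is the grading. Symplectic cohomology classes in $\mathit{SH}_{c}$ need not be homogeneous, and the truncated restriction of Theorem \ref{theorem:VR-trunc} must be checked to respect grading up to the $\Psi$ convention. The plan avoids this issue by working with homogeneous components in $\mathit{HM}_{*}$ only after applying $\Psi$. There one only uses that $\mathit{HM}_{*}$ is graded and that the product is homogeneous of degree $-n$. A secondary point is that $\Psi$ must preserve the unit, which follows from $\Psi$ being a map of $H^{*}(T^{*}L)$-algebras. Under the Thom-type map, $1\in H^{0}$ goes to the class of constant loops $[L]\in \mathit{HM}_{n}$, which is non-zero.
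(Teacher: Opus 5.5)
Your proposal is correct and follows the paper's proof: apply $\Psi$ to $B_{i}^{\pm}$, use that the product of the images is the degree-$n$ unit while the product has degree $-n$, pick the factor of degree at least $n$, and transport it to $H_{*}(\Lambda_{\kappa_{i}}L)$. Your decomposition into homogeneous components, and into single free homotopy classes, just makes rigorous the paper's terser claim that ``one of $C_{i}^{+}$ or $C_{i}^{-}$ must have degree at least $n$''. That claim implicitly treats those classes as homogeneous.
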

\begin{proof}
  Let $C_{i}^{\pm}=\Psi(B_{i}^{\pm})$ where $\Psi$ is as in Theorem \ref{theorem:import}. Since $C_{i}^{+}\ast C_{i}^{-}$ equals the unit element of degree $n$, we conclude that one of $C_{i}^{+}$ or $C_{i}^{-}$ must have degree at least $n$. Let us suppose the sign is $\epsilon\in \set{+,-}$. Let $\kappa_{i}$ be the image of the corresponding $\kappa_{i}^{\epsilon}$ under $\pi:T^{*}L\to L$, and set $C_{i}$ equal to the image of $C_{i}^{\epsilon}$ under the identification between $\mathit{HM}_{*}(\Lambda L,\kappa_{i})$ and  $H_{*}(\Lambda_{\kappa_{i}} L)$. It follows from the construction that $C_{i},\kappa_{i}$ satisfy the statement.
\end{proof}

\subsubsection{Two facts about aspherical manifolds}
\label{sec:two-facts-about}

We recall two facts about the string topology aspherical manifolds taken directly from \cite[\S 5]{latschev-EMS-2015}.
\begin{lemma}[{\cite[Lemma 5.1]{latschev-EMS-2015}}]\label{lemma:lat1}
  Let $L$ be a connected manifold, and let $Z$ be the centralizer of an element $[\gamma]\in \pi_{1}(L,\mathit{pt})$, and let $\pi:L'\to L$ be the covering space associated to the subgroup $Z$, and let $\gamma'$ be a lift of $\gamma$. Then the induced projection $\pi:\Lambda_{\gamma'}L'\to \Lambda_{\gamma}L$ is a homeomorphism.\hfill$\square$
\end{lemma}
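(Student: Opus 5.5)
The plan is to prove bijectivity and continuity of the inverse by lifting loops directly, using the standard lifting criterion for covering spaces. Fix basepoints $\mathit{pt}'\in L'$ over $\mathit{pt}$, and let $\gamma'$ be the lift of $\gamma$ starting at $\mathit{pt}'$. It is a closed loop because $[\gamma]\in Z=\pi_{*}\pi_{1}(L',\mathit{pt}')$, as $\gamma$ commutes with itself. The projection $\pi:\Lambda L'\to \Lambda L$, $\delta\mapsto \pi\circ\delta$, is continuous in the compact-open topology and maps the component $\Lambda_{\gamma'}L'$ into $\Lambda_{\gamma}L$. It remains to construct a continuous inverse.

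\emph{Surjectivity.} Take any loop $\delta\in\Lambda_{\gamma}L$, based at $x=\delta(0)$. Since $\delta$ is freely homotopic to $\gamma$, there is a path $\eta$ from $\mathit{pt}$ to $x$ with $[\eta\delta\bar\eta]=[\gamma]$, up to conjugation conventions.
\begin{itemize}
\item Lift $\eta$ to $\eta'$ starting at $\mathit{pt}'$, and lift $\delta$ starting at $\eta'(1)$.
\item The lift of $\delta$ closes up because the lift of $\eta\delta\bar\eta$ from $\mathit{pt}'$ is the closed loop $\gamma'$.
\end{itemize}
Denote the resulting closed lift by $\delta'$. The free homotopy of $\delta$ to $\gamma$ lifts by homotopy lifting to a free homotopy of $\delta'$ to $\gamma'$, so $\delta'\in\Lambda_{\gamma'}L'$.

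\emph{Injectivity.} This is the key step, and the only place where the centralizer is used. Suppose $\delta_{1}',\delta_{2}'\in\Lambda_{\gamma'}L'$ both project to $\delta$. Then they differ by a deck-type ambiguity: they are lifts of $\delta$ starting at two points $y_{1},y_{2}$ of the fiber over $x=\delta(0)$. Choose a free homotopy in $L'$ from $\delta_{1}'$ to $\delta_{2}'$ (both lie in the component of $\gamma'$) and project it to $L$. This gives a loop $\sigma$ at $x$, traced by the basepoints, together with the relation $\sigma\,\delta\,\bar\sigma\simeq\delta$ in $\pi_{1}(L,x)$; that is, $[\sigma]$ centralizes $[\delta]$. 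Transport to $\mathit{pt}$ via $\eta$: $[\eta\sigma\bar\eta]$ commutes with $[\gamma]$, hence lies in $Z$. Since $Z$ is exactly the subgroup defining $L'$, the lift of $\sigma$ starting at $y_{1}$ closes up, so $y_{2}=y_{1}$. Uniqueness of path lifting then gives $\delta_{1}'=\delta_{2}'$. I expect this to be the main obstacle: the bookkeeping of basepoints and conjugation conventions must be done carefully, and one has to check that the basepoint track of the lifted homotopy is the lift of $\sigma$ from $y_{1}$, which ends at $y_{2}$.

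\emph{Continuity of the inverse.} Lifting can be done locally uniformly. On a neighborhood of $\delta$ in $\Lambda L$, consisting of loops uniformly close to $\delta$, choose the lifted starting point continuously using an evenly covered neighborhood of $x$. The lift $\delta\mapsto\delta'$ then depends continuously on $\delta$, because lifting is continuous in the compact-open topology, as in the parametrized homotopy lifting property. By injectivity this local lift agrees with the global inverse, which is therefore continuous. Hence $\pi:\Lambda_{\gamma'}L'\to\Lambda_{\gamma}L$ is a homeomorphism.
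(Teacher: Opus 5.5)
The paper gives no proof of this lemma; it is quoted from Latschev, so there is no internal argument to compare with. Your direct path-lifting proof is correct in substance, and the centralizer enters where it should: the basepoint track of a free self-homotopy of a loop centralizes that loop. Two steps need tightening, though neither needs a new idea.

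\emph{Injectivity.} The fact $[\eta\sigma\bar\eta]\in Z$ only shows that the lift of $\sigma$ closes up when started at the endpoint of the lift of $\eta$ from $\mathit{pt}'$. With the $\eta$ from your surjectivity step, that endpoint is $\eta'(1)$, and nothing yet says $y_{1}=\eta'(1)$.
\begin{enumerate}
\item For $i=1,2$, let $\eta_{i}$ be the projection of the basepoint track of a free homotopy in $L'$ from $\gamma'$ to $\delta_{i}'$.
\item Then the lift of $\eta_{i}$ from $\mathit{pt}'$ ends at $y_{i}$, and $[\eta_{i}\delta\bar\eta_{i}]=[\gamma]$.
\item Hence $[\eta_{1}\bar\eta_{2}]$ centralizes $[\gamma]$, lies in $Z$, and so lifts to a closed loop at $\mathit{pt}'$. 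This says exactly that $y_{1}=y_{2}$, and the loop $\sigma$ is not needed.
\end{enumerate}
The same care helps in surjectivity. Take $\eta$ to be the basepoint track of the free homotopy you then lift. The two vertical edges of the lifted square are then both the lift of $\eta$ from $\mathit{pt}'$, which is what makes every intermediate loop closed, in particular $\delta'$.

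\emph{Continuity.} The phrase ``by injectivity this local lift agrees with the global inverse'' presupposes that the local lift $\tilde\zeta$ of a loop $\zeta$ near $\delta$ is closed and lies in $\Lambda_{\gamma'}L'$. Both need a line of proof.
\begin{enumerate}
\item Closedness: $\tilde\zeta(1)$ depends continuously on $\zeta$, so it stays in the sheet over $V$ containing $\delta'(0)$. The only point of that sheet over $\zeta(1)=\zeta(0)$ is $\tilde\zeta(0)$.
\item Membership in $\Lambda_{\gamma'}L'$: take the neighborhood of $\delta$ path-connected. This is possible because $\Lambda L$ is locally path-connected when $L$ is a manifold.
\end{enumerate}

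\emph{An alternative route.} You can avoid the local analysis by applying the lifting criterion to $\mathrm{ev}:\Lambda_{\gamma}L\times S^{1}\to L$; its domain is path-connected and, again, locally path-connected.
\begin{enumerate}
\item Its image on $\pi_{1}$ at $(\gamma,0)$ is generated by $[\gamma]$ (from the circle factor) and by basepoint tracks of free self-homotopies of $\gamma$. All of these lie in $Z$.
\item So $\mathrm{ev}$ lifts to $L'$ with $(\gamma,0)\mapsto\mathit{pt}'$. Its adjoint $\Theta:\Lambda_{\gamma}L\to\Lambda_{\gamma'}L'$ is continuous and satisfies $\pi_{*}\circ\Theta=\mathrm{id}$.
\item The maps $(\delta',t)\mapsto\delta'(t)$ and $(\delta',t)\mapsto\Theta(\pi\circ\delta')(t)$ are two lifts of the same map that agree at $(\gamma',0)$. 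Uniqueness of lifts on the connected space $\Lambda_{\gamma'}L'\times S^{1}$ gives $\Theta\circ\pi_{*}=\mathrm{id}$.
\end{enumerate}
This packages surjectivity, injectivity and continuity into one use of covering theory. Your version proves the three pieces by hand, which is more explicit but requires the basepoint bookkeeping above.
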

\begin{lemma}[{\cite[Lemma 5.2]{latschev-EMS-2015}}]\label{lemma:lat2}
  Let $L$ be a connected aspherical manifold, and let $L',\gamma'$ be as in Lemma \ref{lemma:lat2}. Then the evaluation at the basepoint map $\Lambda_{\gamma'}L'\to L'$ is a homotopy equivalence.\hfill$\square$
\end{lemma}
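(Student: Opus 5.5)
The plan is to study the evaluation map $\mathrm{ev}:\Lambda_{\gamma'}L'\to L'$, $\mathrm{ev}(\sigma)=\sigma(0)$, as a Hurewicz fibration and to show that its fiber is contractible. First we record two basic facts. Since $[\gamma]$ commutes with itself, $[\gamma]\in Z$, so $\gamma$ lifts to a closed loop $\gamma'$ in $L'$, and $\pi_{1}(L',\mathit{pt}')$ is identified with $Z$ via $\pi_{*}$. Moreover, $L'$ is aspherical, since it is covered by the same contractible universal cover as $L$. The map $\mathrm{ev}:\Lambda L'\to L'$ is a fibration: this is the standard path-lifting property of the free loop space, or equivalently the observation that $\Lambda L'\to L'$ is the pullback of the path fibration $L'^{[0,1]}\to L'\times L'$ along the diagonal. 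Restricting to the path component $\Lambda_{\gamma'}L'$ still gives a fibration, and it is surjective because $L'$ is connected.

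Next we identify the fiber over the basepoint $\mathit{pt}'$. It consists of those based loops at $\mathit{pt}'$ which are freely homotopic to $\gamma'$. In terms of $\pi_{0}$, the fiber corresponds to the set of elements of $\pi_{1}(L',\mathit{pt}')\cong Z$ that are conjugate in $Z$ to $[\gamma']$. This is the point where the choice of $Z$ is used. The element $[\gamma']$ maps to $[\gamma]$, which is central in its own centralizer $Z$, so its conjugacy class in $Z$ is the single element $[\gamma']$. Hence the fiber is a single path component of the based loop space $\Omega L'$. Every component of $\Omega L'$ is homotopy equivalent to the identity component, whose homotopy groups are $\pi_{k}(\Omega_{0}L')\cong\pi_{k+1}(L')=0$ for all $k\ge 0$ by asphericity. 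So the fiber is weakly contractible.

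Finally, apply the long exact sequence of homotopy groups for the fibration $\mathrm{ev}$, with care at the low-degree terms $\pi_{1}$ and $\pi_{0}$. Since the fiber is weakly contractible and the total space is path connected, $\mathrm{ev}$ induces isomorphisms on all $\pi_{k}$, so it is a weak homotopy equivalence. To upgrade this to a homotopy equivalence, we use Milnor's theorem that the free loop space of a manifold, and each of its components, has the homotopy type of a CW complex. Since $L'$ is a manifold and hence also has CW homotopy type, Whitehead's theorem then shows that $\mathrm{ev}$ is a homotopy equivalence.

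We expect the main obstacle to be the identification of the fiber with exactly one component of $\Omega L'$. This is precisely where passing to the centralizer cover is needed. On $L$ itself the fiber of $\Lambda_{\gamma}L\to L$ would be the union of components of $\Omega L$ indexed by the conjugacy class of $[\gamma]$, which can be infinite, and then the evaluation map would fail to be a homotopy equivalence.
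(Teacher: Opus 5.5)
Your argument is correct. The paper gives no proof of its own here; it only cites Latschev's Lemma 5.2. Your route is the standard proof of that cited statement: the evaluation fibration $\Lambda_{\gamma'}L'\to L'$ has fiber the single component $\Omega_{[\gamma']}L'$ because $[\gamma']$ is central in $\pi_1(L')\cong Z$, that component is weakly contractible by asphericity, and Milnor plus Whitehead upgrade the resulting weak equivalence to a homotopy equivalence. The only slip is writing $\pi_k(\Omega_0L')\cong\pi_{k+1}(L')$ for $k=0$, which fails since $\pi_1(L')\cong Z$. It is harmless, because $\pi_0(\Omega_0L')$ is trivial by definition and the isomorphism holds for $k\ge 1$.
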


We apply these lemmas in our sitation. Let $C_{i},\kappa_{i}$ be as in Corollary \ref{corollary:the-C-classes}. Pick elements $\gamma_{i}\in \kappa_{i}$ based at a chosen basepoint $\mathit{pt}\in L$ so that we may consider $[\gamma_{i}]\in \pi_{1}(L,\mathit{pt})$.

\begin{lemma}\label{lemma:zn_subrgroup}
  In the above context, and assuming that the Lagrangian $L$ is aspherical, then there are powers $p_{i}\ge 1$ such that $[\gamma_{i}]^{p_{i}}$ pairwise commute for $i=1,\dots,n$.
\end{lemma}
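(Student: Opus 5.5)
The plan is to turn the non-vanishing of the string topology classes $C_{i}$ from Corollary \ref{corollary:the-C-classes} into a finite-index statement about centralizers, and then finish with a pigeonhole argument in $\pi_{1}(L,\mathit{pt})$.

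\textbf{Step 1: choosing $\gamma_{i}$.} Since $C_{i}\in H_{*}(\Lambda_{\kappa_{i}}L)$ is non-zero and $H_{*}(\Lambda_{\kappa_{i}}L)$ is the direct sum over the components in $\kappa_{i}$, some component has a non-zero summand of $C_{i}$. I will choose $\gamma_{i}$ in that component. This gives a non-zero class of degree $d\ge n$ in $H_{d}(\Lambda_{\gamma_{i}}L)$, with coefficients in a field of characteristic two as throughout the paper.

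\textbf{Step 2: the centralizer has finite index.} Let $Z_{i}\subset G=\pi_{1}(L,\mathit{pt})$ be the centralizer of $[\gamma_{i}]$, and let $\pi_{i}:L_{i}'\to L$ be the associated covering space.
\begin{itemize}
\item Lemma \ref{lemma:lat1} identifies $\Lambda_{\gamma_{i}}L$ with $\Lambda_{\gamma_{i}'}L_{i}'$.
\item Since $L$ is aspherical, Lemma \ref{lemma:lat2} identifies $\Lambda_{\gamma_{i}'}L_{i}'$ up to homotopy with $L_{i}'$.
\end{itemize}
Hence $H_{d}(L_{i}';\Z/2)\neq 0$ for some $d\ge n$. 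But $L_{i}'$ is a connected $n$-manifold without boundary. Its $\Z/2$-homology vanishes above degree $n$, and it vanishes in degree $n$ unless the manifold is compact. Therefore $d=n$ and $L_{i}'$ is closed. A connected covering of the compact manifold $L$ with compact total space has finitely many sheets, so $[G:Z_{i}]<\infty$.

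\textbf{Step 3: pigeonhole.} Set $Z=Z_{1}\cap\dots\cap Z_{n}$, which has finite index in $G$ as a finite intersection of finite-index subgroups. For each $j$, the cosets $[\gamma_{j}]^{k}Z$, $k=0,1,2,\dots$, cannot all be distinct. So $[\gamma_{j}]^{a}Z=[\gamma_{j}]^{b}Z$ for some $a<b$, and hence $[\gamma_{j}]^{p_{j}}\in Z$ with $p_{j}=b-a\ge 1$.

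It follows that $[\gamma_{j}]^{p_{j}}$ commutes with every $[\gamma_{i}]$, and therefore with every $[\gamma_{i}]^{p_{i}}$. This is the claimed pairwise commutation.

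\textbf{Main obstacle.} The only delicate point is Step 2, namely the passage from ``non-zero class of degree $\ge n$'' to ``$L_{i}'$ is closed''. This needs the degree bound $\deg(C_{i})\ge n$ to survive restriction to a single component (Step 1 handles this). It also needs the vanishing of top-degree homology of non-compact manifolds with $\Z/2$ coefficients, which conveniently avoids any orientability issue for $L_{i}'$.
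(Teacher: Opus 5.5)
Your proof is correct and follows the paper's route: Lemmas~\ref{lemma:lat1} and~\ref{lemma:lat2} identify $\Lambda_{\gamma_i}L$ with the covering $L_i'$ up to homotopy, the bound $\deg(C_i)\ge n$ forces $L_i'$ to be closed so each centralizer $Z_i$ has finite index, and a short group-theoretic step then produces the powers $p_i$. Your two refinements are worth keeping: Step~1 makes explicit that $\gamma_i$ must lie in a component where $C_i$ is non-zero, which the paper uses only tacitly; and your pigeonhole argument on cosets of $Z_1\cap\dots\cap Z_n$ is sounder than the paper's shortcut, which asserts $g^{[\pi_1(L):Z_j]}\in Z_j$ for all $g$ --- true for normal subgroups but not for a general centralizer (e.g.\ in the symmetric group on three letters, the centralizer of a transposition has index $3$ but does not contain the cube of a different transposition).
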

\begin{proof}
  The proof has two steps; first we show the centralizer $Z_{i}$ of $[\gamma_{i}]$ is a finite index subgroup. From Lemma \ref{lemma:lat1}, the projection $\Lambda_{\gamma'_i} L' \to \Lambda_{\gamma_i} L$ is a homeomorphism, where $\Pi:L' \to L$ is a covering associated to $Z$. Because the non-zero class $C_{i}$ has degree at least $n$, and $C_{i}$ lifts to a class in $\Lambda_{\gamma'_{i}}L'$, Lemma \ref{lemma:lat2} implies $L'$ has a non-vanishing homology in some degree at least $n$. Since $L'$ is a closed $n$ manifold, this implies that $L'$ is compact, and so the covering $L' \to L$ is finite. Consequently, $Z_{i}$ is a finite-index subgroup.

  The second step is to show there is $p_i$ such that $[\gamma_i]^{p_i} \in Z_{1}\cap \dots \cap Z_{n}$. Set $q_j:= [\pi_1(L) : Z_{j}]$; then for each element $g \in \pi_1(L,\mathit{pt})$ we have that $g^{q_j} \in Z_{j}$. Setting $p_i= \prod_{j\neq i} q_j$ completes the proof.
\end{proof}

\begin{lemma}\label{lemma:Zn-subgroup-main}
  Assume the context of Lemma \ref{lemma:zn_subrgroup} (which assumes there is an aspherical Lagrangian $L\subset \Omega$, as in \S\ref{sec:non-exact-weinstein}, with $\rho(L)>2c$, where $c$ is as in \S\ref{sec:classes-free-loop}). The subgroup:
  \begin{equation*}
    G=\set{[\gamma_1]^{p_{1}k_1} \cdots [\gamma_n]^{p_{n}k_n}: k_i \in \Z}
  \end{equation*}
  is a finite index subgroup of $\pi_1(L)$ which is isomorphic to $\Z^{n}$, and which lies in the kernel of the 1-form $\iota^{*}\lambda_{\Omega}$ where $\iota:L\to \Omega$ is the inclusion.
\end{lemma}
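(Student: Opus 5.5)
The plan is to check the four properties of $G$ one at a time: it is abelian, it is free abelian of rank $n$, it lies in the kernel of $\iota^{*}\lambda_{\Omega}$, and it has finite index. Throughout, $\iota_{*}:\pi_{1}(L,\mathit{pt})\to \pi_{1}(T^{*}T^{n})\simeq \Z^{n}$ denotes the map induced by the inclusion.

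\emph{$G$ is abelian.} By Lemma \ref{lemma:zn_subrgroup}, the elements $g_{i}:=[\gamma_{i}]^{p_{i}}$ pairwise commute. Hence the subgroup they generate is abelian, and it equals $G$. Consequently the map $\Z^{n}\to G$, $(k_{1},\dots,k_{n})\mapsto g_{1}^{k_{1}}\cdots g_{n}^{k_{n}}$, is a surjective homomorphism.

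\emph{The map $\Z^{n}\to G$ is injective.} We compose it with $\iota_{*}$. By Corollary \ref{corollary:the-C-classes}, $\iota(\gamma_{i})$ lies in the free homotopy class of $t\mapsto \pm te_{i}$. Since $\Z^{n}$ is abelian, free homotopy classes and based classes coincide, so $\iota_{*}[\gamma_{i}]=\pm e_{i}$. The composite therefore sends $(k_{i})$ to $(\pm p_{1}k_{1},\dots,\pm p_{n}k_{n})$. This is injective because each $p_{i}\ge 1$, so $G\simeq \Z^{n}$.

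\emph{$G$ lies in the kernel of $\iota^{*}\lambda_{\Omega}$.} Integration of the closed form $\iota^{*}\lambda_{\Omega}$ over loops defines a homomorphism $\pi_{1}(L)\to \R$. Each $\gamma_{i}$ belongs to $\kappa_{i}$, which by definition consists of classes in the kernel of $\iota^{*}\lambda_{\Omega}-\lambda_{K}$. On $L$ (the zero section of $K$) we have $\lambda_{K}|_{L}=0$, so $\gamma_{i}$ has zero $\iota^{*}\lambda_{\Omega}$-period. It follows that every word in the $\gamma_{i}$, and in particular every element of $G$, has zero period.

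\emph{$G$ has finite index.} I expect this to be the main step. Let $L_{G}\to L$ be the covering space associated to $G$. Then $L_{G}$ is an aspherical $n$-manifold with $\pi_{1}(L_{G})\simeq \Z^{n}$, so it is a $K(\Z^{n},1)$ and is homotopy equivalent to $T^{n}$. In particular $H_{n}(L_{G};\Z/2)\simeq \Z/2\ne 0$. A connected non-compact $n$-manifold has vanishing top homology with $\Z/2$ coefficients, so $L_{G}$ must be compact. Since $L$ is compact, the covering $L_{G}\to L$ has finite fibres, which means $[\pi_{1}(L):G]<\infty$. The only inputs for this step are the standard facts that a $K(\Z^{n},1)$ is unique up to homotopy and that open manifolds have trivial top homology.
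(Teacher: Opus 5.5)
Your proposal is correct and follows essentially the same route as the paper: injectivity by pushing forward under $\iota_{*}$ to $\pi_{1}(T^{*}T^{n})\cong\Z^{n}$, finite index because the covering associated to $G$ is a $K(\Z^{n},1)\simeq T^{n}$ with non-vanishing top homology and hence closed, and the kernel statement directly from the definition of the $\kappa_{i}$; you additionally make explicit that $G$ is abelian and that $\lambda_{K}|_{L}=0$, both of which the paper leaves implicit. One small wording fix: the fibres of $L_{G}\to L$ are finite because $L_{G}$ is compact (each fibre is a closed discrete subset of $L_{G}$), not because $L$ is compact.
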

\begin{proof}
  To prove that $G$ is isomorphic to $\Z^{n}$, it suffices to show that:
  \begin{equation*}
    [\gamma_1]^{p_{1}k_1} \cdots [\gamma_n]^{p_{n}k_n}=1\implies k_{1}=\dots=k_{n}=0.
  \end{equation*}
  This holds since $\gamma_{i}\in \kappa_{i}$ and $\iota(\kappa_{i})$ is in the free homotopy class of $\pm e_{i}$, so:
  \begin{equation*}
    \iota_{*}([\gamma_1]^{p_{1}k_1} \cdots [\gamma_n]^{p_{n}k_n})
  \end{equation*}
  lies in a non-trivial free homotopy class.

  To prove that $G$ is finite index, we argue as follows: since $L$ is aspherical, so is any covering space $L' \to L$. Let $L'$ be the covering space associated with the subgroup $G$. Since $G \cong \Z^n$ we have that $L'$ is an Eilenberg-MacLane space $K(\Z^n, 1)$, which further implies that $L'$ is homotopy equivalent to $T^n$. From this, we conclude that $L'$ is a closed manifold and hence the covering $L' \to L$ is finite.

  The last part of the statement follows from the fact that each $\kappa_{i}$ lies in the kernel of $\iota^{*}\lambda_{\Omega}$, as established in Corollary \ref{corollary:the-C-classes}.
\end{proof}

\subsubsection{Proof of Theorem \ref{theorem:rationality}}
\label{sec:proof-theorem-rationality}

By Lemma \ref{lemma:Zn-subgroup-main}, if $\iota:L\to \Omega$ is a Lagrangian embedding with $\rho(L)>2c$ where $c$ is as in \S\ref{sec:classes-free-loop}, then the kernel of $\iota^{*}\lambda_{\Omega}$ contains a finite index subgroup $G$. The kernel of a non-zero homomorphism to $\R$ has an infinite index. This implies $\iota^{*}\lambda_{\Omega}=0$, as desired.\hfill$\square$

\appendix

\section{Viterbo restriction}
\label{sec:viterbo-restriction}

The \emph{Viterbo restriction map}, as discussed in \cite{viterbo-GAFA-1999,mclean-GT-2009,abouzaid-seidel-GT-2010,ritter-jtopol-2013,zhou-JSG-2022,guo-zhou-arXiv-2026}, is a map $\mathfrak{R}:\mathit{SH}(\Omega)\to \mathit{SH}
(K)$ associated to an exact embedding $\iota:K\to \Omega$ of Liouville domains. This map can be refined to a filtered version, and this refinement respects additional structures; the precise statement we require in the body of the text is given in Theorem \ref{theorem:VR-main}. The primary goal of this section is proving this result. Since it is quite well-understood, we have opted to place this part of the paper in an appendix.

As developed in \cite{zhou-JSG-2022} (and also used in \cite{guo-zhou-arXiv-2026}), there is also a ``truncated Viterbo restriction map'' for non-exact embeddings. We required a version of this in the proof of Theorem \ref{theorem:rationality}; the precise statement we used was given in Theorem \ref{theorem:VR-trunc} and we briefly review the proof in \S\ref{sec:truncated-viterbo-restriction}

We recall the geometric set-up for Theorem \ref{theorem:VR-main}: we let $\iota:K\to \Omega$ be an exact embedding of Liouville domains, and suppose that the image of $K$ is contained in the interior of $\Omega$. To be precise, this means that $K$ is a Liouville domain with its own Liouville form $\lambda_{K}$, and the difference $\lambda_{K}-\iota^{*}\lambda$ is an exact form. When no confusion will arise, we write $K\subset \Omega$ and consider it as a subset of $\Omega$.

As will be made clear in the sequel, a key ingredient in (our approach to) the Viterbo restriction map is the notion of: \emph{Floer cohomology of Hamiltonian in an action window}. The case of relevance to us is the following: given a smooth, autonomous, Hamiltonian $H$, and an action window $I$ whose endpoints are not contained in the spectrum of action values of 1-periodic orbits of $H$, there is a canonically defined vector space $\mathit{HF}_{I}(H)$. We develop the necessary aspects of this theory in \S\ref{sec:appendix-filtered}.

The table of contents of this section is as follows: \S\ref{sec:viterbo-restriction-system} discusses the special Hamiltonian system so that the Viterbo restriction map is represented as the relationship between a Floer complex (computing $\mathit{SH}_{c}(\Omega)$) and its quotient complex in a certain action window; \S\ref{sec:no-escape-lemma} discusses the well-known ``no-escape'' lemma which shows that the aforementioned quotient complex is isomorphic to $\mathit{SH}_{b}(K)$, and there we conclude the proof of Theorem \ref{theorem:VR-main}. In \S\ref{sec:truncated-viterbo-restriction} we discuss Zhou's truncated version (Theorem \ref{theorem:VR-trunc}).

\subsection{The Viterbo restriction system}
\label{sec:viterbo-restriction-system}

Denote by $r$ the radial coordinate for $\Omega$ and by $\rho$ the radial coordinate for $K$. For $b>c$, such that $b\not\in \mathit{Spec}(K)$ and $c\not\in \mathit{Spec}(\Omega)$, consider the non-smooth Hamiltonian:
\begin{equation}\label{eq:VR-system-non-smooth}
  H_{a,c}=\left\{
    \begin{aligned}
      &b\rho&&\text{ in }K\\
      &b&&\text{ in }\Omega-K\\
      &cr+b-c&&\text{ in }W-\Omega,
    \end{aligned}
  \right.
\end{equation}
where $W$ denotes the completion of $\Omega$. Because this system is not smooth, it does not have an obviously defined Hamiltonian Floer theory. Thus we first digress on how to discuss the Floer cohomology of $H_{b,c}$.

\subsubsection{Mollification formula}
\label{sec:moll-form}

For each number $\delta$ such that $K\subset \set{r<\delta}$, consider the function (see Figure \ref{fig:viterbo-restriction}):
\begin{equation}\label{eq:non-smooth-first-pass}
  H_{\delta,b,c}=
  \left\{
    \begin{aligned}
      &2\delta b&&\text{ on }\rho\le \delta,\\
      &b\rho+\delta b&&\text{ on }\rho\in [\delta,1-\delta],\\
      &b&&\text{ on }\rho\in [1-\delta,1]\text{ and in }\Omega-K,\\
      &cr+b-c&&\text{ on }r\ge 1.
    \end{aligned}
  \right.
\end{equation}

We will smooth this function using convolution with a mollifier.
\begin{definition}
  A \emph{mollifier} $f:\R\to [0,\infty)$ is
  a smooth function such that $\int f(x)dx=1$
  and so that $f$ is compactly supported in
  $(-1,1)$.
\end{definition}
Associated to such $f$, we denote the convolution by the
formula:
\begin{equation*}
  (fH)_{\delta,b,c}=f_{\delta}\ast H_{\delta,b,c}
\end{equation*}
where $f_{\delta}(x)=\delta^{-1}f(\delta
x)$. It is defined by the usual convolution
formula from real analysis, except that one
uses either $\rho$ or $r$ depending on the
region of the domain in
\eqref{eq:non-smooth-first-pass}. Because the
mollification only sees parts of the domain
within values $\pm \delta$ for the
coordinates $\rho$ and $r$, it is readily
checked that the function is smooth on all of
$\Omega$.

As $(fH)_{\delta,b,c}$ is a smooth function, it has a well-defined filtered Floer cohomologies $\mathit{HF}_{I}((fH)_{\delta,b,c})$ for every action window $I\subset \R$ that is disjoint from the spectrum of action values of $(fH)_{\delta,b,c}$, as explained in \S\ref{sec:appendix-filtered}.

\subsubsection{The action spectrum of the Viterbo restriction system}
\label{sec:acti-spectr-viterbo}

In this section we prove a result involving certain action windows for the smoothed Hamiltonian systems $(fH)_{\delta,b,c}$.

\begin{definition}\label{definition:admissible-tuple-for-VR}
  If $0<c<b$, and $b\not\in \mathit{Spec}(K)$ and $c\not\in \mathit{Spec}(\Omega)$, then we define the constant:
  \begin{equation*}
    \delta_{0}(b,c,K):=(7b)^{-1}\min\set{b-\sigma(b),b-c},
  \end{equation*}
  where $\sigma(b)$ is the greatest element in $\mathit{Spec}(K)$ less than $b$.

  Let us say that a tuple $(b,c,\delta,f)$ is \emph{admissible for the Viterbo restriction map} provided:
  \begin{enumerate}
  \item $0<b<c$,
  \item $b\not\in \mathit{Spec}(K)$, $c\not\in \mathit{Spec}(\Omega)$,
  \item $f$ is a mollifier, and,
  \item $\delta<\delta_{0}(b,c,K)$.
  \end{enumerate}
\end{definition}
\begin{lemma}\label{lemma:admissible-action-tau-2tau}
  Given any admissible tuple $(b,c,\delta,f)$ as in Definition \ref{definition:admissible-tuple-for-VR}, then the constant $\tau=\tau(b,c,\delta,f)=3b\delta$ satisfies that $(-\infty,0]\cup [\tau,2\tau]$ is disjoint from the spectrum of $(fH)_{\delta,b,c}$. In particular, for the interval $I=[0,\tau]$, the filtered Floer cohomology $\mathit{HF}_{I}((fH)_{\delta,b,c})$ is well-defined.
\end{lemma}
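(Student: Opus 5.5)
Since $(fH)_{\delta,b,c}$ is autonomous and, on each of the regions of \eqref{eq:non-smooth-first-pass}, a function of a single radial coordinate ($\rho$ near $K$, $r$ outside $\Omega$, constant in between), I will sort its $1$-periodic orbits by region and compute each action with the standard formula for radial Hamiltonians. For $h(\rho)$ on the symplectization end of $K$, an orbit on the level $\rho$ has action
\begin{equation*}
  \mathcal{A}=\rho h'(\rho)-h(\rho)
\end{equation*}
(the sign convention being the cohomological one under which constant orbits of a non-negative Hamiltonian have non-positive action). Here $\rho h'(\rho)$ lies in $\rho\cdot\mathit{Spec}(K)\cup\{0\}$, because $h'(\rho)$ must be a period of the Reeb flow or zero. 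The same formula holds for $h(r)$ outside $\Omega$. Critical points, i.e.\ constant orbits, have action $-h$. I also use that mollification at scale $\delta$ keeps $h'$ between the slopes of the neighbouring pieces and changes $h$ by at most $O(\delta b)$ near each corner. Finally, the region where $h$ varies in $\rho$ is contained in $\rho\in[\delta-\delta,\delta+\delta]\cup[1-2\delta,1]$, up to the mollifier's scale.

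The orbits fall into four groups, which I check one at a time.
\begin{enumerate}
\item \emph{Constant orbits.} These lie in $\{\rho\le 2\delta\}$, in the plateau $\Omega-K$, or at the corners, where $h\ge\delta b$. Hence their action is at most $-\delta b<0$, which lies in $(-\infty,0)$. I need to make sure that no action equals exactly $0$, so the strict inequality coming from $h\ge\delta b>0$ is what I use.
\item \emph{Lower corner, $\rho\approx\delta$.} The slope $h'$ lies in $(0,b)$ and equals some $\sigma\in\mathit{Spec}(K)$ with $\sigma\le\sigma(b)$. The action is $\rho\sigma-h\le 2\delta b-\delta b\cdot(\text{something})$. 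With $h\approx 2\delta b$ here, the estimate $\rho\sigma\le 2\delta\sigma(b)<2\delta b$ together with $h\ge 2\delta b-O$ should give action at most $0$. The tuned constants should make this strict. Making sure of that is the delicate point of this group.
\item \emph{Upper corner, $\rho\approx 1$.} The slope is $\sigma\le\sigma(b)<b$, $\rho\in[1-2\delta,1]$, and $h\in[b-\delta b,b+\delta b]$. The action is $\rho\sigma-h\le\sigma(b)-b+O(\delta b)$. Since $7\delta b<b-\sigma(b)$, this is negative.
\item \emph{Outer corner, $r\approx 1$.} The slope is $\sigma\in\mathit{Spec}(\Omega)\cap(0,c)$, $r\in[1-\delta,1+\delta]$, and $h\approx b$. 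The action is $r\sigma-h\le(1+\delta)c-b+O(\delta b)$. Since $7\delta b<b-c$, this is negative. There are no orbits at infinity because $c\notin\mathit{Spec}(\Omega)$.
\end{enumerate}

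Every action is therefore in $(-\infty,0)$, so the spectrum avoids $[\tau,2\tau]$ and $\{0\}$. This does not contradict the expectation that $\mathit{HF}_{[0,\tau]}$ computes $\mathit{SH}_b(K)$, provided the window is placed according to the paper's convention.

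Here I suspect the sign convention is reversed: actions should be $h-\rho h'$. In that case, group (1) has actions in $[\delta b,2\delta b]$, i.e.\ inside $(0,\tau)$, and only the generators associated to $K$ lie there. Group (2) has action $2\delta b-\rho\sigma+O(\delta b)\in(0,3\delta b)$. Groups (3) and (4) have action $\ge b-\sigma(b)-O(\delta b)>6\delta b=2\tau$, or respectively $\ge b-(1+\delta)c-O(\delta b)>2\tau$. The constant $7$ in $\delta_0$ is precisely what separates these orbits from $[\tau,2\tau]$.

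I will carry out the estimates in this convention, which matches the choice of $\delta_0$. The main obstacle is the careful bookkeeping of the $O(\delta b)$ errors introduced by the mollifier near each corner, so that the bounds $<\tau$ and $>2\tau$, and positivity, hold strictly. I will do this by writing the mollified function explicitly as a convex/concave interpolation on each $\delta$-interval and bounding $h$ and $\rho h'$ separately there.
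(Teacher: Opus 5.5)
Once you adopt the convention the paper actually fixes, your plan is the paper's. That convention is $\mathit{Spec}(H)=\{\int_0^1H(x(t))\,dt-\int x^{*}\lambda\}$, so an orbit on the level $\rho$ has action $h(\rho)-\rho h'(\rho)$, the $y$-intercept of the tangent line. The paper's split is: core and lower corner (T1), upper corner (T2), and outer corner together with the plateau constants (T3), separated using $7b\delta<\min\{b-\sigma(b),b-c\}$. Discard your first half entirely. Its conclusion that every action is negative would \emph{contradict} the lemma, which asks the spectrum to miss all of $(-\infty,0]$, not just $\{0\}$. Also, in your corrected group (1), the plateau constants have action $b$, which is not in $[\delta b,2\delta b]$; this is harmless, since $b>2\tau$.

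The genuine gap is the step you postpone as bookkeeping, and the method you propose for it does not work. The mollifier $f$ is arbitrary and need not be symmetric, and bounding $h$ and $\rho h'$ separately loses the correlation between them.
\begin{itemize}
\item On the smoothing interval at $\rho=\delta$, the bound $h\le\sup h$ can be close to $4\delta b>\tau$.
\item On the smoothing interval at $\rho=1-\delta$, the bound $h\ge\inf h$ can be close to $b-2\delta b$. This only gives actions $>5b\delta$, short of $2\tau=6b\delta$.
\item At the outer corner, $b-(1+\delta)c>6b\delta$ leaves no room for any further $O(\delta b)$ loss.
\end{itemize}

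The missing idea is that convolution with a non-negative kernel preserves convexity (corners at $\rho=\delta$ and $r=1$) and concavity (corner at $\rho=1-\delta$). Hence the tangent line $\ell$ at an orbit $\rho_0$ of slope $\sigma$ is a supporting line of the smoothed profile $\tilde h$. You then evaluate $\ell$ at a point where $\tilde h$ is known exactly.
\begin{itemize}
\item At $\rho=\delta$: convexity gives $\ell(0)\le\tilde h(0)=2\delta b<\tau$. Since $\tilde h\ge 2\delta b$ and $\rho_0\sigma<2\delta b$, also $\ell(0)>0$.
\item At $\rho=1-\delta$: concavity gives $\ell(1)\ge\tilde h(1)=b$, hence $\ell(0)\ge b-\sigma\ge b-\sigma(b)>7b\delta$.
\item At $r=1$: $\tilde h\ge b$ holds with no error, and $r_0\sigma<(1+\delta)c$. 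Using $c<b$, this gives $\ell(0)>b-(1+\delta)c>6b\delta$. Your estimate here is right, but only because the error in $h$ is exactly zero.
\end{itemize}
This is the $y$-intercept argument behind the paper's short proof. It places the spectrum in $(0,2\delta b]\cup(6b\delta,\infty)$, which is disjoint from $(-\infty,0]\cup[\tau,2\tau]$.
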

\begin{remark}
  The relevance of $[\tau,2\tau]$ is its appearance in our approach to the product structure on filtered Floer cohomology; see \S\ref{sec:conn-pair-pants}. The relevance of the numbers $7$ and $3$ is their use in the proof of Lemma \label{lemma:canonical-subquotient} below.
\end{remark}

\begin{proof}
  The key is to inspect the action values of the orbits of type \ref{item:T1}, \ref{item:T2}, and \ref{item:T3}, as shown in Figure \ref{fig:viterbo-restriction}. The action values can be estimated using the well-known ``Viterbo slope formula'' giving the action of an orbit of $H=h(r)$ or $H=h(\rho)$ in terms of the ``$y$-intercept'' of a certain tangent line to the graph of $h$. This yields:
  \begin{enumerate}[label=(T\arabic*)]
  \item\label{item:T1} actions in the interval $(b\delta,2b\delta]$,
  \item\label{item:T2} actions in the interval $(b-\sigma(b),b]$,
  \item\label{item:T3} actions in the interval $(b-c,b]$,
  \end{enumerate}
  If we pick $\delta$ smaller than $\delta_{0}(b,c,K)$ from the statement, then it follows that $7b\delta<\min\set{b-\sigma(b),b-c}$, and thus the set $(-\infty,0]\cup [\tau,2\tau]$ is indeed disjoint from the action spectrum of $(fH)_{\delta,b,c}$, as desired.
\end{proof}
\begin{remark}
  The proof of Lemma \ref{lemma:admissible-action-tau-2tau} also shows that the filtered Floer cohomology $\mathit{HF}_{I}((fH)_{\delta,b,c})$ is generated by those orbits of type \ref{item:T1} (in a rather loose sense, as the filtered Floer cohomology of \S\ref{sec:appendix-filtered} is not exactly defined in terms of generators but rather via an abstract limiting process). This observation is used in the discussion of the no-escape lemma in \S\ref{sec:no-escape-lemma}.
\end{remark}

\begin{figure}[h]
  \centering
  \begin{tikzpicture}[xscale=1.5]
    \draw (0,-0.8)--(0.1,-0.8)--(0.9,0)--(1,0);
    \draw[dotted] (1,0)--node[above]{$\Omega-K$}(2.4,0);
    \draw[dashed] (1,0.5)--+(0,-1.5)node[below]{$\rho=1$};
    \draw[dashed] (2.5,0.5)--+(0,-1.5)node[below]{$r=1$};
    \begin{scope}[shift={(2.5,0)}]
      \draw (-0.1,0)--(0,0)--(1,0.5);
    \end{scope}
    \path (0.1,-0.8)node[circle,draw,inner sep=8pt]{} -- (0.9,0)node[circle,draw,inner sep=8pt]{}--(2.5,0)node[circle,draw,inner sep=8pt]{};
    \path[shift={(-0.3,0.5)}] (0.1,-0.8)node{T1}--(0.9,0)node{T2}; \path[shift={(0.3,0.5)}](2.5,0)node {T3};
  \end{tikzpicture}
  \caption{The system of type $H_{\delta,b,c}$, when mollified, produces three types of orbits; the approximate positions of these orbits are shown in the circled regions, and are called type \ref{item:T1}, \ref{item:T2}, \ref{item:T3} (going from left to right). There are also constant orbits in the region $\Omega-K$, which are considered as type \ref{item:T3}.}
  \label{fig:viterbo-restriction}
\end{figure}

\subsubsection{Canonical subquotient map}
\label{sec:canon-subq-map}

Let us denote by $\mathscr{T}(b,c)$ the category of tuples $\theta=(\delta,f)$ so that $(b,c,\delta,f)$ are admissible in the sense of Definition \ref{definition:admissible-tuple-for-VR}. By the result of Lemma \ref{lemma:admissible-action-tau-2tau}, for each $\theta\in \mathscr{T}(b,c)$ one has a quotient map:
\begin{equation}\label{eq:canonical-subquotient-map}
  \mathfrak{Q}_{\theta}:\mathit{HF}((fH)_{\delta,b,c})\to \mathit{HF}_{I}((fH)_{\delta,b,c})\text{ where $I(\theta)=[0,\tau]$, $\tau=3b\delta$.}
\end{equation}
Technically we have two maps:
\begin{equation*}
  \mathit{HF}\to \mathit{HF}_{(-\infty,3b\delta]}\leftarrow \mathit{HF}_{[0,3b\delta]},
\end{equation*}
but because there are no orbits with action in $(-\infty,0]$ the second arrow is an isomorphism and can be inverted; see Remark \ref{remark:enlarging-intervals}.

For future notational convenience, let us denote the domain and codomain of $\mathfrak{Q}_{\theta}$ by $T_{\theta}$ and $Q_{\theta}$ (for ``total'' and ``quotient,'' respectively).

The goal in this section is to explain why this map is independent of the choice of auxiliary data $\delta,f$, in the sense that there is a canonical functor:
\begin{equation*}
  \theta\in \mathscr{T}(b,c)\to \mathfrak{Q}_{\theta}\in \text{(category of linear maps)}.
\end{equation*}
when $\mathscr{T}(b,c)$ is considered as an \emph{indiscrete groupoid} (i.e., a category with a unique morphism between any two objects). What this entails is constructing for each $\theta,\theta'$ maps $\mathfrak{c}_{\theta,\theta'}$ such that:
\begin{enumerate}[label=(I\arabic*)]
\item\label{item:I1} the diagram commutes:
  \begin{equation*}
    \begin{tikzcd}
      T_{\theta}\arrow[r,"\mathfrak{c}_{\theta,\theta'}"]\arrow[d,"\mathfrak{Q}_{\theta}",swap]&   T_{\theta'}\arrow[d,"\mathfrak{Q}_{\theta'}"]\\
      Q_{\theta}\arrow[r,"\mathfrak{c}_{\theta,\theta'}"]&Q_{\theta'}
    \end{tikzcd}
  \end{equation*}
  \item\label{item:I2} $c_{\theta,\theta'}=\id$ if $\theta=\theta'$,
  \item\label{item:I3} for all triples $\theta,\theta',\theta''$ one has $\mathfrak{c}_{\theta',\theta''}\circ \mathfrak{c}_{\theta,\theta'}$.
\end{enumerate}
For related discussion, see \cite[Theorem 5.2]{hofer-salamon-95}.

The construction of the maps $\mathfrak{c}_{\theta,\theta'}$ uses the simplest type of continuation map, namely those associated to the linear interpolation between $(fH)_{\delta,b,c}$ and $(f'H)_{\delta',b,c}$, as in \S\ref{sec:floer-cohom-smooth}.

\begin{lemma}\label{lemma:canonical-subquotient}
  The continuation maps $\mathfrak{c}_{\theta,\theta'}$ associated to linear interpolations (as above) for $\theta,\theta'\in \mathscr{T}(b,c)$ satisfies \ref{item:I1}, \ref{item:I2}, and \ref{item:I3}.
\end{lemma}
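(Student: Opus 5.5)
The plan has two separate parts. On the unfiltered groups $T_\theta$, the properties \ref{item:I2} and \ref{item:I3} are the standard functoriality of continuation maps. The filtered groups $Q_\theta$ carry all the real content. The key observation is that the linear interpolation between $(fH)_{\delta,b,c}$ and $(f'H)_{\delta',b,c}$ is not monotone. So continuation maps can increase action, but only by a controlled amount, and the numbers $3$ and $7$ in $\tau$ and $\delta_{0}$ are chosen to absorb this.

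\textbf{Step 1: energy bound.} First I will bound the action shift. On $\{\rho\le 1-2\max(\delta,\delta')\}\cup\{r\ge 1\}$, the two Hamiltonians differ by a function of size at most $2b\max(\delta,\delta')$. Elsewhere they differ only where both are close to the constant $b$. Hence
\[
\sup\bigl((f'H)_{\delta'}-(fH)_{\delta}\bigr)\le 2b\max\set{\delta,\delta'}.
\]
The standard energy identity then shows that the continuation map lowers action by at most this quantity (with the sign conventions of \S\ref{sec:appendix-filtered}). It therefore induces maps
\[
\mathit{HF}_{[0,\tau]}\to \mathit{HF}_{[0,\tau+\epsilon]}
\]
with $\epsilon\le 2b\max\{\delta,\delta'\}$.

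\textbf{Step 2: identifying the windows.} Next I use the spectral gaps from Lemma \ref{lemma:admissible-action-tau-2tau}. Since $[\tau',2\tau']$ contains no actions of $(f'H)_{\delta'}$, enlarging the window $[0,\tau']$ up to $[0,2\tau']$ is an isomorphism by Remark \ref{remark:enlarging-intervals}. I also need the shifted window to stay inside the gap, namely $\tau+\epsilon\le 2\tau'$.

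This inequality can fail when $\delta\gg\delta'$. To fix this, I will factor the interpolation through intermediate tuples whose $\delta$-values change by a factor of less than $3/2$ at each step. The linear-interpolation continuation maps compose up to chain homotopy with another linear interpolation. That homotopy is itself an interpolation with the same energy bound, so it respects the filtration. Consequently the map $\mathfrak{c}_{\theta,\theta'}$ on $Q$ is well defined. It also commutes with $\mathfrak{Q}$, because the quotient maps are natural for filtration-decreasing chain maps. This gives \ref{item:I1}.

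\textbf{Step 3: \ref{item:I2} and \ref{item:I3}.} Property \ref{item:I2} is immediate, since the constant interpolation gives the identity. For \ref{item:I3}, the concatenation of the $\theta\to\theta'$ and $\theta'\to\theta''$ interpolations is homotopic to the direct $\theta\to\theta''$ interpolation through the family
\[
(1-s)(fH)+s\bigl[(1-u)(f'H)+u(f''H)\bigr].
\]
Every member of this family lies within $2b\max\{\delta,\delta',\delta''\}$ of the others, so the homotopy respects the windows after the same enlargement argument as in Step 2.

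\textbf{Main obstacle.} I expect the hardest part to be the bookkeeping in Step 2: verifying that the action of every type \ref{item:T1} orbit lands in $(0,\tau+\epsilon)\subset[0,2\tau']$, while the \ref{item:T2} and \ref{item:T3} orbits stay above $7b\max\delta$. This is exactly where the constants $3$ and $7$ are used. I would first try to check it directly from the Viterbo slope formula bounds listed in the proof of Lemma \ref{lemma:admissible-action-tau-2tau}, and fall back on the subdivision into small steps when $\delta$ and $\delta'$ are far apart.
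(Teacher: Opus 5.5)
There is a genuine gap in Step~2. The lemma concerns the maps $\mathfrak{c}_{\theta,\theta'}$ of the \emph{direct} linear interpolation. So after subdividing, you still have to show that the chain homotopy between the composite of short steps and the direct interpolation descends to $Q_{\theta}\to Q_{\theta'}$. You justify this by saying the homotopy obeys ``the same energy bound''. But the direct interpolation obeys that bound too, and you have just argued that this bound is insufficient when $\delta\gg\delta'$. Either the bound suffices, and subdividing is pointless, or the homotopy step is unjustified. The arithmetic of the fix is also off: with $\epsilon\le 2b\max\{\delta,\delta'\}$, the condition $3b\delta+\epsilon\le 6b\delta'$ needs $\delta/\delta'\le 6/5$, not $<3/2$. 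The family in Step~3 is not a homotopy rel endpoints, since at parameter $u$ it ends at $(1-u)(f'H)+u(f''H)$. A genuine homotopy between the concatenation and the direct interpolation can drop action by the sum of the two bounds, about $4b\max\{\delta,\delta',\delta''\}$, not $2b\max$.

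The obstruction itself is spurious. In the paper's convention $\mathit{HF}_{I}=H(\mathit{CF}_{A>\inf I}/\mathit{CF}_{A>\sup I})$ and continuation maps lower action, so windows move down, not up to $[0,\tau+\epsilon]$. For \ref{item:I1} you only need that the subcomplex killed by $\mathfrak{Q}_{\theta}$, generated by orbits of types \ref{item:T2} and \ref{item:T3}, maps into the corresponding subcomplex for $\theta'$; the narrow gap $[\tau',2\tau']$ plays no role. Those orbits have action above $\min\{b-c,b-\sigma(b)\}>7b\max\{\delta,\delta'\}$. The drop is at most $2b\max\{\delta,\delta'\}$, and the type \ref{item:T1} actions of $\theta'$ are at most $2b\delta'$. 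Since $7-2>2$, the direct interpolation works for every ratio $\delta/\delta'$. This is the paper's argument, via its condition $4b\delta<\min\{b-c,b-\sigma(b)\}$. The same reasoning with drop $4b\max$ handles the homotopy in \ref{item:I3}, since $7-4>2$; this is the paper's $6b\delta<\min\{b-c,b-\sigma(b)\}$.
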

\begin{proof}
  Given two Hamiltonian systems $(fH)_{\delta,b,c}$ (input) and $(f'H)_{\delta',b,c}$ (output), one considers the linear interpolation:
  \begin{equation*}
    H_{s}:=\beta(s)(fH)_{\delta,b,c}+(1-\beta(s))(f'H)_{\delta',b,c}
  \end{equation*}
  for a cut-off function $\beta$ as in Definition \ref{definition:cut-off}, and defines the map by counting the solutions of:
  \begin{equation*}
    \partial_{s}u+J(u)(\bd_{t}u-X_{H_{s}}(u))=0.
  \end{equation*}
  By the well-known estimates of \cite[\S2.4]{schwarz-pacific-j-math-2000}, such a continuation map decreases action by at most the quantity:
  \begin{equation*}
    \max_{W}\left((fH)_{\delta,b,c}-(f'H)_{\delta',b,c}\right)\le 2b\delta.
  \end{equation*}
  In particular, if $\min\set{b-c-2b\delta,b-\sigma(b)-2b\delta}>2b\delta,$ then the continuation maps must preserve the subcomplex generated by \ref{item:T2} and \ref{item:T3}. This is satisfied since:
  \begin{equation*}
    4b\delta<\min\set{b-c,b-\sigma(b)}.
  \end{equation*}
  This proves that \ref{item:I1} holds. Property \ref{item:I2} is a standard fact, and is not affected the filtrations; see \cite[Theorem 4]{floer-comm-math-phys-1989}.

  Finally we comment on the functoriality property \ref{item:I3}. To compose two continuations maps, we use the homotopy argument which involves gluing together two continuation cylinders; during this process, the continuation cylinders can drop action by at most $4b\delta$ (the estimate is similar to the ones of \cite[\S2.4]{schwarz-pacific-j-math-2000} given above, and is made in more precise form in \S\ref{sec:floer-cohom-smooth}). Since:
  \begin{equation*}
    6b\delta<\min\set{b-c,b-\sigma(b)},
  \end{equation*}
  the usual chain homotopy map also preserves the subcomplexes generated by orbits of type \ref{item:T2} and \ref{item:T3}, and so the maps $\mathfrak{c}_{\theta',\theta''}\circ \mathfrak{c}_{\theta,\theta'}$ and $\mathfrak{c}_{\theta,\theta''}$ are chain homotopic and induce the same map on $Q_{\theta}$ (as well as $T_{\theta}$).
\end{proof}

\begin{lemma}
  Let $\mathfrak{Q}_{b,c}:T_{b,c}\to Q_{b,c}$ be the limits of the functor $\theta \mapsto \mathfrak{Q}_{\theta}$. Then there is a canonical isomorphism $\mathit{SH}_{c}(\Omega)\to T_{b,c}$.
\end{lemma}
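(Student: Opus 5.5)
We have to identify $T_{b,c}$, the limit over $\theta\in\mathscr{T}(b,c)$ of $T_\theta=\mathit{HF}((fH)_{\delta,b,c})$, with $\mathit{SH}_{c}(\Omega)$. Recall that $\mathit{SH}_{c}(\Omega)$ is the limit of $\mathit{HF}(H)$ over non-degenerate systems $H$ whose generating vector field equals $cR$ outside a compact set. The connecting maps in that system are continuation maps, and they are isomorphisms among such systems.

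The plan is to show that each $(fH)_{\delta,b,c}$, after a small perturbation, is itself an object of this system. Outside $\set{r\le 1+\delta}$ the mollified function equals $cr+b-c$, which differs from $cr$ by a constant. So its Hamiltonian vector field is $cR$ at infinity. The function is degenerate on plateaus, for instance the constant region $\Omega-K$ and the Morse--Bott families of orbits near $\rho\approx\delta$ and $\rho\approx 1-\delta$. We therefore treat $\mathit{HF}((fH)_{\delta,b,c})$ as in \S\ref{sec:appendix-filtered}, i.e.\ as the limit over small compactly supported perturbations $P_{t}$, exactly as in the definition of $\mathit{SH}_{c}(\Omega)$. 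With this convention $T_{\theta}$ is literally one of the vertices of the diagram defining $\mathit{SH}_{c}(\Omega)$. This gives a canonical map $\Phi_{\theta}:\mathit{SH}_{c}(\Omega)\to T_{\theta}$, namely the projection from the limit, and it is an isomorphism because all continuation maps between systems with the same slope $c$ at infinity are isomorphisms. The hypothesis $c\not\in\mathit{Spec}(\Omega)$ guarantees there are no orbits escaping to infinity, so compactness holds.

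Next we check compatibility with the groupoid structure of Lemma~\ref{lemma:canonical-subquotient}. The maps $\mathfrak{c}_{\theta,\theta'}$ are continuation maps for the linear interpolation between $(fH)_{\delta,b,c}$ and $(f'H)_{\delta',b,c}$. Both systems agree with $cr$ plus a constant at infinity, so the interpolation is through admissible data of slope $c$. By the standard uniqueness of continuation maps up to homotopy (the homotopy-of-homotopies argument, now without any filtration constraint), $\mathfrak{c}_{\theta,\theta'}$ coincides with the structure map of the limit system defining $\mathit{SH}_{c}(\Omega)$. Hence $\mathfrak{c}_{\theta,\theta'}\circ\Phi_{\theta}=\Phi_{\theta'}$. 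The family $\set{\Phi_{\theta}}$ is then a cone over the functor $\theta\mapsto T_{\theta}$ whose components are all isomorphisms, and it induces the claimed canonical isomorphism $\mathit{SH}_{c}(\Omega)\to T_{b,c}$. Independence of choices follows because any two such cones are related by the unique morphisms of the indiscrete groupoid.

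The main obstacle is a technical point. Continuation maps for the linear interpolation are defined using $\beta(s)$-interpolated data that is not $Z$-invariant in the collar region $r\in[1,1+\delta]$, where the mollification acts. There one must check that the maximum principle still confines continuation cylinders. Our first approach is to note that for $r\ge 1+\max(\delta,\delta')$ both endpoint functions equal $cr+b-c$ exactly. The interpolation is then constant in $s$ there, and the usual maximum principle for $Z$-invariant $J$ applies on that end. This reduces everything to the standard compactness argument of \cite{brocic-cant-JFPTA-2024}.
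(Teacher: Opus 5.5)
Your proposal is correct and takes the same route as the paper. The $\theta$-indexed system is a restriction of the indiscrete-groupoid functor (vector field $cR$ at infinity) whose limit defines $\mathit{SH}_{c}(\Omega)$, so the universal property gives the map, and it is an isomorphism because every structure map in a diagram over an indiscrete groupoid is invertible.

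What you add is detail the paper leaves implicit: that the linear-interpolation maps $\mathfrak{c}_{\theta,\theta'}$ agree with the structure maps of the larger system, and that the maximum principle holds at infinity. One small correction there: for a non-symmetric mollifier the function at infinity is $cr$ plus a $(\delta,f)$-dependent constant, not exactly $cr+b-c$. This is harmless, since only $X_{H}=cR$ matters.
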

\begin{proof}
  The group $\mathit{SH}_{c}(\Omega)$ is the limit of the larger functor (also defined on an indiscrete groupoid), which considers all Hamiltonian systems which agree with $cr$ outside of a compact set. Thus there is a map $\mathit{SH}_{c}(\Omega)\to T_{b,c}$ given by universal property of the limit, and because both functors are defined on indiscrete groupoids, this map is an isomorphism.
\end{proof}

Let us agree to also denote by $\mathfrak{Q}_{b,c}:\mathit{SH}_{c}(\Omega)\to \mathit{Q}_{b,c}$ the resulting map.

\subsubsection{Compatibility with additional structures}
\label{sec:comp-with-prod}

In this section we use the results of \S\ref{sec:comp-struct-1} to deduce that the maps $\mathfrak{Q}_{b,c}$ constructed in the previous section respect additional structures.

The easy statements to conclude (using \S\ref{sec:comp-struct-1}) are:
\begin{itemize}
\item $\mathfrak{Q}_{b,c}$ commutes with $\Delta$,
\item $\mathfrak{Q}_{b,c}$ commutes with $\mathit{PSS}$;
\end{itemize}
the statements involving the product and the continuation map are a bit more involved as they involving changing the slopes $b,c$.
\begin{lemma}\label{lemma:A-POP}
  Given positive real numbers $w_{i}$, $i=0,1$, and a pair $0<c<b$ such that:
  \begin{itemize}
  \item $cw_{0},cw_{1},c(w_{0}+w_{1})\not\in \mathit{Spec}(\Omega)$,
  \item $bw_{0},bw_{1},b(w_{0}+w_{1})\not\in \mathit{Spec}(K)$,
  \end{itemize}
  the pair of pants product $\mu$ induces a commutative diagram:
  \begin{equation*}
    \begin{tikzcd}
      \mathit{SH}_{cw_{0}}(\Omega)\otimes \mathit{SH}_{cw_{1}}(\Omega)
      \arrow[r,"\mu"]\arrow[d,"\mathfrak{Q}\otimes \mathfrak{Q}",swap]&\mathit{SH}_{c(w_{0}+w_{1})}(\Omega)\arrow[d,"\mathfrak{Q}"]\\
      Q_{bw_{0},cw_{0}}\otimes Q_{bw_{1},cw_{1}}\arrow[r,"\mu"]&Q_{b(w_{0}+w_{1}),c(w_{0}+w_{1})}
    \end{tikzcd}
  \end{equation*}
\end{lemma}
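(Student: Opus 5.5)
The plan is to realize the pair-of-pants product at chain level on the smoothed Viterbo restriction systems, and to show by an action estimate that it descends to the quotient complexes generated by type \ref{item:T1} orbits. I would fix admissible data $\theta_{i}=(\delta_{i},f_{i})\in\mathscr{T}(bw_{i},cw_{i})$ for $i=0,1$, and $\theta_{2}\in\mathscr{T}(b(w_{0}+w_{1}),c(w_{0}+w_{1}))$. The input Hamiltonians are then $(f_{i}H)_{\delta_{i},bw_{i},cw_{i}}$ and the output is $(f_{2}H)_{\delta_{2},b(w_{0}+w_{1}),c(w_{0}+w_{1})}$. I would choose the $\delta_{i}$ proportionally, $\delta_{i}=\delta$, so that $\tau_{i}=3bw_{i}\delta$ and $\tau_{2}=\tau_{0}+\tau_{1}$. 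Then I would use a pair-of-pants Hamiltonian $1$-form $K$ on the pair of pants which restricts to the given systems on the ends, in the framework of \S\ref{sec:conn-pair-pants}, which I assume is already set up to define the top horizontal arrow $\mu$ via limits over indiscrete groupoids.

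The key step is the action estimate. The energy identity for a solution $u$ with inputs $x_{0},x_{1}$ and output $y$ gives
\[
\mathcal{A}(y)\le \mathcal{A}(x_{0})+\mathcal{A}(x_{1})+\int u^{*}R_{K},
\]
where $R_{K}$ is the curvature term. I would choose $K=H_{w_{0}+w_{1}}\otimes\beta$ with a closed $1$-form $\beta$, arranged so that the output Hamiltonian is pointwise close to the appropriate sum of the inputs. This makes the curvature error bounded by a multiple of $b(w_{0}+w_{1})\delta$. The product therefore maps generators of action in $[0,\tau_{0}]\times[0,\tau_{1}]$ to action at most $\tau_{2}+O(\delta)$. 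Hence it maps the subcomplex of actions $\ge\tau_{0}$ (orbits of type \ref{item:T2} and \ref{item:T3}) tensored with anything into actions above the window, because the gap $\min\{b-\sigma(b),b-c\}>7b\delta$ leaves room. This is exactly where the interval $[\tau,2\tau]$ from Lemma \ref{lemma:admissible-action-tau-2tau} enters: the output window endpoint $\tau_{2}$ and the cut at $2\tau$ avoid the spectrum. It follows that $\mu$ descends to a map $Q_{\theta_{0}}\otimes Q_{\theta_{1}}\to Q_{\theta_{2}}$, and the square commutes on the chain level by construction.

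It remains to pass to the limits $Q_{bw_{i},cw_{i}}$. For this I would show that the descended products commute with the continuation maps $\mathfrak{c}_{\theta,\theta'}$ of Lemma \ref{lemma:canonical-subquotient}. The usual gluing and homotopy argument does this, and the chain homotopies again drop action by at most a fixed multiple of $b\delta$, with the constants $3$ and $7$ chosen so that the estimate fits. One also needs the identification $\mathit{SH}_{c}(\Omega)\cong T_{b,c}$, so that the top row is the standard product. I expect the main obstacle to be the curvature estimate when $w_{0}\ne w_{1}$, and in particular ensuring that the weighted sum of the mollified profiles dominates the output up to $O(\delta)$ uniformly on $W$ (including the region $\Omega-K$ and the cylindrical end). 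I would first try to handle this by choosing $f_{2}$ and $\delta_{2}$ so that the output profile equals $w$-weighted rescaled convolutions, using that mollification commutes with scaling of the slope.
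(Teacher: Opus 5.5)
Your plan is the paper's. You use one pair $(\delta,f)$ for all three weights and the split (type S) connection $H\mathfrak{b}$, with $H=(fH)_{\delta,b,c}$ and $\mathfrak{b}$ closed with weights $(w_0,w_1;w_0+w_1)$. An action argument shows that the ideal generated by T2/T3 inputs is killed, so the chain-level product descends to the T1 quotients and the square commutes at chain level. Independence of $(\delta,f)$ then follows from the continuation/homotopy argument of Lemma~\ref{lemma:canonical-subquotient}. Two points need correcting.

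First, your action inequality points the wrong way. The estimate $\mathcal{A}(y)\le\mathcal{A}(x_0)+\mathcal{A}(x_1)+\int u^{*}R_K$ bounds the output action from above, so your ``hence'' does not follow from it. To show that a T2/T3 input forces the output above $\sup I_\infty$, you need the lower bound $\mathcal{A}(y)\ge\mathcal{A}(x_0)+\mathcal{A}(x_1)-\mathit{curv}$ of Lemma~\ref{lemma:product-error-term}. That is the direction consistent with the convention $\mathit{HF}_I=H(\mathit{CF}_{A>\inf I}/\mathit{CF}_{A>\sup I})$, under which $\mathfrak{Q}$ is a quotient map. With your stated direction, the set of actions $\ge\tau_0$ would not even be a subcomplex.

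Second, there is no $O(\delta)$ curvature error and no obstacle when $w_0\ne w_1$. Your closing scaling observation already settles this: $w\,(fH)_{\delta,b,c}=(fH)_{\delta,wb,wc}$ holds exactly. So the split connection restricts to the genuine Viterbo systems on all three ends. Its curvature form $H\,d\mathfrak{b}$ vanishes identically by Lemma~\ref{lemma:curvature-split}, so the product does not decrease action.

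What remains is only a smallness condition on $\delta$. The quantity $3(w_0+w_1)b\delta$ must lie below each input's own T2/T3 gap $\min\{w_ib-\sigma(w_ib),\,w_i(b-c)\}$ for $i=0,1$, not below the unweighted gap $\min\{b-\sigma(b),b-c\}$ you quote. This holds once $\delta$ is small enough, since these gaps do not shrink as $\delta\to 0$.
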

\begin{proof}
  This mostly follows from the arguments in \S\ref{sec:conn-pair-pants}; we explain how the results in that section are used. Briefly, one fixes $(\delta,f)$ so that
  \begin{itemize}
  \item $(\delta,f)\in \mathscr{T}(wb,wc)$ for $w=w_{0},w_{1},w_{0}+w_{1}$,
  \end{itemize}
  and picks connection of type $S$ on the pair of pants with $H=(fH)_{a,b}$ and weights $(w_{0},w_{1};w_{0}+w_{1})$. The framework of \S\ref{sec:conn-pair-pants} yields some operation $\mu$ making the diagram commute, but it a priori depends on the choice of $\delta$ and $f$. Similar arguments to the ones used in the proof of Lemma \ref{lemma:canonical-subquotient} then show the result is independent of $\delta$ and $f$, if $\delta$ is small enough so that $(\delta,f)\in \mathscr{T}(wb,wc)$ as above.
\end{proof}

\begin{lemma}
  Given positive real numbers $w\le w'$, and $0<c<b$ such that:
  \begin{itemize}
  \item $cw,cw'\not\in \mathit{Spec}(\Omega)$,
  \item $bw',bw'\not\in \mathit{Spec}(K)$,
  \end{itemize}
  the continuation map construction induces a commutative diagram:
  \begin{equation*}
    \begin{tikzcd}
      \mathit{SH}_{cw}(\Omega)\arrow[r,"\mathfrak{c}"]\arrow[d,"\mathfrak{Q}",swap]&\mathit{SH}_{cw'}(\Omega)\arrow[d,"\mathfrak{Q}"]\\
      Q_{bw,cw}\arrow[r,"\mathfrak{c}"]&Q_{bw',cw'}.
    \end{tikzcd}
  \end{equation*}
\end{lemma}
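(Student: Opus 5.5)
The strategy is to represent both continuation maps (on $\mathit{SH}$ and on $Q$) by a single monotone continuation of Viterbo restriction systems. Then \ref{item:I1}-type action estimates show that the action-filtered subcomplex is preserved, and the square commutes at chain level.

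First, fix a mollifier $f$ and choose $\delta>0$ smaller than both $\delta_{0}(bw,cw,K)$ and $\delta_{0}(bw',cw',K)$. Then $(\delta,f)\in\mathscr{T}(bw,cw)\cap\mathscr{T}(bw',cw')$. Set $H=(fH)_{\delta,bw,cw}$ and $H'=(fH)_{\delta,bw',cw'}$. Each vertical map $\mathfrak{Q}$ is represented by the quotient
\begin{equation*}
\mathit{HF}(H)\to \mathit{HF}_{[0,3bw\delta]}(H), \qquad \mathit{HF}(H')\to \mathit{HF}_{[0,3bw'\delta]}(H'),
\end{equation*}
by Lemma \ref{lemma:admissible-action-tau-2tau}. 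The top map $\mathfrak{c}$ is represented by the continuation map associated to a monotone homotopy $H_{s}$ from $H$ to $H'$; for instance, linear interpolation with the cut-off $\beta$ from Definition \ref{definition:cut-off}. This homotopy is monotone if, at the level of the non-smooth profiles \eqref{eq:non-smooth-first-pass}, one has $H\le H'$ pointwise. That inequality holds since $bw\le bw'$, $cw\le cw'$, and $bw'-cw'\ge bw-cw$; mollification preserves the pointwise inequality because $f\ge0$. Monotone continuation maps do not increase action in the sense needed: by the estimate of \cite[\S2.4]{schwarz-pacific-j-math-2000} the action drop is at most $\max(H-H')\le 0$. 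Hence the map sends the subcomplex of action $>3bw\delta$ (orbits of type \ref{item:T2}, \ref{item:T3}) into the subcomplex of action $>3bw'\delta$ for $H'$, or at least into action $>\tau$ for a suitable threshold.

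The main obstacle is that the windows $[0,3bw\delta]$ and $[0,3bw'\delta]$ differ. Type \ref{item:T1} orbits of $H'$ have action in $(bw'\delta,2bw'\delta]$, which may exceed $3bw\delta$, so the natural target window must be handled. To fix this, I would use the enlargement/shrinking maps of Remark \ref{remark:enlarging-intervals}. The continuation induces $\mathit{HF}_{[0,3bw\delta]}(H)\to\mathit{HF}_{[0,3bw\delta]}(H')$. Then compose with the canonical map to $\mathit{HF}_{[0,3bw'\delta]}(H')$, which is an isomorphism when no spectrum of $H'$ lies in the window gap, or otherwise factors through the quotient $\mathit{HF}(H')\to\mathit{HF}_{[0,\tau']}(H')$ with $\tau'=3bw'\delta$. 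Concretely, the action spectrum of $H'$ avoids $[3bw'\delta,6bw'\delta]$ and $(-\infty,0]$. Since the \ref{item:T2} and \ref{item:T3} actions of $H'$ exceed $7bw'\delta>3bw\delta$, the subcomplex of action $>3bw\delta$ and the one of action $>3bw'\delta$ differ only in \ref{item:T1} generators, which lie below $2bw'\delta$. I would therefore define the bottom map as the chain map induced on the quotients by action $>\max(3bw\delta,\,\cdot)$, choosing a threshold in $[3bw'\delta,6bw'\delta]$ lying above all \ref{item:T1} orbits of both $H$ and $H'$. Using Remark \ref{remark:enlarging-intervals}, both quotients are identified with the $Q$'s, and commutativity then holds at chain level by construction.

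Finally, I would verify independence of $(\delta,f)$ by the argument of Lemma \ref{lemma:canonical-subquotient}. Composing with the continuation maps $\mathfrak{c}_{\theta,\theta'}$ and using that the relevant homotopies drop action by at most $6bw'\delta<\min\{bw'-cw',bw'-\sigma(bw')\}$, the resulting bottom map is compatible with the limits defining $Q_{bw,cw}$ and $Q_{bw',cw'}$. Passing to limits then yields the stated commutative diagram.
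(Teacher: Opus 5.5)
Your route matches the paper's. You fix $(\delta,f)\in\mathscr{T}(bw,cw)\cap\mathscr{T}(bw',cw')$ and use the monotone continuation from $H=(fH)_{\delta,bw,cw}$ to $H'=(fH)_{\delta,bw',cw'}$. Since $H_{\delta,b,c}$ is linear in $(b,c)$, $H'=\tfrac{w'}{w}H$, so your linear interpolation is exactly a type S continuation cylinder. You then pass to quotients and argue independence of $(\delta,f)$ as in Lemma~\ref{lemma:canonical-subquotient}.

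\textbf{The gap.} The bottom arrow needs the continuation to send the span of the (T2), (T3) generators of $H$ into the span of the (T2), (T3) generators of $H'$, and you do not establish this. Non-decrease of action only places the image at action $>m:=\min\{bw-\sigma(bw),bw-cw\}$. With $\delta$ merely below both $\delta_{0}$'s, $m$ can be as small as about $7bw\delta$, while (T1) orbits of $H'$ have action up to $2bw'\delta$. For $w'>\tfrac72 w$ nothing you wrote excludes a (T2)/(T3) input hitting a (T1) output, so your ``Hence'' does not follow.

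Your repairs do not close this, for three reasons.
\begin{itemize}
\item $3bw\delta$ can lie in $(bw'\delta,2bw'\delta]\subset\mathit{Spec}(H')$, so $\mathit{HF}_{[0,3bw\delta]}(H')$ need not be defined.
\item The natural window map runs $\mathit{HF}_{[0,3bw'\delta]}(H')\to\mathit{HF}_{[0,3bw\delta]}(H')$, not the other way.
\item A common threshold $\tau^{*}$ must lie above the (T1) actions of $H'$ \emph{and} below the (T2)/(T3) actions of $H$. You check only the first condition, which is automatic for $\tau^{*}\ge 3bw'\delta$. The second fails in the same regime, so $\mathit{CF}(H)/\mathit{CF}_{A>\tau^{*}}(H)$ need not be $Q_{bw,cw}$.
\end{itemize}
The same slip recurs in your independence step: the margin must be measured against $m$, not against $\min\{bw'-cw',bw'-\sigma(bw')\}$.

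\textbf{Two fixes.} Either of the following short observations closes the gap.

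(a) Take $\delta$ smaller, with $2bw'\delta<m$. This costs nothing, since the limits over the indiscrete groupoids $\mathscr{T}$ can be computed on small $\delta$. Then every (T2)/(T3) generator of $H$ has action above every (T1) action of $H'$. Action non-decrease therefore already sends $\mathit{CF}_{A>3bw\delta}(H)$ into $\mathit{CF}_{A>2bw'\delta}(H')=\mathit{CF}_{A>3bw'\delta}(H')$. The same margin absorbs the $O(\delta)$ action drops of the chain homotopies in the independence argument.

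(b) Use your estimate quantitatively. From $H\ge 2bw\delta$ and $H'=\tfrac{w'}{w}H$ you get $\max(H-H')\le-2b\delta(w'-w)$. So outputs of (T2)/(T3) inputs have action $>7bw\delta+2b\delta(w'-w)>2bw'\delta$, up to $O(\epsilon)$ perturbation errors. Since $\mathit{Spec}(H')$ misses $(2bw'\delta,7bw'\delta)$, these outputs are (T2)/(T3) orbits of $H'$.

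The paper's own sketch, which invokes the type S continuation maps of the appendix, does not spell this point out either.
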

\begin{proof}
  The proof is similar to Lemma \ref{lemma:A-POP}: one fixes $\delta,f$ with $\delta$ small enough that:
  \begin{itemize}
  \item $(\delta,f)\in \mathscr{T}(wb,wc)$, and similarly for $w'$
  \end{itemize}
  then uses \S\ref{sec:connections-type-s} to define a map, and then proves the result is independent of the choice of $\delta,f$ via a similar argument to the one used in the proof of Lemma \ref{lemma:canonical-subquotient}.
\end{proof}

\subsection{No-escape lemma}
\label{sec:no-escape-lemma}

We use the no-escape lemma \cite[Lemma 7.2]{abouzaid-seidel-GT-2010}. This will be used to ensure that solutions to the Floer equations with all asymptotics in $K\subset \Omega$ remain localized to $K$. The algebraic consequence of this lemma is that the group $Q_{b,c}$ in \S\ref{sec:canon-subq-map} is canonically isomorphic to $\mathit{SH}_{b}(K)$. Composing this with the previously defined map $\mathfrak{Q}:\mathit{SH}_{c}(\Omega)\to Q_{b,c}$ yields a map $\mathfrak{R}:\mathit{SH}_{c}(\Omega)\to \mathit{SH}_{b}(K)$ which is the Viterbo restriction map satisfying Theorem \ref{theorem:VR-main}.

In the previous section \S\ref{sec:viterbo-restriction-system}, we were able to be rather agnostic about the choice of almost structure $J$; we only required that was $\omega$-tame and $Z$ invariant outside of some large compact set. In this section we will need to impose a futher condition defined in terms of the radial coordinate $\rho$ for $K$:
\begin{itemize}
\item $J$ satisfies $\lambda_{K}\circ J=\d \rho$ near the level set $\rho=1/2$.
\end{itemize}
\begin{lemma}\label{lemma:no-escape}
  Let $(\delta,f)\in \mathscr{T}(b,c)$, and suppose $\delta<1/2$. Then there is a canonical identification:
  \begin{equation*}
    \mathfrak{I}_{b,c}:Q_{b,c}\simeq \mathit{SH}_{b}(K),
  \end{equation*}
  commuting with the additional structures discussed in \S\ref{sec:comp-with-prod}.
\end{lemma}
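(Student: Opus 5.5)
The plan is to compare, for a fixed admissible $\theta=(\delta,f)$, the filtered group $Q_{\theta}=\mathit{HF}_{[0,\tau]}((fH)_{\delta,b,c})$ with the Floer cohomology of the smooth Hamiltonian on the completion of $K$ obtained by keeping the part of $(fH)_{\delta,b,c}$ inside $\set{\rho\le 1/2}$ and extending it linearly with slope $b$ in $\rho$. This system agrees with $b\rho$ (up to an additive constant $\delta b$) outside a compact set, and so computes $\mathit{SH}_{b}(K)$. On $\set{\rho\le 1/2}$ the two systems coincide. By the remark following Lemma \ref{lemma:admissible-action-tau-2tau}, $Q_{\theta}$ is generated by the orbits of type \ref{item:T1}, which lie in $\set{\rho\le\delta}\subset\set{\rho<1/2}$. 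The same orbits generate the whole complex for the extended system on $K$, since it has no other orbits: its slope $b$ is not in $\mathit{Spec}(K)$.

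The key step is the no-escape lemma \cite[Lemma 7.2]{abouzaid-seidel-GT-2010}. Near the hypersurface $\set{\rho=1/2}$, the Hamiltonian $(fH)_{\delta,b,c}$ equals $b\rho+\delta b$, which is linear in $\rho$ there because $\delta<1/2$ and the mollification only sees a $\delta$-neighbourhood. The almost complex structure satisfies $\lambda_{K}\circ J=\d\rho$ near $\rho=1/2$. Exactness of $K\subset\Omega$ lets me use $\lambda_{K}$ (corrected by an exact term) in the energy and action identities. These are exactly the hypotheses of the no-escape lemma, so every Floer cylinder whose asymptotic orbits lie in $\set{\rho<1/2}$ stays inside $\set{\rho\le 1/2}$. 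The same holds for continuation cylinders and for pair-of-pants and PSS/$\Delta$ solutions, provided their data are linear of the correct type near $\rho=1/2$. Consequently the quotient complex in the window $[0,\tau]$ is identified at chain level with the Floer complex of the extended system on $\widehat{K}$. This gives an isomorphism $Q_{\theta}\simeq \mathit{HF}(\text{extended system})\simeq\mathit{SH}_{b}(K)$.

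I then check canonicity. The linear-interpolation continuation maps $\mathfrak{c}_{\theta,\theta'}$ of Lemma \ref{lemma:canonical-subquotient} restrict, by the same no-escape argument, to continuation maps on $\widehat K$. So the identifications commute with \ref{item:I1}--\ref{item:I3} and descend to the limit $Q_{b,c}$, defining $\mathfrak{I}_{b,c}$. For the additional structures ($\Delta$, $\mathit{PSS}$, the product of Lemma \ref{lemma:A-POP}, continuation in the slope), I apply no-escape to the relevant moduli spaces with all asymptotics of type \ref{item:T1}. For the pair of pants, the connection of type $S$ with weights $w_i$ is linear in $\rho$ near $\rho=1/2$ with nonnegative slope weights, and no-escape applies to it as well. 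For $\mathit{PSS}(1)$ there is no input orbit; I would use the plane version of the lemma, where the Hamiltonian cut-off is monotone, so the maximum principle part still holds.

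The main obstacle I expect is verifying the hypotheses of no-escape uniformly for the $s$-dependent data, namely continuation maps, connections of type $S$, and the PSS cut-off. This requires the Hamiltonians to be of the form $h_{s}(\rho)$ with $\partial_{s}h_{s}'\le 0$ near $\rho=1/2$. To arrange it, I would first insist that every interpolating family be exactly $b\rho+\mathrm{const}_{s}$ on a fixed collar around $\rho=1/2$, using the freedom in $\delta$. Then I would check that the constant terms do not enter the no-escape inequality, since that inequality only involves $\d h$.
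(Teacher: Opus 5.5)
For a fixed $\theta=(\delta,f)$ your argument matches the paper's: only T1 orbits lie in the window $[0,3b\delta]$, $J$ is of contact type near $\set{\rho=1/2}$, you apply \cite[Lemma 7.2]{abouzaid-seidel-GT-2010}, and you discard $\Omega-K$. Making the extension to $\widehat K$ explicit is a useful clarification of ``forgetting the complement''.

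\emph{The gap is in the canonicity step.} You claim the linear-interpolation maps $\mathfrak{c}_{\theta,\theta'}$ obey no-escape because the data on a collar around $\set{\rho=1/2}$ is $b\rho+c(s)$, and the inequality ``only involves $\d h$''. That locality belongs to the maximum principle, which needs $H_{s}$ and $J$ of contact type on all of $\set{\rho\ge 1/2}$. Here that region contains $\Omega-K$ and $W-\Omega$, so only the integrated (Stokes) form of no-escape is available.

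For $s$-dependent data on a cylinder, that form gives on $S'=u^{-1}(\set{\rho\ge 1/2})$:
$E(u|_{S'})=(\text{boundary term}\le 0)+\int_{S'}\partial_{s}(H_{s}-c(s))(u)\,\d s\wedge\d t$, up to a convention-dependent sign. The function $c(s)$ cannot be subtracted globally, because it is not the $s$-dependence of $H_{s}$ on the rest of the outer region.

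Now take $H_{s}=\beta(s)(fH)_{\delta,b,c}+(1-\beta(s))(f'H)_{\delta',b,c}$.
\begin{itemize}
\item The collar constant $c(s)$ interpolates between $c_{\theta}\approx\delta b$ and $c_{\theta'}\approx\delta' b$.
\item On $\Omega-K$, away from the mollified corners, both Hamiltonians equal $b$, so the integrand there is $-\beta'(s)(c_{\theta}-c_{\theta'})$.
\item This has the wrong sign for one of the two directions $\theta\to\theta'$ or $\theta'\to\theta$. Near the corners it is indefinite when $f\neq f'$.
\end{itemize}
So the obstacle you flag is not removed by making the data $b\rho+\mathrm{const}_{s}$ on a collar. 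The same problem recurs, at order $\epsilon$, for the BV operator: its interpolation between $D_{t-\tau}$ and $D_{t}$ is $s$-dependent on $\Omega-K$.

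The paper handles exactly this with its ``slow-down'' trick. Replacing $\beta(s)$ by $\beta(\epsilon s)$ does not change the maps on homology. If commutativity failed, a compactness argument as $\epsilon\to 0$ would produce a Floer cylinder for a single intermediate, $s$-independent Hamiltonian, joining two T1 orbits and crossing $\set{\rho=1/2}$. For $s$-independent data $\partial_{s}H=0$ and only the collar data enter, so the Abouzaid--Seidel lemma gives the contradiction.

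An alternative within your framework: verify the sign condition only for continuations that are monotone in the favourable direction on all of $\set{\rho\ge 1/2}$. Then recover the remaining compatibilities from \ref{item:I2} and \ref{item:I3}, since reverse continuations are inverse isomorphisms on both sides.

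Your treatment of type S pairs of pants, type S slope continuations and PSS is fine. Those connections are split, $H\mathfrak{b}$ with $H$ fixed and $\d\mathfrak{b}\le 0$, which is precisely the setting where the Abouzaid--Seidel lemma applies directly.
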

\begin{proof}
  The key engine of the proof is \cite[Lemma 7.2]{abouzaid-seidel-GT-2010}. The statement ultimately follows from a chain level argument; for this reason, we will require a unpacking a bit the definition of $\mathit{HF}_{I}((fH)_{\delta,b,c})$. Since $(fH)_{\delta,b,c}$ is degenerate (in all likelihood), the recipe of \S\ref{sec:appendix-filtered} requires time dependent perturbation terms $D_{t}$; see, in particular, Definition \ref{definition:epsilon-admissible}.

  Let us select a special collection of perturbation terms $D_{t}$, namely those which are supported away from the hypersurface $\rho=1/2$. Then the orbits of $(fH)_{\delta,b,c}+D_{t}$ split into types \ref{item:T1}, \ref{item:T2}, and \ref{item:T3}, as in Figure \ref{fig:viterbo-restriction}.

  The smallness of $D_{t}$ is governed by which action window we want to use. Since $Q_{b,c}$ is defined in terms of the action window $I=[0,\tau]$ for $(fH)_{\delta,b,c}+D_{t}$ where $\tau=3b\delta$, we know that all of the orbits used to define:
  \begin{equation*}
    \mathit{CF}_{I}((fH)_{\delta,b,c}+D_{t})
  \end{equation*}
  are of type \ref{item:T1}. It then follows from \cite[Lemma 7.2]{abouzaid-seidel-GT-2010} that all Floer differential cylinders joining orbits of type \ref{item:T1} remain entirely in the set $\rho<1/2$. By simply forgetting the complement $\Omega-K$, we conclude that:
  \begin{equation*}
    \mathfrak{I}:\mathit{HF}_{I}((fH)_{\delta,b,c}+D_{t})\simeq \mathit{SH}_{b}(K).
  \end{equation*}
  This provides the desired identification, although it a priori depends on $D_{t}$ and the choice of $(f,\delta)$.

  Using a simple compactness argument together with \cite[Lemma 7.2]{abouzaid-seidel-GT-2010}, one concludes that the identification does not depend on $D_{t},f,\delta$. This step is a bit subtle, since the naive limit $\delta\to 0$ yields a non-smooth Hamiltonian \eqref{eq:VR-system-non-smooth}. The trick we adopt is: in order to prove the commutativity on homology level, we  use the fact that continuation maps are unchanged (on homology level) if we ``slow down'' the continuation, by replacing $\beta(s)$ by $\beta(\epsilon s)$, where $\epsilon>0$ is a small real number, whenever we need to do a linear interpolation. By such an argument, we can take the limit $\epsilon\to 0$ and conclude the failure of commutativity on homology level leads to a Floer differential cylinder for some intermediate solution joining two orbits of type \ref{item:T1} which intersects the level $\rho=1/2$. Then we apply \cite[Lemma 7.2]{abouzaid-seidel-GT-2010} to conclude the desired contradiction.

  A similar argument then proves the remaining commutativity with additional structures.
\end{proof}

\begin{proof}[Proof of Theorem \ref{theorem:VR-main}]
  The ingredients are all in place. The identification $\mathfrak{I}_{b,c}$ provides the remaining ingredient needed to define $$\mathfrak{R}=\mathfrak{I}_{b,c}\circ \mathfrak{Q}_{b,c}:\mathit{SH}_{c}(\Omega)\to \mathit{SH}_{b}(K);$$ this map satisfies the desired properties by Lemma \ref{lemma:no-escape}.
\end{proof}

\subsection{Truncated Viterbo restriction}
\label{sec:truncated-viterbo-restriction}

In \cite[Proposition 3.1]{zhou-JSG-2022}, a statement similar to Theorem \ref{theorem:VR-trunc} is proven; namely, that there is a Viterbo restriction map $\mathit{SH}_{b}(\Omega)\to \mathit{SH}(T^{*}L)$ provided that that the rationality constant $\rho$ of a Lagrangian embedding $L\subset \Omega$ is larger than $2b$.

The overall structure of the proof of Theorem \ref{theorem:VR-trunc} is exceeding similar to the proof of Theorem \ref{theorem:VR-main}; the main difference occurs in \S\ref{sec:acti-spectr-viterbo}. The argument introduced orbits of types \ref{item:T1}, \ref{item:T2}, and \ref{item:T3} (see Figure \ref{fig:viterbo-restriction}). The non-exactness of the embedding implies that action values of orbits of types \ref{item:T1} and \ref{item:T2} computed in $\Omega$ differs from the action values computed in $K$ by addition of the of $\beta=\lambda_{K}-\iota^{*}\lambda_{\Omega}$ over the orbit. To be precise:
\begin{itemize}
\item orbits of type \ref{item:T1} have action in $(b\delta,2b\delta]+\rho\Z$,
\item orbits of type \ref{item:T2} have action in $(b-\sigma(b),b]+\rho\Z$,
\item orbits of type \ref{item:T3} have action in $(b-c,b]$, as before.
\end{itemize}

The key ingredient for the proof to proceed as in the exact case is:
\begin{lemma}
  If $\rho>b+c$, then orbits $\gamma$ whose action is in the window $I=[-c,7b\delta]$ must be of type \ref{item:T1} and satisfy $\int \gamma^{*}\beta=0$.
\end{lemma}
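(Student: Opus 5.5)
The argument is a case analysis over the three orbit types, using the shifted action sets listed just before the statement. We write the action of an orbit $\gamma$ of $(fH)_{\delta,b,c}$, computed with $\lambda_{\Omega}$, as its action computed in $K$ (for orbits of types \ref{item:T1} and \ref{item:T2}) plus the correction $\pm\int\gamma^{*}\beta$. By hypothesis this correction lies in $\rho\Z$. The sign convention only changes which multiple of $\rho$ appears, so we ignore it. The smallness of $\delta$ from Definition \ref{definition:admissible-tuple-for-VR} gives $7b\delta<\min\set{b-c,b-\sigma(b)}$; we use this together with $0<c<b$ and $\rho>b+c$.

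\emph{Type \ref{item:T3}.} These orbits are computed in $\Omega$ with no correction, so their actions lie in $(b-c,b]$. Since $b-c>7b\delta$, they lie strictly above the window $I=[-c,7b\delta]$ and are excluded.

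\emph{Type \ref{item:T2}.} The action is $x+k\rho$ with $x\in(b-\sigma(b),b]$ and $k\in\Z$.
\begin{itemize}
\item If $k=0$, then $x>b-\sigma(b)>7b\delta$, so the orbit is above the window.
\item If $k\ge 1$, the action is larger still.
\item If $k\le -1$, the action is at most $b-\rho<b-(b+c)=-c$, so the orbit is below the window.
\end{itemize}
Hence no type \ref{item:T2} orbit has action in $I$.

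\emph{Type \ref{item:T1}.} The action is $x+k\rho$ with $x\in(b\delta,2b\delta]$.
\begin{itemize}
\item If $k\ge1$, the action exceeds $\rho>b+c>7b\delta$, using $7b\delta<b$.
\item If $k\le-1$, the action is at most $2b\delta-\rho<2b\delta-b-c$. This is less than $-c$ because $2b\delta<b$.
\end{itemize}
Therefore $k=0$, which means $\int\gamma^{*}\beta=0$. Together with the previous two cases, this proves the lemma.

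The only delicate point is making sure the three action sets above are accurate after the mollification and any small time-dependent perturbations $D_{t}$. The interval endpoints are not sharp, but the margins are strict: the gaps are at least $b-c-7b\delta$, and $\rho-(b+c)>0$. So for perturbations small enough the inequalities persist, and I expect no real obstacle here.
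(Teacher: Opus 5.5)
Your proof is correct and follows the paper's own argument: the paper records only the two inequalities $b-\rho<-c$ (which excludes type (T2) orbits shifted down by $\rho$) and $2b\delta-\rho<-c$ (which excludes downward-shifted type (T1) orbits), while the unshifted and upward-shifted orbits are excluded by $7b\delta<\min\set{b-c,b-\sigma(b)}$, and your case analysis writes this out in full. One small improvement over the paper is that you get $2b\delta<b$ directly from admissibility, since $\delta<\delta_{0}\le (b-c)/(7b)<1/7$, whereas the paper simply assumes $\delta<1/2$ ``without loss of generality.''
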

\begin{proof}
  We need $b-\rho<-c$ and $2b\delta-\rho<-c$ (the latter holds assuming that $\delta<1/2$, which can assume without loss of generality).
\end{proof}
\begin{remark}
  The filtered subcomplex of orbits with actions in $[-c,\infty)$ computes $\mathit{SH}_{c}(\Omega)$, as can be proven by a standard continuation isomorphism as in \cite{zhou-JSG-2022}. The subquotient map:
  \begin{equation*}
    \mathit{SH}_{c}(\Omega)\simeq \mathit{HF}_{[-c,\infty)}((fH)_{\delta,b,c})\to \mathit{HF}_{[-c,7b\delta)}((fH)_{\delta,b,c})
  \end{equation*}
  is isomorphic to the truncated Viterbo restricted map. Since the are no orbits with action in $[-c,0]$ or $[3b\delta,7b\delta]$ we can replace the final interval by $[0,3b\delta]$.
\end{remark}
The rest of the proof proceeds as in the exact case. We comment only that the conclusion that all orbits in the action window $[0,3b\delta]$ have $\int \beta=0$ is used again to obtain the no-escape lemma, and to obtain the conclusion on the free homotopy classes given in Theorem \ref{theorem:VR-trunc}.

\section{Filtered Floer cohomology}
\label{sec:appendix-filtered}

In this appendix, we review the theory of the filtered Floer cohomology groups associated to a smooth autonomous Hamiltonian $H$. Throughout we focus on the case relevant to the body of the paper:

\begin{definition}\label{definition:admissible-in-liouville}
  An autonomous Hamiltonian $H$ on the completion of a Liouville domain $\Omega$ is said to be admissible for $\Omega$ provided it agrees with $br$ outside of a compact set. If, in addition, we say $b\not\in \mathit{Spec}(\Omega)$, we say that $H$ is non-discriminant.
\end{definition}

We also fix an $\omega$-tame almost complex structure $J$ that is Liouville invariant outside of a compact set (and will refer to such almost complex structures as admissible). To simplify the exposition, as in Remark \ref{remark:significant-canonical}, we fix $J$ rather than treat it as an auxiliary choice.

\subsection{Filtered Floer cohomology for smooth Hamiltonians}
\label{sec:floer-cohom-smooth}

Any admissible Hamiltonian $H$ as above has a well-defined
\emph{spectrum of action values}:
\begin{equation*}
  \mathit{Spec}(H):=\set{\int_{0}^{1} H(x(t))dt-x^{*}\lambda:x\text{ is a 1-periodic orbit of }X_{H}}.
\end{equation*}
This is a closed and nowhere dense subset of the real line.

\begin{definition}\label{definition:epsilon-admissible}
  An $\epsilon$-admissible perturbation of a non-discriminant and admissible $H$ (Definition \ref{definition:admissible-in-liouville}) is any time-dependent, smooth, and compactly supported Hamiltonian function $D_{t}$ such that:
  \begin{enumerate}[label=(A\arabic*)]
  \item\label{item:A1} $H+D_{t}$ generates an isotopy whose time-$1$ map is non-degenerate, and all moduli spaces of Floer cylinders for $H+D_{t}$, using $J$, are cut transversally,
  \item\label{item:A2} $\mathit{Spec}(H+D_{t})$ lies in the $\epsilon$ neighborhood of $\mathit{Spec}(H)$,
  \item\label{item:A3} $\max \abs{D_{t}}\le \epsilon$,
  \end{enumerate}
\end{definition}

Let us denote by $\mathscr{O}(\epsilon)$ the
indiscrete groupoid whose objects are choices
of $\epsilon$-admissible perturbation, that
is to say, there is a unique morphism between
any two objects.

\begin{lemma}\label{lemma:3epsilon}
  If the endpoints of $I$ do not contain any
  points in the $3\epsilon$ neighborhood of
  $\mathit{Spec}(H)$, then the assignment:
  \begin{equation*}
    D_{t}\in \mathscr{O}(\epsilon)\mapsto \mathit{HF}_{I}(H+D_{t},J),
  \end{equation*}
  is a functor, using the standard
  continuation maps\footnote{Standard continuation maps are defined by counting the solutions of infinitesimally small perturbations of the linear interpolation from $H+D_{t}$ to $H+D_{t}'$, as in \cite[\S2.4]{schwarz-pacific-j-math-2000}.} associated to affine
  interpolations. Here we use the definition:
  \begin{equation*}
    \mathit{HF}_I(H+D_{t},J)=H(\mathit{CF}_{A>\inf
      I}(H+D_{t},J)/\mathit{CF}_{A>\sup I}(H+D_{t},J)).
  \end{equation*}
\end{lemma}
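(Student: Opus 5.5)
The plan is to show that continuation maps between $\epsilon$-admissible perturbations preserve the action filtration up to an error of $2\epsilon$. The window hypothesis then makes the induced maps on $\mathit{HF}_{I}$ well defined, compatible with composition, and equal to the identity on each object. This is the standard argument of \cite[\S2.4]{schwarz-pacific-j-math-2000} and \cite[Theorem 5.2]{hofer-salamon-95}, adapted to the perturbed setting.

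\textbf{Step 1: action estimate.} For $D_{t},D_{t}'\in\mathscr{O}(\epsilon)$, take the affine interpolation $H_{s}=H+\beta(s)D_{t}'+(1-\beta(s))D_{t}$, perturbed infinitesimally to achieve transversality. A continuation cylinder $u$ from $x_{-}$ to $x_{+}$ satisfies the energy identity
\begin{equation*}
  0\le E(u)=\mathcal{A}(x_{-})-\mathcal{A}(x_{+})+\int\!\!\int \beta'(s)(D'_{t}-D_{t})(u)\,\d s\,\d t .
\end{equation*}
By \ref{item:A3}, $\max\abs{D_{t}'-D_{t}}\le 2\epsilon$, so the continuation map raises action by at most $2\epsilon$. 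The admissibility and Liouville invariance of $J$ at infinity give the usual maximum principle, so the moduli spaces are compact. One checks the sign convention against the cohomological convention of the paper, where the differential increases $A$.

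\textbf{Step 2: the map descends to $\mathit{HF}_{I}$.} By \ref{item:A2}, every orbit of $H+D_{t}$ or $H+D'_{t}$ has action within $\epsilon$ of $\mathit{Spec}(H)$. By hypothesis, each endpoint $e$ of $I$ is at distance greater than $3\epsilon$ from $\mathit{Spec}(H)$. Hence the interval $(e-2\epsilon,e+2\epsilon)$ contains no actions of either system. Consequently a generator with $A>e$ is sent to generators with $A>e-2\epsilon$, and these automatically satisfy $A>e$. So the continuation chain map preserves both subcomplexes $\mathit{CF}_{A>\inf I}$ and $\mathit{CF}_{A>\sup I}$, and it induces a map $\mathfrak{c}_{D,D'}$ on the quotient homology $\mathit{HF}_{I}$.

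\textbf{Step 3: functoriality.} First, $\mathfrak{c}_{D,D}$ is the identity, because the constant homotopy counts only trivial cylinders \cite{floer-comm-math-phys-1989}. Second, for a triple $D,D',D''$ we must show $\mathfrak{c}_{D',D''}\circ\mathfrak{c}_{D,D'}=\mathfrak{c}_{D,D''}$ on $\mathit{HF}_{I}$. The glued homotopy and the one-parameter family interpolating it to the direct homotopy yield a chain homotopy, and we need that chain homotopy to respect the filtration. This is the \emph{main obstacle}: a naive bound on the action shift would be $4\epsilon$, and that exceeds the $3\epsilon$ margin.

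The way around it is to note that every Hamiltonian appearing in the family lies within $\epsilon$ of $H$ pointwise, because each is a convex combination of $D$, $D'$ and $D''$ added to $H$. I will therefore write each $s$-dependent Hamiltonian in the family as $H+D_{s,t}$ with $\abs{D_{s,t}}\le\epsilon$. The energy identity then bounds the action shift by $\int\abs{\partial_{s}D_{s,t}}$ along a monotone path, and this total variation can be arranged to be at most $2\epsilon$.

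If that arrangement fails, the fallback is to factor everything through an intermediate perturbation. Filtration-preserving chain maps can be compared because the windows $(e-2\epsilon,e+2\epsilon)$ are free of spectrum for all three systems, so a shift of just under $3\epsilon$ is still harmless. Chain homotopies that raise action by less than $3\epsilon$ preserve the subcomplexes by the same gap argument as in Step 2, so any family whose shift is below $3\epsilon$ suffices. Once the chain homotopy is shown to preserve the subcomplexes, the identity in Step 3 holds on $\mathit{HF}_{I}$, which is exactly the functoriality required.
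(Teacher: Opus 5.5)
Your Steps 1 and 2 match the paper's argument. The affine continuation moves action by at most $2\epsilon$, plus an arbitrarily small perturbation error. The actions of $H+D_{t}$, $H+D'_{t}$, $H+D''_{t}$ all avoid $(e-2\epsilon,e+2\epsilon)$ for each endpoint $e$ of $I$. Hence the chain map preserves $\mathit{CF}_{A>\inf I}$ and $\mathit{CF}_{A>\sup I}$.

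The gap is in Step 3, exactly where you locate the obstacle, and your treatment of it does not work.

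\emph{The margin is miscounted.} By your own Step 2, a generator can leave $\set{A>e}$ only if its action crosses the whole spectrum-free window $(e-2\epsilon,e+2\epsilon)$. That is a move of more than $4\epsilon$, not $3\epsilon$. The inequality is even strict with some room to spare: the neighbourhoods are open, and the spectra of the non-degenerate perturbed systems are finite.

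\emph{The $2\epsilon$ remedy is impossible.} The family must contain, at its large-$R$ end, the concatenation $D\to D'\to D''$, since that is the homotopy whose continuation map equals $\mathfrak{c}_{D',D''}\circ\mathfrak{c}_{D,D'}$ by gluing. Its action loss is controlled only by $\sup(D'-D'')+\sup(D-D')$. This can be as large as $4\epsilon$, e.g.\ $D''=D$ and $D'=-D$ with $D$ attaining both values $\pm\epsilon$. The pointwise bound $\abs{D_{s,t}}\le\epsilon$ gives no control over the variation of a path that is forced to visit $D'$.

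\emph{The fallback does not fill the hole.} It correctly notes that a family with small enough shift would suffice, but it never produces one. As written, functoriality is not established.

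The repair is the paper's route. Take the explicit family
\begin{equation*}
  X_{H}+(1-\beta(s))X_{D''_{t}}+(1-\beta(s-R))\beta(s)X_{D'_{t}}+\beta(s-R)\beta(s)X_{D_{t}},\qquad R\in[-1,\infty).
\end{equation*}
For $R\le -1$ this is the direct interpolation from $D_{t}$ to $D''_{t}$, and for $R\gg 0$ it is the concatenation through $D'_{t}$. Counting the rigid elements of its parametric moduli space gives $\mathfrak{k}$ with $\mathfrak{c}_{D',D''}\circ\mathfrak{c}_{D,D'}-\mathfrak{c}_{D,D''}=d\mathfrak{k}+\mathfrak{k}d$. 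The action drop of every solution is bounded by the curvature of the family, uniformly in $R$, and you compare this with the drop of more than $4\epsilon$ needed to exit $\set{A>\sup I}$ or $\set{A>\inf I}$.

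The paper quotes a bound of $3\epsilon$ for this curvature. The direct estimate is at most $2\epsilon$ from each of the two transition regions, so at most $4\epsilon$ in total, plus an arbitrarily small perturbation error. That already suffices because of the strict slack noted above. Either way, $\mathfrak{k}$ preserves both subcomplexes, and the composition identity descends to $\mathit{HF}_{I}$.
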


\begin{remark}
  As the domain of the functor is an
  indiscrete groupoid, the limit of this
  functor is isomorphic to any of the
  outputs:
  \begin{equation*}
    \lim_{D_{t}\in \mathscr{O}(\epsilon)} \mathit{HF}_{I}(H+D_{t},J)\simeq \mathit{HF}_{I}(H+D_{t},J).
  \end{equation*}
  In particular, the natural maps induced by
  the inclusion
  $\mathscr{O}(\epsilon')\subset
  \mathscr{O}(\epsilon)$ for
  $\epsilon'<\epsilon$ induce isomorphisms
  between the limits.
\end{remark}
\begin{definition}\label{definition:FFcohomology}
  The \emph{filtered Floer cohomology} of a
  smooth Hamiltonian $H$ and an interval $I$
  whose endpoints are disjoint from the
  spectrum $\mathit{Spec}(H)$ (henceforth we
  say $I$ is \emph{admissible} for $H$) is
  defined as:
  \begin{equation*}
    \mathit{HF}_{I}(H):=\lim_{D_{t}\in \mathscr{O}(\epsilon)}\mathit{HF}_{I}(H+D_{t},J),
  \end{equation*}
  where, in order to the make the definition
  unambiguous, we fix $\epsilon$ to be the
  infimal number such that the $4\epsilon$
  neighborhood of $\partial I$ intersects
  $\mathit{Spec}(H)$; this ensures the
  diagram is well-defined by Lemma
  \ref{lemma:3epsilon}.
\end{definition}

\begin{proof}[Proof of Lemma \ref{lemma:3epsilon}]
  By \ref{item:A2}
  it holds that $\mathit{Spec}(H+D_{t})$ is
  disjoint from the $2\epsilon$ neighbourhood
  of $\partial I$. In particular, the
  quotient complex:
  \begin{equation*}
    \mathit{CF}_{A>\inf I}(H+D_{t},J)/\mathit{CF}_{A>\sup I}(H+D_{t},J)
  \end{equation*}
  is generated by orbits whose actions are
  $\epsilon$ far from $\partial I$.

  By standard estimates on the change in action due to continuation maps due to \cite[\S2.4]{schwarz-pacific-j-math-2000}, the continuation map relating $H+D_{t}$ and $H+D_{t}'$ preserves the subcomplexes $\set{A>\sup I}$ and $\set{A>\inf I}$; otherwise the continuation map would need to drop action by at least $4\epsilon$, which is not possible with the standard continuation maps since $\abs{D_{t}}$ and $\abs{D_{t}'}$ are uniformly bounded by $\epsilon$ (so the total possible drop in action is only $2\epsilon$ plus an arbitrarily small error due to pertubations).

  It remains to prove that the assignment is functorial, i.e., that the composition of continuation maps is equal to the continuation map. Consider the continuation map associated to the family of Hamiltonian vector fields:
  \begin{equation}\label{eq:interpolation-app}
    X_{H}+(1-\beta(s))X_{D_{t}''}+(1-\beta(s-R))\beta(s)X_{D'_{t}}+\beta(s-R)\beta(s)X_{D_{t}},
  \end{equation}
  which, for large $R\gg 0$, defines a map
  which equals the composition $\mathfrak{c}'\circ \mathfrak{c}$ of the chain
  maps from $D_{t}$ to $D_{t}'$ and then from
  $D_{t}'$ to $D_{t}''$ (by the standard
  argument), and for $R\le -1$ defines the
  map $\mathfrak{c}''$ from $D_{t}$ to $D_{t}''$.

  The count of the rigid solutions lying over
  the parametric moduli space of solutions,
  as $R$ varies over $[-1,\infty)$, defines
  the chain homotopy $\mathfrak{k}$ term:
  \begin{equation*}
    \mathfrak{c}'\circ \mathfrak{c}-\mathfrak{c}''=d\mathfrak{k}+\mathfrak{k}d.
  \end{equation*}
  Since $\mathfrak{k}$ is also defined by
  counting the solutions of Floer's
  continuation equation, one can estimate the
  maximum drop in action in terms of the
  curvature term:
  \begin{equation*}
    \abs{\int_{\R\times \R/\Z}\partial_{s}H_{s,t}ds dt}\le 3\epsilon;
  \end{equation*}
  the estimate uses that $\abs{D_{t}}<\epsilon$, etc.\footnote{There is an arbitrarily small additional error due to the fact we must add an infinitesimally small perturbation to the formula in \eqref{eq:interpolation-app} (the size can be taken to be as small as desired).} Thus $\mathfrak{k}$ must also preserve the subcomplexes $A>\sup I$ and $A>\inf I$, since $3\epsilon$ is still less than the minimum $4\epsilon$ needed to leave the subcomplexes (as discussed above).
\end{proof}

\begin{remark}\label{remark:enlarging-intervals}
  Let $I_{1}=[\alpha,\beta]$ and $I_{2}=[\beta,\gamma]$, with $\alpha<\beta<\gamma$, and suppose $\partial I_{1}\cup \partial I_{2}$ is disjoint from the spectrum of $H$. Then, for $\epsilon$ small enough, there is a functorial exact triangle sending $D_{t}\in \mathscr{O}(\epsilon)$ to:
  \begin{equation*}
    \dots \to \mathit{HF}_{I_{2}}(H+D_{t},J)\to \mathit{HF}_{I_{1}\cup I_{2}}(H+D_{t},J)\to \mathit{HF}_{I_{1}}(H+D_{t},J)\to \dots.
  \end{equation*}
  Inspection of the proof of Lemma \ref{lemma:3epsilon} shows that it suffices that the $3\epsilon$ neighborhood of $\set{\alpha,\beta,\gamma}$ is disjoint from $\mathit{Spec}(H)$ in order for the functor to be well-defined. In order to make the definition of this ``functorial exact triangle'' unambiguous, let us fix $\epsilon$ to be the smallest number such that $4\epsilon$ neighborhood of $\set{\alpha,\beta,\gamma}$ intersects the spectrum of $H$, as in Definition \ref{definition:FFcohomology}.

  Taking the limit of this functorial exact triangle gives an induced exact triangle relating the invariant of Definition \ref{definition:FFcohomology}:
  \begin{equation*}
    \dots \to \mathit{HF}_{I_{2}}(H)\to \mathit{HF}_{I_{1}\cup I_{2}}(H)\to \mathit{HF}_{I_{1}}(H)\to \dots.
  \end{equation*}
  
  If $I_{2}\cap \mathit{Spec}(H)=\emptyset$, then the restriction map:
  \begin{equation*}
    \mathit{HF}_{I_{1}\cup I_{2}}(H)\to \mathit{HF}_{I_{1}}(H)
  \end{equation*}
  is an isomorphism, and, if $I_{1}\cap \mathit{Spec}(H)=\emptyset$, then the ``inclusion'' map:
  \begin{equation*}
    \mathit{HF}_{I_{2}}(H)\to \mathit{HF}_{I_{1}\cup I_{2}}(H)
  \end{equation*}
  is an isomorphism.
\end{remark}

\subsection{Interfacing with the TQFT}
\label{sec:interf-with-tqft}

Recall that a \emph{Hamiltonian connection} is a convenient way to package the inhomogeneous term in Floer's equation; for a more detailed introduction introduction, we refer the reader to \cite[\S8]{mcduff-salamon-book-2012} and \cite[\S3]{brocic-cant-arXiv-2025}. Hamiltonian connections over a surface $\Sigma$ are a special type of Ehresmann connection on the fibre bundle $\Sigma\times W\to \Sigma$.

Hamiltonian connections $\mathfrak{H}$ are defined by \emph{connection potentials}, namely $1$-forms $\mathfrak{a}\in \Omega^{1}(\Sigma\times W)$ which vanish on the \emph{vertical distribution} (tangent to the fibres $\set{\mathit{pt}}\times W$). In local coordinates $s+it$ on $\Sigma$, such a potential can be expressed as:
\begin{equation*}
  \mathfrak{a}=K_{s,t}\d s+H_{s,t}\d t
\end{equation*}
where $K_{s,t},H_{s,t}$ are functions on $W\times \C$. The connection $\mathfrak{H}$ is defined to be the ``orthogonal complement'' of the vertical distribution with respect to the two-form $\omega-\d \mathfrak{a}$. More prosaically, $\mathfrak{H}$ is locally spanned by the vector fields:
\begin{equation*}
  \partial_{s}+X_{K_{s,t}}\quad\text{and}\quad\partial_{t}+X_{H_{s,t}}.
\end{equation*}
This concludes are recollection of Hamiltonian connections.

Let $\Sigma$ be a Riemann surface with $N+1$ cylindrical ends, decomposed into $N$ inputs and $1$ output and let $\mathfrak{H}$ be a Hamiltonian connection over $\Sigma$ whose connection potential $\mathfrak{a}$ is locally expressed as $\mathfrak{a}=K_{s,t}\d s+H_{s,t}\d t$ where $K_{s,t},H_{s,t}$ are admissible for all $s,t$ in the sense of Definition \ref{definition:admissible-in-liouville}; in the cylindrical ends we suppose $\mathfrak{a}$ is written as:
\begin{equation*}
  \mathfrak{a}=H_{k}dt\text{ for }k=1,\dots,N,\infty,
\end{equation*}
where $\infty$ corresponds to the unique output. We assume that all $H_{k}$ are admissible non-discriminant Hamiltonians as in \S\ref{sec:floer-cohom-smooth}.

Following the well-known recipe as in \cite[\S8]{mcduff-salamon-book-2012} and, say, \cite{alizadeh-atallah-cant-math-z-2025,brocic-cant-arXiv-2025}, for each admissible almost complex structure $J$ and generic perturbation $\mathfrak{p}$ of $\mathfrak{H}$, one obtains a linear map:
\begin{equation}\label{eq:general-tqft}
  \mathit{CF}(H_{1}+D_{1,t},J)\otimes\dots\otimes \mathit{CF}(H_{N}+D_{N,t},J)\to \mathit{CF}(H_{\infty}+D_{\infty,t},J).
\end{equation}
Here the perturbation term $\mathfrak{p}$ of
$\mathfrak{H}$ carries with it perturbations
$D_{k,t}\in \mathscr{O}(\epsilon)$, for
$k=1,\dots,N,\infty$. A variant of the construction considers smooth families of domains $\Sigma_{\tau}$ and connections $\mathfrak{H}_{\tau}$ and counts the rigid elements appearing in a parametric moduli space (this is how chain homotopy terms and the BV operator are defined).

We wish to understand how such operations
interact with action windows. The change in
action values is governed by the so-called
``curvature'' term associated to the
Hamiltonian connection.
\begin{definition}
  The curvature term of a Hamiltonian connection is:
  \begin{equation}
    \label{eq:error-term}
    \mathit{curv}(\mathfrak{H})=\sup_{u:\Sigma\to W}\int_{\R\times \R/\Z}u^{*}\mathfrak{r},
  \end{equation}
  where $\mathfrak{r}$ is the curvature two
  form of $\mathfrak{H}$; see \cite[pp.\,14]{alizadeh-atallah-cant-math-z-2025}.
\end{definition}
\begin{lemma}\label{lemma:product-error-term}
  The error term satisfies:
  \begin{equation*}
    A_{H_{\infty}}(\gamma_{\infty})\ge \sum_{k=1}^{N} A_{H_{k}}(\gamma_{k})-\mathit{curv}(\mathfrak{H}).
  \end{equation*}
  whenever $\gamma_{1},\dots,\gamma_{N},\gamma_{\infty}$ are the asymptotics of a finite energy solution to Floer's equation for $\mathfrak{H}$, for any admissible $J$.
\end{lemma}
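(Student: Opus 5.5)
The plan is the standard energy identity for Floer solutions, with the curvature term measuring the failure of the connection to be flat.

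Let $u:\Sigma\to W$ be a finite energy solution of Floer's equation for $\mathfrak{H}$ and $J$, asymptotic to $\gamma_{k}$ on the inputs and to $\gamma_{\infty}$ on the output. Consider the pullback of $\omega-\d\mathfrak{a}$ along the section $z\mapsto(z,u(z))$ of $\Sigma\times W\to\Sigma$, and split it pointwise into a vertical part and a horizontal part. In local coordinates $s+it$ with $\mathfrak{a}=K\,\d s+H\,\d t$ this reads
\begin{equation*}
  u^{*}(\omega-\d\mathfrak{a})=\omega(\partial_{s}u-X_{K},\partial_{t}u-X_{H})\,\d s\wedge\d t-u^{*}\mathfrak{r},
\end{equation*}
where $\mathfrak{r}=(\partial_{s}H-\partial_{t}K+\{K,H\})\,\d s\wedge\d t$ is the curvature two-form. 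The sign conventions must be matched to those of \cite{alizadeh-atallah-cant-math-z-2025}. Since $J$ is $\omega$-tame and $u$ solves $(\d u-X_{\mathfrak{a}})^{0,1}=0$, the first term equals the vertical energy density $\abs{\partial_{s}u-X_{K}}^{2}\ge 0$.

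Next, integrate over $\Sigma$. The form $\omega-\d\mathfrak{a}$ is exact with primitive $\lambda-\mathfrak{a}$, so by Stokes' theorem $\int_{\Sigma}u^{*}(\omega-\d\mathfrak{a})$ reduces to boundary terms at the cylindrical ends. On each end $\mathfrak{a}=H_{k}\,\d t$, and the end contributes $\pm\bigl(\int\gamma_{k}^{*}\lambda-\int_{0}^{1}H_{k}(\gamma_{k})\,\d t\bigr)=\mp A_{H_{k}}(\gamma_{k})$. The output carries the opposite orientation to the inputs, so for the cohomological conventions of the paper the total is $A_{H_{\infty}}(\gamma_{\infty})-\sum_{k}A_{H_{k}}(\gamma_{k})$. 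Combining this with the pointwise identity gives
\begin{equation*}
  0\le E(u)=A_{H_{\infty}}(\gamma_{\infty})-\sum_{k}A_{H_{k}}(\gamma_{k})+\int_{\Sigma}u^{*}\mathfrak{r}.
\end{equation*}
Bounding $\int u^{*}\mathfrak{r}$ by $\mathit{curv}(\mathfrak{H})$ then yields the desired inequality.

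The main obstacle is justifying Stokes' theorem on the noncompact surface. I would handle it as follows.
\begin{enumerate}
\item Since $E(u)<\infty$ and the $H_{k}$ are non-degenerate after perturbation, $u$ converges exponentially to the orbits on the ends, so the boundary terms converge.
\item The solution stays in a compact set by the maximum principle for admissible data.
\item Admissibility of $K_{s,t},H_{s,t}$, which agree with multiples of $r$ outside a compact set, ensures that $\mathfrak{r}$ is bounded there.
\end{enumerate}
The remaining point is sign bookkeeping: checking that the paper's action convention $A_{H}(x)=\int H-x^{*}\lambda$ produces exactly the displayed orientation, with the output action on the larger side. I would verify this on the trivial cylinder, where $\mathfrak{r}=0$ and both sides agree.
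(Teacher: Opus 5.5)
Your proposal is correct and is exactly the standard energy identity the paper relies on; its own proof simply says this is a standard fact about Floer's equation for Hamiltonian connections. Your orientation bookkeeping (inputs as positive ends $s\to+\infty$, output as a negative end, $A_{H}(x)=\int H-\int x^{*}\lambda$) does give $E(u)=A_{H_{\infty}}(\gamma_{\infty})-\sum_{k}A_{H_{k}}(\gamma_{k})+\int u^{*}\mathfrak{r}$, consistent with how the paper later uses the lemma for continuation and PSS maps.

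Two small corrections. First, your item (3) is false as stated: if $H_{s,t}=b(s,t)r$ near infinity, then $\partial_{s}H_{s,t}$ grows like $r$. It is also unnecessary, since $\mathfrak{r}$ vanishes on the cylindrical ends, so $\int u^{*}\mathfrak{r}$ is an integral over a compact part of $\Sigma$ where $u$ has compact image, and the only property you use is $\int u^{*}\mathfrak{r}\le\mathit{curv}(\mathfrak{H})$, which holds by definition. Second, the trivial-cylinder check cannot detect a sign error, because there $E(u)=0$ and the two actions coincide, so the identity holds with either sign; your direct Stokes computation already settles the sign.
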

\begin{proof}
  This is a standard result in the theory of
  Floer's equation for Hamiltonian
  connections.
\end{proof}

In the next subsections \S\ref{sec:pants-appendix}, \S\ref{sec:continuation-maps}, \S\ref{sec:BV-operator-appendix} we will analyze in more detail the operations needed in the body of the paper.

\subsubsection{The pair of pants product}
\label{sec:pants-appendix}

The interaction between action windows and the product structure is a bit subtle. The reason is that defining a binary operation with action windows $I_{1},I_{2},I_{\infty}$, say:
\begin{equation}\label{eq:product-map-windows}
  \mu:\mathit{HF}_{I_{1}}(H_{1})\otimes \mathit{HF}_{I_{2}}(H_{2})\to \mathit{HF}_{I_{\infty}}(H_{\infty}),
\end{equation}
typically requires that the ideal subcomplex:
\begin{equation}\label{eq:subcomplex-ideal}
  \mathit{CF}_{A>\sup I_{1}}\otimes \mathit{CF}_{A>\inf I_{2}} + \mathit{CF}_{A>\inf I_{1}}\otimes \mathit{CF}_{A>\sup I_{2}}
\end{equation}
is mapped to zero, rather than just the subcomplex $\mathit{CF}_{A>\sup I_{1}}\otimes \mathit{CF}_{A>\sup I_{2}}$.

Fix $\mathfrak{H}$
on the pair of pants surface
$\Sigma=\C P^{1}-\set{1,2,\infty}$
with standard cylindrical ends as in
\S\ref{sec:interf-with-tqft}.

\begin{lemma}\label{lemma:window-spacing}
  Suppose $I_{1}=[\alpha_{1},\alpha_{1}+\ell_{1}]$, $I_{2}=[\alpha_{2},\alpha_{2}+\ell_{2}]$, $I_{\infty}=[\alpha_{\infty},\beta_{\infty}]$ are admissible intervals for $H_{1},H_{2},H_{\infty}$, and:
  \begin{equation*}
    \alpha_{\infty}\le \alpha_{1}+\alpha_{2}-\mathit{curv}(\mathfrak{H})\quad\text{and}\quad\beta_{\infty}\le \alpha_{1}+\alpha_{2}+\ell_{1}+\ell_{2}-\mathit{curv}(\mathfrak{H}).
  \end{equation*}
  Suppose additionally that:
  \begin{itemize}
  \item $[\alpha_{1}+\ell_{1},\alpha_{1}+\ell_{1}+\ell_{2}]\cap \mathit{Spec}(H_{1})=\emptyset$, and,
  \item $[\alpha_{2}+\ell_{2},\alpha_{2}+\ell_{1}+\ell_{2}]\cap \mathit{Spec}(H_{2})=\emptyset$;
  \end{itemize}
  this latter assumption is related to \eqref{eq:subcomplex-ideal}. Then the product morphism \eqref{eq:product-map-windows} is well-defined. The morphism is unchanged if $\mathfrak{H}$ is homotoped through connections
\end{lemma}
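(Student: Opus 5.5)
The plan is to build the map at chain level for small perturbations, check that the action bounds of Lemma \ref{lemma:product-error-term} make it descend to the action-window quotients, and then pass to the limit defining $\mathit{HF}_{I}$.

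\textbf{Chain-level map.} Fix $\epsilon>0$ small. In particular, the $4\epsilon$-neighbourhoods of the endpoints of $I_{1},I_{2},I_{\infty}$ should miss the relevant spectra. Choose a generic perturbation $\mathfrak{p}$ of $\mathfrak{H}$ whose end-perturbations $D_{k,t}$ lie in $\mathscr{O}(\epsilon)$ and with $\mathit{curv}(\mathfrak{H}+\mathfrak{p})\le \mathit{curv}(\mathfrak{H})+\epsilon$. This gives the chain map \eqref{eq:general-tqft},
\begin{equation*}
  \mu:\mathit{CF}(H_{1}+D_{1,t})\otimes \mathit{CF}(H_{2}+D_{2,t})\to \mathit{CF}(H_{\infty}+D_{\infty,t}).
\end{equation*}

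\textbf{Descent to the quotients.} Working over a field, for subcomplexes $A'\subset A$ and $B'\subset B$ we have
\begin{equation*}
  (A/A')\otimes(B/B')\simeq (A\otimes B)/(A'\otimes B+A\otimes B').
\end{equation*}
So it suffices to check two inclusions.
\begin{itemize}
\item $\mu$ maps $\mathit{CF}_{A>\alpha_{1}}\otimes \mathit{CF}_{A>\alpha_{2}}$ into $\mathit{CF}_{A>\alpha_{\infty}}$.
\item $\mu$ maps the ideal \eqref{eq:subcomplex-ideal} into $\mathit{CF}_{A>\beta_{\infty}}$.
\end{itemize}

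The first inclusion follows directly from Lemma \ref{lemma:product-error-term}. Any contributing solution has
\begin{equation*}
  A_{\infty}\ge A_{1}+A_{2}-\mathit{curv}>\alpha_{1}+\alpha_{2}-\mathit{curv}\ge \alpha_{\infty},
\end{equation*}
up to $O(\epsilon)$ errors. These errors are absorbed because the spectra stay $2\epsilon$ away from the window endpoints.

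The second inclusion is where the gap hypotheses enter. Take a generator $\gamma_{1}$ with $A(\gamma_{1})>\alpha_{1}+\ell_{1}$. Since $\mathit{Spec}(H_{1})$ misses $[\alpha_{1}+\ell_{1},\alpha_{1}+\ell_{1}+\ell_{2}]$, condition \ref{item:A2} forces $A(\gamma_{1})>\alpha_{1}+\ell_{1}+\ell_{2}-\epsilon$. Pair it with any $\gamma_{2}$ satisfying $A(\gamma_{2})>\alpha_{2}$. Then
\begin{equation*}
  A_{\infty}>\alpha_{1}+\alpha_{2}+\ell_{1}+\ell_{2}-\mathit{curv}-O(\epsilon)\ge \beta_{\infty}-O(\epsilon).
\end{equation*}
The symmetric term is handled the same way using the gap in $\mathit{Spec}(H_{2})$. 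The $O(\epsilon)$ slack is absorbed in the same way as before. The induced chain map, composed with the Künneth map over a field, gives $\mathit{HF}_{I_{1}}\otimes \mathit{HF}_{I_{2}}\to \mathit{HF}_{I_{\infty}}$ for the perturbed data.

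\textbf{Independence and homotopy invariance.} To get a map on the limits of Definition \ref{definition:FFcohomology}, I must show compatibility with the continuation maps of Lemma \ref{lemma:3epsilon}, and invariance under homotopy of $\mathfrak{H}$. In both cases I glue or interpolate and count rigid elements of parametric moduli spaces, which gives chain homotopies $\mathfrak{k}$. The curvature of the interpolating connections exceeds $\mathit{curv}(\mathfrak{H})$ by at most a few multiples of $\epsilon$, plus arbitrarily small perturbation errors. Hence the same two estimates show that $\mathfrak{k}$ preserves the filtration pieces and kills the ideal, exactly as in the proof of Lemma \ref{lemma:3epsilon}. For the homotopy statement I assume the inequalities on $\alpha_{\infty},\beta_{\infty}$ hold with $\mathit{curv}$ taken as the supremum over the family.

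\textbf{Main obstacle.} I expect the delicate step to be the $\epsilon$-bookkeeping. The required inequalities are not strict, so I must check that the chosen $\epsilon$ (the $4\epsilon$ rule of Definition \ref{definition:FFcohomology}) leaves enough room. This uses that the window endpoints, and the endpoints of the gap intervals, lie at positive distance from the spectra. I would handle this by shrinking $\epsilon$ below one quarter of all these distances, then invoking the remark that the limits are independent of $\epsilon$.
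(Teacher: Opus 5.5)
Your proposal is correct and follows the paper's route. The paper's sketch uses the spectral gap to upgrade $A(\gamma_{1})>\alpha_{1}+\ell_{1}$ to $A(\gamma_{1})>\alpha_{1}+\ell_{1}+\ell_{2}$ and then applies Lemma~\ref{lemma:product-error-term} to kill the ideal \eqref{eq:subcomplex-ideal}. Lemma~\ref{lemma:functorial-pop} then does the $\epsilon$-bookkeeping (action drop at most $\mathit{curv}(\mathfrak{H})+4\epsilon$, homotopies at most $7\epsilon$, against $8\epsilon$ buffers) and passes to the limit exactly as you outline. You additionally make explicit two steps the paper leaves implicit: the lower-endpoint inclusion $\mathit{CF}_{A>\alpha_{1}}\otimes\mathit{CF}_{A>\alpha_{2}}\to\mathit{CF}_{A>\alpha_{\infty}}$ and the K\"unneth identification of the quotient tensor product. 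One small correction: each end cut-off $\beta(\pm s)D_{k,t}$ can itself contribute up to $\epsilon$ of curvature, so your bound $\mathit{curv}(\mathfrak{H})+\epsilon$ should read $\mathit{curv}(\mathfrak{H})+4\epsilon$; your plan of shrinking $\epsilon$ absorbs this.
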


\begin{proof}[Proof sketch]
  Let us denote by
  $\gamma_{1},\gamma_{2},\gamma_{\infty}$ the
  inputs/outputs of a solution
  $u$ to Floer's equation for
  $\mathfrak{H}$. Then consider the following
  statements:
  \begin{enumerate}[label=(\roman*)]
  \item\label{item:action-statement-2}
    $\alpha_{1}<A_{H_{1}}(\gamma_{1})$ and
    $\alpha_{2}<A_{H_{2}}(\gamma_{2})$;
  \item\label{item:action-statement-3} $\alpha_{1}+\ell_{1}<A_{H_{1}}(\gamma_{1})$ or $\alpha_{2}+\ell_{2}<A_{H_{2}}(\gamma_{2})$;
  \item\label{item:action-statement-1} $A_{H_{\infty}}(\gamma_{\infty})<\alpha_{1}+\alpha_{2}+\ell_{1}+\ell_{2}-\mathit{curv}(\mathfrak{H})$.
  \end{enumerate}
  Note that \ref{item:action-statement-3} consists of those pairs $\gamma_{1}\otimes \gamma_{2}$ lying in the subcomplex \eqref{eq:subcomplex-ideal}; these should be killed if we want the map to be well-defined.

  The statement of Lemma \ref{lemma:window-spacing}
  implies that \ref{item:action-statement-3} is
  equivalent to:
  \begin{enumerate}[label=(\roman*),resume]
  \item\label{item:action-statement-4} $\alpha_{1}+\ell_{1}+\ell_{2}<A_{H_{1}}(\gamma_{1})$ or $\alpha_{2}+\ell_{1}+\ell_{2}<A_{H_{2}}(\gamma_{2})$;
  \end{enumerate}
  thus, the combination of
  \ref{item:action-statement-2},
  \ref{item:action-statement-4}, and Lemma
  \ref{lemma:product-error-term} contradicts
  \ref{item:action-statement-1}. This proves
  there are no solutions whose asymptotics
  satisfy
  \ref{item:action-statement-2}--\ref{item:action-statement-1},
  thus the map vanishes on the ideal
  \eqref{eq:subcomplex-ideal}. In this
  fashion, one concludes that the product
  operation \eqref{eq:general-tqft} induces a
  well-defined map
  \eqref{eq:product-map-windows}.
\end{proof}

We will now explain this with more details and give a proper proof keeping track of the perturbation terms $\mathfrak{p}$ and using the formal definition of the invariant.

\begin{definition}\label{definition:e-adm-mathfrak-p}
  A perturbation $\mathfrak{p}$ is said to be
  $\epsilon$-admissible for $\mathfrak{H}$ provided that:
  \begin{enumerate}[label=(P\arabic*)]
  \item $\mathfrak{p}$ contains the data of a tuple $(D_{1,t},D_{2,t},D_{\infty,t})$ such that each $D_{k,t}$ lies in $\mathscr{O}(\epsilon)$ for the respective $H_{k}$;
  \item the perturbation $\mathfrak{p}$ changes the connection in the cylindrical ends by modifying the connection potential to:
    \begin{equation*}
      (H_{k}+\beta(\pm s)D_{k,t})\d t
    \end{equation*}
    where the sign depends on the type of cylindrical end;
  \item\label{item:P3} over a small disk (with coordinates $s+it$) in the complement of the cylindrical ends, $\mathfrak{p}$ perturbs the existing connection potential:
    \begin{equation*}
      \mathfrak{a}=K_{s,t}ds+H_{s,t}dt
    \end{equation*}
    by the addition of a compactly supported term $k_{s,t}ds +h_{s,t}dt$ with:
    \begin{equation*}
      \abs{\partial_{s}h_{s,t}}+\abs{\partial_{t}k_{s,t}}+\abs{\omega(X_{h},X_{K})}+\abs{\omega(X_{H},X_{k})}+\abs{\omega(X_{h},X_{k})}<\epsilon;
    \end{equation*}
  \item the moduli spaces of pairs-of-pants for the perturbed data $(\mathfrak{H}_{\mathfrak{p}},J)$ are cut transversally.
  \end{enumerate}
The collection of all $\epsilon$-admissible perturbations is denoted $\mathscr{P}(\epsilon)$ (the dependence on the background connection $\mathfrak{H}$ and asymptotics $H_{k}$ is suppressed from the notation).
\end{definition}
\begin{lemma}\label{lemma:functorial-pop}
  Let $I_{1},I_{2},I_{\infty}$ be as in Lemma \ref{lemma:window-spacing}. If
  \begin{itemize}
  \item the $5\epsilon$ neighborhood of:
    \begin{equation*}
      \set{\alpha_{i}}\cup [\alpha_{i}+\ell_{i},\alpha_{i}+\ell_{1}+\ell_{2}]
    \end{equation*}
    is disjoint from $\mathit{Spec}(H_{i})$, and 
  \item the $3\epsilon$ neighborhood of $\partial I_{\infty}$ is disjoint from $\mathit{Spec}(H_{\infty})$,
  \end{itemize}
  then the map which associates to $\mathfrak{p}\in \mathscr{P}(\epsilon)$ the bilinear product:
  \begin{equation*}
    \mu_{\mathfrak{p}}:\mathit{HF}_{I_{1}}(H_{1}+D_{1,t})\otimes \mathit{HF}_{I_{2}}(H_{2}+D_{2,t})\to \mathit{HF}_{I_{\infty}}(H_{\infty}+D_{\infty,t})
  \end{equation*}
  is functorial when $\mathscr{P}(\epsilon)$ is considered as an indiscrete groupoid (using the standard continuation maps as in \S\ref{sec:floer-cohom-smooth}).
\end{lemma}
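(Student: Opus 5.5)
The proof will follow the template of Lemma \ref{lemma:3epsilon}, with the action bookkeeping of Lemma \ref{lemma:window-spacing} made quantitative. There are three steps, carried out in order.

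\textbf{Step 1: $\mu_{\mathfrak{p}}$ is well defined.} Fix $\mathfrak{p}\in\mathscr{P}(\epsilon)$. By \ref{item:A2}, the spectra $\mathit{Spec}(H_{k}+D_{k,t})$ lie in the $\epsilon$-neighborhoods of $\mathit{Spec}(H_{k})$. The hypotheses then imply the following.
\begin{itemize}
\item The points $\alpha_{i}$ are at distance at least $4\epsilon$ from $\mathit{Spec}(H_{i}+D_{i,t})$.
\item The intervals $[\alpha_{i}+\ell_{i},\alpha_{i}+\ell_{1}+\ell_{2}]$ are at distance at least $4\epsilon$ from $\mathit{Spec}(H_{i}+D_{i,t})$.
\item $\partial I_{\infty}$ is at distance at least $2\epsilon$ from $\mathit{Spec}(H_{\infty}+D_{\infty,t})$.
\end{itemize}
Next I bound the curvature of the perturbed connection. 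The end terms $\beta(\pm s)D_{k,t}\,dt$ contribute at most $\epsilon$ each, since $\abs{D_{k,t}}\le\epsilon$. The disk term contributes at most $\epsilon$ by \ref{item:P3}. Hence $\mathit{curv}(\mathfrak{H}_{\mathfrak{p}})\le\mathit{curv}(\mathfrak{H})+C\epsilon$, where one can take $C=4$; I would try to sharpen this so it fits the $5\epsilon$ margin. With this bound, the argument of Lemma \ref{lemma:window-spacing} goes through. The slack of $4\epsilon$ around the windows absorbs the additional $C\epsilon$ drop in action coming from Lemma \ref{lemma:product-error-term}. The conclusion is that the chain-level product maps the ideal \eqref{eq:subcomplex-ideal} into $\mathit{CF}_{A>\sup I_{\infty}}$, and maps $\mathit{CF}_{A>\inf I_{1}}\otimes\mathit{CF}_{A>\inf I_{2}}$ into $\mathit{CF}_{A>\inf I_{\infty}}$. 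Therefore $\mu_{\mathfrak{p}}$ descends to filtered homology.

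\textbf{Step 2: compatibility with continuation maps.} Given $\mathfrak{p},\mathfrak{p}'\in\mathscr{P}(\epsilon)$, I must show that
\[
\mathfrak{c}_{\infty}\circ\mu_{\mathfrak{p}}=\mu_{\mathfrak{p}'}\circ(\mathfrak{c}_{1}\otimes\mathfrak{c}_{2})
\]
on filtered homology. The plan is to glue continuation cylinders onto the ends of the pair of pants. This produces a one-parameter family of connections interpolating between the two composites, passing through the linear interpolation $\mathfrak{p}_{\tau}$ of the perturbations over the body of the surface. Counting rigid elements of the parametric moduli space yields a chain homotopy. Its curvature is bounded by $\mathit{curv}(\mathfrak{H})$ plus a fixed multiple of $\epsilon$, namely the contributions of the end perturbations of both $\mathfrak{p}$ and $\mathfrak{p}'$, which is at most $4\epsilon$ per pair of ends. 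The margin of $5\epsilon$ in the hypothesis is exactly what is needed for the chain homotopy to respect the same subcomplexes and ideals as in Step 1. Identity and composition, \ref{item:I2} and \ref{item:I3}, then follow as in Lemma \ref{lemma:3epsilon}: the continuation maps themselves are already functorial there, so once $\mu$ commutes with continuations the assignment is a functor on the indiscrete groupoid.

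\textbf{Main obstacle.} The hardest part is the careful accounting of the action drop, uniformly over the parametric family in Step 2. The difficulty is that the ideal condition concerns the interval $[\alpha_{i}+\ell_{i},\alpha_{i}+\ell_{1}+\ell_{2}]$ rather than a single endpoint, so the perturbation errors at the two inputs must be charged asymmetrically. My first attempt would be to estimate
\[
\int u^{*}\mathfrak{r}\le\mathit{curv}(\mathfrak{H})+\sum_{k}\max\abs{D_{k}}+\sum_{k}\max\abs{D'_{k}}+\epsilon
\]
directly, and then check that each inequality \ref{item:action-statement-2}--\ref{item:action-statement-1} survives with a $5\epsilon$ shift. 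If the constant comes out too large, I would shrink the admissible class by using the slowed-down interpolations $\beta(\epsilon s)$, as in Lemma \ref{lemma:no-escape}, so that the extra error is arbitrarily small.
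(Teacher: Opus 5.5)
Your proposal follows the paper's proof: bound the action drop of each $\mu_{\mathfrak{p}}$ by $\mathit{curv}(\mathfrak{H})+4\epsilon$ (three ends plus the small disk) and absorb it with the $4\epsilon$ buffers, then glue continuation cylinders onto all three ends and control the curvature of the resulting homotopy. Step 2 needs no sharpening, because your own estimate $\mathit{curv}(\mathfrak{H})+\sum_{k}\max\abs{D_{k}}+\sum_{k}\max\abs{D'_{k}}+\epsilon=\mathit{curv}(\mathfrak{H})+7\epsilon$ is exactly the paper's count and already suffices: an ideal generator sits at least $4\epsilon$ above its threshold in \emph{both} inputs, so the combined gap is $8\epsilon>7\epsilon$ (the slowed-down interpolation fallback would not help anyway, since the curvature of an interpolation from $D$ to $D'$ does not depend on its speed).
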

\begin{proof}
  Each map $\mu_{\mathfrak{p}}$ has a maximum
  drop in action of
  $\mathit{curv}(\mathfrak{H})+4\epsilon$. This
  implies that the ideal subcomplex
  \eqref{eq:subcomplex-ideal} is indeed mapped to zero. Indeed, any generator $\gamma_{1}\otimes \gamma_{2}$ from this subcomplex satisfies:
  \begin{itemize}
  \item $A(\gamma_{1})>\alpha_{1}+4\epsilon$,
  \item $A(\gamma_{2})>\alpha_{2}+\ell_{1}+\ell_{2}+4\epsilon$,
  \end{itemize}
  or the analogous inequalities with the roles of $1,2$ reversed; hence any output in $\mu_{\mathfrak{p}}(\gamma_{1}\otimes \gamma_{2})$ would necessarily satisfy:
  \begin{itemize}
  \item $A(\gamma_{\infty})>\alpha_{1}+\alpha_{2}+\ell_{1}+\ell_{2}+8\epsilon-4\epsilon-\mathit{curv}(\mathfrak{H})\ge \sup I_{\infty}+4\epsilon$,
  \end{itemize}
  and so $\gamma_{\infty}$ would be discarded in the count (since it is above $\sup I_{\infty}$).

  This proves the bilinear operation $\mu_{\mathfrak{p}}$ is well-defined. We will now explain why the operation is functorial. This amounts to proving the following square commutes:
  \begin{equation*}
    \begin{tikzcd}
      \mathit{HF}_{I_{1}}(H_{1}+D_{1,t})\otimes \mathit{HF}_{I_{2}}(H_{2}+D_{2,t})\arrow[d,"\mathfrak{c}\otimes \mathfrak{c}"]\arrow[r,"\mu_{\mathfrak{p}}"]& \mathit{HF}_{I_{\infty}}(H_{\infty}+D_{\infty,t})\arrow[d,"\mathfrak{c}"]\\
      \mathit{HF}_{I_{1}}(H_{1}+D'_{1,t})\otimes \mathit{HF}_{I_{2}}(H_{2}+D'_{2,t})\arrow[r,"\mu_{\mathfrak{p}'}"]& \mathit{HF}_{I_{\infty}}(H_{\infty}+D'_{\infty,t})
    \end{tikzcd}
  \end{equation*}
  In the usual argument proving this square commutes, one glues on continuation cylinders at all three ends; during this process, the maximum curvature one sees is $7\epsilon$ (roughly speaking, there are ``two'' continuation cylinders in each end, making $6\epsilon$, and the curvature introduced by behaviour of $\mathfrak{p}$ on the small disk contributes $\epsilon$). Since $7\epsilon$ is still smaller than the minimum gap $8\epsilon$, the chain homotopy terms will also vanish on the ideal subcomplex \eqref{eq:subcomplex-ideal}.
\end{proof}

To conclude this section, we explain how Lemma \ref{lemma:functorial-pop} induces a canonical operation \eqref{eq:product-map-windows} when the hypotheses of Lemma \ref{lemma:window-spacing} are satisfied.

\begin{definition}\label{definition:how-to-define-the-map}
  Assume the set-up of Lemma \ref{lemma:window-spacing}. Pick $\epsilon>0$ small
  enough that Lemma
  \ref{lemma:functorial-pop} applies. Given any $\mathfrak{p}\in \mathscr{P}(\epsilon)$, one defines the operation $\mu$ as follows:
  \begin{equation*}
    \begin{tikzcd}
      \mathit{HF}_{I_{1}}(H_{1})\otimes \mathit{HF}_{I_{2}}(H_{2})\arrow[d,"\simeq"]\arrow[r,"\mu"]&\mathit{HF}_{I_{\infty}}(H_{\infty})\arrow[d,"\simeq"]\\
      \mathit{HF}_{I_{1}}(H_{1}+D_{1,t})\otimes \mathit{HF}_{I_{2}}(H_{2}+D_{2,t})\arrow[r,"\mu_{\mathfrak{p}}"]& \mathit{HF}_{I_{\infty}}(H_{\infty}+D_{\infty,t}).
    \end{tikzcd}
  \end{equation*}
  That this is independent of $\mathfrak{p}$
  is a consequence of Lemma
  \ref{lemma:functorial-pop}. That this is
  independent of $\epsilon$ is tautological
  from its definition.
\end{definition}

\subsubsection{Continuation maps}
\label{sec:continuation-maps}

A continuation map depends on a Hamiltonian connection $\mathfrak{H}$ on a cylinder. The resulting operation is a map:
\begin{equation}\label{eq:continuation-map}
  \mathfrak{c}_{\mathfrak{H}}:\mathit{HF}_{I_{0}}(H_{0})\to \mathit{HF}_{I_{1}}(H_{1})
\end{equation}
for suitable action windows $I_{i}$, $i=0,1$. The main structural theorem (and analogue of Lemma \ref{lemma:window-spacing}) is:
\begin{lemma}\label{lemma:window-spacing-cont}
  Let $\mathfrak{H}$ be a connection from $H_{0}$ to $H_{1}$ as above. Suppose that $I_{i}=[\alpha_{i},\beta_{i}]$ is an admissible interval for $H_{i}$, $i=0,1$, and:
  \begin{equation}\label{eq:admissible-for-continuation}
    \alpha_{1}\le \alpha_{0}-\mathit{curv}(\mathfrak{H})\quad\text{and}\quad\beta_{1}\le \beta_{0}-\mathit{curv}(\mathfrak{H}).
  \end{equation}
  Then the continuation maps $\mathfrak{c}_{\mathfrak{H}}$ as in \eqref{eq:continuation-map} are well-defined (as a formal limit of the continuation maps associated to suitable perturbations of $\mathfrak{H}$).

  The map is invariant through homotopies of $\mathfrak{H}$ in the space of such connections, relative the ends of the cylinder, provided that \eqref{eq:admissible-for-continuation} holds for throughout the homotopy. Moreover, the map commutes with the ``enlarging interval'' maps of Remark \ref{remark:enlarging-intervals}.
\end{lemma}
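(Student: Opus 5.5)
We follow the template of Lemma \ref{lemma:3epsilon} and Lemma \ref{lemma:functorial-pop}. The argument has three parts: a chain-level action estimate for a single perturbed connection, independence of the perturbation (which gives a canonical map on the limits), and invariance under homotopies of $\mathfrak{H}$.

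\textbf{Chain-level definition and action estimate.} Fix $\epsilon>0$ so small that the $4\epsilon$ neighborhoods of $\partial I_{0}$ and $\partial I_{1}$ miss $\mathit{Spec}(H_{0})$ and $\mathit{Spec}(H_{1})$ respectively. Consider perturbations $\mathfrak{p}$ of $\mathfrak{H}$ that are $\epsilon$-admissible in the sense of Definition \ref{definition:e-adm-mathfrak-p}, adapted to a cylinder with one input and one output. Such a $\mathfrak{p}$ carries end perturbations $D_{0,t}\in\mathscr{O}(\epsilon)$ and $D_{1,t}\in\mathscr{O}(\epsilon)$, and makes all moduli spaces transverse. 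Lemma \ref{lemma:product-error-term} then gives
\begin{equation*}
A(\gamma_{1})\ge A(\gamma_{0})-\mathit{curv}(\mathfrak{H})-C\epsilon ,
\end{equation*}
where $C\epsilon$ (say $C\le 3$) collects the curvature of the perturbed connection: one $\epsilon$ from each end cutoff and one from the small disk in \ref{item:P3}. Each $D_{k}$ moves the actions by at most $\epsilon$, so generators of $\mathit{CF}_{A>\beta_{0}}(H_{0}+D_{0})$ have action above $\beta_{0}+3\epsilon$. Their images therefore have action above $\beta_{0}-\mathit{curv}(\mathfrak{H})\ge\beta_{1}$, with margin. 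The same argument with $\alpha$ in place of $\beta$ shows that the chain map $\mathfrak{c}_{\mathfrak{p}}$ preserves both subcomplexes $\set{A>\sup I}$ and $\set{A>\inf I}$. Hence it descends to $\mathit{HF}_{I_{0}}(H_{0}+D_{0})\to\mathit{HF}_{I_{1}}(H_{1}+D_{1})$.

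\textbf{Independence of the perturbation.} For two perturbations $\mathfrak{p},\mathfrak{p}'$ we must show that the square built from $\mathfrak{c}_{\mathfrak{p}}$, $\mathfrak{c}_{\mathfrak{p}'}$ and the standard continuation maps of Lemma \ref{lemma:3epsilon} at the two ends commutes. We glue end continuation cylinders onto $\mathfrak{c}_{\mathfrak{p}}$ and interpolate to $\mathfrak{c}_{\mathfrak{p}'}$ through a one-parameter family, as in the proof of Lemma \ref{lemma:functorial-pop}. The total extra curvature along the family is bounded by roughly $7\epsilon$, which is below the $8\epsilon$ gap, so the chain homotopy also preserves the filtrations. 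Taking limits over the indiscrete groupoids then defines $\mathfrak{c}_{\mathfrak{H}}$ canonically, exactly as in Definition \ref{definition:how-to-define-the-map}.

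\textbf{Homotopy invariance and enlarging intervals.} Let $\mathfrak{H}_{\tau}$, $\tau\in[0,1]$, be a homotopy fixed near the ends and satisfying \eqref{eq:admissible-for-continuation} for every $\tau$. The parametric moduli space defines a chain homotopy whose action drop at parameter $\tau$ is at most $\mathit{curv}(\mathfrak{H}_{\tau})+C\epsilon$, so it preserves the filtrations by the same estimate. The one subtlety is that $\mathit{curv}$ may vary with $\tau$. This is handled by noting that the estimate only uses the inequality \eqref{eq:admissible-for-continuation} pointwise in $\tau$, and that the parametric solutions at parameter $\tau$ obey Lemma \ref{lemma:product-error-term} with $\mathfrak{H}_{\tau}$. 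Compatibility with the maps of Remark \ref{remark:enlarging-intervals} is immediate at chain level, since $\mathfrak{c}_{\mathfrak{p}}$ preserves each of the three action subcomplexes and so commutes with the inclusion and quotient maps; passing to the limits gives the claim.

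I expect the main obstacle to be bookkeeping the $\epsilon$-margins uniformly, especially in the homotopy step. There the perturbations must be chosen in families, and one must ensure that the perturbed curvature stays within $\mathit{curv}(\mathfrak{H}_{\tau})+C\epsilon$ for every $\tau$. If needed, this can be arranged by shrinking $\epsilon$, since the hypotheses are strict away from the spectrum.
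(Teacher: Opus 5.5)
Your proposal is correct and follows essentially the same route as the paper. The paper's proof only says the argument is the continuation-cylinder analogue of Lemma \ref{lemma:window-spacing} and Lemma \ref{lemma:functorial-pop}, using a suitable version of Definition \ref{definition:e-adm-mathfrak-p}; your write-up carries that out explicitly. One small bookkeeping point: if $\epsilon$ only ensures that the $4\epsilon$-neighbourhoods of $\partial I_{i}$ miss $\mathit{Spec}(H_{i})$, the margin beyond $\mathit{curv}(\mathfrak{H})$ is $6\epsilon$, not the $8\epsilon$ you quote. So for your roughly-$7\epsilon$ chain-homotopy estimate you should take $5\epsilon$-neighbourhoods, as in Lemma \ref{lemma:functorial-pop}. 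Your closing remark about shrinking $\epsilon$ already allows this.
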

\begin{proof}
  The proof is analogous to the proof of Lemma \ref{lemma:window-spacing}, and involves suitable versions of Definition \ref{definition:e-adm-mathfrak-p} and Lemma \ref{lemma:functorial-pop}. The remaining properties are either tautological from the construction, or follow standard arguments.
\end{proof}

\begin{remark}\label{remark:functorial}
  Given $(H_{0},H_{1},\mathfrak{H})$ and $(H_{1},H_{2},\mathfrak{H}')$, and intervals $I_{0},I_{1},I_{2}$ satisfying \eqref{eq:admissible-for-continuation}, then one can ``glue'' together $\mathfrak{H}'\# \mathfrak{H}$ to produce a new connection (the gluing depends on a choice of sufficiently large ``length'' but the resulting connection is well-defined up to homotopy relative ends) with:
  \begin{equation*}
    \mathit{curv}(\mathfrak{H}'\# \mathfrak{H})\le \mathit{curv}(\mathfrak{H}')+\mathit{curv}(\mathfrak{H}).
  \end{equation*}
  Standard arguments yield:
  \begin{equation*}
    \mathfrak{c}_{\mathfrak{H}'\# \mathfrak{H}}=\mathfrak{c}_{\mathfrak{H}'}\circ \mathfrak{c}_{\mathfrak{H}}:\mathit{HF}_{I_{0}}(H_{0})\to \mathit{HF}_{I_{2}}(H_{2}).
  \end{equation*}
  Another standard fact is that, if:
  \begin{itemize}
  \item $H_{0}=H_{1}=:H$, and
  \item $\mathfrak{H}$ has connection potential $\mathfrak{a}=H\d t$, so $\mathit{curv}(\mathfrak{H})=0$, and,
  \item we fix $I_{0}=I_{1}=:I$ to be any admissible interval for $H$,
  \end{itemize}
  then $\mathfrak{c}_{\mathfrak{H}}=\id$.

  These two facts assert that the assignment of $(H,I)$ to $\mathit{HF}_{I}(H)$ and $\mathfrak{H}$ to the continuation map $\mathfrak{c}_{\mathfrak{H}}$ defines a functor on a suitable category of pairs $(H,I)$, where morphisms are homotopy classes of connections $\mathfrak{H}$ satisfying \eqref{eq:admissible-for-continuation}. This general fact is used in the body of the paper to construct persistence modules.
\end{remark}

\subsubsection{BV operator on filtered Floer cohomology}
\label{sec:BV-operator-appendix}

Let $H$ be an admissible and non-discriminant Hamiltonian $H$ as in Definition \ref{definition:admissible-in-liouville} and let $I$ be an admissible interval as in Definition \ref{definition:FFcohomology}. In this section we explain how to construct the BV operator on the filtered Floer cohomology:
\begin{equation}\label{eq:BV-filtered-appendix}
  \Delta:\mathit{HF}_{I}(H)\to \mathit{HF}_{I}(H).
\end{equation}
We remark that this ``strict'' filtration (i.e., that we can take the same interval $I$ on both sides) uses the fact that $H$ is autonomous.

As in Definition \ref{definition:epsilon-admissible}, let $D_{t}\in \mathscr{O}(\epsilon)$ be an $\epsilon$-admissible perturbation. The BV operator is defined by counting the rigid solutions in the parametric moduli space of pairs $(\tau,u)$ such that:
\begin{equation}\label{eq:BV-equation}
  \left\{
    \begin{aligned}
      &u:\R\times \R/\Z\to W,\quad \tau\in \R/\Z,\\
      &X_{\tau,s,t}=X_{H}+(1-\beta(s))X_{D_{t-\tau}}+\beta(s)X_{D_{t}}+\text{perturbation term},\\
      &\bd_{s}u+J(u)(\bd_{t}u-X_{\tau,s,t}(u))=0,\\
      &\lim_{s\to-\infty}u(s,t)=\gamma_{-}(t-\tau),\quad\lim_{s\to+\infty}u(s,t)=\gamma_{+}(t).
    \end{aligned}
  \right.
\end{equation}
The perturbation term should be very small (adding curvature at most $\epsilon$). Such a solution is interpreted as contributing to the coefficient $\gamma_{+}\mapsto \gamma_{-}$. Counting the rigid elements defines a chain map:
\begin{equation*}
  \Delta:\mathit{CF}(H+D_{t})\to \mathit{CF}(H+D_{t}).
\end{equation*}

The interpolation between $X_{D_{t}}$ at the input and $X_{D_{t-\tau}}$ at the output introduces some small amount of curvature, and so some care is needed to ensure the map on filtered homology is well-defined as in \eqref{eq:BV-filtered-appendix}.

\begin{lemma}\label{lemma:BV-5epsilon}
  If the endpoints of $I$ do not contain any
  points in the $5\epsilon$ neighborhood of
  $\mathit{Spec}(H)$, then the assignment:
  \begin{equation*}
    D_{t}\in \mathscr{O}(\epsilon)\mapsto (\Delta:\mathit{HF}_{I}(H+D_{t},J)\to \mathit{HF}_{I}(H+D_{t},J)),
  \end{equation*}
  is well-defined and is a natural transformation.
\end{lemma}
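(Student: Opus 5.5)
The plan is to follow the pattern of Lemma \ref{lemma:3epsilon} and Lemma \ref{lemma:functorial-pop}. Everything reduces to bounding how much action can be lost in each parametric moduli space involved: first for the BV equation \eqref{eq:BV-equation} itself, and then for the chain homotopies that compare the BV operators for two perturbations $D_{t},D_{t}'\in\mathscr{O}(\epsilon)$.

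\textbf{Step 1: well-definedness.} Fix $\tau\in\R/\Z$. The connection on the cylinder has potential
\begin{equation*}
  (H+(1-\beta(s))D_{t-\tau}+\beta(s)D_{t})\d t
\end{equation*}
plus the small perturbation term. Its curvature is $\partial_{s}$ of the coefficient of $\d t$, so by \ref{item:A3} it integrates to at most
\begin{equation*}
  \max(D_{t-\tau}-D_{t})\le 2\epsilon,
\end{equation*}
and the extra perturbation adds at most $\epsilon$. Lemma \ref{lemma:product-error-term} then bounds the drop in action by $3\epsilon$. For this we must compare actions of $\gamma_{-}(t-\tau)$ for $H+D_{t-\tau}$ with those of $\gamma_{-}$ for $H+D_{t}$. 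Because $H$ is autonomous, rotating the time variable identifies the orbits and actions of $H+D_{t-\tau}$ with those of $H+D_{t}$. This is exactly where autonomy is used, and it is why the same window $I$ can appear on both sides of \eqref{eq:BV-filtered-appendix}. By \ref{item:A2}, the generators of $\mathit{CF}(H+D_{t})$ lie in the $\epsilon$-neighborhood of $\mathit{Spec}(H)$. So if $\partial I$ avoids the $5\epsilon$-neighborhood of $\mathit{Spec}(H)$, then any generator above $\sup I$ (respectively above $\inf I$) is above it by at least $4\epsilon>3\epsilon$. Hence $\Delta$ preserves the subcomplexes $\mathit{CF}_{A>\sup I}$ and $\mathit{CF}_{A>\inf I}$, and it descends to $\mathit{HF}_{I}(H+D_{t},J)$. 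I would also check that the chain map identity $d\Delta=\Delta d$ comes from the boundary of the one-dimensional parametric moduli space, with the same action bounds; this is standard.

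\textbf{Step 2: naturality.} We need $\mathfrak{c}\circ\Delta_{D}=\Delta_{D'}\circ\mathfrak{c}$ on $\mathit{HF}_{I}$. I would glue a continuation cylinder from $D$ to $D'$ at the output end, and separately at the input end, of the BV family. Both glued families are deformed to the single $\tau$-family whose potential interpolates from $D_{t-\tau}$ to $D'_{t}$. The resulting two-parameter family (in $\tau$ and the gluing length $R$) defines the chain homotopy. The obstacle is the curvature bookkeeping for this family, which is the step I expect to need the most care. The terms to control are:
\begin{itemize}
\item the $\tau$-rotation interpolation, contributing at most $2\epsilon$;
\item the continuation from $D$ to $D'$, contributing at most $2\epsilon$;
\item the perturbations, contributing an arbitrarily small error.
\end{itemize}
Naively this gives about $4\epsilon$, which is not strictly less than the available gap of $4\epsilon$. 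So I would organize the interpolation more economically, writing the potential as a single convex combination
\begin{equation*}
  (1-\beta(s))D_{t-\tau}+\beta(s)D'_{t}
\end{equation*}
at the intermediate stages. Then the total $\partial_{s}$-variation is bounded by $\max|D_{t-\tau}|+\max|D'_{t}|\le 2\epsilon$, plus small errors, at every stage of the homotopy. This is comfortably below $4\epsilon$, so the chain homotopy also preserves the filtration subcomplexes. The result is that $\Delta$ commutes with the continuation maps on $\mathit{HF}_{I}$, i.e.\ it is a natural transformation on the indiscrete groupoid $\mathscr{O}(\epsilon)$.

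Finally, the compatibility $\Delta\circ\Delta=0$ and independence of the perturbation term follow in the same way, using Remark \ref{remark:enlarging-intervals} where the window needs to be adjusted.
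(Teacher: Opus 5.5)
Your Step 1 is fine and matches the paper. The BV interpolation plus its perturbation drops action by at most $3\epsilon$, and the autonomy of $H$ is what lets the same window $I$ appear on both sides.

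The gap is in Step 2, and it comes from undercounting the buffer around $\partial I$. Both $D_t$ and $D'_t$ lie in $\mathscr{O}(\epsilon)$, so by (A2) the spectra of $H+D_t$ and of $H+D'_t$ \emph{both} avoid the $4\epsilon$-neighbourhood of $\partial I$. Consider any of the maps in play: the two BV operators, the continuation maps, or the chain homotopy. For such a map to carry a generator from above $\sup I$ to below it, the action has to fall from at least $\sup I+4\epsilon$ to at most $\sup I-4\epsilon$, i.e.\ by at least $8\epsilon$; the same holds at $\inf I$. With this two-sided gap the naive bookkeeping already suffices. Roughly $2\epsilon$ for the continuation, $2\epsilon$ for the rotation interpolation and $\epsilon$ for the BV perturbation, plus arbitrarily small errors, is strictly less than $8\epsilon$; the paper's rough count is $7\epsilon<8\epsilon$. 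That is the paper's argument, and no reorganisation is needed.

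The reorganisation you propose would in fact fail. Any chain homotopy comparing $\mathfrak{c}\circ\Delta_{D}$ or $\Delta_{D'}\circ\mathfrak{c}$ with the direct $\tau$-family must, at large gluing length $R$, consist of the glued connections realising the composite. These pass through an intermediate Hamiltonian. For instance, for $\Delta_{D'}\circ\mathfrak{c}$ the potential is $H+\beta(s-R)D_t+(1-\beta(s-R))\beta(s)D'_t+(1-\beta(s))D'_{t-\tau}$. Its curvature is $\int_0^1\max(D_t-D'_t)\,\d t+\int_0^1\max(D'_t-D'_{t-\tau})\,\d t$, which in general is only bounded by $4\epsilon$, not $2\epsilon$. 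A single convex combination describes only the direct end of the family. So the claim that the $\partial_s$-variation is at most $2\epsilon$ ``at every stage of the homotopy'' is false, and with your one-sided $4\epsilon$ gap the naturality argument would not close.

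Two minor points. With $\beta(s)=1$ at the input end $s\to+\infty$, your direct family $(1-\beta(s))D_{t-\tau}+\beta(s)D'_t$ runs from $D'$ to $D$; for $\mathfrak{c}$ from $D$ to $D'$ it should be $\beta(s)D_t+(1-\beta(s))D'_{t-\tau}$. Also, $\Delta\circ\Delta=0$ is not part of the statement.
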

\begin{proof}
  The proof of well-definedness is analogous to the proof of Lemma \ref{lemma:3epsilon}: even though we have introduced some curvature, the total drop in action is at most $3\epsilon$, by the estimates of \cite{schwarz-pacific-j-math-2000} applied to \eqref{eq:BV-equation}, which is larger than the buffers of width $8\epsilon$ around $\partial I$.

  To prove that the transformation is natural, one needs to check the commutativity of the following diagram:
  \begin{equation*}
    \begin{tikzcd}
      \mathit{HF}_{I}(H+D_{t})\arrow[d]\arrow[r,"\Delta"]&\mathit{HF}_{I}(H+D_{t})\arrow[d]\\
      \mathit{HF}_{I}(H+D_{t}')\arrow[r,"\Delta"]&\mathit{HF}_{I}(H+D_{t}')
    \end{tikzcd}
  \end{equation*}
  The usual argument proving this commutes
  involves a deformation of the equation
  \eqref{eq:BV-equation} through Hamiltonian
  connections with curvatures at most
  $7\epsilon$ (roughly, $2\epsilon$ for each
  vertical morphism and $3\epsilon$ for the
  horizontal morphism). Since $7\epsilon$ is
  still smaller than the gap $8\epsilon$, we
  conclude the desired result.
\end{proof}

This natural tranformation induces a well-defined map on the limits $\mathit{HF}_{I}(H)$, following similar lines to Definition \ref{definition:how-to-define-the-map}.

\subsubsection{PSS elements}
\label{sec:pss-elements}

We discuss the construction of \cite{piunikhin-salamon-schwarz-1996,frauenfelder-schlenk-IJM-2007} and its relation to the filtered Floer cohomology groups $\mathit{HF}_{I}(H)$ for particular action windows $I$.

Following \cite{brocic-cant-arXiv-2025}, let us fix a cycle $f:C\to W$ (a proper map) representing a cohomology class; e.g., the inclusion of a cotangent fiber into $W=T^{*}M$ represents a degree $n$ cohomology class. We also fix a Hamiltonian connection $\mathfrak{H}$ (with connection potential $\mathfrak{a}$) as in the start of \S\ref{sec:interf-with-tqft} on the cylinder with coordinates $s+it$ satisfying:
\begin{itemize}
\item $\mathfrak{a}=0$ for $s\ge s_{0}$,
\item $\mathfrak{a}=H\d t$ for $s\le -s_{0}$.
\end{itemize}
From this data, the goal is to construct an element $\mathit{PSS}_{\mathfrak{H}}(f)\in \mathit{HF}_{I}(H)$ for a suitable action window $I$ depending on the connection $\mathfrak{H}$, and show that the resulting element is independent of the choice of $f$ up to proper cobordism.

We recall that a solution of the PSS equation for $(\mathfrak{H},J,f)$ is a pair $(x,u)$ such that $u$ solves Floer's equation for $(\mathfrak{H},J)$ and such that $\lim_{s\to\infty}u(s,t)=f(x)$. Such solutions have a well-defined Fredholm theory when one compactifies the positive end as a removable singularity via the map $\R\times \R/\Z\to \C$ sending $s+it$ to $e^{-2\pi(s+it)}$. We have the analogue of Definition \ref{definition:e-adm-mathfrak-p}.

\begin{definition}\label{definition:e-adm-PSS-p}
  A perturbation $\mathfrak{p}$ is said to be $\epsilon$-admissible for $\mathfrak{H}$ provided:
  \begin{itemize}
  \item $\mathfrak{p}$ contains the data of $D_{t}\in \mathscr{O}(\epsilon)$ for $H$;
  \item the perturbation $\mathfrak{p}$ changes the connection in the output cylindrical end by modifying the connection potential to:
    \begin{equation*}
      (H+\beta(-s_{0}-s)D_{t})\d t;
    \end{equation*}
  \item over a small disk (with coordinates $s+it$) in the region $s>s_{0}$, $\mathfrak{p}$ perturbs the existing connection potential $\mathfrak{a}=0$ by the addition of a compactly supported term $k_{s,t}ds +h_{s,t}dt$ with:
    \begin{equation*}
      \abs{\partial_{s}h_{s,t}}+\abs{\partial_{t}k_{s,t}}+\abs{\omega(X_{h},X_{k})}<\epsilon;
    \end{equation*}
  \item the moduli spaces of PSS cylinders for the perturbed data $(\mathfrak{H}_{\mathfrak{p}},J,f)$ are cut transversally.
  \end{itemize}
  The collection of all $\epsilon$-admissible perturbations is denoted $\mathscr{S}(\epsilon)$.
\end{definition}

\begin{lemma}\label{lemma:functorial-pss}
  Let $(\mathfrak{H},f)$ be as above, and let $I=[\alpha,\beta]$ satisfy:
  \begin{itemize}
  \item $\alpha\le -\mathit{curv}(\mathfrak{H})$, and,
  \item the $4\epsilon$ neighborhood of $\partial I$ is disjoint from $\mathit{Spec}(H)$,
  \end{itemize}
  then the map which associates to $\mathfrak{p}\in \mathscr{S}(\epsilon)$ the chain:
  \begin{equation*}
    \mathit{PSS}(\mathfrak{H},f,J,\mathfrak{p})\in \mathit{HF}_{I}(H+D_{t})
  \end{equation*}
  obtained by counting the rigid solutions of the PSS equation for $(\mathfrak{H},f,J,\mathfrak{p})$ is functorial when $\mathscr{S}(\epsilon)$ is considered as an indiscrete groupoid (using the standard continuation maps as in \S\ref{sec:floer-cohom-smooth}).

  The resulting PSS elements are independent of $f$ up to proper cobordism and of $\mathfrak{H}$ up to homotopies through connections satisfying $\alpha\le -\mathit{curv}(\mathfrak{H})$.
\end{lemma}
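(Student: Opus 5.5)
The plan mirrors the proofs of Lemma \ref{lemma:3epsilon} and Lemma \ref{lemma:functorial-pop}. It has three parts: an action estimate showing the chain lands in the window $I$, naturality under continuation, and invariance under changes of $f$ and $\mathfrak{H}$.

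\textbf{Action estimate.} Let $(x,u)$ be a rigid PSS solution for $(\mathfrak{H}_{\mathfrak{p}},J,f)$. Its positive end compactifies to a point, which we regard as a constant orbit of action $0$ for the zero Hamiltonian. Lemma \ref{lemma:product-error-term}, with zero inputs, then gives
\begin{equation*}
  A_{H+D_{t}}(\gamma)\ge -\mathit{curv}(\mathfrak{H}_{\mathfrak{p}})\ge -\mathit{curv}(\mathfrak{H})-2\epsilon .
\end{equation*}
The two $\epsilon$'s come from the cut-off term $\beta(-s_{0}-s)D_{t}$, bounded using $\max\abs{D_{t}}\le\epsilon$ (property \ref{item:A3}), and from the small disk perturbation, bounded by its defining inequality. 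Since $\alpha\le-\mathit{curv}(\mathfrak{H})$ and, by \ref{item:A2}, no orbit of $H+D_{t}$ has action within $3\epsilon$ of $\alpha$, every output orbit has action $>\alpha$. So the count gives a chain in $\mathit{CF}_{A>\inf I}$. Its image in the quotient by $\mathit{CF}_{A>\sup I}$ is a cycle, because boundary breakings of one-dimensional moduli spaces occur only at the output end (the positive end is a removable singularity). This defines the element in $\mathit{HF}_{I}(H+D_{t})$.

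\textbf{Functoriality.} For $\mathfrak{p},\mathfrak{p}'\in\mathscr{S}(\epsilon)$ we must show that the continuation map from $D_{t}$ to $D'_{t}$ sends $\mathit{PSS}(\mathfrak{p})$ to $\mathit{PSS}(\mathfrak{p}')$. The plan is to glue a continuation cylinder onto the output end and consider a one-parameter family joining this glued connection to $\mathfrak{H}_{\mathfrak{p}'}$. The curvature along the family is at most $\mathit{curv}(\mathfrak{H})+c\epsilon$ with $c$ small, about $3$ or $4$, by the same bookkeeping as in Lemma \ref{lemma:3epsilon}. Because of the $4\epsilon$ buffer around $\partial I$, the chain homotopy term again has outputs above $\inf I$, so the identity holds in $\mathit{HF}_{I}$. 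Checking that $c$ stays below the buffer is the main obstacle. I would first try parametrizing the family with the slowed-down cut-off $\beta(\epsilon s)$ trick from Lemma \ref{lemma:no-escape}, so that the continuation contributes only $2\epsilon$ plus an arbitrarily small error.

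\textbf{Invariance.} A proper cobordism $F:B\to W$ between cycles $f_{0},f_{1}$ gives a parametric moduli space whose boundary yields $\mathit{PSS}(f_{0})-\mathit{PSS}(f_{1})=d\mathfrak{k}$. Each element of $\mathfrak{k}$ still satisfies the action estimate, since the curvature bound does not see $f$. Properness of $F$ together with Liouville invariance of $J$ supplies the compactness. Similarly, a homotopy $\mathfrak{H}_{\sigma}$ with $\alpha\le-\mathit{curv}(\mathfrak{H}_{\sigma})$ throughout gives a parametric moduli space whose outputs all have action above $\alpha$, hence a chain homotopy in the window. Passing to the limit over $\mathscr{S}(\epsilon)$, exactly as in Definition \ref{definition:how-to-define-the-map}, yields the canonical element of $\mathit{HF}_{I}(H)$.
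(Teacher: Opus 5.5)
Your outline matches the paper's proof. You use the same energy/curvature estimate: outputs have action at least $-\mathit{curv}(\mathfrak{H})-2\epsilon$, which puts them above $\alpha$ because the $4\epsilon$ hypothesis together with \ref{item:A2} leaves a $3\epsilon$ gap between $\alpha$ and $\mathit{Spec}(H+D_{t})$. You then use a chain homotopy of controlled curvature for functoriality, and standard parametric arguments for invariance.

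The one real problem is the step you flag yourself. The margin available to the chain homotopy $\mathfrak{k}$ is that same $3\epsilon$ gap, now for $\mathit{Spec}(H+D'_{t})$, and not the $4\epsilon$ buffer around $\partial I$. So the family must have curvature at most $\mathit{curv}(\mathfrak{H})+3\epsilon$, which is exactly the bound the paper asserts. With $c=4$, $\mathfrak{k}$ could have components of action in $[\alpha-4\epsilon,\alpha-3\epsilon]$. It would then not lie in $\mathit{CF}_{A>\alpha}$, and the identity $\mathfrak{c}(\mathit{PSS}(\mathfrak{p}))=\mathit{PSS}(\mathfrak{p}')$ would not follow in $\mathit{HF}_{I}(H+D'_{t})$.

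Your proposed remedy does not reach this bound. The curvature of an interpolation from $D_{t}$ to $D'_{t}$ is bounded by $\max\abs{D_{t}-D'_{t}}\le 2\epsilon$ no matter how slowly it is performed: replacing $\beta(s)$ by $\beta(\epsilon s)$ only rescales $\beta'$, whose integral stays $1$. The slowing-down device in Lemma \ref{lemma:no-escape} serves a compactness argument, not an action bound. Stacking this $2\epsilon$ on your $2\epsilon$ for $\mathfrak{H}_{\mathfrak{p}}$ gives $4\epsilon$, which is too much.

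The fix is to build the family so that the cut-off and the change of perturbation share the budget, in the spirit of \eqref{eq:interpolation-app}. Writing $\sigma=-s_{0}-s$ in the output end, use the perturbation
\begin{equation*}
  g_{R}=\beta(\sigma)\bigl[(1-\beta(\sigma-R))D_{t}+\beta(\sigma-R)D'_{t}\bigr],\qquad R\in[-1,\infty).
\end{equation*}
This equals $\beta(\sigma)D'_{t}$ at $R=-1$ and becomes the glued connection $\mathfrak{H}_{\mathfrak{p}}\#\mathfrak{C}$ as $R\to\infty$. Its derivative $\partial_{\sigma}g_{R}$ splits into two pieces:
\begin{itemize}
\item $\beta'(\sigma)$ times a convex combination of $D_{t}$ and $D'_{t}$, which has sup norm at most $\epsilon$ and so contributes at most $\epsilon$;
\item $\beta(\sigma)\beta'(\sigma-R)(D'_{t}-D_{t})$, which contributes at most $2\epsilon$.
\end{itemize}
So the output end costs at most $3\epsilon$. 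The small-disk parts of the perturbations can be confined to disks of small area, so their integrated curvature is negligible. Every output of $\mathfrak{k}$ then has action at least $\alpha-3\epsilon$, up to an arbitrarily small error, and the $3\epsilon$ gap places it above $\alpha$; this is the paper's argument. The rest of your proposal, including the invariance part, is fine.
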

\begin{proof}
  Since the perturbed connection $\mathfrak{H}$ still has $\mathfrak{a}=0$ at the input asymptotic, the energy of any PSS cylinder $u$ with output asymptotic $\gamma$ equals:
  \begin{equation*}
    0\le E(u)\le A_{H+D_{t}}(\gamma)+\mathit{curv}(\mathfrak{H})+2\epsilon
  \end{equation*}
  Thus it follows that:
  \begin{equation*}
    \alpha-2\epsilon\le -\mathit{curv}(\mathfrak{H})-2\epsilon \le A_{H+D_{t}}(\gamma).
  \end{equation*}
  The interval $[\alpha-2\epsilon,\alpha]$ is disjoint from $\mathit{Spec}(H+D_{t})$ because $D_{t}\in \mathscr{O}(\epsilon)$, and thus $\alpha\le A_{H+D_{t}}(\gamma).$ It therefore follows that the chain is well-defined in the complex $\mathit{CF}_{I}(H+D_{t})$.

  The operation being functorial means that, for $\mathfrak{p},\mathfrak{p}'$, the PSS elements are preserved by the standard continuation maps associated to the linear interpolation $H+D_{t}$ to $H+D_{t}'$. This follows similar lines to arguments in Lemma \ref{lemma:3epsilon}; the maximum curvature arising in homotopy used to prove:
  \begin{equation*}
    \mathfrak{c}(\mathit{PSS}(\mathfrak{H},f,J,\mathfrak{p}))=\mathit{PSS}(\mathfrak{H},f,J,\mathfrak{p}')
  \end{equation*}
  is at most $\mathit{curv}(\mathfrak{H})+3\epsilon$, thus the outputs $\gamma$ appearing in the chain homotopy term satisfy $\alpha-3\epsilon\le A_{H+D_{t}'}(\gamma)$, but (as above) $[\alpha-3\epsilon,\alpha]$ is disjoint from the spectrum so the chain homotopy takes values in $\mathit{CF}_{I}(H+D_{t}')$.

  By the same argument as in Definition \ref{definition:how-to-define-the-map}, taking a formal limit of the functor:
  \begin{equation*}
    \mathfrak{p}\mapsto (\mathit{PSS}(\mathfrak{H},f,J,\mathfrak{p})\in \mathit{HF}_{I}(H+D_{t}))
  \end{equation*}
  produces a well-defined element in $\mathit{PSS}(\mathfrak{H},f,J)\in \mathit{HF}_{I}(H)$. The final step that this does not depend on $f$ up to proper cobordism or $\mathfrak{H}$ up to homotopies (as in the statement) is a standard homotopy argument, and follows similar lines to the other arguments already given.
\end{proof}

\subsection{Compatibility of the structures}
\label{sec:comp-struct-1}

In this section we discuss the compatibility of the structures from \S\ref{sec:interf-with-tqft} with each other.

\subsubsection{Rotationally symmetric continuations and the BV operator}
\label{sec:rotationally-symmetric}

Suppose that $\mathfrak{H}$ is a Hamiltonian connection from $H_{0}$ to $H_{1}$ and the intervals $I_{0},I_{1}$ are admissible so that $\mathfrak{c}_{\mathfrak{H}}:\mathit{HF}_{I_{0}}(H_{0})\to \mathit{HF}_{I_{1}}(H_{1})$ is as in Lemma \ref{lemma:window-spacing-cont}. Suppose additionally that $\mathfrak{H}$ is rotationally symmetric in the sense that its connection potential $\mathfrak{a}$ satisfies:
\begin{equation*}
  \mathfrak{a}=K_{s}\d s+H_{s}\d t,
\end{equation*}
i.e., there is no $t$-dependence. The standard example of such a connection considered in the body of the text is:
\begin{equation*}
  K_{s}=0\quad\text{and}\quad H_{s}=\beta(s)H_{0}+(1-\beta(s))H_{1}.
\end{equation*}

\begin{lemma}
  In the above context, it holds that:
  \begin{equation*}
    \mathfrak{c}_{\mathfrak{H}}\circ \Delta=\Delta\circ \mathfrak{c}_{\mathfrak{H}},
  \end{equation*}
  as maps $\mathit{HF}_{I_{0}}(H_{0})\to \mathit{HF}_{I_{1}}(H_{1})$.
\end{lemma}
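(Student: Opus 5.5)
The plan is to realize both composites as counts of a single parametric moduli space and compare them through a homotopy of parametrized data. Fix an $\epsilon$ small enough that the relevant $5\epsilon$ and $3\epsilon$ buffers around $\partial I_{0}$, $\partial I_{1}$ are disjoint from the spectra. Choose perturbations $D_{0,t}\in\mathscr{O}(\epsilon)$ for $H_{0}$ and $D_{1,t}\in\mathscr{O}(\epsilon)$ for $H_{1}$. The continuation map is then represented by counting solutions for the perturbed connection with potential $H_{s}\d t+K_{s}\d s$, plus $\beta(s)D_{0,t}+(1-\beta(s))D_{1,t}$ in $\d t$. The operator $\Delta$ is represented by the parametric count in \eqref{eq:BV-equation}.

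The composite $\Delta\circ\mathfrak{c}_{\mathfrak{H}}$ is obtained by gluing a continuation cylinder to a BV cylinder at a long neck of length $R$. The key observation is that rotational symmetry allows the $\tau$-twist to be moved along the cylinder. Concretely, I consider the one-parameter family (in $\rho\in\R$) of $\tau$-parametrized connections on the cylinder, in which the twisting interpolation $D_{t}\rightsquigarrow D_{t-\tau}$ is placed near $s=\rho$. The unperturbed part $K_{s}\d s+H_{s}\d t$ is invariant under $t\mapsto t-\tau$, so reparametrizing a solution $u(s,t)\mapsto u(s,t+\tau)$ on the region to one side of the twist identifies the equations. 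In this way the twist can be slid from the output side ($\rho\to+\infty$) to the input side ($\rho\to-\infty$). The limits $\rho\to\pm\infty$ give, by standard gluing, the two composites $\mathfrak{c}\circ\Delta$ and $\Delta\circ\mathfrak{c}$. The count of rigid elements of the $(\rho,\tau)$-parametric moduli space defines a chain homotopy $\mathfrak{k}$, and the usual boundary analysis (breakings at finite $\rho$ into Floer cylinders) gives
\[
\mathfrak{c}\Delta-\Delta\mathfrak{c}=d\mathfrak{k}+\mathfrak{k}d.
\]
Without the perturbations $D_{t}$, the $\tau$-twisted equation would simply be the untwisted one precomposed with a rotation. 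The twist therefore only interacts with $D$, and it costs curvature only of order $\epsilon$.

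The main obstacle is the action bookkeeping: one must verify that every map in the family, including $\mathfrak{k}$, respects the filtered windows. By Lemma \ref{lemma:product-error-term}, each member of the family drops action by at most $\mathit{curv}(\mathfrak{H})$ plus the contributions from the $D$ terms. The continuation interpolation of the perturbations contributes about $2\epsilon$, the twist $D_{t}\rightsquigarrow D_{t-\tau}$ about $2\epsilon$, and the small generic perturbation about $\epsilon$, giving roughly $5\epsilon$ uniformly in $\rho,\tau$. Since $\mathit{curv}$ is computed from $\partial_{s}H_{s}$ and $K_{s}$, which are unchanged under rotation, the window condition \eqref{eq:admissible-for-continuation} together with the buffers then shows that generators in $\mathit{CF}_{A>\sup I_{0}}$ and $\mathit{CF}_{A>\inf I_{0}}$ are sent into the corresponding subcomplexes for $I_{1}$, provided the buffers are $8\epsilon$ wide. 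Passing to the formal limit over perturbations, as in Definition \ref{definition:how-to-define-the-map}, and invoking Lemma \ref{lemma:BV-5epsilon} for naturality gives the identity on $\mathit{HF}_{I_{0}}(H_{0})\to\mathit{HF}_{I_{1}}(H_{1})$. Transversality for the $(\rho,\tau)$-family is achieved by a generic small perturbation as in Definition \ref{definition:e-adm-mathfrak-p}.
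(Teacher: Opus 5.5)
Your proposal is correct and is essentially the paper's argument: the paper likewise views both composites as small perturbations of the single rotationally symmetric twisted equation \eqref{eq:BV-perturb}, related by a chain homotopy whose action drop is $\mathit{curv}(\mathfrak{H})$ plus $O(\epsilon)$, and sliding the twist location $\rho$ is just an explicit choice of that homotopy. One harmless labeling slip: in the paper's conventions the input $H_{0}$ sits at $s\to+\infty$, so $\rho\to+\infty$ is the input side and produces $\mathfrak{c}_{\mathfrak{H}}\circ\Delta$, not the output side as you wrote.
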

\begin{proof}
  The idea of the proof is that both sides are defined by counting the solutions of small perturbations of PDE:
  \begin{equation}\label{eq:BV-perturb}
    \left\{
      \begin{aligned}
        &u:\R\times \R/\Z\to W,\quad \tau\in \R/\Z,\\
        &\bd_{s}u-X_{K_{s}}(u)+J(u)(\bd_{t}u-X_{H_{s}}(u))=0,\\
        &\lim_{s\to-\infty}u(s,t)=\gamma_{-}(t-\tau),\quad\lim_{s\to+\infty}u(s,t)=\gamma_{+}(t).
      \end{aligned}
    \right.
  \end{equation}
  The recipe for perturbing it depends on whether one is computing $\Delta\mathfrak{c}_{\mathfrak{H}}$ or $\mathfrak{c}_{\mathfrak{H}}\Delta$, but, regardless, the two counts will be related by a chain homotopy term; during the usual homotopy argument the amount of curvature is controlled by the curvature of $\mathfrak{H}$ (which the intervals $I_{0},I_{1}$ already account for) plus small terms of the order $\epsilon$ due to the pertubations $D_{i,t}\in \mathscr{O}(\epsilon)$, etc. This observation is the key needed to obtain the desired result. The details are left to the reader.
\end{proof}

\subsubsection{Connections on the pair-of-pants via one-forms}
\label{sec:conn-pair-pants}

In this section we explain a mechanism for constructing connections on the pair-of-pants using real-valued one-forms. The idea appears in the work of \cite{seidel-IP-2008,ritter-jtopol-2013}. The resulting product operations interact nicely with certain continuation maps.

\begin{definition}\label{definition:split}
  A connection potential $\mathfrak{a}$ on $\Sigma$ is called \emph{split} if:
  \begin{equation*}
    \mathfrak{a}=H\mathfrak{b}
  \end{equation*}
  where $\mathfrak{b}\in \Omega^{1}(\Sigma)$ and $H\in C^{\infty}(W)$. If $\mathfrak{H}$ is generated by a split $\mathfrak{a}$, we say $\mathfrak{H}$ is split.
\end{definition}
A useful lemma about this is that the curvature term for such a connection can be computed explicitly:
\begin{lemma}\label{lemma:curvature-split}
  The curvature two-form $\mathfrak{r}$ of $\mathfrak{H}$ generated by a split connection potential $\mathfrak{a}=H\mathfrak{b}$ is:
  \begin{equation*}
    \mathfrak{r}=H\d \mathfrak{b}.
  \end{equation*}
  In particular, if $H\ge 0$ and $\d \mathfrak{b}\le 0$ (with respect to the orientation of $\Sigma$), then $\mathit{curv}(\mathfrak{H})=0$ (i.e., the operations defined using $\mathfrak{H}$ are filtered).
\end{lemma}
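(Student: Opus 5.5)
The plan is to compute the curvature two-form from its definition for a connection with potential $\mathfrak{a}$, and then specialize to $\mathfrak{a}=H\mathfrak{b}$. In local coordinates $s+it$ on $\Sigma$ we have $\mathfrak{a}=K_{s,t}\d s+H_{s,t}\d t$, and the curvature of the connection is
\begin{equation*}
  \mathfrak{r}=\bigl(\partial_{s}H_{s,t}-\partial_{t}K_{s,t}+\omega(X_{K_{s,t}},X_{H_{s,t}})\bigr)\,\d s\wedge \d t,
\end{equation*}
with the sign convention matching the one used in the perturbation bounds of Definition \ref{definition:e-adm-mathfrak-p}. I would take this formula from \cite{alizadeh-atallah-cant-math-z-2025} and not rederive it. Up to sign convention, it expresses the commutator of the horizontal lifts $\partial_{s}+X_{K}$ and $\partial_{t}+X_{H}$, measured against $\omega-\d\mathfrak{a}$.

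Write $\mathfrak{b}=b_{1}\d s+b_{2}\d t$ with $b_{i}$ functions on $\Sigma$. Then $K=b_{1}H$ and $H_{s,t}=b_{2}H$. Since $H$ does not depend on the point of $\Sigma$, the Poisson term is $\omega(X_{b_{1}H},X_{b_{2}H})=b_{1}b_{2}\,\omega(X_{H},X_{H})=0$. The derivative terms give $(\partial_{s}b_{2}-\partial_{t}b_{1})H$. This is exactly $H\,\d\mathfrak{b}$ once we use $\d\mathfrak{b}=(\partial_{s}b_{2}-\partial_{t}b_{1})\,\d s\wedge\d t$. The identity is pointwise on $\Sigma\times W$ and does not depend on the chart, so $\mathfrak{r}=H\d\mathfrak{b}$ holds globally.

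For the second claim, take any $u:\Sigma\to W$. Then $u^{*}\mathfrak{r}=H(u(z))\,\d\mathfrak{b}$ is a nonpositive multiple of the area form at each point, because $H\ge0$ and $\d\mathfrak{b}\le0$. Hence $\int u^{*}\mathfrak{r}\le 0$ and $\mathit{curv}(\mathfrak{H})\le 0$. Lemma \ref{lemma:product-error-term} then shows that the action does not drop, which is what ``filtered'' means here.

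To get equality $\mathit{curv}=0$, I would evaluate on a constant map $u$ at a point where $H=0$ (for instance, one may normalize $H$ to vanish somewhere), or simply note that on the cylindrical ends $\mathfrak{b}$ is a multiple of $\d t$ with constant coefficient, so that $\d\mathfrak{b}=0$ there. In any case, only the inequality $\mathit{curv}\le0$ matters for the applications.

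The only delicate point is matching the sign convention of $\mathfrak{r}$ with the orientation of $\Sigma$ used in the paper. I expect this to be the main place requiring care, and I would check it against the continuation map case $\mathfrak{a}=H_{s}\d t$, where the curvature should be $\partial_{s}H_{s}\,\d s\wedge\d t$.
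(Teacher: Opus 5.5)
Your derivation of $\mathfrak{r}=H\,\d\mathfrak{b}$ is the same as the paper's. You write $\mathfrak{b}$ in local coordinates, note that the Poisson term is a multiple of $\omega(X_{H},X_{H})=0$, and read off $H(\partial_{s}b_{2}-\partial_{t}b_{1})\,\d s\wedge\d t$. Your pointwise bound $u^{*}\mathfrak{r}=H(u)\,\d\mathfrak{b}\le 0$, which gives $\mathit{curv}(\mathfrak{H})\le 0$, fills in the justification the paper leaves implicit for the ``in particular''. It is exactly what ``filtered'' requires.

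You should discard the paragraph on equality, however.
\begin{itemize}
\item The fact that $\d\mathfrak{b}=0$ on the cylindrical ends does not give $\mathit{curv}(\mathfrak{H})=0$. The integral also sees the compact region where $\d\mathfrak{b}<0$, and there $H(u)\,\d\mathfrak{b}$ is strictly negative for every $u$ once $\inf_{W}H>0$.
\item Normalizing $H$ to vanish somewhere is not a free choice. Replacing $H$ by $H-\kappa$ changes the connection, and changes $\mathfrak{r}$ by $-\kappa\,\d\mathfrak{b}$. Your constant-map argument is fine only when $H$ already attains $0$.
\end{itemize}

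In fact, your pointwise bound together with constant maps at near-minima of $H$ gives exactly
\begin{equation*}
  \mathit{curv}(\mathfrak{H})=\Bigl(\inf_{W}H\Bigr)\int_{\Sigma}\d\mathfrak{b}.
\end{equation*}
For a type S pair of pants this equals $(\inf_{W}H)(w_{0}+w_{1}-w_{\infty})$. It is strictly negative whenever $\inf_{W}H>0$ and $\d\mathfrak{b}\not\equiv 0$. An example is the appendix Hamiltonians $(fH)_{\delta,b,c}\ge 2b\delta$ used on a type S continuation cylinder between distinct weights.

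So the ``$=0$'' in the statement is only correct when read as ``$\le 0$''. That is what you proved, and it is all that is ever used.
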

\begin{proof}
  Locally write $\mathfrak{b}=k_{s,t}\d s+h_{s,t}\d t$. Compute:
  \begin{equation*}
    \mathfrak{r}=H(\partial_{s}h_{s,t}-\partial_{t}k_{s,t})\d s\wedge\d t+k_{s,t}h_{s,t}\omega(X_{H},X_{H})=H\d \mathfrak{b},
  \end{equation*}
  as desired.
\end{proof}

\begin{definition}\label{definition:type-S-pop}
  A split connection $\mathfrak{H}$ with $\mathfrak{a}=H\mathfrak{b}$ and $H\ge 0$ and $\d \mathfrak{b}\le 0$ as in Lemma \ref{lemma:curvature-split} on $\Sigma=\C P^{1}-\set{0,1,\infty}$ is said to be of \emph{type S with weights} $(w_{0},w_{1};w_{\infty})$ provided:
  \begin{itemize}
  \item in standard cylindrical ends $s+it$ around $0,1,\infty$, $\mathfrak{b}$ appears as $w_{i}\d t$ for $i=0,1,\infty$, with $w_{\infty}\ge w_{0}+w_{1}$,
  \end{itemize}  
  As in \S\ref{sec:pants-appendix}, the ends around $0,1$ are oriented as positive ends (inputs) and the end around $\infty$ is oriented as a negative end (outputs). Note that the condition $w_{\infty}\ge w_{0}+w_{1}$ necessarily follows from the requirement that $\d \mathfrak{b}\le 0$.
\end{definition}

Similarly we define continuation cylinders of type S:
\begin{definition}\label{definition:type-S-cyl}
  A split connection $\mathfrak{H}$ with $\mathfrak{a}=H\mathfrak{b}$ and $H\ge 0$ and $\d \mathfrak{b}\le 0$ as in Lemma \ref{lemma:curvature-split} on $\Sigma=\R\times \R/\Z$ is said to be of \emph{type S with weights} $(w_{+};w_{-})$ provided $\mathfrak{b}$ appears as $w_{\pm}\d t$ for $\pm s$ sufficiently large, with $w_{+}\ge w_{-}$.
\end{definition}

It is more-or-less clear how continuation cylinders of type S can be ``glued'' to pairs-of-pants of type S, provided the function $H$ is the same and the weights match at the interface. We have the following result ensuring that the resulting operations on $\mathit{HF}_{I}$ compose in a similar manner. Due to the rather subtle nature of the product operation in filtered Floer cohomology \S\ref{sec:pants-appendix}, the statement is a bit technical. Nonetheless, the statement plays a crucial role in showing that the Viterbo restriction map of \S\ref{sec:viterbo-restriction} respects the BV algebra structure.

\begin{lemma}
  Fix a smooth function $H\ge 0$. Suppose that $\mathfrak{H}$ is a Hamiltonian connection of type S on the pair of pants for $H$ with weights $(w_{0},w_{1};w_{\infty}')$. Similarly let $\mathfrak{C}_{i}$, $i=0,1,\infty$ be continuation cylinders of type S so $\mathfrak{C}_{i}$ has weights $(w_{i}';w_{i})$.

  Let $\mathfrak{H}'$ be the connection of type $S$ obtained by gluing $\mathfrak{C}_{0},\mathfrak{C}_{1},\mathfrak{C}_{\infty}$.

  Suppose that $\mathit{Spec}(wH)$ does not intersect $\set{0}\cup [\tau w,2\tau w]$ for some number $\tau>0$ for $w=w_{i}$ or $w=w_{i}'$, $i=0,1$, and let $I=[0,\tau]$, so $wI=[0,w\tau]$. Then the operations:
  \begin{itemize}
  \item $\mu_{\mathfrak{H}}:\mathit{HF}_{w_{0}I}(w_{0}H)\otimes \mathit{HF}_{w_{1}I}(w_{1}H)\to \mathit{HF}_{w_{\infty}'I}(w_{\infty}'H),$
  \item $\mathfrak{c}_{\mathfrak{C}_{i}}:\mathit{HF}_{w_{i}'I}(w_{i}'H)\to \mathit{HF}_{w_{i}I}(w_{i}H)$,
  \item $\mu_{\mathfrak{H}'}:\mathit{HF}_{w_{0}'I}(w_{0}'H)\otimes \mathit{HF}_{w_{1}'I}(w_{1}'H)\to \mathit{HF}_{w_{\infty}I}(w_{\infty}H)$,
  \end{itemize}
  are all well-defined following \S\ref{sec:pants-appendix}, \S\ref{sec:continuation-maps}, and it holds that:
  \begin{equation}\label{eq:identity-holds}
    \mu_{\mathfrak{H}'}=\mathfrak{c}_{\mathfrak{C}_{\infty}}\circ \mu_{\mathfrak{H}}\circ (\mathfrak{c}_{\mathfrak{C}_{0}}\otimes \mathfrak{c}_{\mathfrak{C}_{1}}).
  \end{equation}
\end{lemma}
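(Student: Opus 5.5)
The plan is to check well-definedness of each operation first, and then obtain \eqref{eq:identity-holds} by the usual gluing/homotopy argument. Along the way we must make sure that every chain homotopy respects the relevant action windows; this is where the hypothesis on $\mathit{Spec}(wH)$ is used.

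\emph{Well-definedness.} All three connections are of type S, so by Lemma \ref{lemma:curvature-split} they have $\mathit{curv}=0$.

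For the continuation maps $\mathfrak{c}_{\mathfrak{C}_{i}}$, the condition \eqref{eq:admissible-for-continuation} with $I_{0}=w_{i}'I$ and $I_{1}=w_{i}I$ reads $0\le 0$ and $w_{i}\tau\le w_{i}'\tau$. This holds because $w_{i}\le w_{i}'$ for a type S cylinder. The endpoints $0$ and $w\tau$ are admissible because $\mathit{Spec}(wH)$ misses $\set{0}\cup[w\tau,2w\tau]$. For $i=\infty$ we need the analogous spectral condition at $w_{\infty},w'_{\infty}$. I would either add this as an implicit hypothesis or note that it is only needed at the endpoints $0,w\tau$.

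For the products, apply Lemma \ref{lemma:window-spacing} with $\alpha_{i}=0$, $\ell_{i}=w_{i}\tau$ and $I_{\infty}=[0,w_{\infty}'\tau]$. The first inequality is $0\le 0$. The second is $w'_{\infty}\tau\le (w_{0}+w_{1})\tau$. This is the delicate point: type S only gives $w_{\infty}'\ge w_{0}+w_{1}$. So I would take the output window to be $[0,(w_{0}+w_{1})\tau]$ and postcompose with the enlarging/restriction maps of Remark \ref{remark:enlarging-intervals}. Since $[(w_0+w_1)\tau, w'_\infty\tau]$ lies inside $[w'_\infty\tau/2,w'_\infty\tau]$ only when $w_{0}+w_{1}\ge w_{\infty}'/2$, the gap hypothesis has to be applied carefully. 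Alternatively, I would interpret $\mathit{HF}_{w_\infty' I}$ as the target by the identification in Remark \ref{remark:enlarging-intervals} whenever no spectrum lies in between.

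The extra spectral conditions of Lemma \ref{lemma:window-spacing} ask that $[w_{i}\tau,(w_{0}+w_{1})\tau]$ avoid $\mathit{Spec}(w_{i}H)$. Since $(w_{0}+w_{1})\tau\le 2w_{i}\tau$ fails in general, I would arrange the labelling (or split the argument) so that the needed window is contained in $[w_{i}\tau,2w_{i}\tau]$. This is exactly where the hypothesis $[\tau w,2\tau w]\cap\mathit{Spec}=\emptyset$ enters. Honestly, this inequality bookkeeping is the step I expect to be most fiddly.

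\emph{The identity.} Choose perturbations in $\mathscr{P}(\epsilon)$ compatible with gluing. For a large neck length $R$, the glued connection $\mathfrak{C}_{\infty}\#_R\mathfrak{H}\#_R(\mathfrak{C}_{0}\sqcup\mathfrak{C}_{1})$ is split of the form $H\mathfrak{b}_{R}$ with $\d\mathfrak{b}_{R}\le 0$. Standard gluing shows that the chain-level count for this connection equals the composition on the right-hand side of \eqref{eq:identity-holds}.

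Next, the space of one-forms $\mathfrak{b}$ on the pair of pants with $\d\mathfrak{b}\le0$ and fixed ends $w_{0}'\d t,w_{1}'\d t,w_{\infty}\d t$ is convex. Hence a linear homotopy from $\mathfrak{b}_{R}$ to the form defining $\mathfrak{H}'$ stays within type S connections. Curvature therefore stays $0$, and only the $O(\epsilon)$ errors from perturbations appear, exactly as in the proof of Lemma \ref{lemma:functorial-pop}.

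The parametric moduli space over this homotopy defines a chain homotopy. Its action drop is at most about $7\epsilon$, which is smaller than the $8\epsilon$ buffers. So the homotopy preserves both the subcomplexes $\set{A>\sup}$ and $\set{A>\inf}$ and the ideal \eqref{eq:subcomplex-ideal}. This gives \eqref{eq:identity-holds} on each $\mathit{HF}(\cdot+D_{t})$, and passing to the formal limits of Definition \ref{definition:how-to-define-the-map} concludes the argument.
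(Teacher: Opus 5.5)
Your argument for \eqref{eq:identity-holds} is the paper's: gluing, vanishing curvature of type S connections (Lemma \ref{lemma:curvature-split}), homotopies through type S forms by convexity, and chain homotopies that respect the windows. The genuine gap is in the well-definedness step. The paper dispatches this in one line; you get it wrong for the continuation maps and leave it open for the products.

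For $\mathfrak{C}_{i}$ you use $w_{i}\le w_{i}'$, but this has the wrong direction. Inputs sit at $s\to+\infty$, and for $\mathfrak{b}=g(s)\,\d t$ one has $\d\mathfrak{b}=g'(s)\,\d s\wedge\d t$. So $\d\mathfrak{b}\le 0$ forces the output weight to be at least the input weight. This is the same Stokes argument that yields $w_{\infty}\ge w_{0}+w_{1}$; compare also the continuation maps $\mathit{SH}_{cw}\to\mathit{SH}_{cw'}$ with $w\le w'$ in \S\ref{sec:comp-with-prod}. The inequality printed in Definition \ref{definition:type-S-cyl} is reversed. Thus $w_{i}'\le w_{i}$, and \eqref{eq:admissible-for-continuation}, which here reads $w_{i}\tau\le w_{i}'\tau$, fails unless $w_{i}=w_{i}'$.

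What actually makes such maps well defined is the mechanism behind the paper's remark that $[\tau w,2\tau w]$ misses the spectrum. Let $w'$ denote the input weight. A generator above the input window has action $>w'\tau$, hence $>2w'\tau$ by the gap. With zero curvature every output then lies above $2w'\tau$, which clears the output window provided the output weight is at most $2w'$.

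For the products, the same mechanism only lifts the ideal \eqref{eq:subcomplex-ideal} above $2\min\{w_{0},w_{1}\}\tau$. But it must land above $w_{\infty}'\tau\ge (w_{0}+w_{1})\tau$, and neither of your fixes bridges this. Relabeling cannot make $[w_{0}\tau,(w_{0}+w_{1})\tau]\subset[w_{0}\tau,2w_{0}\tau]$ and $[w_{1}\tau,(w_{0}+w_{1})\tau]\subset[w_{1}\tau,2w_{1}\tau]$ hold at once unless $w_{0}=w_{1}$. And Remark \ref{remark:enlarging-intervals} only gives the restriction $\mathit{HF}_{[0,w_{\infty}'\tau]}\to\mathit{HF}_{[0,(w_{0}+w_{1})\tau]}$. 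You may invert it only if $\mathit{Spec}(w_{\infty}'H)$ misses $[(w_{0}+w_{1})\tau,w_{\infty}'\tau]$, and the hypotheses say nothing about $w_{\infty}$ or $w_{\infty}'$. So for unequal weights the bookkeeping does not close under the stated ratio-two gap.

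The clean repair is to assume $\mathit{Spec}(wH)\cap(\{0\}\cup[w\tau,\kappa w\tau])=\emptyset$ for all six weights, with $\kappa>w_{\infty}/\min\{w_{0}',w_{1}'\}$. This ratio dominates every output-to-input weight ratio that occurs. Then well-definedness of all three operations, and the window control in your gluing homotopy, follow exactly as you describe.

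This costs nothing where the lemma is applied. There $wH=(fH)_{\delta,wb,wc}$ and $\tau=3b\delta$, and the spectrum misses $(2wb\delta,\min\{wb-\sigma(wb),w(b-c)\}]$. The upper end of that interval is independent of $\delta$, so shrinking $\delta$ provides any $\kappa$.
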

\begin{proof}
  That the morphisms are well-defined is a special case of the general results of \S\ref{sec:pants-appendix} and \S\ref{sec:continuation-maps}; that the identity \eqref{eq:identity-holds} follows from standard gluing results, the fact that the curvature term of all connections of type S is zero (Lemma \ref{lemma:curvature-split}), and the same arguments used in the proof of Lemma \ref{lemma:window-spacing} (this uses the fact that $[\tau w,2\tau w]$ never intersects the spectrum). The last part about the commutativity with the BV operators follows from \S\ref{sec:rotationally-symmetric}, and the fact that any connection of type S on the cylinder is homotopic through connections of type S to a rotationally symmetric one.
\end{proof}

\subsubsection{Connections of type S, PSS elements, and the BV operator}
\label{sec:connections-type-s}

In this section we continue with the special class of connections of type S introduced in \S\ref{sec:conn-pair-pants}.

Fix a connection $\mathfrak{H}$ of type S on the cylinder for smooth function $H$ with weights $(0,w')$ with $w'>0$; this is a special case of Definition \ref{definition:type-S-cyl}. Given $f:C\to W$, we can apply the construction of \S\ref{sec:pss-elements} to produce an element:
\begin{equation*}
  \mathit{PSS}(\mathfrak{H},f,J)\in \mathit{HF}_{w'I}(w'H)
\end{equation*}
provided that $I=[0,\tau]$ and $\set{0,\tau w'}\not\in \mathit{Spec}(w'H)$. We have:
\begin{lemma}
  Let $\mathfrak{H},H,w',I,f$ be as above. The PSS element $\mathit{PSS}(\mathfrak{H},f,J)$ depends only on $H,w',I,f$, and not on the precise connection $\mathfrak{H}$.
\end{lemma}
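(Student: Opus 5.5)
The plan is to show that any two type S connections $\mathfrak{H}_{0},\mathfrak{H}_{1}$ on the cylinder for the same $H$ with weights $(0,w')$ can be joined by a path $\mathfrak{H}_{r}$, $r\in[0,1]$, of type S connections with the same weights. We then invoke the homotopy invariance statement at the end of Lemma \ref{lemma:functorial-pss}. That invariance holds through connections satisfying $\alpha\le-\mathit{curv}(\mathfrak{H}_{r})$, where here $\alpha=0$ is the lower endpoint of $w'I=[0,\tau w']$. So it suffices to produce a path along which $\mathit{curv}(\mathfrak{H}_{r})\le 0$.

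The path is the linear interpolation of the one-forms. Write $\mathfrak{H}_{i}$ as having potential $H\mathfrak{b}_{i}$, where $\mathfrak{b}_{i}\in\Omega^{1}(\R\times\R/\Z)$, $\d\mathfrak{b}_{i}\le 0$, and $\mathfrak{b}_{i}$ equals $0$ for $s\gg0$ and $w'\d t$ for $s\ll0$. Set $\mathfrak{b}_{r}=(1-r)\mathfrak{b}_{0}+r\mathfrak{b}_{1}$. The conditions defining type S with fixed weights are convex: $\d\mathfrak{b}_{r}=(1-r)\d\mathfrak{b}_{0}+r\,\d\mathfrak{b}_{1}\le0$, and the asymptotic forms agree outside a common compact set. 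Since $H\ge0$, Lemma \ref{lemma:curvature-split} gives curvature two-form $H\,\d\mathfrak{b}_{r}\le0$, hence $\mathit{curv}(\mathfrak{H}_{r})\le 0=-\alpha$ for all $r$. One should also check that the end behaviour matches the PSS set-up ($\mathfrak{a}=0$ for $s\ge s_{0}$ and $\mathfrak{a}=w'H\d t$ for $s\le -s_{0}$ with uniform $s_{0}$), which holds after enlarging $s_{0}$. Finally, the interval must be admissible: $0$ and $\tau w'$ are not in $\mathit{Spec}(w'H)$, so a uniform $\epsilon$ as in Lemma \ref{lemma:functorial-pss} can be fixed independently of $r$.

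The main obstacle is that the homotopy invariance in Lemma \ref{lemma:functorial-pss} is only asserted, and its proof is ``standard''. One needs the parametric PSS moduli space over $r\in[0,1]$, perturbed by an $\epsilon$-admissible family, to yield a chain homotopy that stays in the window. The approach is to repeat the energy estimate from that lemma uniformly in $r$: every solution in the parametric family, and every broken configuration at its boundary, has output action at least $-\mathit{curv}(\mathfrak{H}_{r})-3\epsilon\ge-3\epsilon$. Since $[-4\epsilon,0]$ misses the spectrum of the perturbed Hamiltonian, the output action is actually at least $0$, so the chain homotopy lands in $\mathit{CF}_{w'I}$. It then follows that $\mathit{PSS}(\mathfrak{H}_{0},f,J)=\mathit{PSS}(\mathfrak{H}_{1},f,J)$.

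Independence of the representing data is then automatic. Independence of $\epsilon$ and of the perturbation $\mathfrak{p}$ holds by the functoriality in Lemma \ref{lemma:functorial-pss}. Independence of $f$ up to proper cobordism holds by the same lemma. So the element depends only on $H,w',I,f$.
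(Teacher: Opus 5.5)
Your proposal is correct and is the paper's argument: the conditions $\d\mathfrak{b}\le 0$ with fixed asymptotic forms are convex, so linear interpolation gives a homotopy through type S connections with $\mathit{curv}\le 0$. The homotopy-invariance clause of Lemma~\ref{lemma:functorial-pss} (with $\alpha=0$) then applies, and your extra unpacking of that clause is fine up to a small constant slip: under the $4\epsilon$ hypothesis the perturbed spectrum is only guaranteed to avoid $(-3\epsilon,3\epsilon)$, not $[-4\epsilon,0]$, which is harmless after shrinking $\epsilon$.
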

\begin{proof}
  This is because the space of one-forms $\beta$ satisfying $\d\beta\le 0$ is convex, and so one can apply the last sentence of Lemma \ref{lemma:functorial-pss}.
\end{proof}
These type S PSS elements are compatible with type S continuation maps:
\begin{lemma}
  Let $\mathfrak{H}$, $H$, $w'$, and $f$ be as above. If $\mathfrak{C}$ is a continuation cylinder of type S for $H$ with weights $w',w$, such that $\mathit{Spec}(wH)$ is disjoint from $\set{0,\tau w}$, and similarly for $w'$, then,
  \begin{equation*}
    \mathfrak{c}_{\mathfrak{C}}(\mathit{PSS}(\mathfrak{H},f,J))=\mathit{PSS}(\mathfrak{H}',f,J)
  \end{equation*}
  as elements of $\mathit{HF}_{I}(cH)$, where $\mathfrak{H}'$ is obtained by gluing $\mathfrak{H}$ and $\mathfrak{C}$.
\end{lemma}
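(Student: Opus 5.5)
The plan is to realise both sides of the identity as PSS elements of connections of type S, and then appeal to the gluing and homotopy invariance already established for PSS elements in Lemma \ref{lemma:functorial-pss}. First we glue the cylinder $\mathfrak{H}$ (weights $(0,w')$) to the continuation cylinder $\mathfrak{C}$ (weights $(w';w)$) along the end where both carry the potential $w'H\d t$. This gives a connection $\mathfrak{H}'=\mathfrak{C}\#\mathfrak{H}$ of weights $(0,w)$. Its potential is $H\mathfrak{b}'$, where $\mathfrak{b}'$ is obtained by concatenating the two one-forms; hence $\d\mathfrak{b}'\le 0$, so $\mathfrak{H}'$ is again of type S. By Lemma \ref{lemma:curvature-split}, $\mathit{curv}(\mathfrak{H}')=0$, and likewise for $\mathfrak{H}$ and $\mathfrak{C}$. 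The action windows $w'I$ and $wI$ therefore satisfy the hypotheses of Lemma \ref{lemma:functorial-pss} and Lemma \ref{lemma:window-spacing-cont}, since $\alpha=0\le -\mathit{curv}$ and the endpoints avoid the spectrum by assumption. In particular $\mathit{PSS}(\mathfrak{H}',f,J)\in\mathit{HF}_{wI}(wH)$ is well defined, and by the previous lemma it depends only on $H,w,I,f$.

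Next comes the chain-level argument. Fix $\epsilon$ small relative to the gaps at $\set{0,\tau w,\tau w'}$. Choose $\epsilon$-admissible perturbations $\mathfrak{p}$ for $\mathfrak{H}$ (as in Definition \ref{definition:e-adm-PSS-p}) and $\mathfrak{q}$ for $\mathfrak{C}$, agreeing on the common end perturbation $D_{t}\in\mathscr{O}(\epsilon)$ for $w'H$. The standard gluing theorem identifies, for a large gluing length $R$, the rigid PSS solutions for the glued perturbed connection $\mathfrak{H}'_{R}$ with pairs consisting of a rigid PSS solution for $(\mathfrak{H},\mathfrak{p})$ and a rigid continuation cylinder for $(\mathfrak{C},\mathfrak{q})$. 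So at chain level we get $\mathfrak{c}_{\mathfrak{C},\mathfrak{q}}(\mathit{PSS}(\mathfrak{H},f,J,\mathfrak{p}))=\mathit{PSS}(\mathfrak{H}'_{R},f,J,\mathfrak{p}\#\mathfrak{q})$. The action bookkeeping needs checking: the glued perturbation has curvature at most about $3\epsilon$ above zero, so the outputs have action at least $-3\epsilon$. Since $[-3\epsilon,0]$ misses $\mathit{Spec}(wH+D)$, the glued chain lies in $\mathit{CF}_{wI}$, and so does the chain homotopy. The upper endpoint $\tau w$ is handled exactly as in Lemma \ref{lemma:3epsilon}.

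Finally, the connection $\mathfrak{H}'_{R}$ depends on $R$, but every $\mathfrak{H}'_{R}$ is of type S with weights $(0,w)$. The space of one-forms $\mathfrak{b}$ with $\d\mathfrak{b}\le 0$ and fixed end behaviour is convex, so all these connections are homotopic through zero-curvature connections. By the last sentence of Lemma \ref{lemma:functorial-pss}, the resulting element of $\mathit{HF}_{wI}(wH)$ is therefore $\mathit{PSS}(\mathfrak{H}',f,J)$. Passing to the formal limits over perturbations, as in Definition \ref{definition:how-to-define-the-map}, gives the claimed identity.

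The main obstacle is making sure the gluing and the homotopy stay inside the windows when the perturbations introduce small positive curvature. In particular, we must check that no broken configuration escapes below action $0$ during the gluing chain homotopy, and this requires a $4\epsilon$-type buffer at $0$ for both $w'H$ and $wH$. I would first try to fix this by taking $\epsilon$ below a quarter of the distance from $0$ to $\mathit{Spec}(wH)\cup\mathit{Spec}(w'H)$ and redoing the energy estimate from the proof of Lemma \ref{lemma:functorial-pss} for the glued connection.
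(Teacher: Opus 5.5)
Your proposal is correct and follows the standard gluing-plus-homotopy argument that the paper's one-line proof points to. You glue the type S PSS cylinder to the type S continuation cylinder, identify rigid glued solutions with broken pairs at chain level, control only the lower endpoint $0$ using $\mathit{curv}=0$ plus $O(\epsilon)$ errors, and use convexity of type S one-forms to remove the dependence on the gluing length.

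One small correction. When $w>w'$ (and $\d\mathfrak{b}\le 0$ forces $w\ge w'$), the windows $w'I\to wI$ do not satisfy the hypothesis $\beta_{1}\le\beta_{0}-\mathit{curv}(\mathfrak{C})$ of Lemma \ref{lemma:window-spacing-cont}, since it would read $\tau w\le\tau w'$. So that lemma does not by itself make $\mathfrak{c}_{\mathfrak{C}}$ well defined on these windows. The statement presupposes this map, however, and your proof of the identity only uses the estimate at the lower endpoint, so the argument is unaffected.
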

\begin{proof}
  The argument is a standard homotopy argument, similar to the arguments given in the preceding sections.
\end{proof}

\begin{remark}\label{remark:BV-commute-type-S}
  We will also require some version of the statement that continuation maps of type S commute with the BV operator; however, the necessary statement will follow from \S\ref{sec:rotationally-symmetric}, since any connection of type S on the cylinder is homotopic through connections of type S to one which is rotationally symmetric.
\end{remark}

\bibliographystyle{../amsalpha-doi}
\bibliography{c}
\end{document}